\numberwithin{equation}{section} 
\newtheorem{theorem}{Theorem}[section]
\newtheorem{lemma}[theorem]{Lemma}
\newtheorem{proposition}[theorem]{Proposition}
\newtheorem{remark}[theorem]{Remark}
\newcommand{\bbE}{{\ensuremath{\mathbb E}} }
\newcommand{\bbN}{{\ensuremath{\mathbb N}} }
\newcommand{\bbP}{{\ensuremath{\mathbb P}} }
\newcommand{\bbR}{{\ensuremath{\mathbb R}} }
\newcommand{\bbZ}{{\ensuremath{\mathbb Z}} }
\newcommand{\cA}{{\ensuremath{\mathcal A}} }
\newcommand{\cB}{{\ensuremath{\mathcal B}} }
\newcommand{\cE}{{\ensuremath{\mathcal E}} }
\newcommand{\cF}{{\ensuremath{\mathcal F}} }
\newcommand{\cH}{{\ensuremath{\mathcal H}} }
\newcommand{\cN}{{\ensuremath{\mathcal N}} }
\newcommand{\cP}{{\ensuremath{\mathcal P}} }
\newcommand{\cR}{{\ensuremath{\mathcal R}} }
\newcommand{\cS}{{\ensuremath{\mathcal S}} }
\newcommand{\cT}{{\ensuremath{\mathcal T}} }
\newcommand{\ga}{\alpha}
\newcommand{\gb}{\beta}
\newcommand{\gga}{\gamma}
\newcommand{\gd}{\delta}
\newcommand{\gD}{\Delta}
\newcommand{\gep}{\varepsilon} 
\newcommand{\gz}{\zeta}
\newcommand{\gt}{\theta}
\newcommand{\gk}{\kappa}
\newcommand{\gl}{\lambda}
\newcommand{\gL}{\Lambda}
\newcommand{\gs}{\sigma}
\newcommand{\go}{\omega}
\newcommand{\gO}{\Omega}
\newcommand{\ub}{\underline}
\newcommand{\petit}{{\sf{p}}}
\renewcommand{\tilde}{\widetilde}          
\DeclareMathSymbol{\leqslant}{\mathalpha}{AMSa}{"36} 
\DeclareMathSymbol{\geqslant}{\mathalpha}{AMSa}{"3E} 
\DeclareMathSymbol{\eset}{\mathalpha}{AMSb}{"3F}     
\newcommand{\dd}{\text{\rm d}}             
\newcommand{\sumtwo}[2]{\sum_{\substack{#1 \\ #2}}} 
\newcommand{\R}{\mathbb{R}}
\newcommand{\Z}{\mathbb{Z}}
\newcommand{\N}{\mathbb{N}}
\newcommand{\PEfont}{\mathrm}
\DeclareMathOperator{\var}{\ensuremath{\PEfont Var}}
\newcommand\bP{\ensuremath{\mathrm{P}}}
\newcommand\bE{\ensuremath{\mathrm{E}}}
\DeclareMathOperator\argmax{argmax}
\DeclareMathOperator\argmin{argmin}
\newcommand{\ind}{{\sf 1}}
\newcommand{\card}{\mathrm{card}}
\newcommand{\8}{\infty}
\newenvironment{myenumerate}{
\renewcommand{\theenumi}{\arabic{enumi}}
\renewcommand{\labelenumi}{{\rm(\theenumi)}}
\begin{list}{\labelenumi}
{
\setlength{\itemsep}{0.4em}
\setlength{\topsep}{0.5em}
\setlength\leftmargin{2.45em}
\setlength\labelwidth{2.05em}
\setlength{\labelsep}{0.4em}
\usecounter{enumi}
}
}
{\end{list}
}
\renewenvironment{enumerate}{
\begin{myenumerate}}
{\end{myenumerate}}
\newcommand{\beq}{\begin{equation}}
\newcommand{\eeq}{\end{equation}}
\newcommand{\ba}{\begin{aligned}}
\newcommand{\ea}{\end{aligned}}
\newcommand{\bin}{\mathrm{Bin}}
\newcommand{\lopt}{\ell_0}
\newcommand{\kopt}{{k_0}}
\newcommand{\ins}{{\rm in}}
\newcommand{\out}{{\rm out}}
\newcommand{\loc}{{\rm loc}}
\newcommand{\obs}{{\rm obs}}
\newcommand{\good}{{\rm Good}}
\newcommand{\tone}{\mathsf{T}_1}
\newcommand{\ttwo}{\mathsf{T}_2}
\newcommand{\pin}{{\rm pin}}
\newcommand{\Inw}{{\rm Inw}}
\newcommand{\Out}{{\rm Out}}
\newcommand{\chop}{\mathsf{m}}
\newcommand{\kBone}{B^{(1)}}
\newcommand{\kBtwo}{B^{(2)}}
\newcommand{\kBthree}{B^{(3)}}
\newcommand{\kBfour}{B^{(4)}}
\newcommand{\kBfive}{B^{(5)}}
\newcommand{\mZ}{\mathsf Z}
\newcommand{\mK}{\mathsf K}
\newcommand{\mtau}{\overline{\tau}}
\newcommand{\bgs}{\boldsymbol{\gs}}
\begin{document}
\date{\today}
\title[]{Localization of a one-dimensional simple random walk among power-law renewal obstacles}
\author[J. Poisat]{Julien Poisat}
\address[J. Poisat]{Universit\'e Paris-Dauphine, CNRS, UMR [7534], CEREMADE, PSL Research University, 75016 Paris, France}
\email{poisat@ceremade.dauphine.fr}

\author[F. Simenhaus]{Fran\c cois Simenhaus}
\address[F. Simenhaus]{Universit\'e Paris-Dauphine, CNRS, UMR [7534], CEREMADE, 75016 Paris, France}
\email{simenhaus@ceremade.dauphine.fr}

\thanks{\textit{Acknowledgements}. JP acknowledges the support of the ANR-17-CE40-0032 grant SWiWS and the ANR-22-CE40-0014 grant LOCAL}

\keywords{Random walks in random obstacles, polymers in random environments, parabolic Anderson model, survival probability, localization, one-city theorem, Markov renewal process.}

\begin{abstract} 
We consider a one-dimensional simple random walk killed by quenched soft obstacles. The position of the obstacles is drawn according to a renewal process with a power-law increment distribution. In a previous work, we computed the large-time asymptotics of the quenched survival probability. In the present work we continue our study by describing the behaviour of the random walk conditioned to survive. We prove that with large probability, the walk quickly reaches a unique time-dependent optimal gap that is free from obstacles and gets localized there. We actually establish a dichotomy. If the renewal tail exponent is smaller than one then the walk hits the optimal gap and spends all of its remaining time inside, up to finitely many visits to the bottom of the gap.  If the renewal tail exponent is larger than one then the random walk spends most of its time inside of the optimal gap but also performs short outward excursions, for which we provide matching upper and lower bounds on their length and cardinality. Our key tools include a Markov renewal interpretation of the survival probability as well as various comparison arguments for obstacle environments. Our results may also be rephrased in terms of localization properties for a directed polymer among multiple repulsive interfaces.
\end{abstract}

\maketitle
\setcounter{tocdepth}{3}
\tableofcontents


\section*{Introduction}

When put under a certain type of constraint, a random walk may be localized in an atypically small region of space, in contrast to the unconstrained diffusive behaviour. This phenomenon can be observed in various models and under different forms. Let us mention for instance the collapse transition of a polymer in a poor solvent~\cite{CNPT18}, the pinning of a polymer on a defect line~\cite{Gia07,Gia11}, as well as in the parabolic Anderson model~\cite{Ko16}, localization of a polymer in a heterogeneous medium~\cite{Comets-book}, confinement of random walks among obstacles~\cite{Sz98}. Such models, which are often motivated by Biology, Chemistry or Physics, offer challenging mathematical problems and have been an active field of research.\\

In this paper we consider a discrete-time one-dimensional simple random walk on $\Z$ among quenched random obstacles drawn from a renewal process. Equivalently, the picture is that of a $(1+1)$-directed polymer among randomly located repulsive defect lines. Assuming that the increment (or gap) distribution of the renewal process has a power-law decay with tail exponent $\gga$ (see \eqref{eq:deftau}), we proved in a previous work~\cite[Theorem 2.2]{PoiSim19} that the logarithm of the quenched probability to survive up to time $n$, rescaled by $n^{\gga/(\gga+2)}$, converges in distribution to 
\beq
\label{eq:varform}
\inf_{(x,y)\in \Pi}\Big(\gl x + \frac{\pi^2}{2y^2} \Big),
\eeq
where $\Pi$ is a Poisson point process on $[0,+\8) \times (0,+\8)$ with explicit intensity measure and $\gl$ is a constant (the Lyapunov exponent) depending on the gap distribution and on the killing strength of the obstacles. As we pointed out in~\cite{PoiSim19}, this is a hint that the random walk conditioned to survive for a long time $n$ should eventually localize near an optimal gap whose position and length correspond (after suitable renormalization) to the minimizer of~\eqref{eq:varform}. In the present paper, we refine our analysis to prove such localization results. We actually establish that the random walk quickly reaches a unique optimal gap (in the sense that the hitting time is negligible in front of $n$) and roughly stays there for the time left. To be more precise, our study reveals a dichotomy:
\begin{itemize}
\item When $\gga>1$, the random walk is allowed to perform short excursions outside of the optimal gap, see Theorem~\ref{thm:ggasup1}. 
\item When $\gga<1$ the random walk reaches the optimal gap and may touch its boundary for a $O(1)$ amount of time after which it remains {\it inside} the optimal gap, see Theorem~\ref{thm:ggainf1}.
\end{itemize}
\par Since the obstacles are drawn according to a renewal process, our work provides an exemple of localization in a correlated disordered environment. Influence of spatial correlations has been investigated in other contexts such as localization of directed polymers with space-time noise~\cite{Lacoin11}, Brownian motion in correlated Poisson potential~\cite{Lacoin2012-I, Lacoin2012-II, Rang2020} as well as Anderson localization, both by mathematicians~\cite{MR3736656,MR3689969,Lyu2020} and physicists~\cite{AubryAndre1980, Croy2011, tessieri2015}. In our model however, the extreme value statistics of the gaps are more relevant than the correlation structure itself. Those are related to the heavy tails of the gap distribution. Influence of heavy tails in localization phenomena has been considered in the context of directed polymers~\cite{AufLou11, BergerTorri19,MR3572330,MR3491341,Viveros21} as well as in the parabolic Anderson model~\cite{BisKon01,MR3052403,MR3551159, MR3835474, MR2883854,MR2474543}. Heavy tails also play a decisive role in trap models, see e.g.~\cite{BAC07, CroyMuir16}.\\

\par Random motion among random obstacles has been intensively studied in the past decades. We refer to Sznitman's monograph~\cite{Sz98} for a thorough analysis of Brownian motion among Poissonian obstacles in dimension $d\geq 1$. More recently, Ding, Fukushima, Sun and Xu~\cite{ding2019distribution, DIngXu2019, DingXu2019LOC} gave detailed results in the case of Bernoulli hard obstacles in dimension $d\geq 2$. We will give a more complete description of the results for Bernoulli (soft or hard) obstacles, as well as a comparison with our own work and techniques, in Comment~\ref{it:ber} (Section~\ref{sec:comments}).\\

\par Note that our model can also be interpreted as a directed $(1+1)$ polymer interacting with multiple repulsive interfaces. In this context, the probability to survive turns out to be the partition function while the distribution of the random walk conditioned to survive coincides with the polymer measure (see \eqref{eq:funcpart}, \eqref{eq:polymeasure} and the paragraph below these two equations). This statistical mechanics point of view was adopted in our previous work \cite{PoiSim19} and previously by Caravenna and Pétrélis~\cite{CP09b, CP09} (see Comment~\ref{it:cp} in Section~\ref{sec:comments}).\\

\section{Model and results}

Throughout the paper, the sets of positive and non-negative integers are respectively denoted by $\bbN$ and $\bbN_0=\bbN \cup \{0\}$.

\subsection{Definition of the model}
{\it \noindent Simple random walk and hitting times.} Let $S=(S_n)_{n\ge 0}$ be a simple random walk on $\bbZ$ starting from the origin and defined on a probability space $(\gO,\cA,\bP)$. We use $\bP_x$ when the walk rather starts at $x\in \bbZ$. We denote by $H_x$ (resp. $H_A$) the first hitting time of $x$ (resp. the set $A$) starting from time $1$ while $\tilde H_x$ (resp. $\tilde H_A$) is the first hitting time of $x$ (resp. the set $A$) starting from time $0$. For all subsets $I$ of $\bbN_0$, we denote $S_I=\{S_n,\ n\in I\}$.\\

{\it \noindent Obstacle set and killed random walk.} 
An environment is a subset of $\bbZ$ whose elements are called {\it obstacles}. The intervals between consecutive obstacles shall be referred to as {\it gaps}. Let $\gb>0$, the value of which is fixed through the paper. Given an environment $\tau$, we define the {\it random walk killed by the obstacles} as follows. Each time the walk $S$ visits an obstacle, it has a probability $1-e^{-\gb}$ to be killed, independently of the previous visits to $\tau$. Moreover, the walk is killed with probability one when visiting $\bbZ^- := \bbZ\setminus \bbN$. This assumption is only for technical convenience and does not hide anything deep. We denote by $\bgs$ the time of death. In other terms, $\bgs = \bgs(S,\cN,\tau)=\gt_\cN \wedge H_{\bbZ^-}$, where $\gt = (\gt_n)_{n\ge 0}$ are the consecutive visits of the walk to $\tau$ ($\gt_0=0$ when there is an obstacle at the starting point) and $\cN$ is an \bbN-valued Geometric random variable with parameter $1-e^{-\gb}$ that is also defined on $(\gO,\cA,\bP)$ and independent of the random walk $(S_n)_{n\ge 0}$. We will also sometimes consider $\gs := \gt_\cN$ instead of $\bgs$.
\\

{\it \noindent Assumption on the obstacles.} Let $\gga>0$. In this article we consider a random environment $\tau=(\tau_n)_{n\ge 0}$ that is a discrete $\bbN_0$-valued renewal process defined on a probability space $(\gO_{\obs}, \cA_{\obs}, \bbP)$, which starts from $\tau_0=0$ and such that the increments $(\tau_i - \tau_{i-1})_{i\geq 1}$ are i.i.d., $\bbN$- valued, and satisfy
\beq
\label{eq:deftau}
\bbP(\tau_i - \tau_{i-1} = n) \stackrel{n\to\infty}{\sim} c_\tau  n^{-(1+\gga)}, \qquad i \in \bbN,
\eeq
for some positive constant $c_\tau$. The point $\tau_i$ indicates the position of the $i$-th obstacle along the integer line. The length of the gap $(\tau_{i-1}, \tau_{i})$ is denoted by $T_i:= \tau_{i} - \tau_{i-1}$ for $i\ge 1$. As $\bgs=\bgs(S,\cN,\tau)$ it may be viewed as a random variable defined on the probability space  $(\gO \times \gO_{\obs},\cA\otimes \cA_{\obs},\bP\otimes \bbP)$ or, when $\tau$ is fixed, as a random variable defined on $(\gO,\cA,\bP)$. As we study \textit{quenched} properties of the walker, we will in the following mainly consider this last point of view.\\

{\it \noindent Notation.}
Let us collect here some notation that we use throughout the paper and which we keep as close as possible to that of \cite{PoiSim19}.
\begin{itemize}
\item {\bf (Dependence on parameters)} We may add a superscript to the symbol $\bP$ when we work with a different value of~$\gb$. For convenience, the dependence of $\gs$ and $\bgs$ on the environment will be written explicitly only when it is not the one defined in~\eqref{eq:deftau}.
\item {\bf (Expectation)} For all events $A$ (measurable with respect to the random walk) we sometimes use the standard notation
$\bE(X, A) = \bE(X \ind_A)$.
\item {\bf (Moment Generating Functions)} If $\{u(n)\}_{n\ge 1}$ is a sequence of non-negative real numbers then we denote by
\beq
\label{eq:mgf}
\hat u(f) = \sum_{n\ge 1} \exp(fn) u(n)
\eeq
 its $[0,\infty]$-valued moment generating function ($f\ge 0$).
\item {\bf (Records)} The sequence of {\it gap records}, denoted by $(T^*_k)_{k\ge 1}$ and defined by
\beq
\label{eq:records}
T^*_k = T_{i(k)},
\eeq
where 
\beq
\label{eq:indexes}
i(1)=1 \quad \text{ and } \quad  i(k+1) = \inf\{i > i(k) \colon T_i > T_{i(k)}\},
\eeq
plays an important role in our analysis. We also define for $k\in \bbN$,
\beq
\tau^*_k := \tau_{i(k)-1},
\eeq
that is the obstacle at the bottom of the gap $T^*_k$ and 
\beq
\label{eq:defhstar}
H^*_k :=H_{\tau^*_k}
\eeq
is the hitting time of $\tau^*_k$.

\item {\bf (Statistical mechanics terminology)} We shall write for all events $A$ (measurable with respect to the random walk) and $n\in\bbN$,
\beq
\label{eq:funcpart}
Z_n(A) := \bP(A, \bgs >n),
\qquad Z_n := Z_n(\gO),
\eeq
so that 
\beq
\label{eq:polymeasure}
\bP(\, A\, |\bgs >n) = Z_n(A)/Z_n.
\eeq
The quantity $Z_n$ is referred to as the partition function and the measure $Z_n(\cdot)/Z_n$ as the polymer measure. This terminology comes from statistical mechanics: one can indeed consider the path $(k,S_k)_{0\le k\le n}$ as a $(1+1)$ directed polymer among horizontal interfaces located at heights $(\tau_i)_{i\geq 0}$. Each time this polymer touches one of these interfaces it increases its energy by one, so that the Hamiltonian associated to this model writes $\cH_n(S)=\sum_{k=1}^n1_{\{S_k\in \tau\}}$ and the polymer (probability) measure has a density with respect to the simple random walk law that is proportional to $\exp\{-\beta \cH_n(S)\}$ where $\beta$ plays the role of the inverse temperature. One can easily check that the partition function of this model, that is the renormalisation constant, coincides with the probability $\bP(\bgs>n)$ to survive up to time $n$, while the polymer measure writes $Z_n(\cdot)/Z_n$. In the following we adopt the most convenient terminology, according to context.
\end{itemize}
\subsection{Results}  In order to formulate our results we need to introduce additional notation.\\
{\it \noindent Confinement estimates.} For all $t\in\bbN$, $g(t)$ denotes the asymptotic rate of decay of the probability that the walk stays confined in a slab of width $t$. More precisely, we define for $n\in \bbN$,
\beq
\label{eq:defRuinproba}
q_t(n) = \bP_0(\inf\{k>0\colon S_k \in \{-t,0,t\}\}= n),
\eeq
and 
\beq
\label{eq:def:gt}
g(t) := - \log \cos\Big(\frac{\pi}{t}\Big) =\frac{\pi^2}{2t^2}\Big[1 + \frac{\pi^2}{6t^2} + o\Big(\frac 1 {t^2}\Big) \Big],
\eeq
where the $o$ holds as $t\to\infty$.
The following result will be used many times throughout the paper:
\begin{lemma}
\label{lem:estim-qtn}
There exist $\cT_0>0$ and $c_1,c_2,c_3,c_4,c_5>0$ such that for all $t>\cT_0$,
\beq
\label{eq:estim-qtn1}
\frac{c_3}{t^3 \wedge n^{3/2}} e^{-g(t)n} \le q_t(n) \le \frac{c_4}{t^3 \wedge n^{3/2}} e^{-g(t)n},
\eeq
with the upper bound valid for all $n\in\bbN$ and the lower bound valid (i) for all $n\in 2\bbN$ and (ii) for all $n\in2\bbN-1$ with the extra conditions that $n\ge c_5t^2$ and $t\in 2\bbN-1$. Moreover, for all $n\in \bbN$ and $t>\cT_0$,
\beq
\label{eq:estim-qtn2}
\frac{c_1}{t \wedge n^{1/2}} e^{-g(t)n} \le \sum_{i>n} q_t(i) \le \frac{c_2}{t \wedge n^{1/2}} e^{-g(t)n}.
\eeq
\end{lemma}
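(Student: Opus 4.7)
The plan is to reduce $q_t(n)$ to a boundary-crossing problem for SRW killed at $\{0,t\}$ and exploit the explicit spectral decomposition of the corresponding generator on $\{1,\ldots,t-1\}$. By the reflection symmetry $S\leftrightarrow -S$ and conditioning on the first step, $q_t(n)=\bP_1(H_{\{0,t\}}=n-1)$, and a last-step decomposition then gives
\[
q_t(n) = \tfrac12\bP_1\bigl(S_{n-2}=1,\,H_{\{0,t\}}>n-2\bigr)+\tfrac12\bP_1\bigl(S_{n-2}=t-1,\,H_{\{0,t\}}>n-2\bigr).
\]
The killed SRW on $\{1,\ldots,t-1\}$ has eigenvalues $\cos(\pi k/t)$ and $L^2$-orthonormal eigenfunctions $\sqrt{2/t}\,\sin(\pi kj/t)$ for $k=1,\ldots,t-1$, so
\[
\bP_1\bigl(S_m=j,\,H_{\{0,t\}}>m\bigr)=\tfrac{2}{t}\sum_{k=1}^{t-1}\cos(\pi k/t)^{m}\sin(\pi k/t)\sin(\pi kj/t).
\]
Combined with $\sin(\pi k(t-1)/t)=(-1)^{k+1}\sin(\pi k/t)$, this produces explicit series for both terms whose common principal ($k=1$) mode is of size $\asymp t^{-3}e^{-g(t)n}$, since $\cos(\pi/t)=e^{-g(t)}$ and $\sin(\pi/t)\asymp 1/t$.

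I would then split into two regimes separated by $n\sim t^2$. In the \emph{eigenvalue regime} $n\ge C_0 t^2$ (with $C_0$ sufficiently large), the $k=1$ mode dominates: using $|\cos(\pi k/t)|\le\cos(2\pi/t)$ for $2\le k\le t-2$ together with $-\log\cos(2\pi/t)\sim 4g(t)$, a Gaussian-type tail estimate gives $\sum_{k\ge 2}|\cos(\pi k/t)|^{n-2}\sin^2(\pi k/t)\le C t^{-2}e^{-4g(t)n}$, negligible compared with the principal mode once $C_0$ is large enough. In the \emph{diffusive regime} $n\le C_0 t^2$, the factor $e^{-g(t)n}$ is of order one and the target is $n^{-3/2}$; here the spectral series is no longer useful and I would rely on the classical first-return estimate $\bP_0(H_0=n)\asymp n^{-3/2}$. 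The upper bound is immediate by monotonicity (confinement can only decrease the probability), while the lower bound follows by combining this estimate with a diffusive-scaling argument showing that a uniformly positive fraction of first-return paths of length $n\le C_0 t^2$ stays strictly inside $(-t,t)$ (e.g.\ via the reflection principle). The two regimes match at $n\sim t^2$ since $t^{-3}=(t^2)^{-3/2}$.

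The parity constraints in the statement have a transparent origin: for $n$ odd, $\bP_1(S_{n-2}=1,\,\cdot)=0$ by two-periodicity, so only the $j=t-1$ term contributes, which in turn requires $t$ odd for non-vanishing. Moreover that term is an alternating series in $k$, so obtaining a lower bound forces the principal modes (for $t$ and $n-2$ both odd, the indices $k=1$ and $k=t-1$ contribute with the same sign and comparable magnitude) to dominate the oscillating tail, which is precisely the role of the hypothesis $n\ge c_5 t^2$ in item~(ii). For the tail estimate~\eqref{eq:estim-qtn2}, I would either sum the point bounds over $i>n$---the geometric series in the eigenvalue regime contributes a factor $(1-\cos(\pi/t))^{-1}\asymp t^2$, producing the announced $t^{-1}e^{-g(t)n}$, while in the diffusive regime the classical tail $\bP_0(H_0>n)\asymp n^{-1/2}$ applies---or redo the spectral analysis directly for $\bP_1(H_{\{0,t\}}>n)$. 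The main obstacle will be the uniform control of the non-principal modes in the crossover region $n\sim t^2$ and, for odd $n$, a quantitative handle on the alternating spectral tail strong enough to yield a lower bound; everything else is standard random-walk analysis.
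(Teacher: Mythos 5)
Your proposal is essentially the paper's approach: both proofs rest on the explicit trigonometric/spectral series for the confined walk (the paper quotes the Feller/Caravenna--P\'etr\'elis formulas for $q_t^0,q_t^1$; your eigenfunction expansion on $\{1,\ldots,t-1\}$, after using $\cos(\pi(t-k)/t)=-\cos(\pi k/t)$ to fold the sum, is identical), exploit the same parity cancellation coming from $\sin(\pi k(t-1)/t)=(-1)^{k+1}\sin(\pi k/t)$, and isolate a principal $k=1$ mode of size $\asymp t^{-3}e^{-g(t)n}$ with $n\ge c_5t^2$ needed to dominate the oscillating remainder when $n$ is odd. The organizational differences are minor: the paper imports the $n$ even case wholesale from~\cite[Lemma~2.1]{CP09b} and presents new details only for $n$ odd, splitting the remainder by mode index ($2\le\nu\le\gep t$ versus $\nu>\gep t$) rather than by your time regime $n\lessgtr C_0t^2$; and for~\eqref{eq:estim-qtn2} it simply sandwiches $\sum_{i>n}q_t(i)$ between the even-$n$ tails $\sum_{i>n\pm1}q_t(i)$ instead of resumming the pointwise bounds as you propose. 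Your diffusive-regime lower bound via ``a uniformly positive fraction of first-return paths stays in $(-t,t)$'' is correct but informal where the cited CP09b argument extracts both regimes uniformly from the spectral series, and your upper bound ``by monotonicity'' needs the small observation that $q_t^0(n)\le\bP_0(H_0=n)$ and $q_t^1(n)$ is Gaussian-small for $n\le C_0 t^2$---none of these are gaps, just places to tighten.
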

A crucial point here is the uniformity of the constants. The proof of Lemma~\ref{lem:estim-qtn} and other useful results concerning the generating functions of ruin probabilities are collected in Appendix~\ref{app:ruin}.\\

{\it \noindent Localization interval.} 
We recall from~\cite{PoiSim19} the notation $ N = n^{\frac{\gga}{\gga+2}}$ that appears to be the relevant scale for studying localization, and also the function
\beq
\label{def:Gell}
G_n^\gb(\ell) =  \frac{\gl(\gb,\ell-1)(\ell-1)}{N} + g(T_{\ell}) \frac{n}{N},  \qquad \ell\in \bbN,
\eeq
where
\beq
\label{def:gl}
\gl(\gb,\ell) := -\frac{1}{\ell}\log \bP_0(H_{\tau_\ell} < \bgs), \qquad \ell \in \bbN,
\eeq
with the convention $\gl(\gb,0)=0$.
We also remind the reader that from \cite[Proposition 2.1]{PoiSim19}, the function $\gl$ is defined for all $\gb> 0$ by
\beq
\label{eq:defgl}
\lambda(\beta)=\lim_{\ell \to +\8}\gl(\gb,\ell),
\eeq
where the convergence holds $\bbP-a.s.$ and in $L_1(\bbP)$.
Let 
\beq
\label{eq:defell0}
\lopt = \lopt(n,\tau,\gb) = \argmin_{\ell \ge 1} G_n^\gb(\ell),
\eeq
which is uniquely defined when $n$ is large, on an event of large $\bbP$-probability (see~\eqref{eq:newcoinditions} and Proposition~\ref{pr:goodenv} below). On the asymptotically unlikely event of more than one minimizer we define $\lopt$ as the smallest one. Note that $\lopt$ necessarily corresponds to one of the gap records, i.e. there exists $\kopt = \kopt(n,\tau,\gb)$ such that $T_{\lopt} = T^*_{\kopt}$. The following interval (which depends on $n$, $\gb$ and the obstacles):
\beq
I_\loc^{(n)} = (\tau_{\kopt}^*, \tau_{\kopt}^*+ T^*_{\kopt}) = (\tau_{\lopt-1}, \tau_{\lopt}).
\eeq
will be referred to as {\it optimal gap} or {\it localization interval}. We shall often omit the dependence on $n$ in order to lighten notation.


\par Let us now introduce some random variables that will help us describe the trajectory of the random walk once it has reached the optimal gap.
We first define
\beq
\label{eq:defthetabar}
\bar \gt = \{\bar\gt_i\}_{i\ge 0} = \{k \ge 0 \colon S_k \in \{\tau_{\lopt-1}, \tau_{\lopt}\}\},
\eeq
that is the increasing sequence of times at which the walk visits the boundary of the optimal gap. Note that $\bar \gt$ is both a (delayed) renewal process and a subset of $\gt$. Note also that $\bar \gt_0$ is the hitting time of the optimal gap and thus coincides with $H_{\tau_{\lopt-1}}$ under $\bP$.
Let
\beq
\label{eq:defcN}
\cN(n) = \sup\{i\ge 0 \colon \bar \gt_i \le n\} \qquad (\sup \emptyset = 0),
\eeq
be the number of excursions ``in'' or ``out'' of the localization interval and, for all $a,b\in\{0,1\}$,
\beq
\cN_\ins^{a,b}(n) = \card\{0\le k \le \cN(n) \colon S_{\bar \gt_k} = \tau_{\lopt-1+a},  S_{\bar \gt_{k+1}} = \tau_{\lopt-1+b}, S_{(\bar \gt_k, \bar \gt_{k+1})} \subseteq I_\loc\}
\eeq
be the number of excursions inside the localization interval (from one side of the interval to the same one when $a=b$ and crossing the interval when $a\neq b$), and
\beq
\cN_\out^{a}(n) = \card\{0\le k \le \cN(n) \colon S_{\bar \gt_k} = S_{\bar \gt_{k+1}} = \tau_{\lopt-1+a}, S_{(\bar \gt_k, \bar \gt_{k+1})} \cap I_\loc = \emptyset\}
\eeq
be the number of excursions outside of the localization interval. Finally, we denote by
\beq
\cT_\out(n) = \card\{\bar \gt_0 \le k \le n \colon S_k \notin I_\loc\}
\eeq
the time spent by the random walk outside of the localization interval between the first contact with $I_\loc$ and the final time $n$.\\

\par We now have all the notation in hands to state our results, which we split in two theorems respectively dealing with the case of integrable gaps ($\gamma >1$) and non-integrable ones ($\gamma <1$). This is justified by the fact that the localization of the random walk in $I_\loc^{(n)}$ is stronger when $\gamma <1$. 
\begin{theorem}[Case $\gga >1$] 
\label{thm:ggasup1}
For all $\kappa,\petit\in (0,1)$ there exists $C>0$ such that, for $n$ large enough and for all $a,b\in\{0,1\}$
\beq
\label{eq:ggasup1}
\bP\Big(H_{\tau_{\lopt-1}} \le \kappa n, \frac{n}{CT_{\lopt}^3} \le \cN^{a,b}_{\ins}(n), \cN^a_{\out}(n),\cT_{\out}(n)  \le C \frac{n}{T_{\lopt}^3}\Big| \bgs > n\Big) \ge 1-\petit,
\eeq
with $\bbP$-probability larger than $1-\petit$.
\end{theorem}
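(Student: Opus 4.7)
I would prove each of the four clauses in~\eqref{eq:ggasup1} by bounding the contribution to $Z_n$ of its complement, so it suffices to show that each such contribution is at most $\petit Z_n$. First restrict to a $\bbP$-typical event $\good$ of probability $\geq 1-\petit$ on which (i) $\lopt$ is the unique minimiser of $G_n^\gb(\cdot)$ and $T_\lopt\asymp N^{1/\gga}$, and (ii) every competing gap $T_\ell$ with $\ell\neq \lopt$ satisfies $g(T_\ell)\geq g(T_\lopt)+\gd/T_\lopt^{2}$ for some $\gd=\gd(\petit)>0$. On $\good$, a refinement of the Markov renewal representation of~\cite{PoiSim19} gives matched two-sided bounds
\beq
c_1\, e^{-NG_n^\gb(\lopt)}\leq Z_n \leq c_2\, e^{-NG_n^\gb(\lopt)},
\eeq
which serve as the baseline against which every bad event is compared.

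\textbf{Hitting-time bound.} On $\{H_{\tau_{\lopt-1}}>\kappa n\}\cap\{\bgs>n\}$ the walk is confined to $[0,\tau_{\lopt-1})$ throughout $[0,\kappa n]$, because it starts at $0$ and must pass through $\tau_{\lopt-1}$ before reaching any point of $I_\loc$ or beyond. On $\good$, the best confinement exponent available in $[0,\tau_{\lopt-1})$ is at least $g(T_\lopt)+\gd/T_\lopt^{2}$, producing an excess cost of order $\kappa\gd n/T_\lopt^{2}\asymp \kappa\gd N\to\infty$; combined with the best possible continuation after time $\kappa n$ (analysed by reapplying the renewal framework to the environment past $\tau_{\lopt-1}$), this yields $\bP(H_{\tau_{\lopt-1}}>\kappa n,\bgs>n)/Z_n\to 0$, much stronger than needed.

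\textbf{Outside quantities.} After the first contact with $\partial I_\loc=\{\tau_{\lopt-1},\tau_\lopt\}$, the post-hitting trajectory decomposes into alternating inside and outside excursions between successive visits to $\partial I_\loc$, and I would describe it through the Markov renewal chain on $\partial I_\loc$ whose kernel encodes the excursion weights. The crucial estimate, derived from Lemma~\ref{lem:estim-qtn} together with a generating-function computation at the critical exponent $g(T_\lopt)$, is that the cumulative weight of an outside excursion is dominated by $C/T_\lopt^{3}$ times the effective inside contribution per boundary transition, the $T_\lopt^{-3}$ reflecting the $t^{-3}$ prefactor in the ruin probability evaluated at the typical excursion length $\asymp T_\lopt^{2}$. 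Summing over the number and lengths of outside excursions yields the first-moment bounds
\beq
\bE[\cN_\out^a(n)\,\ind_{\bgs>n}],\ \bE[\cT_\out(n)\,\ind_{\bgs>n}]\ \leq\ C\,\frac{n}{T_\lopt^{3}}\,Z_n,
\eeq
and Markov's inequality completes both upper bounds.

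\textbf{Inside lower bound and main obstacle.} Once $\cT_\out(n)\leq Cn/T_\lopt^{3}\ll n$ is controlled, the walk spends $(1-o(1))n$ time inside $I_\loc$, so the renewal structure forces at least $cn/T_\lopt$ visits to $\partial I_\loc$; among these, each type $(a,b)$ appears with frequency at least $c/T_\lopt^{2}$ (essentially certain for $a=b$, by gambler's ruin for $a\neq b$), giving $\cN_\ins^{a,b}(n)\geq cn/T_\lopt^{3}$ with the required conditional probability. The chief obstacle is the $T_\lopt^{-3}$ bound on outside excursion weights, which has to be extracted uniformly in the environment beyond $I_\loc$: this relies on a careful generating-function analysis and on the quantitative separation of runner-up gaps built into the definition of $\good$, which is precisely what distinguishes $\gga>1$ (where the outside tail is negligible but nonvanishing) from $\gga<1$.
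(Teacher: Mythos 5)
Your high-level architecture is correct and matches the paper's: restrict to a good environment, control the hitting time of the optimal gap, and then describe the post-hitting trajectory through a Markov renewal structure on $\partial I_\loc$, transferring back via Radon--Nikodym control. But the mechanism you give for the $n/T_{\lopt}^3$ scaling of the excursion counts is wrong, and the lower-bound argument for $\cN_\ins^{a,b}$ rests on unconditioned-SRW intuition that fails under the survival conditioning.

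Concretely: you attribute the $T_\lopt^{-3}$ to the weight of an outward excursion per boundary transition, ``reflecting the $t^{-3}$ prefactor in the ruin probability at the typical excursion length $\asymp T_\lopt^2$.'' But the $t^{-3}$ prefactor in $q_t(\ell)$ concerns \emph{inward} excursions, and under the tilted renewal measure the typical inward excursion length is not $T_\lopt^2$ but $T_\lopt^3$ (since the tail $\ell^{-3/2}e^{-(g(\tone)-\phi)\ell}$ has heavy polynomial decay cut off only at $\ell\asymp 1/(g(\tone)-\phi)\asymp T_\lopt^3$; see Lemma~\ref{lem:mom-exc}). Meanwhile, the \emph{probability} of an outward excursion per boundary visit is bounded away from $0$ and $1$ uniformly (Lemma~\ref{lem:binf.proba.out}); it is not $\asymp T_\lopt^{-3}$. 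The $n/T_\lopt^3$ count comes from the total number of boundary visits being $\asymp n/T_\lopt^3$ (because inward excursions have mean length $T_\lopt^3$), with a constant fraction of those outward. Similarly, your claim that ``the renewal structure forces at least $cn/T_\lopt$ visits to $\partial I_\loc$'' and that ``each type $(a,b)$ appears with frequency at least $c/T_\lopt^2$'' is wrong on both counts: under conditioning on survival, the walk visits $\partial I_\loc$ only $\asymp n/T_\lopt^3$ times (not $n/T_\lopt$, which is the unconditioned-SRW occupation rate), and each transition type $(a,b)$, including $a\neq b$, occurs with a frequency bounded away from zero (again Lemma~\ref{lem:binf.proba.out}), not $\asymp T_\lopt^{-2}$. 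You arrive at the right exponent for $\cN_\ins^{a,b}$ by multiplying two wrong estimates, but for $a=b$ your own reasoning (``essentially certain'') would then give $n/T_\lopt$, contradicting the theorem's \emph{upper} bound $Cn/T_\lopt^3$.

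Two further gaps: (i) you claim sharp two-sided bounds $c_1 e^{-NG_n^\gb(\lopt)}\le Z_n\le c_2e^{-NG_n^\gb(\lopt)}$ with constants, which is stronger than what the paper establishes (the bounds carry $e^{o(N)}$ corrections, and the first-moment argument must work with those); and (ii) the theorem requires matching upper \emph{and} lower bounds for $\cN_\ins^{a,b}$, $\cN_\out^a$, $\cT_\out$, but you only address upper bounds for the outside quantities and the lower bound for $\cN_\ins^{a,b}$. The missing bounds (upper for $\cN_\ins$, lower for $\cN_\out,\cT_\out$) need their own arguments, e.g.\ second-moment/binomial concentration over the $\ins$/$\out$ choices as in Lemmas~\ref{lem:nbr-exc}--\ref{lem : tps exterieur}; a first-moment bound alone cannot give a matching lower bound. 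Finally, working directly with $\bP(\cdot|\bgs>n)$ without first truncating to $\mtau$ and invoking Proposition~\ref{pr:weakloc} to discard excursions past $\tau^*_{\kopt+1}$ would require controlling the environment far beyond the optimal gap, which is the reason the paper introduces $\mtau$ in the first place.
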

\begin{theorem}[Case $\gga <1$]
\label{thm:ggainf1}
For all $\kappa,\petit\in(0,1)$, there exists $C>0$ such that, for $n$ large enough,
\beq
\label{eq:ggainf1}
\bP\Big(H_{\tau_{\lopt-1}} \le \kappa n,
\forall k \in [H_{\tau_{\lopt-1}}+C,n], S_k \in I_{\loc}^{(n)} \Big| \bgs > n
\Big) \ge 1-\petit,
\eeq
with $\bbP$-probability larger than $1-\petit$.
\end{theorem}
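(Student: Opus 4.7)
The plan is to bound by $\petit$ the conditional probability of the complement of the event in~\eqref{eq:ggainf1}. This complement lies in the union of $\{H_{\tau_{\lopt-1}} > \kappa n\}$ and $\{\exists k \in [H_{\tau_{\lopt-1}}+C, n] : S_k \notin I_\loc^{(n)}\}$. The main prerequisite is a sharp two-sided estimate $Z_n \asymp T_\lopt^{-1} \exp(-N G_n^\beta(\lopt))$ on a high-probability environment event $\gO_\good$, obtained by upgrading the Markov renewal analysis of~\cite{PoiSim19} with the ruin prefactors of Lemma~\ref{lem:estim-qtn}; and, similarly, $\bP_{\tau_{\lopt-1}}(\bgs > m) \asymp T_\lopt^{-1} e^{-g_\beta(T_\lopt) m}$ for the walk started at the boundary of the optimal gap, with $g_\beta(T_\lopt)$ the confinement rate in the presence of soft killing at the boundary obstacles.

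The hitting-time piece follows by a Markov decomposition at time $\kappa n$: any trajectory with $H_{\tau_{\lopt-1}} > \kappa n$ is confined to $\{0, \ldots, \tau_{\lopt-1} - 1\}$ for the first $\kappa n$ steps; on $\gO_\good$ the largest gap $T^{**}$ in this region satisfies $g(T^{**}) - g(T_\lopt) \gtrsim T_\lopt^{-2}$, and the resulting excess cost $\gtrsim \kappa N$ beats the polynomial prefactors, making this piece $o(Z_n)$.

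The strict-confinement piece is the main step. By the strong Markov property at $H_{\tau_{\lopt-1}}$ it suffices to show that, for the walk started at $\tau_{\lopt-1}$, the conditional probability $\bP_{\tau_{\lopt-1}}(S_k \in I_\loc^{(n)}\text{ for all } k \in [C, m]\,|\, \bgs > m)$ tends to $1$, with $m = n - H_{\tau_{\lopt-1}}$. Writing this as a ratio, the numerator is a hard-killing confinement probability in the open interval of width $T_\lopt - 1$ and is $\asymp T_\lopt^{-1} e^{-g(T_\lopt) m}$ by Lemma~\ref{lem:estim-qtn}, while the denominator is $\asymp T_\lopt^{-1} e^{-g_\beta(T_\lopt) m}$. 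A perturbative eigenvalue computation on $\gO_\good$ gives $g(T_\lopt) - g_\beta(T_\lopt) \asymp c(\beta) T_\lopt^{-3}$, so the exponent in the ratio is of order $m / T_\lopt^3 \asymp n/T_\lopt^3 = n^{(\gga-1)/(\gga+2)}$, which tends to $0$ precisely when $\gga < 1$. The ratio thus tends to $1$, and the failure probability is $o(1)$ for $n$ large.

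The principal obstacle is the perturbative identification $g(T_\lopt) - g_\beta(T_\lopt) \asymp c(\beta) T_\lopt^{-3}$ and the corresponding two-sided partition function estimates from the boundary of the optimal gap, together with a matching bound during the transient regime of length $O(T_\lopt^2)$ after $H_{\tau_{\lopt-1}}$ where the walker has not yet relaxed to its quasi-stationary distribution; the role of $C$ in the statement is precisely to absorb the contribution of this transient. The hypothesis $\gga < 1$ enters decisively through the scaling $T_\lopt \asymp n^{1/(\gga+2)}$: the ratio converges to $1$ iff $n/T_\lopt^3 \to 0$, i.e., iff $\gga < 1$. In the complementary $\gga > 1$ regime treated by Theorem~\ref{thm:ggasup1}, this quantity diverges, the ratio vanishes, and the walker instead performs order $n/T_\lopt^3$ short excursions outside $I_\loc^{(n)}$.
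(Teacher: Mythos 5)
Your overall strategy — hitting-time control followed by a confinement estimate starting from the boundary of the optimal gap, with the dichotomy driven by whether $n/T_{\lopt}^3\to 0$ — matches the paper's; the key exponent comparison $g(T_{\lopt})-\phi\asymp T_{\lopt}^{-3}$ (which is the paper's Lemma~\ref{lem:est-scd-order-FE}, obtained via the sandwich comparison of Lemma~\ref{lem:sandw:phit1t2} rather than a direct eigenvalue perturbation) is correctly identified. However, the main step has a genuine gap. From estimates of the form $\mathrm{Num}\asymp a\,T_{\lopt}^{-1}e^{-g(T_{\lopt})m}$ and $\mathrm{Denom}\asymp b\,T_{\lopt}^{-1}e^{-\phi m}$ with unspecified constants $a,b$, the ratio converges to $a/b$, not to $1$. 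There is no reason to expect $a=b$ for fixed $C$: the numerator excludes every path that touches $\partial I_\loc$ after time $C$, and each return to the boundary launches an outward excursion with conditional probability bounded below by $\gd_0>0$ (Lemma~\ref{lem:binf.proba.out}). What actually forces the conditional probability toward $1$ is a decomposition over the last boundary-contact time after $C$: via Proposition~\ref{pr:ren-int} and Lemma~\ref{lem:mass-ren-fct}, the contribution of a last contact at time $k$ carries a factor $\cP(k\in\bar\gt)\lesssim (k^{3/2}\wedge T_{\lopt}^3)^{-1}$, and summing over $k>C$ gives roughly $C^{-1/2}+n/T_{\lopt}^3$. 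It is choosing $C$ large that kills the heavy-tailed $k^{-3/2}$ piece, while $\gga<1$ kills $n/T_{\lopt}^3$. This is precisely the $E_1,E_2,E_3$ decomposition in the paper's Proposition~\ref{pr:locinf1}; your $\asymp$ argument skips it, and therefore does not establish the statement.

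Two secondary points. First, the two-sided estimate $\bP_{\tau_{\lopt-1}}(\bgs>m)\asymp T_{\lopt}^{-1}e^{-\phi m}$ (Lemma~\ref{lem:sharp-Zn-renorm}) requires $m\ge 2T_{\lopt}^3$ for its lower bound, whereas for $\gga<1$ one eventually has $n\le T_{\lopt}^3$ on $\good_n$ (Remark~\ref{rk:comparegapn}); the paper accordingly lower-bounds the denominator by the confinement event (rate $g(T_{\lopt})$) instead, and absorbs the factor $e^{(g(T_{\lopt})-\phi)n}\to 1$ elsewhere. Second, you describe $C$ as absorbing a ``transient of length $O(T_{\lopt}^2)$,'' but $C$ is a fixed constant while $T_{\lopt}^2\to\infty$, so it cannot absorb that transient; the $O(T_{\lopt}^2)$ window near the right endpoint is the paper's $E_3$ term and is treated by a separate argument, whereas $C$ cuts off the $k^{-3/2}$ tail of short return times. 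You also omit the passage from $\bgs$ to $\bgs(\mtau)$ (Steps~1 and~3 of the paper's Section~\ref{sec:proofmain}), which is what legitimizes freezing the environment beyond the next record gap before invoking the Markov renewal machinery.
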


\subsection{Comments}
\label{sec:comments}
Let us give a few comments about our results.
\begin{enumerate}
\item {\bf (Lyapunov exponent)} Our definition of the Lyapunov exponent in~\eqref{def:gl} and~\eqref{eq:defgl} does not coincide with~\cite[Section 5.2]{Sz98} where the index of the obstacle is replaced by the distance to the origin. This is actually irrelevant when $\gga>1$, by the Law of Large Numbers, but not when $\gga<1$.
\item {\bf (Alternative definition of the optimal gap)} In our results the localization interval $I_\loc^{(n)}$ is defined via the minimizer $\lopt$ of the function $G$. Actually it could also be defined using the simpler function $\tilde G$ and its minimizer $\tilde \lopt$ defined only later in~\eqref{eq:defelltilde0} and~\eqref{eq:tildeG}. These two functions are indeed close to each other for the value of $\ell$ of interest, so that their respective minimizers coincide (see the definition of $\kBone_n$ in \eqref{eq:newcoinditions} and the proof of Lemma \ref{lem:goodB0} that states that the latter event occurs with large probability when $n$ goes to infinity).
\item \label{it:cp} {\bf (Heuristics for the dichotomy)} Caravenna and Pétrélis~\cite{CP09b} considered the case of deterministic gaps scaling like $t_n \approx n^{\ga}$. They proved that the value $\ga = 1/3$ is the frontier between two different regimes. If $1/3 < \ga < 1/2$ then the random walk conditioned to survive spends all of its time inside of a single gap, up to finitely many visits to the obstacle sitting at the starting point. If $0<\ga<1/3$ then excursions in-between consecutive visits to the obstacle set have length $t_n^3 \approx n^{3\ga}$ and the total number of excursions is $n^{1-3\ga}$, in order of magnitude. This gives a heurisitic explanation of the critical value $\gga=1$, since the relevant gaps are of order $n^{1/(\gga+2)}$ in our model (see also Remark~\ref{rk:comparegapn} below). In order not to lengthen too much the paper, we restrain from treating the case $\gga=1$, for which one should probably replace the constant prefactor in~\eqref{eq:deftau} by a slowly varying function.
\item {\bf (Hitting time of the optimal gap)} The fact that the random walk reaches the optimal gap in a time $o(n)$ was already observed in the case of Bernoulli obstacles, see~\cite[Corollary 5.7]{Sz98}. Our theorems do not provide a lower bound for the hitting time of the optimal gap and we actually do not believe that our upper bound is sharp. This is suggested by the fact that, for $x\in \N$, the random walk under $\bP(\cdot|\bgs>H_x)$ is a Markov chain with explicit transitions (see \cite[Proposition~3.6]{PoiSim19} for a precise formulation). Using standard tools from the theory of random walks in random environment, one can thus establish that as $x\to\infty$, 
\beq
\bE(H_x | \bgs  > H_x) \sim \text{(cst)}
\left\{
\begin{array}{ll}
x^2& (\gga<1)\\
x^{{2}/{\gga}}& (1<\gga<2)\\
x&(2< \gga).
\end{array}
\right.
\eeq
Letting $x=\tau_{\lopt-1}$, this leads to an estimate for the expected hitting time of the optimal gap that is much smaller than $n$.
We postpone to future work a more precise study of $H_{\tau_{\lopt-1}}$ under $\bP(\cdot| \bgs >n)$.
\item \label{it:ber} {\bf (Bernoulli obstacles)} The case of Bernoulli obstacles has been extensively studied in the literature. In dimension $d=1$ this corresponds to obstacles being distributed in an i.i.d. fashion, or equivalently, to the gaps being independent and geometrically distributed,
\beq
\label{eq:bernoulli}
\bbP(T_1 = n) = (1-p)^{n-1}p, \qquad n\in\bbN, \qquad p\in(0,1). 
\eeq
In all dimensions, there exists a constant $c>0$ such that the quenched survival probability satisfies the following large-time asymptotics:
\beq
\bP(\gs >n) = \exp\Big(-[c+o(1)]\frac{n}{(\log n)^{2/d}}\Big),
\eeq
where the convergence holds $\bbP$-almost-surely, see~\cite[Theorem 5.1]{Sz98} and~\cite{Antal95, Fuk2009,Sznitman93-ptrf}. The fact that the mode of convergence differs from~\cite[Theorem 2.2]{PoiSim19} comes from the difference in extreme value asymptotics. To the best of our knowledge, the strongest result regarding path localization in dimension one is \cite[Corollary~5.7]{Sz98}, which states (in the context of Brownian motion in Poisson obstacles) that the walk conditioned to survive hits one of several possible pockets of localization in a time $o(n)$ and remains there. These pockets are at distance of order $n(\log n)^{-3}$ from the origin and have a diameter of order $(\log n)^{2+o(1)}$, as $n\to\infty$. There remains the question of whether the walk targets a unique pocket of localization (referred to as one city or one island property in the parabolic Anderson literature). In a recent series of papers, Ding, Fukushima, Sun and Xu~\cite{ding2019distribution,DIngXu2019,DingXu2019LOC} have answered this question in the positive (among other finer results) in the case of hard obstacles ($\gb = +\infty$) and dimension $d\ge 2$. Interestingly, the same authors expect the walk to make excursions up to length (cst)$\log n$ outside of the localization ball, see \cite[Remark~1.4]{ding2019distribution}. This echoes the existence of short excursions outside of the localization interval that we establish in our model, when the gap exponent is larger than one.\\

The extreme value statistics in the Bernoulli case differs from ours. Indeed, under assumption~\eqref{eq:bernoulli}, one may check that the point measure associated to the collection $(i/n, |\log (1-p)| T_i - \log n)$ for $i\in \bbN$ converges in distribution to a Poisson point process on $(0,\infty)\times \bbR$ with intensity measure $\dd x \otimes e^{-y}\dd y$. Therefore, the difference between two competing gaps is of a much smaller order than in our case. This explains why it is difficult to isolate one among all the many gaps which contribute to the first order asymptotic of the survival probability. See the discussion in~\cite[Section 6.1]{Sz98}, in particular (1.15). In contrast, the existence of a unique localizing gap in our model may be already conjectured from the first-order asymptotics of the survival probability, which is expressed as the solution of a variational problem. Also, the decomposition technique (according to the furthest visited record gap) that we use to derive upper bounds and Proposition~\ref{pr:weakloc} (a preliminary weak form of localization) seems too weak to handle Bernoulli obstacles, since the ratio between a given record gap and all the (many) competing gaps will be dangerously close to one in that case. The enlargement of obstacles technique (see \cite{Sz98,Sznit01} and references therein) was designed to tackle such situations by providing an elaborate coarse-graining of the obstacle environment, which is then split into safe and dangerous zones (the so-called {\it clearings} and {\it forests}). Our method is more elementary but sufficient to handle polynomial gap distribution. Let us stress however that we do need more refined renewal techniques to go from Proposition~\ref{pr:weakloc} to the more detailed localization results of Theorems~\ref{thm:ggasup1} and~\ref{thm:ggainf1}.
\item {\bf (Parabolic Anderson model)} The random walk in random obstacles belongs to the more general framework of the Parabolic Anderson Model (PAM), that is a random walk sampled according to a probability measure proportional to
\beq
\exp\Big(\sum_{1\le i\le n} \go(S_i)\Big),
\eeq
where $\{\omega(x)\}_{x\in \bbZ^d}$ is a field of random variables called potential, see König~\cite{Ko16} for an account on this topic and the appendix therein for a discussion of open problems. In the language of the PAM, Theorems~\ref{thm:ggasup1} and ~\ref{thm:ggainf1} correspond to a one city (or one island) theorem. 
Such results have been obtained for the PAM with heavy-tailed i.i.d. potentials~\cite{KLMS09}. In our case, the potential $\omega(x) = -\gb \ind_{\{x\in \tau\}}$ is bounded and correlated. In contrast with the i.i.d. setup, the width of the heavy-tailed gaps then plays the decisive role, rather than the height of the potential. For other results about localisation in a similar framework, see also~\cite{BKdS18, konig2023weakly}.
\end{enumerate}

\subsection{Organisation of the paper} The proof of Theorems~\ref{thm:ggasup1} and~\ref{thm:ggainf1} proceeds in two steps: we show that \eqref{eq:ggasup1} and \eqref{eq:ggainf1} hold on a certain subset $\good_n\subseteq \gO_\obs$ (defined in Section~\ref{sec:goodenv}) and then prove that this subset has a large $\bbP$-probability (Appendix~\ref{sec:proof-good-env}). The proof of the first part is itself decomposed in two parts:
\begin{enumerate}
\item Control of the hitting time of the optimal gap (Section~\ref{sec:hitting});
\item Description of the trajectory of the random walk started at the lower boundary of the optimal gap (Sections~\ref{sec:locinf1} and~\ref{sec:locsup1}). In both sections the main tool is the comparison between the random walk conditioned to survive with a certain Markov renewal process that we define and study in Section~\ref{sec:rmrp}.
\end{enumerate}
We finally assemble these different parts in Section~\ref{sec:proofmain}.

\subsection{Reminder on the case of equally spaced obstacles}
Caravenna and Pétrélis~\cite{CP09b} treated the case of equally spaced obstacles, which we refer to as the {\it homogeneous} case, in the sense that increments of $\tau$ are all equal. We summarize their results here as we use them repeatedly in the following (recall the difference between $\gs$ and $\bgs$).

\begin{proposition}[Homogeneous case, see Eq. (2.1)-(2.3) in \cite{CP09b}]
\label{pr:homo}
Assume $\tau = t\bbZ$, where $t\in \bbN$. There exists a positive constant $\phi(\gb,t)$ such that 
\beq 
\label{eq:315}
\phi(\gb,t) = - \lim_{n\to\infty} \frac{1}{n} \log \bP(\gs > n),
\eeq
with 
\beq
\label{eq:phi}
\phi(\gb,t) = \frac{\pi^2}{2t^2} \Big[1 - \frac{4}{e^{\gb} - 1}\frac{1}{t} + o\Big(\frac{1}{t} \Big) \Big].
\eeq
Moreover, it is the only solution of the equation:
\beq
 \bE[\exp(\phi \inf\{n\in \bbN \colon S_n \in \tau\})] = \hat q_t(\phi) =\exp(\gb),\qquad \gb\ge 0.
\eeq
\end{proposition}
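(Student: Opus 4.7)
The plan is to reduce the problem to a singularity analysis of an explicit generating function in $\hat q_t$, and then to use a trigonometric closed form for $\hat q_t$ to Taylor expand around its singularity at $g(t)$.

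First, by the strong Markov property applied at successive visits to $\tau$, together with the translation invariance of simple random walk under integer shifts (and the fact that from any point of $t\bbZ$ the walk cannot reach another element of $t\bbZ$ without first hitting one of its two nearest neighbours), the inter-visit times $Y_i := \gt_i - \gt_{i-1}$ for $i\ge 1$ are i.i.d.\ with common law $q_t$, so that $\gs = Y_1 + \cdots + Y_\cN$ with $\cN \sim \Geo(1-e^{-\gb})$ independent of $(Y_i)_{i\ge 1}$. Conditioning on $\cN$ gives
\[
\bE\bigl[e^{\phi \gs}\bigr] \;=\; \frac{(1-e^{-\gb})\,\hat q_t(\phi)}{1 - e^{-\gb}\,\hat q_t(\phi)},
\]
which is finite precisely when $e^{-\gb}\hat q_t(\phi) < 1$. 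Since $\hat q_t$ is continuous and strictly increasing on $[0, g(t))$ with $\hat q_t(0) = 1$ and $\hat q_t(\phi)\to\infty$ as $\phi\uparrow g(t)$ (as the formula below confirms), there is a unique $\phi(\gb, t) \in (0, g(t))$ solving $\hat q_t(\phi(\gb, t)) = e^{\gb}$. From there, the generating function
\[
F(z) \;:=\; \sum_{n\ge 0} z^n\, \bP(\gs > n) \;=\; \frac{1 - \hat q_t(\log z)}{(1 - z)\,(1 - e^{-\gb}\hat q_t(\log z))}
\]
has its smallest positive singularity at the simple pole $z_\star = e^{\phi(\gb, t)}$ (possibly accompanied by $-z_\star$ when $q_t$ lives on a sublattice of $\bbZ$, which only alters the prefactor), and standard singularity analysis yields $\bP(\gs > n) = \Theta(e^{-\phi(\gb, t) n})$, hence~\eqref{eq:315}.

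Next, to derive the closed form for $\hat q_t$, first-step conditioning gives $\hat q_t(\phi) = e^{\phi}\bE_1[e^{\phi H_{\{0, t\}}}]$, and the function $u(x) := \bE_x[e^{\phi H_{\{0, t\}}}]$ on $\{0,\ldots, t\}$ satisfies the discrete Feynman--Kac recurrence $u(x) = \tfrac{e^{\phi}}{2}(u(x-1) + u(x+1))$ with $u(0) = u(t) = 1$. Its characteristic roots are $e^{\pm i\vartheta}$ where $\cos\vartheta = e^{-\phi}$, and matching the boundary values gives $u(x) = \cos(x\vartheta) + \tan(t\vartheta/2)\sin(x\vartheta)$. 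Evaluating at $x = 1$ and using $e^\phi\cos\vartheta = 1$ produces
\[
\hat q_t(\phi) \;=\; 1 + e^{\phi}\,\tan(t\vartheta/2)\,\sin\vartheta, \qquad \cos\vartheta = e^{-\phi},
\]
which blows up exactly at $\vartheta = \pi/t$, i.e.\ at $\phi = g(t)$, thereby confirming the monotonicity claim used above.

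Finally, setting the right-hand side equal to $e^{\gb}$ and writing $\vartheta = \pi/t - \gep$ with $\gep = o(1/t)$ (forced because $\tan(t\vartheta/2)$ must stay bounded), the expansions $\tan(t\vartheta/2) = \cot(t\gep/2) \sim 2/(t\gep)$, $\sin\vartheta \sim \pi/t$ and $e^\phi = 1 + O(t^{-2})$ give $\gep \sim 2\pi/[t^2(e^\gb-1)]$. Plugging back into $\phi = -\log\cos\vartheta \sim \vartheta^2/2 = (\pi/t - \gep)^2/2$ and expanding then produces~\eqref{eq:phi}. The one delicate point is the bookkeeping in this last step: isolating the stated $1/t$ correction inside the bracket of~\eqref{eq:phi} forces us to control in a uniform way the various $O(1/t^2)$ contributions coming from $e^\phi$, from the next term of the $\cot$ expansion, and from $\sin\vartheta - \vartheta$, but no idea beyond the explicit identity for $\hat q_t$ above is needed.
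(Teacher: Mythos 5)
The paper offers no proof of its own here; the proposition is cited verbatim from Caravenna and P\'etr\'elis, whose trigonometric closed forms for $\hat q_t$ are exactly the ones transcribed into Appendix~\ref{sec:mgf-qt} (and your identity $\hat q_t(\phi)=1+\tan\vartheta\,\tan(t\vartheta/2)$, with $\cos\vartheta=e^{-\phi}$, agrees with~\eqref{eq:mgf:qt} after the half-angle identity $\tan(u/2)=(1-\cos u)/\sin u$). Your reconstruction is correct and follows essentially that same route: compound-geometric decomposition $\gs=Y_1+\cdots+Y_\cN$, closed form for $\hat q_t$ via the discrete Feynman--Kac recursion, location of the dominant singularity of $\sum_n z^n\bP(\gs>n)$ at $z_\star=e^{\phi(\gb,t)}$ (with the parity caveat you flag being absorbed by monotonicity of $n\mapsto\bP(\gs>n)$), and the expansion $\vartheta=\pi/t-\gep$ yielding~\eqref{eq:phi}.
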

Note that the first order term in the expansion of $\phi(\gb,\cdot)$ does not depend on $\gb$ and coincides with that of~\eqref{eq:def:gt}. We may drop the subscript $\gb$ when there is no risk of confusion. The function $\phi(\gb,\cdot)$ provides an upper bound on the survival probabilities even when the obstacles are not equally spaced:
\begin{proposition}\label{prop:roughUB} (See Proposition 3.4 in~\cite{PoiSim19})
Let $(\tau_i)_{i\in \bbN_0}$ be any increasing sequence of integers with $\tau_0=0$ and let $t_i = \tau_i - \tau_{i-1}$ for all $i\in\bbN$.
There exists a constant $C>0$ (that does not depend on $\tau$) such that for all $0\le k < r <  \ell$ and $n\ge 1$, one has
\beq
\bP_{\tau_r}(\gs \wedge H_{\tau_k} \wedge H_{\tau_\ell} > n) \le Cn^2(\ell-k) \exp(-\phi(\gb,\max\{t_i \colon k<i\le \ell\})n).
\eeq
\end{proposition}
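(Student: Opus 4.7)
The plan is to reduce to the homogeneous asymptotics of Proposition~\ref{pr:homo} via a Chernoff-type bound at the exponent $\phi := \phi(\gb, t_*)$, where $t_* := \max\{t_i : k < i \le \ell\}$. By Proposition~\ref{pr:homo}, $\phi$ is characterised as the unique solution of $\hat q_{t_*}(\phi) = e^\gb$, a calibration tailored so that an excursion of the random walk into a subgap of width at most $t_*$ contributes an exponential moment $\le e^\gb$, exactly offsetting a geometric killing factor $e^{-\gb}$.

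The first step is to decompose the stopping time $T := \bgs \wedge H_{\tau_k} \wedge H_{\tau_\ell}$ at the successive visits $\gt_0 = 0 < \gt_1 < \cdots$ of the walk to $\tau$. On $\{T > n\}$ each excursion length $L_j := \gt_j - \gt_{j-1}$ takes place in a gap of width at most $t_*$, so a monotone comparison of the simple random walk in gaps of varying width yields, for $k < i < \ell$,
\[
\bE_{\tau_i}\bigl[ e^{\phi L_1};\, \gt_1 < H_{\tau_k} \wedge H_{\tau_\ell}\bigr] \le \hat q_{t_*}(\phi) = e^\gb,
\]
after a brief shift/reflection argument to account for the fact that excursions start at gap boundaries rather than centres. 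The second step is to combine this with the geometric killing: the resulting Perron--Frobenius-type transfer matrix on the obstacle skeleton $\{\tau_{k+1}, \dots, \tau_{\ell-1}\}$ has spectral radius bounded by $1$ thanks to the cancellation $e^{-\gb} \cdot e^\gb = 1$, and its iterated norm is in turn controlled linearly in $\ell - k$. Feeding this into Markov's inequality
\[
\bP_{\tau_r}(T > n) \le e^{-\phi n}\, \bE_{\tau_r}\bigl[e^{\phi T}\bigr]
\]
gives, up to a critical correction discussed below, a bound of the form $C(\ell - k) e^{-\phi n}$.

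The main obstacle is precisely this critical correction: at the threshold $\phi = \phi(\gb, t_*)$ the exponential moment $\bE_{\tau_r}[e^{\phi T}]$ is only marginally finite once summed over the geometric number of excursions, so a naive Chernoff bound does not directly yield a bounded prefactor. The polynomial factor $n^2(\ell - k)$ should then come from a slight detuning --- either by replacing $\phi$ by $\phi - c/n^2$ and balancing the two error terms, or equivalently by summing directly over the length and location of the last excursion before time $n$ using the sharp one-sided tail $\sum_{i>n} q_{t_*}(i) \lesssim n^{-1/2} e^{-g(t_*)n}$ from Lemma~\ref{lem:estim-qtn}. This last step is a bookkeeping exercise rather than a conceptual one --- the content lies entirely in the excursion calibration $\hat q_{t_*}(\phi) = e^\gb$ --- but it must be carried out carefully to keep the polynomial power of $n$ independent of $\ell - k$ and to handle the case where the walk actually hits $\tau_k$ or $\tau_\ell$ just before the geometric killing fires.
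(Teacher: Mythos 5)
Your proposal is correct and follows essentially the same route as the cited proof ([PoiSim19, Proposition~3.4], which this paper invokes rather than reproves): an excursion decomposition at the successive visits to the obstacle set, the comparison of each excursion's exponential moment at level $\phi(\gb,t_*)$ with $\hat q_{t_*}(\phi)=e^{\gb}$ so as to cancel the killing factor $e^{-\gb}$ per visit, and a decomposition over the index and starting time of the excursion straddling $n$ (each excursion lasting at least one unit of time) to convert the marginally divergent Chernoff bound into the polynomial prefactor $n^2(\ell-k)$. You correctly identify both the key calibration and the criticality issue, and your proposed fixes are exactly those used in the reference.
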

This statement may look stronger than in~\cite[Proposition 3.4]{PoiSim19} as the constant $C$ does not depend on $\tau$ anymore. An inspection of the proof of \cite[Proposition 3.4]{PoiSim19} reveals that it is actually the case.
We refer to~\cite[Section 3.2]{PoiSim19} for more details on these confinement estimates.

\section{Good obstacles environment}
\label{sec:goodenv}
In this section we define the event $\good_n$ of \textit{good environments} (in the sense that on this event \eqref{eq:ggasup1} and \eqref{eq:ggainf1} are satisfied) and state that this event happens with large probability. The event $\good_n$ is defined as the intersection of the event $\Omega_n$ introduced in \cite{PoiSim19} (recalled right below) and additional events specific to this paper. 
\par \textit{Reminder from previous work.}
We first recall the definition of $\Omega_n$ in \cite{PoiSim19} that depends on parameters $\delta,\gep_0,\gep,\eta>0$. In order to be exhaustive we list the various notation and objects involved in the definition of $\Omega_n$ and point out where they have been introduced in~\cite{PoiSim19}:
\begin{enumerate}
\item The constant $C_1$ is a positive constant so that
\beq
\label{eq:encadr_gphi}
1/(C_1 t^2) \leq g(t), \phi(\gb,t) \le C_1 / t^2, \qquad t\in \bbN. 
\eeq
Its existence is guaranteed by \eqref{eq:def:gt} and \eqref{eq:phi}.
\item 
The exponent $\kappa$ is defined by
\beq
\label{def:kappa}
\gk =
\begin{cases}
\frac{\gamma}{4} & \text{if } \gamma \le 1\\
\frac{1}{2\gamma} - \frac14 & \text{if } 1 <\gamma < 2\\
\frac{1}{2\gamma} & \text{if } \gamma \ge 2.
\end{cases}
\eeq
\item The constant $\cT_0$ comes from Lemma~\ref{lem:estim-qtn}. 
\item Recall~\eqref{eq:records} and \eqref{eq:indexes}. The sets of records are defined by
\beq
\label{eq:defrecordset}
\ba
R(a,b) &= \{k \in \bbN \colon a\le i(k) \le b\},\quad \cR(a,b) = i(R(a,b)),\quad a,b\in\bbN,\ a<b,\\
R_\gep(n) &= R(\gep N, \gep^{-1}N),\qquad \cR_\gep(n) = \cR(\gep N, \gep^{-1}N), \qquad n\in \bbN, \qquad \gep>0.
\ea
\eeq

\item \label{it:f} The functions $\alpha$, $\cT_0$ and $f: z\in (0,\pi)\mapsto z/\sin(z)$ are defined in \cite[Lemma 3.7]{PoiSim19} while $h\colon (A,L,\ga)\in(0,\infty)^3 \mapsto A^{2L}/[2\ga e^\gb(e^\gb-1)]$ comes from \cite[Lemma 3.8]{PoiSim19}.
\item We define for $k\geq 1$
\beq
\label{eq:def_fk}
f_{k}:=2 f\left(\pi \frac{T^*_{k-1}}{T^*_k}\Big[1+\frac{C}{(T^*_k)^2}\Big]\right),
\eeq
where the constant $C$ is the same as the one in \cite[Lemma 3.7]{PoiSim19}.
\item Given $\alpha>0$ and $k\in\bbN$, we define the set of {\it bad} edges as
\beq
\label{eq:defB}
\cB_{k,\alpha} = \{j < i(k),\; T_{j} > \alpha T^*_{k}\},
\eeq
and its cardinal
\beq
L_{k,\alpha} = |\cB_{k,\alpha}|.
\eeq
\item The functions $\gl(\cdot)$ and $\gl(\cdot,\cdot)$ are defined in \eqref{def:gl} and \eqref{eq:defgl}.

\item For $n\geq 1$ we define the random point measure
\beq
\label{eq:defpin}
\Pi_n =\sum_{i=1}^{\infty}\delta_{\left(\frac{i-1}{n}, \frac{T_i}{n^{1/\gamma}}\right)}.
\eeq

\end{enumerate}

\par We are now ready to recall the definition of $\Omega_n$:
\beq
\Omega_n(\gd,\gep_0,\gep,\eta)=\bigcap_{i=1}^{11} A_n^{(i)}(\gd,\gep_0,\gep,\eta),
\eeq
with
\beq
\label{eq:ge}
\ba
A^{(1)}_n&=
\begin{cases}
{\{\tau_{N^{1+\gk}}^2 < n^{1-\frac{\gamma\wedge (2-\gamma)}{4(\gamma+2)}}\}} & \text{if } \gga < 2\\
{\{\tau_{N^{1+\gk}}^2 < n^{1+\frac{2\gga - 1}{2(\gamma+2)}}\}} & \text{if } \gga \ge 2,\\
 \end{cases}\\
A_n^{(2)}(\gep_0) &:= {\{ T_k \le \gep_0^{\frac{1}{2\gamma}} N^{\frac{1}{\gamma}},\quad \forall k\le \gep_0N\}}\\
A_n^{(3)}(\gep_0) &:= \{\tau_{N/\gep_0} < n\}\\ 
A_n^{(4)}(\gd) &:=\{\exists \ell \in \{N,\ldots, 2N\}\colon T_\ell \ge \cT_0 \vee \gd N^{\frac{1}{\gamma}}\}\\
A_n^{(5)}(\gep_0,\gep) &:=\{\forall k\in R_{\gep_0}(n),\ T^*_k > \cT_0(\gep) \vee \gep_0^{\frac{3}{2\gamma}}N^{\frac{1}{\gamma}}\}\\
A_n^{(6)}(\gep_0,\gep) &:=\{\forall k\in R_{\gep_0}(n),\ f_k^{L_{k,\alpha(\gep)}} \le \exp(n^{\frac{\gamma}{2(\gamma+2)}})\}\\
A_n^{(7)}(\gep_0,\gep) &:=\{\forall k\in R_{\gep_0}(n),\ T^*_k > h(f_k, L_{k,\ga(\gep)}, \ga(\gep))\}\\
A_n^{(8)}(\gep_0) &:= \{|R(1,N/\gep_0)| \le [\log(N/\gep_0)]^2\}\\
A_n^{(9)}(\gd) &:=\{ |\gl(2N,\gb) - \gl(\gb)| \le \tfrac{C_1}{2\gd^{2}}\}\\
A_n^{(10)}(\gep_0,\gep,\eta) &:= \{ |\gl(\ell-1,b) - \gl(b)| \le \tfrac{\gep_0\eta}{2},\ \forall \ell\ge \gep_0N,\ b\in \{\beta, \beta-\gep\}\}\\
A^{(11)}_n(\gep_0)&:=\{ \Pi_N( [0,\gep_0^{-\gga/2}]\times [\gep_0^{-1},+\8[) = 0 \}.
\ea
\eeq

{\begin{remark}
We point out a mistake that is easy to correct in the definition of $A^{(11)}$ in \cite{PoiSim19}. With the new definition above it is easy to check, adapting \cite[(6.41)]{PoiSim19}, that 
\beq
\lim_{\gep_0\to 0} \liminf_{n \to \8} \bbP(A_n^{(11)}(\gep_0))=1.
\eeq
Furthermore, the event $A^{(11)}_n(\gep_0)$ is used in the proof of \cite[Theorem 2.2]{PoiSim19} only, more precisely in \cite[(6.58)]{PoiSim19}. With the correct definition of $A^{(11)}_n(\gep_0)$ above, this part of the proof is now valid assuming that $\gep_0$ has been chosen small enough so that $\gep_0^{-\gga/2}\geq \frac{u+2\eta}{\lambda(\gb-\gep)}$.
\end{remark}
}
\par \textit{New conditions.} We turn to the five new conditions that we add to the previous ones to define a \textit{good environment}.
Recall \eqref{eq:defell0} and define analogously
\beq
\label{eq:defelltilde0}
\tilde\ell_0 = \tilde\ell_0(n,\tau,\gb) = \argmin_{\ell \ge 1} \tilde G_n^\gb(\ell)
\eeq
where 
\beq
\label{eq:tildeG}
\tilde G_n^\gb(\ell) = \gl(\gb) \frac{\ell-1}{N} + \frac{\pi^2}{2 T_{\ell}^2} \frac{n}{N}, \qquad N = n^{\frac{\gga}{\gga+2}}.
\eeq
We define for $\gep_0\in(0,1)$, $\gep\in(0,\gb/2)$, $\eta>0$, $\rho\in(0,\frac12)$, $J \in \bbN$ and $\mathsf C \in \bbN$ the events
\beq
\label{eq:newcoinditions}
\ba
\kBone_n(\gep_0)&=\left\{
\begin{array}{c}
\lopt(\gb) \textrm{ and } \tilde{\lopt}(\gb) \textrm{ are uniquely defined,} \\
\gep_0 N \le \lopt(\gb) = \tilde{\lopt}(\gb) \le \gep_0^{-1} N,\\
\gep_0 N^{\frac 1\gga}\le T_{\tilde{\lopt}(\gb)} \le \gep_0^{-1} N^{\frac 1\gga}  \\
\end{array}
\right\}\\
\kBtwo_n(\eta)&=\Big\{\min_{\ell\neq \lopt}  G_n^{\beta}(\ell)- \min_{\ell}  G_n^{\beta}(\ell)\geq 2\eta\Big\}\\
\kBthree_n(\rho) &= \{\rho T_{\lopt} < \max_{i\neq \lopt, i< i(\kopt+1)} T_i < (1-\rho) T_{\lopt}\},\\
\kBfour_n(J) &= 
\Big\{\forall \ell \ge J,\ \max_{1\le i\le \ell} T_{\lopt + i} \le \ell^{\frac{4+\gamma}{4\gamma}}\Big\}
\cap
\Big\{\forall J\le \ell \le \lopt,\ \max_{1\le i\le \ell} T_{\lopt - i} \le \ell^{\frac{4+\gamma}{4\gamma}}\Big\}\\
\kBfive_n(\gep_0, \gep, \mathsf{C})&=\Big\{
\card\{k\in [\gep_0 N, \gep_0^{-1}N] \colon T_k \ge \frac{\alpha(\gep)\gep_0}{4} N^{\frac 1 \gga}\} \le \mathsf{C}
\Big\}.
\ea
\eeq
The first event assures that the optimal gap is well defined and gives a first control on its position and its width. 
\begin{remark}
\label{rk:comparegapn}
Note also that the last inequality in $\kBone_n$ implies that $n\ge T_{\lopt}^3$, provided $\gga >1$ (resp. $n\le T_{\lopt}^3$, provided $\gga <1$) if $n$ is large enough. This dichotomy will become important in the following.
\end{remark}
The second event guarantees that the strategy of localization in the optimal gap is the most favorable one. 
The third event allows us to control the ratio between the optimal gap and the second largest gap in the interval $[0,\tau^*_{\kopt+1}]$, that is the interval between the origin and the $(\kopt+1)$-th record gap.
 The fourth event provides a control on the lengths of the gaps below and above the optimal gap, relatively to their distance to the latter.
 Note that the exponent $\frac{4+\gamma}{4\gamma}$ in the definition of $\kBfour_n$ is somewhat arbitrary and that it could be safely replaced by $c/\gga$ for any $1<c<1+\gga/2$. We choose $c = 1 + \gga/4$ in order to avoid a new parameter. Finally, the fifth event allows us to control the number of gaps competing with the optimal one in the proof of Proposition \ref{pr:weakloc}.\\
\par \textit{Definition of a good environment.}
We may now define

\beq
\good_n(\gd, \gep_0, \gep, \eta, \rho, J, \mathsf{C}) := \gO_n(\gd, \gep_0, \gep, \eta)\cap \left(\bigcap_{1\le i\le 5} B^{(i)}_n(\gep_0, \gep, \eta, \rho, J, \mathsf{C})\right).
\eeq

In the following we will prove that \eqref{eq:ggasup1} and \eqref{eq:ggainf1} hold on $\good_n$ so that the proofs of Theorems~\ref{thm:ggasup1} and~\ref{thm:ggainf1} will be complete once we prove
\begin{proposition}
\label{pr:goodenv}
\par For all $\petit\in(0,1)$, for all $\gd, \gep_0, \eta, \rho \in (0,\frac12)$ small enough and $J \geq 1$ large enough, for all $\gep\in(0,\gb/2)$, there exists $\mathsf{C}=\mathsf{C}(\gep_0,\gep)$ such that
\beq
\liminf_{n\to\infty}  \bbP\left(\good_n(\gd, \gep_0, \gep, \eta, \rho, J, \mathsf{C})\right)\ge 1-\petit.
\eeq
\end{proposition}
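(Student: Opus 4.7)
Since the event $\gO_n(\gd,\gep_0,\gep,\eta)$ was shown in~\cite{PoiSim19} to satisfy $\bbP(\gO_n) \to 1$ (for an appropriate joint choice of parameters), the task reduces to proving that each of the five new events $\kBone_n, \kBtwo_n, \kBthree_n, \kBfour_n, \kBfive_n$ occurs with $\bbP$-probability at least $1-\petit/5$, and then choosing the parameters $\gep_0,\eta,\rho,J,\mathsf{C}$ successively. The unifying tool is the classical weak convergence of the rescaled point measure
\[
\sum_{i\ge 1} \delta_{\bigl((i-1)/N,\; T_i/N^{1/\gga}\bigr)}
\]
to a Poisson point process $\Pi^*$ on $[0,\8)\times(0,\8)$ with intensity $dx \otimes c_\tau\gga\, y^{-1-\gga}\,dy$, a direct consequence of the regular-variation assumption~\eqref{eq:deftau}.

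\textbf{Events $\kBone_n$, $\kBtwo_n$, $\kBthree_n$, $\kBfive_n$: scaling-limit arguments.} Under the above convergence, the rescaled functional $\tilde G_n^\gb(\ell)$ converges, jointly over the relevant atoms, to $F(x,y) := \gl(\gb)x + \pi^2/(2y^2)$ evaluated on the points of $\Pi^*$. Because $\Pi^*$ has an atomless intensity, $F$ admits a.s.\ a unique minimizer on $\Pi^*$, with coordinates in the interior of $(0,\8)^2$, and its second smallest value is a.s.\ strictly larger. Continuous-mapping type arguments yield the range bounds in $\kBone_n$ (for $\gep_0$ small) and the margin $\kBtwo_n$ (for $\eta$ small). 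The identity $\lopt = \tilde\lopt$ in $\kBone_n$ is then enforced by the event $A_n^{(10)}\subseteq\gO_n$, which yields $\sup_{\ell\in[\gep_0 N,\gep_0^{-1}N]} |G_n^\gb(\ell)-\tilde G_n^\gb(\ell)| \le \eta$ (after absorbing the negligible difference between $g(T_\ell)$ and $\pi^2/(2T_\ell^2)$ on $\kBone_n$), so that the $2\eta$ margin of $\kBtwo_n$ forces both functionals to share the same minimizer. For $\kBthree_n$, note that $\max_{i\neq \lopt,\,i<i(\kopt+1)} T_i = T^*_{\kopt-1}$, and in the Poisson limit the ratio $T^*_{\kopt-1}/T^*_{\kopt}$ is a.s.\ in $(0,1)$; small $\rho$ gives the desired probability. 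For $\kBfive_n$, the count $\card\{k\in[\gep_0 N,\gep_0^{-1}N]\colon T_k \ge \tfrac{\alpha(\gep)\gep_0}{4}N^{1/\gga}\}$ converges in distribution to a Poisson random variable with explicit parameter, so a deterministic threshold $\mathsf{C}=\mathsf{C}(\gep_0,\gep)$ is enough.

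\textbf{Event $\kBfour_n$ via Borel--Cantelli.} For any fixed deterministic index $a$, the i.i.d.\ tail~\eqref{eq:deftau} and a union bound give
\[
\bbP\Bigl(\max_{1\le i\le \ell} T_{a+i} > \ell^{(4+\gga)/(4\gga)}\Bigr) \le \ell\,\bbP\bigl(T_1 > \ell^{(4+\gga)/(4\gga)}\bigr) \lesssim \ell^{-(1/4+1/\gga+\gga/4)},
\]
which is summable in $\ell$ (since $1/\gga+\gga/4\ge 1$ by AM--GM). Hence the probability that the maximum condition fails for some $\ell\ge J$ tends to $0$ as $J\to\8$, uniformly in $a$. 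To transfer this to the random shift $a=\lopt$, we condition on $\lopt$: on $\kBone_n$, $\lopt$ lies in $[\gep_0 N,\gep_0^{-1}N]$, and conditionally on the past up to $\lopt$ the forward gaps $(T_{\lopt+i})_{i\ge 1}$ are i.i.d.\ with the original renewal law, so the Borel--Cantelli bound applies directly. The backward statement about $(T_{\lopt-i})_{i\le \lopt}$ follows by symmetry upon reversing the gap sequence.

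\textbf{Main obstacle.} The delicate point is the decoupling between $\lopt$ and the surrounding gap sequence in $\kBfour_n$: $\lopt$ is defined through a global variational problem and so depends on all the $T_i$'s. We plan to handle this rigorously via a Palm-type description of $\Pi^*$ at its minimizing atom (under which the remaining points form a Poisson process with the same intensity) combined with the uniform summability established above, noting that the control on $\lopt$ provided by $\kBone_n$ restricts the random shift to a window where the renewal Markov property can be invoked cleanly.
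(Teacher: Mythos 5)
Your high-level strategy for $\kBone_n$, $\kBtwo_n$, $\kBthree_n$, $\kBfive_n$ matches the paper: weak convergence of the rescaled point measure $\Pi_n$ to a Poisson point process, continuity of the relevant functionals at $\Pi$, and Poisson convergence of the binomial count in $\kBfive_n$. (A minor imprecision: $\max_{i\neq\lopt,\,i<i(\kopt+1)} T_i$ is not always $T^*_{\kopt-1}$, since a non-record gap between $\lopt$ and $i(\kopt+1)$ can be the second largest; the paper's Lemma~\ref{lem:phikphi} tracks exactly this quantity through the point $\ub Z_n$, but your intuition that the ratio is bounded away from $0$ and $1$ in the limit is correct.) The event $\kBfour_n$ is where the proposal has genuine gaps.

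First, the arithmetic. With $\bbP(T_1>x)\asymp x^{-\gga}$ one gets $\ell\,\bbP\bigl(T_1>\ell^{(4+\gga)/(4\gga)}\bigr)\asymp \ell\cdot \ell^{-(4+\gga)/4}=\ell^{-\gga/4}$, not $\ell^{-(1/4+1/\gga+\gga/4)}$, and $\sum_{\ell\ge J}\ell^{-\gga/4}$ diverges whenever $\gga\le 4$. So the union bound over $\ell$ does not give summability, and the Borel--Cantelli step breaks. The paper avoids this by passing to the records $(i_k)$ and bounding $\sum_k\bbP(T_{i_k}>i_k^{(4+\gga)/(4\gga)})\le\sum_j \tfrac1j[1-(1-\bbP(T_1\ge j^{(4+\gga)/(4\gga)}))^j]\asymp\sum_j j^{-(1+\gga/4)}$, where the extra factor $1/j$ (the probability that index $j$ is a running max) is exactly what makes the series converge. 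Equivalently, one should union-bound over the index $i$ where the exceedance occurs (taking $\ell=\max(i,J)$), which yields $\sum_{i>J} i^{-(1+\gga/4)}$; the bound over $\ell$ without that reindexing is too crude.

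Second, the decoupling. The statement ``conditionally on the past up to $\lopt$ the forward gaps are i.i.d.\ with the original renewal law'' is false: $\lopt$ depends on the whole sequence $(T_i)_{i\ge1}$ through a global variational problem and is not a stopping time, so the renewal strong Markov property does not apply at $\lopt$. You flag this yourself, but the proposed fix (a Palm-type description of the minimizing atom) is only sketched, and it is not obvious how to carry it out in the pre-limit setting where the $T_i$'s are not Poissonian. The paper instead decomposes on $\{\tilde\lopt=k, T_k=t\}$, rewrites the minimality of $\tilde G_n$ at $k$ as the event $\{T_{k+\ell}\le U(\ell,t)\ \forall\,\ell\}$, which is coordinatewise decreasing in the forward gaps, and notes that the bad event $\{\exists\,\ell\ge J,\ T_{k+\ell}\ge\ell^{(4+\gga)/(4\gga)}\}$ is increasing in the same variables; FKG then yields the desired decoupling $\bbP(\text{bad},\ \tilde\lopt=k)\le\bbP(\text{bad shifted to the origin})\,\bbP(\tilde\lopt=k)$. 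This FKG argument is the essential ingredient missing from your proposal, and combined with the records-based summability it closes both gaps.
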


We defer the proof of this proposition to Appendix~\ref{sec:proof-good-env}.
\section{Hitting time of the optimal gap}
\label{sec:hitting}
In this section we only focus on the first event on the left-hand sides of \eqref{eq:ggasup1} and \eqref{eq:ggainf1}, that is the one dealing with the hitting time of the localization interval. What happens after this hitting time will be studied in the next sections. We first provide in Proposition \ref{pr:weakloc} a rough control on the behaviour of the random walk conditioned to survive, namely that with large probability, it reaches the gap with label $i(k_0)$ (corresponding to the minimizer of $G_n$) but it does not reach the gap $i(k_0+1)$ (corresponding to the next record gap). We then refine this result in Proposition~\ref{pr:hitting} to get the desired upper bound on the hitting time of the optimal gap. Recall the notation from \eqref{eq:defhstar}.
\begin{proposition}
\label{pr:weakloc}
Let $\petit \in(0,1),\delta>0$. There exists $\gep_0>0$ so that for all $\eta\in(0,1)$ and for all $\gep$ small enough (depending on $\gep_0$ and $\eta$), if $n$ is large enough and $\tau\in \good_n(\gd,\gep_0,\gep,\eta)$,
\beq
\bP(H^*_{\kopt}\leq n <H^*_{\kopt+1} |\bgs > n)\ge1-\petit.
\eeq
\end{proposition}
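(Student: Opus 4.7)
The plan is to sandwich $Z_n=\bP(\bgs>n)$ between matching exponential bounds and to show that, on $\good_n$, any ``wrong strategy'' $\{K=k\}$, where $K:=\max\{k\ge 0 : H^*_k\le n\}$ is the deepest record reached by the walker before time $n$, contributes at most $e^{-\eta N+o(N)}Z_n$. Summing over the polynomially many relevant indices then yields $\bP(K\neq \kopt\mid \bgs>n)\le \petit$, which is precisely the complement of the event in the statement.

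\textit{Lower bound.} Applying the strong Markov property at the hitting time $H_{\tau_{\lopt-1}}$ gives
\[
Z_n \ge \bP(H_{\tau_{\lopt-1}}<\bgs)\cdot \bP_{\tau_{\lopt-1}}(\bgs\wedge H_{\tau_\lopt}>n),
\]
where the first factor equals $e^{-\gl(\gb,\lopt-1)(\lopt-1)}$ by~\eqref{def:gl} and the second, a slab-confinement probability in a gap of width $T_\lopt$, is bounded below by $c n^{-1/2}e^{-g(T_\lopt)n}$ thanks to Lemma~\ref{lem:estim-qtn}. Combined, $Z_n\ge n^{-O(1)}e^{-NG^\gb_n(\lopt)}$.

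\textit{Upper bound on each $\{K=k,\bgs>n\}$.} For $k\neq \kopt$, applying the strong Markov property at $H^*_k$ and then Proposition~\ref{prop:roughUB} for the post-hitting confinement in $[\tau^*_k,\tau^*_{k+1}]$ yields
\[
\bP(K=k,\bgs>n) \le C n^2(i(k+1)-i(k))\, e^{-\phi(\gb,T^*_k) n}\, \bE\!\left[e^{\phi(\gb,T^*_k)H^*_k};\; H^*_k<\bgs\right].
\]
The weighted MGF factorises via strong Markov across the $i(k)-1$ obstacles separating $0$ from $\tau^*_k$, and each single-gap factor is controlled using Proposition~\ref{pr:homo}: the critical identity $\hat q_{T_j}(\phi(\gb,T_j))=e^\gb$ together with the monotonicity $T_j\le T^*_k \Rightarrow \phi(\gb,T_j)\ge \phi(\gb,T^*_k)$ gives a uniform per-obstacle bound. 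Using the expansion~\eqref{eq:phi} to identify $\phi\approx g$ at leading order and $A^{(10)}_n$ to identify $\gl(\gb,\cdot)$ with $\gl(\gb)$ on the relevant range, the exponent collapses to $-N G^\gb_n(i(k))+o(N)$.

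\textit{Conclusion and main obstacle.} On $\kBtwo_n(\eta)$ we have $G^\gb_n(i(k))\ge G^\gb_n(\lopt)+2\eta$ for every $k\neq \kopt$, so each contribution is at most $e^{-\eta N+o(N)}Z_n$. The number of non-negligible indices is controlled by $A^{(8)}_n$, which bounds the number of records with $i(k)\le N/\gep_0$ by $(\log n)^2$; records with $i(k)>N/\gep_0$ yield a geometrically decaying tail thanks to the linear growth $\gl(\gb)(i(k)-1)$. The main obstacle is quantitative: the single-obstacle MGF bounds and the polynomial prefactors must jointly fit in $o(N)$ so as not to consume the buffer $2\eta N$ supplied by $\kBtwo_n(\eta)$. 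This is most delicate for $k$ close to $\kopt$, where $\phi=\phi(\gb,T^*_k)$ acts as a saddle-point balancing hitting and confinement costs, and the competing strategy's cost $NG^\gb_n(i(k))$ exceeds the optimum by exactly $2\eta N$.
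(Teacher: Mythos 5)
Your overall architecture is the same as the paper's: decompose the survival event according to the deepest record gap reached, show that each ``wrong'' record contributes at most $e^{-(\text{cst})\eta N + o(N)} Z_n$ using the buffer supplied by $\kBtwo_n(\eta)$, and control the lower bound on $Z_n$ through the confinement estimate in the optimal gap. The lower bound, the use of Proposition~\ref{prop:roughUB} for the post-hitting confinement, and the final comparison via $\kBtwo_n(\eta)$ and $A^{(10)}_n$ are all correct and match the paper's scheme (which relies on \cite[Propositions 4.1 and 5.1]{PoiSim19}).

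There is, however, a genuine gap in the step where you control $\bE[e^{\phi H^*_k};\, H^*_k<\bgs]$. The strong Markov property does give the product
\[
\bE_0\bigl[e^{\phi H_{\tau_\ell}};\, H_{\tau_\ell}<\bgs\bigr]
=\prod_{j=1}^{\ell} \bE_{\tau_{j-1}}\bigl[e^{\phi H_{\tau_j}};\, H_{\tau_j}<\bgs\bigr],
\]
but each factor $\bE_{\tau_{j-1}}[e^{\phi H_{\tau_j}};\,H_{\tau_j}<\bgs]$ is \emph{not} a single-gap quantity: on its way from $\tau_{j-1}$ to $\tau_j$ the walk may re-enter any of the gaps $T_1,\dots,T_{j-1}$, and if one of those gaps is comparable in size to $T^*_k$ the tilt $e^{\phi\,\cdot}$ is dangerously close to being non-summable there. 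The identity $\hat q_{T_j}(\phi(\gb,T_j))=e^\gb$ from Proposition~\ref{pr:homo} only controls a return-time MGF for a walk \emph{confined} to the slab of width $T_j$; it does not bound a factor whose excursions are free to run through all the previous (possibly large) gaps. The monotonicity $T_j\le T^*_k\Rightarrow\phi(\gb,T_j)\ge\phi(\gb,T^*_k)$ therefore does not give you the ``uniform per-obstacle bound'' you assert, and without further argument the product can exceed $e^{o(N)}$.

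This is precisely what the paper's machinery is built to handle: it replaces the killing strength $\gb$ by $\gb-\gep$, pays for the cost of excursions back into large competing gaps via the multiplicative factor $f_k^{L_{k,\alpha(\gep)}}$ of \cite[Lemmas 3.7--3.8]{PoiSim19}, and then uses the environment events $A^{(6)}_n$, $A^{(7)}_n$, $A^{(10)}_n$ to guarantee that this correction is $e^{o(N)}$ and that the resulting $\gl(\gb-\gep)$ is within $\gep_0\eta/2$ of $\gl(\gb)$, so that the $2\eta$ gap from $\kBtwo_n(\eta)$ still has room to absorb it. Your write-up references $A^{(10)}_n$ but never introduces the change of measure $\gb\to\gb-\gep$, and it omits $A^{(6)}_n$ and $A^{(7)}_n$ entirely; without the tilting of $\gb$ and the control of the bad edges $\cB_{k,\alpha}$, the step ``the exponent collapses to $-NG^\gb_n(i(k))+o(N)$'' is unjustified. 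A secondary point: you invoke $A^{(8)}_n$ to bound the number of relevant records, but the paper's sum over $k\in R_{\gep_0}(n)\setminus\{\kopt\}$ is controlled directly by the prefactor $2\gep_0^{-1}C n^6$ and the exponential gap; $A^{(8)}_n$ is not used in this proof.
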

\begin{proof}[Proof of Proposition~\ref{pr:weakloc}]
Recall the definitions of $C_1$ in \eqref{eq:encadr_gphi} and $\gl$ in \eqref{eq:defgl} and fix $\gep_0$ small enough so that \beq
\label{eq:gep0}
\ba
\frac{\beta}{\gep_0}& >2(C_1 \delta^{-2} + \gl(\gb)),\\
 \gep_0^{-1/\gamma}& >4C_1(C_1 \delta^{-2} + \gl(\gb)).
\ea
\eeq
We have to prove that 
\beq
\frac{\sum_{k\neq \kopt}Z_n^{(k)}}{Z_n}\leq \petit,
\eeq
where 
\beq
\label{eq:defZpark}
Z_n^{(k)}=Z_n(H^*_k\leq n < H^*_{k+1}).
\eeq
Recall \eqref{eq:defrecordset}.
We first get rid of all the records that do not lie in $R_{\gep_0}(n)$. In Step $1$ of \cite[Proposition 5.1]{PoiSim19} we proved that on $\Omega_n(\gd,\gep_0)$,
\beq
\ba
&Z_n\geq \frac{c_1}{2\sqrt{n}}\exp\{-N(2C_1\gd^{-2}+2\gl(\beta)+o(1))\},\\
&\sum_{k\in R(N/\gep_0,\8)}  Z_n^{(k)}\leq e^{-\beta N/\gep_0},\\
&\sum_{k\in R(0,\gep_0 N)}  Z_n^{(k)}\leq Z_n(H_{\tau_{\gep_0 N}}>n) \le \exp \Big({-\frac{ \gep_0^{-1/\gamma}N}{2C_1}}\Big).
\ea
\eeq
Together with the choice of parameters in \eqref{eq:gep0}, this implies that if $n$ is large enough
\beq
\frac{\sum_{k\notin R_{\gep_0}(n)}Z_n^{(k)}}{Z_n} \leq \petit.
\eeq
We turn to the records in $R_{\gep_0}(n)$ that are not $\kopt$.
We consider $\eta$ and $\gep$ positive, the latter being small enough so that the conclusion of \cite[Lemma 3.7]{PoiSim19} is valid.
We recall the lower bound on $Z_n$ obtained in \cite[Proposition 4.1]{PoiSim19}: on $\Omega_n(\gd)$,
\beq
\label{eq:binf}
Z_n\geq \exp{\{-N \min_{1<\ell\leq N^{1+\kappa}} G^{\gb}_n(\ell)+o(N)\}}.
\eeq
In Eq.~$(5.20)$ in Step $2$ of \cite[Proposition~5.1]{PoiSim19} we derived an upper bound on $Z_n^{(k)}$ for $k \in R_{\gep_0}(n)$ on $\Omega_n(\gd,\gep_0,\gep)$. If we restrict to $k\neq \kopt$ this upper bound becomes
\beq
Z_n^{(k)} \leq 2C\ n^5\exp\Big(-N \min_{\ell \in \mathcal{R}_{\gep_0}(n)\atop \ell\neq \lopt}  G_n^{\beta-\epsilon} (\ell) +o(N)\Big).
\eeq
Similarly as in Step $3$ and summing only over $k\in R_{\gep_0}(n), k\neq \kopt$, we obtain
\beq
\label{eq:bsup}
\sum_{k\in R_{\gep_0}(n)\atop k\neq \kopt}Z_n^{(k)} \leq 2\gep_0^{-1} C\ n^6\exp\Big(-N \min_{\ell \in \mathcal{R}_{\gep_0}(n)\atop \ell\neq \lopt}  G_n^{\beta-\gep}(\ell)  +o(N)\Big)
\eeq
on $\Omega_n(\gd,\gep_0,\gep)$.

On $\kBone_n(\gep_0)$, $\ell_0(\beta)$ lies in $\mathcal{R}_{\gep_0}(n)$ and we may deduce from \eqref{eq:binf} and \eqref{eq:bsup} that
\beq
\label{eq:avantdernier}
\frac{\sum_{k\in R_{\gep_0}(n)\atop k\neq \kopt}Z_n^{(k)}}{Z_n}\leq 
2 \gep_0^{-1} C\ n^6 \exp{\Big\{-N \Big(\min_{\ell\neq \lopt}  G_n^{\beta-\gep}(\ell) -\min G^{\gb}_n(\ell)+o(1)\Big)\Big\}}.
\eeq
We suppose $\gep>0$ to be small enough so that $|\gl(\beta)-\gl(\gb-\gep)|\leq \frac{\gep_0 \eta}{2}$. This implies that on $A^{(10)}(\gep_0,\gep,\eta) \cap \kBone_n(\gep_0) \cap \kBtwo_n(\eta) $,
\beq
\ba
\min_{\ell\neq \lopt}  G_n^{\beta-\gep}(\ell) -\min G^{\gb}_n(\ell)&= 
 \min_{\ell\neq \lopt}  G_n^{\beta}(\ell) -\min G^{\gb}_n(\ell)  +\min_{\ell\neq \lopt}  G_n^{\beta-\gep}(\ell) - \min_{\ell\neq \lopt}  G_n^{\beta}(\ell)\\
&\geq 2\eta -\max_{\gep_0 N \leq \ell \leq N/\gep_0}|\gl(\ell-1,\gb-\gep)-\gl(\ell-1,\gb)|\frac{\ell-1}{N}\\
&\geq 2\eta-3\frac{\gep_0 \eta}{2}\frac{\gep_0^{-1} N}{N}\geq \frac{\eta}{2}.
\ea
\eeq
Together with \eqref{eq:avantdernier}, this concludes the proof.
\end{proof}

With this weak localization result in hands, we are now able to control the hitting time of the optimal gap.
\begin{proposition}
\label{pr:hitting}
Let $\petit, \kappa\in(0,1)$ and $\gd\in(0,1)$. There exists $\gep_0>0$ so that for $\eta\in(0,1)$, $\rho\in(0,\frac12)$, $\mathsf{C}\ge1$ and for all $\gep>0$ small enough (depending on $\gep_0$ and $\eta$), $n$ large enough and $\tau\in \good_n(\gd,\gep_0,\gep,\eta,\rho, \mathsf{C})$,
\beq
\bP(H_{\tau_{\lopt-1}} >\kappa n|\bgs > n)\le \petit.
\eeq
\end{proposition}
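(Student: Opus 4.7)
\textit{Proof plan.}
The natural starting point is Proposition~\ref{pr:weakloc}: by tuning its parameters so that the error bound is $\petit/2$, it suffices to show that
$$
\bP\bigl(\kappa n < H^*_{\kopt} \leq n < H^*_{\kopt+1},\ \bgs > n\bigr) \leq (\petit/2)\, Z_n.
$$
I apply the strong Markov property at $H^*_{\kopt} = H_{\tau_{\lopt-1}}$, which writes the left-hand side as
$$
\sum_{t=\lceil \kappa n \rceil + 1}^n \bP(H^*_{\kopt}=t,\bgs>t)\cdot \bP_{\tau^*_{\kopt}}(\bgs>n-t,\, H^*_{\kopt+1}>n-t).
$$

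Each factor is then controlled via Proposition~\ref{prop:roughUB}. For the first, I use the crude bound $\bP(H^*_{\kopt}=t,\bgs>t)\leq \bP(H^*_{\kopt}>t-1,\bgs>t-1)$ together with $\kBthree_n(\rho)$, which asserts that every gap in $[0,\tau^*_{\kopt})$ is at most $(1-\rho)T_{\lopt}$, giving a confinement rate $a:=\phi(\gb,(1-\rho)T_{\lopt})$. For the second, the largest gap in $[\tau^*_{\kopt},\tau^*_{\kopt+1})$ is $T_{\lopt}$, giving rate $b:=\phi(\gb,T_{\lopt})$. Since $a>b$, the geometric sum over $t$ is dominated by its endpoint $t\approx \kappa n$, producing an overall bound of the form $\mathrm{poly}(n)\exp(-\kappa n a-(1-\kappa)n b)$. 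Combining with the lower bound \eqref{eq:binf}, and using $g(T_{\lopt})\approx b$ (from \eqref{eq:def:gt}--\eqref{eq:phi}) and $\gl(\gb,\lopt-1)\approx \gl(\gb)$ (from $A_n^{(10)}$), the ratio with $Z_n$ is at most $\mathrm{poly}(n)\exp(-\kappa n(a-b)+\gl(\gb)\lopt+o(N))$.

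It then remains to show that this exponent diverges to $-\infty$. The expansion $a-b\approx \pi^2\rho/T_{\lopt}^2$, together with the bounds $T_{\lopt}\in [\gep_0 N^{1/\gga},\gep_0^{-1} N^{1/\gga}]$ and $\lopt\in [\gep_0 N,\gep_0^{-1} N]$ from $\kBone_n(\gep_0)$ and the scaling identity $n/T_{\lopt}^2\asymp N$, shows that $\kappa n(a-b)$ and $\gl(\gb)\lopt$ are both of order $N$. The main obstacle I anticipate is precisely this: since the two terms are of comparable order, the sign of the exponent depends on a delicate interplay between $\gep_0$, $\rho$ and $\gl(\gb)$, and the naive analysis suggests that $\gep_0$ must be tuned relative to $\kappa\rho/\gl(\gb)$, which does not accommodate arbitrary $\kappa\in(0,1)$ uniformly in $\rho$. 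To close the argument, one likely needs to sharpen the upper bound on the first factor by incorporating the obstacle-crossing cost $\gl(\gb)\lopt$—for instance, via a refined decomposition into gap-by-gap excursions, or by using $\kBtwo_n(\eta)$ to quantify the gap between $\min G_n^{\gb}$ and its runner-up—so that this cost cancels against its counterpart in the lower bound on $Z_n$ and only the genuine penalty $-\kappa n(a-b)\to -\infty$ remains.
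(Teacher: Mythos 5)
Your decomposition by the strong Markov property at $H^*_{\kopt}$ and your treatment of the second factor via Proposition~\ref{prop:roughUB} match the paper, and you have correctly located the obstruction: your bound on the first factor does not close. But the problem is more severe than a ``delicate interplay of constants.'' The crude bound $\bP(H^*_{\kopt}=t,\bgs>t)\leq \bP(H^*_{\kopt}>t-1,\bgs>t-1)$, controlled by Proposition~\ref{prop:roughUB}, captures the confinement rate $a=\phi(\gb,(1-\rho)\tone)$ but throws away the exponentially small crossing cost $\approx e^{-\lambda(\gb)\,\lopt}$ that the walk pays to reach $\tau_{\lopt-1}$. This factor \emph{must} appear in the numerator in order to cancel the term $e^{-\lambda(\gb)(\lopt-1)}$ in the lower bound~\eqref{eq:binf} on $Z_n$. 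Without it, the ratio has a spurious positive contribution $\lambda(\gb)\lopt$ in its exponent, of order $\gep_0^{-1}N$, whereas your genuine penalty $\kappa(a-b)n$ is only of order $\gep_0^{2}N$; since $\gep_0$ must be small for Proposition~\ref{pr:weakloc}, the exponent diverges to $+\infty$, not $-\infty$, and no tuning of $\rho$ or $\eta$ can save it. Also, $\kBtwo_n(\eta)$ is not the right lever: it compares the optimal gap to its competitors, not the entrance cost.

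The paper's mechanism is a Chernoff tilting: write $\bP(H^*_{\kopt}=m,\bgs>m)\leq e^{-\phi m}\,\bE^{\gb}[e^{\phi H^*_{\kopt}},\,H^*_{\kopt}<\bgs]$ with $\phi=g((1-\rho/2)\tone)$ chosen slightly above $g(\tone)$. The key point is that the tilted expectation retains the survival event $\{H^*_{\kopt}<\bgs\}$, hence the crossing cost; it is then controlled by Lemmas~3.7 and~3.8 of~\cite{PoiSim19} (this is where events $A^{(5)}_n$--$A^{(7)}_n$, $\kBthree_n$, $\kBfour_n$, $\kBfive_n$ are used) to yield a bound $\leq\mathrm{poly}(n)\,\bP^{\gb-\gep}(H^*_{\kopt}<\bgs)$. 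Comparing with the lower bound~\eqref{eq:lbd} on $Z_n$ (which involves the matching factor $\bP^{\gb}(H^*_{\kopt}<\bgs)$), the crossing costs cancel up to corrections of order $(\gep/\gep_0)\lambda'(\gb^-)N+\eta N$, and the condition~\eqref{eq:choicepar} ensures the genuine penalty $\kappa\,c(\rho)\,\gep_0^2\,N$ dominates. So your diagnosis points in the right direction, but the missing step is precisely this tilted-expectation control, which is the technical heart of the argument and is supplied by the PoiSim19 machinery rather than by any simple refinement of Proposition~\ref{prop:roughUB}.
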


\begin{proof}[Proof of Proposition~\ref{pr:hitting}]
Note that $H^*_{\kopt}=H_{\tau_{\lopt-1}}$. We first observe that for all $0\leq M \leq n$,
\beq
\label{eq:decompohit}
\bP^{\beta}(H_{\tau_{\lopt-1}} > M|\bgs > n)\leq 
\bP^{\beta}(M < H^*_{\kopt}\leq n < H^*_{\kopt+1}  |\bgs   > n)+\bP^{\beta}(n \notin [H^*_{\kopt},H^*_{\kopt+1}) |\bgs> n).
\eeq
Using Proposition \ref{pr:weakloc}, there exists $\gep_0$ such that for $\eta>0$ and $\gep$ small enough, on $ \good_n(\gd,\gep_0,\gep,\eta)$ and for $n$ large enough, 
\beq
\bP^{\beta}(n \notin [H^*_{\kopt},H^*_{\kopt+1}) |\bgs> n)\leq \petit.
\eeq
To manage the first term in the r.h.s. of \eqref{eq:decompohit}, we 
first provide an upper bound on $\bP^{\beta}(M < H^*_{k_0}\leq n < H^*_{k_0+1}  ,\bgs  > n)$.
 By applying the Markov property at $H^*_{k_0}$ and using the Chebyshev inequality, we obtain for all $\phi\geq 0$,
\beq
\label{eq:inter}
\ba
\bP^{\beta}(M &< H^*_{k_0}\leq n < H^*_{k_0+1}  ,\bgs   > n)\\
& \leq \sum_{m=M}^{n} \bP^{\beta}( H^*_{k_0}=m,  n < H^*_{k_0+1}  ,\bgs   > n)\\
& \leq \sum_{m=M}^{n} e^{-\phi m} \bE^{\beta}(e^{\phi H^*_{k_0}}, H^*_{k_0} <  \bgs)\bP_{\tau^*_{\kopt}}^\beta \left( \bgs \wedge H^*_{k_0+1}  >n-m \right).
\ea
\eeq

\par From Proposition \ref{prop:roughUB} we obtain that on $\kBone_n(\gep_0)$,
\beq
\label{eq:control1}
\bP_{\tau^*_{\kopt}}^\beta \left( \bgs \wedge H^*_{k_0+1}  >n-m \right)\leq Cn^3 e^{-\phi(\beta, T^*_\kopt)(n-m)}.
\eeq

\par We now turn to the upper bound on $ \bE^{\beta}(e^{\phi H^*_{k_0}}, H^*_{k_0} <  \bgs)$. To this end, let us first recall the notation from~\cite{PoiSim19} : 
\begin{itemize}
\item $({\sf X}_k)_{k\in\bbN_0}$ is the sequence of the obstacles indexes successively visited by the random walk, that is the process defined by $\tau_{{\sf X}_k} = S_{\gt_k}$;
\item $\gz_k^* = \inf\{n\in\bbN\colon {\sf X}_n = i(k)\}$.
\end{itemize}
The idea is to condition the random walk $S$ on $\sf X$ and use the technical Laplace transform estimates in~\cite[Lemma 3.7]{PoiSim19}
with the choice $t=(1-\rho/2)T^*_{k_0}$ and $\phi:=\frac{\pi^2}{2((1-\rho/2)T^*_{k_0})^2}$. We rewrite $\phi:=(1+c(\rho))\frac{\pi^2}{2(T^*_\kopt)^2}$ with the suitable choice of $c(\rho)>0$. At the cost of choosing $\eta>0$ and then $\gep >0$ even smaller (so that the assumptions of Proposition~\ref{pr:weakloc} and~\cite[Lemma 3.7]{PoiSim19} are satisfied) we may assume that
\beq
\label{eq:choicepar}
\lambda'(\gb^-)\frac{\gep}{\gep_0}+{\eta} <\kappa c(\rho) \gep_0^{2}\frac{\pi^2}{2},
\eeq
where $\lambda'(\gb^-)$ is the left derivative of the convex function $\lambda$ at $\gb$, for reasons that will become clear at the end of the proof. 
This choice of $t$ and $\phi$ guarantees that the assumption of~\cite[Lemma 3.7]{PoiSim19} are satisfied on $\kBone_n(\gep_0) \cap A_n^{(5)}(\gep_0,\gep)$ and provide $\alpha(\gep) >0$. This leads us to introduce for $\rho\in (0,1/2)$ (recall the definition of $f$ in Item ~\ref{it:f} of Section \ref{sec:goodenv}),
\beq
\ba
f_{k_0}(\rho)&=2 f\left(\pi \frac{T^*_{k_0-1}}{(1-\rho/2)T^*_{k_0}}\Big[1+\frac{C}{(T^*_{k_0})^2}\Big]\right)\\
\cB_{k_0,\alpha(\gep)}(\rho)& = \{j < i(k_0) \colon  T_{j} > (1-\rho/2) \alpha(\gep) T^*_{k_0}\}\\
L_{k_0,\alpha(\gep)}(\rho)& = |\cB_{k_0,\alpha(\gep)}(\rho)|.
\ea
\eeq
In this way, we obtain (see~\cite[Step 2 in the proof of Proposition~5.1, Eq.~(5.12)-(5.15)]{PoiSim19} for details)
\beq
\bE^{\beta}(e^{\phi H^*_{k_0}}, H^*_{k_0} <  \bgs)\leq \bE^{\gb-\gep}\left(  \big(f_{\kopt}(\rho)\big)^{\sharp\{ i\le \gz_{\kopt}^* \colon |\tau_{{\sf X}_{i-1}}-\tau_{{\sf X}_{i}}| >(1-\rho/2) \alpha(\gep) T^*_{k_0}\}}, H^*_{k_0} <  \bgs\right).
\eeq
We are now left with providing an upper bound for the moment generating function of $\sharp\{i\le \gz_{\kopt}^* \colon |\tau_{{\sf X}_{i-1}}-\tau_{{\sf X}_{i}}| >(1-\rho/2) \alpha(\gep) T^*_{k_0}\}$ under $\bP^\gb(\cdot | H^*_{k_0} <  \bgs)$. This is precisely the role of~\cite[Lemma $3.8$]{PoiSim19}, which gives an upper bound depending on $L_{k_0,\alpha(\gep)}(\rho)$ but requires some assumption on $T^*_\kopt$. From the definition of $h(A,L,\alpha)$ in Item~\ref{it:f} it is clear that the function $h$ is increasing with $A$ and $L$ but decreasing with $\alpha$, so that on $\kBthree_n(\rho)\cap \kBfive_n(\gep_0,\gep,\mathsf{C})$,
\beq
h(f_{k_0}(\rho), L_{k_0,\ga(\gep)}(\rho), \ga(\gep)) \leq h(2 f\left(\pi ({1-\rho/3})\right), \mathsf C,\alpha (\gep)).
\eeq
This implies that for $n$ large enough $(1-\rho/2)T^*_{k_0} > h(f_{k_0}(\rho), L_{k_0,\ga(\gep)}(\rho), \ga(\gep))$. One may finally use \cite[Lemma $3.8$]{PoiSim19} (with $T^*_{k_0}$ replaced by $(1-\rho/2)T^*_{k_0}$) to obtain that on $A^{(3)}_n(\gep_0)$,
\beq
\bE^{\beta}(e^{\phi H^*_{k_0}}, H^*_{k_0} <  \bgs) \leq 2  \big(f_{\kopt}(\rho)\big)^{L_{\kopt,\alpha(\gep)}(\rho)} n \bP^{\gb-\gep}\left( H^*_{k_0} <  \bgs \right).
\eeq
On $\kBthree_n(\rho)$, as $f$ is non-decreasing, for $n$ large enough 
\beq
f_{k_0}(\rho)\le2 f\left(\pi ({1-\rho/3})\right).
\eeq
 Note that on $\kBfive_n(\gep_0, \gep,\mathsf{C})$
\beq
\ba
L_{k_0,\alpha(\gep)}(\rho)&\leq |\{\gep_0 N \leq j \leq \gep_0^{-1}N \colon  T_{j} > \alpha(\gep) (1-\rho/2) \gep_0 N^{\frac{1}{\gamma}} \}|\\
&\leq \Pi_N\Big([\gep_0,\gep_0^{-1}]\times [\frac{\alpha(\gep)\gep_0}{4} ,+\8)\Big)\leq \mathsf C. 
\ea
\eeq
Combining the three previous bounds, we finally get:
\beq
\label{eq:control2}
\bE^{\beta}(e^{\phi H^*_{k_0}}, H^*_{k_0} <  \bgs) 
\leq 2  \big[2 f\left(\pi ({1-\rho/3})\right)\big]^{\mathsf C} n \bP^{\gb-\gep}\left( H^*_{k_0} <  \bgs \right).
\eeq
\par With \eqref{eq:control1} and \eqref{eq:control2} in hands, one can turn back to \eqref{eq:inter} to get
\beq
\label{eq:inter2}
\ba
&\bP^{\beta}(M < H^*_{k_0}\leq n < H^*_{k_0+1}  ,\bgs   > n)\\
& \leq 2Cn^4\  \big[2 f\left(\pi ({1-\rho/3})\right)\big]^{\mathsf C} \bP^{\beta-\gep}(H^*_{k_0} <  \bgs) \sum_{m=M}^{n} e^{-\phi m} \ e^{-\phi(\beta, T^*_\kopt)(n-m)}\\
& \leq 2Cn^4 \big[2 f\left(\pi ({1-\rho/3})\right)\big]^{\mathsf C} \bP^{\beta-\gep}(H^*_{k_0} <  \bgs) e^{-\phi(\beta, T^*_\kopt)n -c(\rho)\frac{\pi^2 M}{2(T^*_{\kopt})^2}}\sum_{m=M}^{n}  e^{-\left(\frac{\pi^2}{2(T^*_{\kopt})^2}-\phi(\beta, T^*_\kopt)\right)m}.
\ea
\eeq
Using the expansion of the functions $g$ and $\phi(\gb,\cdot)$ in~\eqref{eq:def:gt} and~\eqref{eq:phi} (see~\cite[Eq. (5.17)-(5.18)]{PoiSim19} for details) there exists some constant $C(\beta)$ so that on $\kBone_n(\gep_0) \cap A_n^{(5)}(\gep_0,\gep)$
\beq
 e^{-\left(\phi(\beta, T^*_\kopt)-g(T^*_{\kopt}) \right)n}  \sum_{m=M}^{n} e^{-\left(\frac{\pi^2}{2(T^*_{\kopt})^2}-\phi(\beta, T^*_\kopt)\right)m}\leq ne^{C(\gb)\gep_0^{-9/2\gamma}n^{\frac{\gamma-1}{\gamma+2}}}.
\eeq
Moreover, on $A^{(10)}_n(\gep_0,\gep,\eta)\cap \kBone_n(\gep_0)$,
\beq
\ba
\bP^{\beta-\gep}(H^*_{k_0} <  \bgs )&\leq \exp\Big\{-\Big(\lambda{(\beta-\gep)}-\frac{\gep_0 \eta}{2}\Big)(\lopt-1)\Big\}\\
&\leq \exp\Big\{-\Big(\lambda(\beta)-\gep \lambda'(\beta^-)-\frac{\gep_0 \eta}{2}\Big)(\lopt-1)\Big\},
\ea
\eeq
where we used for the last inequality that $\lambda$ is convex so that $\lambda{(\beta-\gep)}\geq \lambda(\beta)-\gep \lambda'(\beta^-)$.
Gathering all these intermediate estimates we finally obtain from \eqref{eq:inter2} the bound
\beq
\label{eq:ubn}
\ba
\bP^{\beta}(M < &H^*_{k_0}\leq n< H^*_{k_0+1}  ,\bgs > n)\\
&\leq \exp\Big\{-g(T^*_\kopt)n-c(\rho)\frac{\pi^2}{2(T^*_\kopt)^2}M-\left(\lambda(\beta)-\gep \lambda'(\beta-)-\frac{\gep_0 \eta}{2}\right)\lopt+o(N)\Big\}.
\ea
\eeq

\par To complete our control of the first term in \eqref{eq:decompohit} we turn to the lower bound on $\bP^{\beta}(\bgs   > n)$. 
On $\kBone_n(\gep_0) \cap \Omega_n(\gd)$, one can use \cite[Eq.~(4.8) and Lemma 4.2]{PoiSim19} with $\ell=\lopt$ so that 
\beq
Z_n\geq \frac{c_1}{2\sqrt{n}}  \bP^{\gb}(H^*_{k_0} <  \bgs ) e^{- g(T^*_{k_0}) n + o(N)},
\eeq
where the $o(N)$ is a \emph{deterministic} function of $N$.
On $A_n^{(10)}(\gep_0,\gep,\eta)$, we finally obtain
\beq
\label{eq:lbd}
Z_n\geq \frac{c_1}{2\sqrt{n}} e^{-(\lambda(\gb)+\frac{\gep_0 \eta}{2})(\lopt-1)} e^{- g(T^*_{k_0}) n +  o(N)}.
\eeq
Choosing $M=\kappa n$, we get from \eqref{eq:ubn} and \eqref{eq:lbd} 
\beq
\bP^{\beta}(\kappa n < H^*_{k_0}\leq n < H^*_{k_0+1}  |\bgs   > n)\leq  \exp\Big\{-c(\rho)\frac{\pi^2}{2(T^*_\kopt)^2}\kappa n+(\gep \lambda'(\beta-) + {\gep_0 \eta})\lopt+o(N)\Big\}. 
\eeq
On $\kBone_n(\gep_0)$, $\lopt\leq \gep_0^{-1} N$ and $T^*_{\kopt}\leq \gep_0 ^{-1}N^{\frac{1}{\gamma}}$ so that 
\beq
\ba
\bP^{\beta}(\kappa n < H^*_{k_0}\leq n < H^*_{k_0+1}  |\bgs   > n)\leq   \exp\Big\{-N\Big[c(\rho) \kappa \frac{\pi^2 \gep_0^{2}}{2}-\left(\frac{\gep}{\gep_0}\lambda'(\gb^-)+\eta\right)+o(1)\Big]\Big\}
\ea 
\eeq
and the conclusion follows from the choice of $\gep$ and $\eta$ in \eqref{eq:choicepar}.
\end{proof}

\section{A related Markov renewal process}
\label{sec:rmrp}

In this section we introduce a Markov renewal process that is related to the partition function of the polymer and, more generally, to the polymer measure itself. This auxiliary process turns out to be much easier to handle, due to its time-homogeneous Markov structure, and will be the main tool to establish localization results in the next two sections. The interpretation of the partition function using a renewal process is a standard and powerful tool in the field of polymer models. We refer the reader to \cite[Chapter 1]{Gia07} for a simple but instructive example in the context of homogeneous pinning on a defect line as well as more refined models (see also~\cite{Gia11}). This interpretation also plays a central role in the study of polymers interacting with multiple interfaces, see~\cite{CP09b,CP09}. As for {\it Markov} renewal processes, they have been previously applied to the pinning model with periodic disorder~\cite[Chapter 3]{Gia07} or with an annealed correlated Gaussian disorder~\cite{Poisat2012, Poisat2013b} and to the strip wetting model~\cite{Sohier_2013}.
\par Let us outline the content of the present section. In Section \ref{sec:markov-ren-int}, we introduce a transformation of the environment and the Markov renewal process with law $\cP$, see \eqref{def:cP}. We point its relation with the partition function in Proposition \ref{pr:ren-int}. In Section \ref{subsec:fee}, we start our study of $\cP$ and provide estimates on the free energy (defined in \eqref{eq:defel}) by an extensive use of comparisons with other environments (see Lemmas \ref{lem:sandw:phit1t2} and \ref{lem:est-scd-order-FE}). Section \ref{subsec:te} is dedicated to the proof of two technical estimates (Lemmas \ref{lem:Zout} and \ref{eq:control.ratio.h}) that will later be decisive for the description of $\cP$. In Section \ref{subsec:tpel} we begin this description by proving that the transition probabilities of the auxiliary Markov chain are non-degenerate (see Lemma \ref{lem:binf.proba.out}) and by providing estimates on the first two moments of the excusions lengths, depending on their type (see Lemma \ref{lem:mom-exc}). We complete our description in Section \ref{subsec:mrfsp} with a control on the mass renewal function (see Lemma \ref{lem:mass-ren-fct}) leading to estimates on the survival probability of the random walk starting from the lower boundary of the localization interval (see Lemma \ref{lem:sharp-Zn-renorm}).
\subsection{A Markov renewal interpretation of the partition function} 
\label{sec:markov-ren-int}
Throughout this section, we consider that $\tau$ belongs to $\good_n$, which implies in particular that $\lopt(n)$ is uniquely defined.
\par \textit{Re-parametrization.} To simplify, we shall write 
\beq
\tone = \tone(n) := T_{\ell_0(n)}, \qquad
\ttwo = \ttwo(n) = \max_{i\neq \lopt, i< i(\kopt+1)} T_i.
\eeq
Note that $\ttwo$ corresponds to the second largest gap in the interval $[0,\tau^*_{\kopt+1}]$ that appears in the event $\kBthree_n(\rho)$, which now reads:
\beq
\kBthree_n(\rho) = \{\rho \tone < \ttwo < (1-\rho) \tone \}.
\eeq

We replace all gaps after $T^*_{\kopt+1}$ by gaps of size $1$: For $k\geq 1$,
\beq
\overline{T}_k=
\begin{cases}
\ba
 T_k \quad &\text{if }k < i(\kopt+1)\\
1 \quad &\text{if }k \geq i(\kopt+1).
\ea
\end{cases}
\eeq
The environment $\mtau$ is then defined by $\mtau_0=0$ and, for $n\geq 1$, 
\beq
\mtau_n-\mtau_{n-1}=\overline{T}_n. 
\eeq
Proposition \ref{pr:weakloc} indicates that with high probability the walk does not reach the $i(\kopt+1)$-th gap so that the behaviour of the walk under $\bP(\ \cdot\ | \bgs(\tau)>n)$ or $\bP(\ \cdot\ | \bgs(\mtau)>n)$ can be compared (this will be made precise in the proof of Theorem~\ref{thm:ggasup1}).
Let us also write 
\beq
\label{eq:deftau+-}
\ba
\tau_k^+ &= \mtau_{\lopt+k} - \mtau_{\lopt}, \qquad &k\in\bbN, \\
\tau_k^- &= \mtau_{\lopt-1} - \mtau_{\lopt-1-k}, \qquad &k\in\{1,\ldots, \lopt-1\},
\ea
\eeq
so that we will sometimes refer to $\mtau$ as $(\tau^-,\tone, \tau^+)$. This notation will be specially handy when we later compare $\mtau$ to other environments where $\tau^-$ and $\tau^+$ are modified.

\par We define, for all $m\geq 1$, $a,b\in\{0,1\}$ and all events $A$,
\beq
\ba
 \mZ_m^a(A) &= \bP_{\tau_{\lopt-1}+a\tone}(\bgs(\mtau) > m,A),\qquad \mZ_m^a =\mZ_m^a (\Omega),\\
\mZ_m^{ab}(A)&= \bP_{\tau_{\lopt-1}+a\tone}( \bgs(\mtau)> m, S_m = \tau_{\lopt-1}+b\tone,A),\qquad \mZ_m^{ab}=\mZ_m^{ab}(\Omega).
\ea
\eeq
To lighten notation we may later use $\mZ_m=\mZ_m^0$ and $\mZ_m(A)=\mZ_m^0(A)$.\\

\par \textit{Decomposition of the partition function and auxiliary Markov renewal process.}
\label{sec:defcP}
We recall that the visits to the boundary of the optimal gap are denoted by $(\bar \gt_i)_{i\ge 0}$ (see \eqref{eq:defthetabar}). We define
\beq
\ba
X_i &= \frac{S_{\bar \gt_i}-\tau_{\lopt-1}}{\tone}\in\{0,1\}, \qquad i\geq 0\\
Y_i &=
\begin{cases}
\ins & \text{if} \quad S_{(\bar \gt_{i-1}, \bar \gt_i)} \subset I_\loc\\
\out &\text{if} \quad S_{(\bar \gt_{i-1}, \bar \gt_i)} \cap I_\loc = \emptyset
\end{cases}
\qquad i\geq 1.
\ea
\eeq

Recall~\eqref{eq:defRuinproba}. In this section, we need to consider ruin probabilities depending on the exit point of the random walk. For convenience, we use the same notation as in~\cite{CP09b} (see (2.5) and (2.6) therein), that is, for $t\in\bbN$:
\beq
\label{eq:defRuinproba2}
\ba
q_t^0(n) &= \bP_0(\inf\{k>0\colon S_k \in \{-t,0,t\}\}= n,\ S_n = 0),\\
q_t^1(n) &= \bP_0(\inf\{k>0\colon S_k \in \{-t,0,t\}\}= n,\ S_n = t),\\
\ea
\eeq
so that for all $n\in \bbN$,
\beq
 q_t(n) = q_t^0(n) + 2 q_t^1(n).
\eeq
We refer to Appendix~\ref{sec:mgf-qt} for useful results on the moment generating functions of these sequences.

\par For $a,b \in \{0,1\}$ and $\ell\in\bbN$, define 
\beq
\label{eq:lienZq}
\ba
\mK_{\ins}(a, b, \ell)&:=
\mZ^a_{\ell}(\bar \gt_1 = \ell, Y_1 = \ins, X_1 = b)
=
\begin{cases}
\frac{e^{-\beta}}{2}q_{\tone}^0(\ell) \quad \textrm{if } a=b\\
{e^{-\beta}}q_{\tone}^1(\ell) \quad \textrm{if } a\neq b,
\end{cases}\\
\mK_{\out}(a, \ell)&:=\mZ^a_{\ell}(\bar \gt_1 = \ell, Y_1 = \out),
\ea
\eeq
and
\beq
\label{def Zexc2}
\mK(a, b, y; \ell) := 
\begin{cases}
\mK_{\ins}(a, b, \ell) & (y = \ins)\\
\mK_{\out}(a, \ell) & (y = \out \textrm{ and } a=b)\\
0 & \textrm{ otherwise}.
\end{cases}
\eeq
We define for all events $A$,
\beq
\mZ_m^\pin(A) := \mZ_m(A, m\in \bar \gt),
\eeq
and the pinned partition function $\mZ_m^\pin := \mZ_m^\pin(\Omega)=\mZ_m^{00}+\mZ_m^{01}$, which we decompose according to the contact points with the interfaces:
\beq
\label{eq:decomposeZn}
\mZ_m^\pin = 
\sum_{k=1}^m
\sum_{0<u_1 < \ldots < u_k=m} 
\sumtwo{x_1, \ldots, x_k \in \{0,1\}}%
{y_1, \ldots, y_k \in \{\ins, \out\}}
\prod_{i=1}^k
\mK(x_{i-1}, x_i, y_i; u_i - u_{i-1}),
\eeq
with the convention that $u_0=x_0=0$. Note that the unpinned partition function is related to the pinned one via the relation:
\beq
\mZ_m = \sumtwo{0\le k \le m}{a\in\{0,1\}} \mZ_k^{0a}\;\mZ_{m-k}^{a} (\bar \gt_1 > m-k).
\eeq

Next, we define for $a,b\in\{0,1\}$ and $f\ge 0$ (recall~\eqref{eq:mgf}), 
\beq
\hat \mK(a, b, f) = \hat \mK_{\ins}(a, b, f) +\hat \mK_{\out}(a, f) \ind_{\{a=b\}}.
\eeq
From \eqref{eq:lienZq}  and Lemma~\ref{lem:estim-qtn}, for all $f<g(\tone)$ and $a,b\in \{0,1\}$, $\hat \mK_{\ins}(a,b,f)<+\8$. Moreover, from \cite[Proposition 3.5]{PoiSim19} and Proposition~\ref{pr:homo}, for all $a\in \{0,1\}$ and $f< \phi(\ttwo)$,
\beq
\hat \mK_{\out}(a,f) \leq \sum_{\ell\ge 1} \bP(\gs(\ttwo\Z)>\ell)e^{f\ell}<+\8.
\eeq

We define the $\{0,1\}^2$-matrix
\beq
\label{eq:matrice}
\ba
\hat \mK(\cdot,\cdot,f)&:=\begin{pmatrix}
\hat \mK_{\ins}(0,0,f) +\hat \mK_{\out}(0,f)  &\hat \mK_{\ins}(0,1,f) \\
\hat \mK_{\ins}(1,0,f)  &\hat \mK_{\ins}(1,1,f) +\hat \mK_{\out}(1,f)
\end{pmatrix}
\\
&=\begin{pmatrix}
\hat \mK_{\ins}(0,0,f) +\hat \mK_{\out}(0,f)  &\hat \mK_{\ins}(0,1,f) \\
\hat \mK_{\ins}(0,1,f)  &\hat \mK_{\ins}(0,0,f) +\hat \mK_{\out}(1,f)
\end{pmatrix}
\ea
\eeq 
where we have used that $\hat \mK_{\ins}(1,1,f)=\hat \mK_{\ins}(0,0,f)$ and $\hat \mK_{\ins}(0,1,f)=\hat \mK_{\ins}(1,0,f)$.
Since the matrix $\hat \mK(\cdot,\cdot,f)$ has positive entries, we may define $\gL(f)$ as its Perron-Frobenius eigenvalue. We recall that (see~\cite[Proof of Theorem 1.1]{S06})
\beq
\gL(f) = \sup_{v \in \R^2\setminus \{0\} } \min_{i} \frac{(\hat \mK(\cdot,\cdot,f)v)_i}{v_i}
\eeq
so that $f\to \gL(f)$ is continuous and increasing on $[0,g(\tone))$. 
\par For all $\ell\in\bbN$, let
\beq
\mK_{\ins}(\ell) := \mK_{\ins}(0,0,\ell) + \mK_{\ins}(0,1,\ell)
= \mK_{\ins}(1,0,\ell) + \mK_{\ins}(1,1,\ell).
\eeq
We shall later use the following routine inequality:
\begin{lemma} 
\label{lem:sandwPF}
For all $f\ge0$,
\beq
\min(\hat \mK_{\out}(0,f),\hat \mK_{\out}(1,f))
\le
\gL(f) - \hat \mK_{\ins}(f)
\le
\max(\hat \mK_{\out}(0,f),\hat \mK_{\out}(1,f)).
\eeq
\end{lemma}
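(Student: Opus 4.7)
The plan is to reduce the inequality to the classical Frobenius row-sum bounds for a non-negative matrix, after observing that the two row sums of $\hat \mK(\cdot,\cdot,f)$ share a common ``$\ins$ part'' and differ only by $\hat \mK_{\out}(0,f)$ vs. $\hat \mK_{\out}(1,f)$.

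First I would compute the row sums of the matrix in \eqref{eq:matrice}. Using the identity $\mK_{\ins}(\ell) = \mK_{\ins}(0,0,\ell) + \mK_{\ins}(0,1,\ell) = \mK_{\ins}(1,0,\ell) + \mK_{\ins}(1,1,\ell)$ together with the observation that $\hat \mK_{\ins}(0,0,f) = \hat \mK_{\ins}(1,1,f)$ and $\hat \mK_{\ins}(0,1,f) = \hat \mK_{\ins}(1,0,f)$ (as used already in the second equality of \eqref{eq:matrice}), both row sums take the form
\[
\sum_{b\in\{0,1\}} \hat\mK(\cdot,\cdot,f)_{a,b} = \hat\mK_{\ins}(f) + \hat\mK_{\out}(a,f), \qquad a\in\{0,1\}.
\]
Hence the minimum and maximum row sums equal $\hat\mK_{\ins}(f) + \min_a \hat\mK_{\out}(a,f)$ and $\hat\mK_{\ins}(f) + \max_a \hat\mK_{\out}(a,f)$ respectively.

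Next I would invoke the standard row-sum bounds for the Perron-Frobenius eigenvalue of a non-negative matrix, which read
\[
\min_i \sum_j \hat\mK(\cdot,\cdot,f)_{ij} \le \gL(f) \le \max_i \sum_j \hat\mK(\cdot,\cdot,f)_{ij}.
\]
Both of these follow directly from the variational characterization recalled in the text: taking $v = (1,1)$ in
\[
\gL(f) = \sup_{v \neq 0} \min_i \frac{(\hat\mK(\cdot,\cdot,f)\,v)_i}{v_i}
\]
yields the lower bound, while the dual Collatz--Wielandt formula $\gL(f) = \inf_{v>0} \max_i (\hat\mK(\cdot,\cdot,f)v)_i/v_i$ (which follows from the first by applying it to the transpose and noting that $\hat\mK$ and $\hat\mK^{\mathsf{T}}$ have the same Perron eigenvalue) with $v=(1,1)$ yields the upper bound. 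Plugging in the row sums computed above and subtracting $\hat\mK_{\ins}(f)$ from all three terms gives the claimed inequality. There is no real obstacle here; the lemma is a routine consequence of the Perron--Frobenius row-sum sandwich once one observes that the $\ins$-contribution to each row sum is the same constant $\hat\mK_{\ins}(f)$.
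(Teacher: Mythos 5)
Your proof is correct and follows essentially the same route as the paper: the paper's one-line proof simply cites the classical row-sum sandwich for the Perron--Frobenius root (Corollary~1 in \cite[Section~1.1]{S06}), while you re-derive that sandwich from the Collatz--Wielandt variational characterization. Your row-sum computation and the final subtraction of $\hat\mK_{\ins}(f)$ are exactly what the cited corollary packages. One small inaccuracy: the dual formula $\gL(f)=\inf_{v>0}\max_i (\hat\mK v)_i/v_i$ does \emph{not} follow from the sup--min formula ``by applying it to the transpose'' --- transposing gives $\gL(f)=\sup_{v>0}\min_i (\hat\mK^{\mathsf T}v)_i/v_i$, which is a different expression. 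The inf--max form is a separate (standard) half of the Collatz--Wielandt theorem; alternatively you can get the upper bound directly without it: if $u>0$ is a right Perron eigenvector and $i^*$ maximizes $u_i$, then $\gL(f)\,u_{i^*}=\sum_j \hat\mK_{i^*j}u_j\le u_{i^*}\sum_j \hat\mK_{i^*j}$, so $\gL(f)\le\max_i\sum_j \hat\mK_{ij}$.
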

\begin{proof}[Proof of Lemma~\ref{lem:sandwPF}] Use Corollary~1 in~\cite[Section 1.1]{S06}.
\end{proof}
This implies that $\gL(0)<1$ and since
\beq
\hat \mK(0,0,f) \ge \hat \mK_{\ins}(0,0,f) = \frac{e^{-\beta}}{2} \hat q_{\tone}^0(f) \to \infty, 
\eeq
as $f\uparrow g(\tone)$, we conclude that the equation
\beq
\label{eq:defel}
 \gL(f)  = 1
\eeq
admits a unique solution in $(0,g(\tone))$ that we denote by $\phi = \phi(\tau^-,\tone, \tau^+) $ and call \textit{free energy}.
Thus, there exists a map $h\colon \{0,1\} \mapsto (0,\infty)$ such that
\beq
\label{eq:decompo}
\sum_{b \in \{0,1\}} \hat \mK(a, b, \phi) \frac{h(b)}{h(a)} = 1, 
\qquad 
\forall a\in \{0,1\}.
\eeq
The relation in~\eqref{eq:decompo} allows us to define the law of a Markov renewal process $\cP$ (which actually depends on $\gb$ and on the environment of obstacles) such that for all $i\in \bbN$, $b\in\{0,1\}$, $y\in\{\ins,\out\}$,
\beq
\label{def:cP}
\cP(\bar \gt_i -  \bar \gt_{i-1}= \ell, X_i = b, Y_i = y | \bar \gt_j, X_j, Y_j,\ j<i)
= \mK(X_{i-1}, b, y; \ell) e^{\phi\ell} \frac{h(b)}{h(X_{i-1})},
\eeq
with $\bar\gt_0= 0$ and $X_0 = 0$ as initial state. In the rest of the paper, we shall use subscripts to specify the starting point of the modulating chain (which is zero by default):
\beq
\cP_a(\cdot) = \cP(\cdot | X_0 = a).
\eeq
We use both subscripts and superscripts to specify the starting and ending points, namely, for $m\in\bbN_0$,
\beq
\cP_a^b(m\in \bar\gt, A) = \sum_{k\ge 0} \cP(\bar \gt_k = m, X_k = b, A | X_0 = a).
\eeq

Let us now explain the link between the Markov renewal process $\cP$ defined in~\eqref{def:cP} and the polymer measure. First, let $\cF = \gs(\bar \gt, X)$ and $\cF_i = \gs(\bar \gt_j, X_j, j\le i)$ for $i\in \bbN$. Define for $m\in \bbN$ the following $\gs$-algebra:
\beq
\hat \cF_m = \{A \in \cF \colon \forall i\ge 1, A \cap \{\bar \gt_i = m\} \in \cF_i \}.
\eeq
From \eqref{eq:decomposeZn} and \eqref{def:cP}, one can deduce
\begin{proposition}
\label{pr:ren-int}
For all $m\ge 1$ and $A\in \hat \cF_m$,
\beq
\mZ_m^\pin(A) e^{\phi m} = \cP_0^0(A, m\in\bar\gt) + \frac{h(0)}{h(1)} \cP_0^1(A, m\in\bar\gt).
\eeq
\end{proposition}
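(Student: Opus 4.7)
The strategy is a standard change-of-measure/telescoping argument for Markov renewal processes, starting from the explicit decomposition \eqref{eq:decomposeZn} of the pinned partition function and rewriting each kernel contribution in terms of the transition probabilities of $\cP$ introduced in \eqref{def:cP}.

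I would first notice that, since $A\in\hat\cF_m\subseteq\cF=\gs(\bar\gt,X)$ does not depend on the excursion types $(Y_i)$, the decomposition \eqref{eq:decomposeZn} of $\mZ_m^\pin(A)$ can be written as
\beq
\mZ_m^\pin(A)
= \sum_{k\ge 1}
\sum_{\substack{0=u_0<u_1<\cdots<u_k=m \\ 0=x_0,x_1,\ldots,x_k\in\{0,1\}}}
\ind_{A}^{(k)}(u,x)\prod_{i=1}^k \hat K(x_{i-1},x_i;u_i-u_{i-1}),\notag
\eeq
where $\hat K(a,b;\ell):=\sum_{y}\mK(a,b,y;\ell)$ coincides with the $(a,b)$ entry of the matrix appearing in \eqref{eq:matrice} (before taking MGF), and $\ind_A^{(k)}(u,x)$ denotes the indicator of $A$ on the event $\{\bar\gt_i=u_i,\,X_i=x_i\ \forall\,i\le k\}$. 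The crucial point here is that the definition of $\hat\cF_m$ guarantees that $A\cap\{\bar\gt_k=m\}\in\cF_k$, so $\ind_A^{(k)}(u,x)$ is genuinely a function of $(u_1,\ldots,u_k,x_1,\ldots,x_k)$.

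Next, by \eqref{def:cP}, for each step the kernel can be rewritten as
\beq
\hat K(x_{i-1},x_i;u_i-u_{i-1})
= e^{-\phi(u_i-u_{i-1})}\,\frac{h(x_{i-1})}{h(x_i)}\,\cP\bigl(\bar\gt_i-\bar\gt_{i-1}=u_i-u_{i-1},\,X_i=x_i\,\bigl|\,\cF_{i-1}\bigr),\notag
\eeq
where on the right-hand side the $Y_i$'s have already been summed out. Plugging this into the previous display, the exponential factors telescope to $e^{-\phi m}$, while the ratios of $h$ telescope to $h(0)/h(x_k)$. The product of conditional $\cP$-probabilities then assembles, by the Markov property of $\cP$, into the joint probability $\cP_0(\bar\gt_i=u_i,\,X_i=x_i\ \forall\,i\le k)$. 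Splitting the sum according to the terminal state $x_k\in\{0,1\}$ and summing over $k\ge 1$ and the positions of the earlier contact points yields
\beq
\mZ_m^\pin(A)\,e^{\phi m}
= \sum_{b\in\{0,1\}}\frac{h(0)}{h(b)}\,\cP_0^b(A,\,m\in\bar\gt),\notag
\eeq
which, using $h(0)/h(0)=1$, is exactly the claimed identity.

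The only step requiring any care is the justification that the indicator of $A$ can be treated as a deterministic function of the first $k$ renewals on the event $\{\bar\gt_k=m\}$; this is precisely what the definition of $\hat\cF_m$ is designed for, and it is what allows one to pass from the sum over trajectories under $\bP$ to the expectation under the Markov renewal measure $\cP_0$ without measurability issues. The telescoping itself is an elementary computation that works because $\phi$ and $h$ were defined so as to make $\hat K(a,b;\cdot)\,e^{\phi\cdot}\,h(b)/h(a)$ a genuine transition kernel, see \eqref{eq:defel}--\eqref{eq:decompo}.
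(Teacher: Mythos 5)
Your proof is correct and follows exactly the route the paper has in mind: the paper states only that the proposition can be deduced from the decomposition \eqref{eq:decomposeZn} and the definition \eqref{def:cP}, and your argument is precisely that computation written out — summing out the $y_i$'s (legitimate since $A\in\gs(\bar\gt,X)$), inverting \eqref{def:cP} step by step, telescoping the $e^{\phi(\cdot)}$ and $h$ factors to $e^{\phi m}\,h(0)/h(x_k)$, and reassembling the product into $\cP_0^b(A,m\in\bar\gt)$ via the Markov property. You also correctly identify that the definition of $\hat\cF_m$ is exactly what makes the indicator of $A$ a deterministic function of $(u_1,\dots,u_k,x_1,\dots,x_k)$ on $\{\bar\gt_k=m\}$, which is the one point where the deduction could go wrong.
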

\subsection{Free energy estimates}
\label{subsec:fee}

Let us bound the free energy $\phi = \phi(\tau^-,\tone, \tau^+)$ defined in Section~\ref{sec:defcP} by simpler quantities corresponding to the free energy for less and more favorable environments of obstacles.
Let $\phi(t)$ be the free energy corresponding to the periodic environment $t\bbZ$ (see Proposition~\ref{pr:homo}) and recall that $g(t)$ is given in~\eqref{eq:def:gt}. Then, we have the following
\begin{lemma} 
\label{lem:sandw:phit1t2}
The following inequalities hold:
\beq
\label{eq:sandw:phit1t2}
\phi(\tone) \le \phi(\tau^-,\tone,\tau^+) \le \phi(\bbN,\tone,\bbN) < g(\tone).
\eeq
\end{lemma}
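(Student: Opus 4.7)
The proof rests on the monotonicity of the free energy $\phi$ with respect to the environment outside the optimal gap $I_\loc$. The starting observation is that $\hat\mK_\ins(a,b,f)$, given explicitly in \eqref{eq:lienZq}, depends only on $\tone$, whereas the dependence on $\tau^\pm$ enters the matrix \eqref{eq:matrice} solely through its two diagonal entries $\hat\mK_\out(a,f)$. Moreover, by a pathwise coupling of outside excursions driven by the same underlying simple random walk, removing obstacles from $\tau^+$ or $\tau^-$ can only \emph{increase} $\hat\mK_\out(a,f)$, because each obstacle visit during an excursion contributes a survival factor $e^{-\beta}<1$. Combined with the classical monotonicity of the Perron--Frobenius eigenvalue for non-negative matrices (Corollary~1 in Section~1.1 of \cite{S06}, already invoked in Lemma~\ref{lem:sandwPF}) and the strict monotonicity of $f\mapsto \Lambda(f)$ on $[0,g(\tone))$, this gives: a sparser outside environment yields a pointwise larger $\hat\mK_\out$, hence a pointwise larger $\Lambda(f)$, hence a smaller solution of $\Lambda(f)=1$.

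For the upper bound, the environment $(\bbN,\tone,\bbN)$ places an obstacle at every integer outside $I_\loc$, so it is at least as dense as $(\tau^-,\tone,\tau^+)$, and the monotonicity principle above yields $\phi(\tau^-,\tone,\tau^+) \le \phi(\bbN,\tone,\bbN)$. The final strict inequality $\phi(\bbN,\tone,\bbN) < g(\tone)$ is in fact environment-independent: by Lemma~\ref{lem:estim-qtn}, $\hat q_\tone^0(f)\to \infty$ as $f\uparrow g(\tone)$, hence $\hat\mK_\ins(0,0,f)\to \infty$ and \textit{a fortiori} $\Lambda(f)\to \infty$; the unique solution of $\Lambda(f)=1$ therefore lies strictly in $(0,g(\tone))$, which was exactly the content of the definition of $\phi$ preceding \eqref{eq:defel}.

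For the lower bound, one first notes that on $\good_n$ every gap of $\mtau$ is at most $\tone$: indeed, for $k<i(\kopt+1)$ one has $T_k \le T^*_{\kopt}=\tone$ by the definition of the records, while for $k\ge i(\kopt+1)$ the gap has been reset to $1$. Hence $\mtau$ is at least as dense outside $I_\loc$ as the fully periodic environment $\tone\bbZ$, and the monotonicity principle gives $\phi(\tau^-,\tone,\tau^+) \ge \phi(\tone\bbZ,\tone,\tone\bbZ)$. It remains to identify the latter with $\phi(\tone)$ from Proposition~\ref{pr:homo}. In the periodic case, translation symmetry forces $\hat\mK_\out(0,f) = \hat\mK_\out(1,f)$, so the matrix \eqref{eq:matrice} takes the symmetric form with diagonal entries $A(f) = \hat\mK_\ins(0,0,f) + \hat\mK_\out(0,f)$ and off-diagonal entries $B(f) = \hat\mK_\ins(0,1,f)$, whose Perron eigenvalue is $A+B$. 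Decomposing an outside excursion at its successive visits to the obstacle sublattice $\tone\bbZ$ gives a tridiagonal linear recursion on the depth of the excursion, whose bounded solution yields $\hat\mK_\out(0,f)$ in closed form; after simplification, the eigenvalue equation $A+B=1$ reduces to $e^{-\beta}[\hat q_\tone^0(f) + 2\hat q_\tone^1(f)] = 1$, i.e.\ to $\hat q_\tone(f) = e^{\beta}$, which by Proposition~\ref{pr:homo} defines $\phi(\tone)$.

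The main delicate point will be the periodic-case identification: one has to solve the tridiagonal recursion on the excursion depth and verify that its bounded solution precisely balances the off-diagonal survival factors, so that the Perron equation collapses onto the scalar equation of Proposition~\ref{pr:homo}. By contrast, once monotonicity is set up both outer comparisons and the strict inequality $\phi<g(\tone)$ follow essentially from inspection of the dependence of $\hat\mK$ on the environment.
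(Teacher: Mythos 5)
Your treatment of the second and third inequalities is correct and essentially the paper's: the comparison with $(\bbN,\tone,\bbN)$ is a genuine pathwise domination because the obstacle set of $\mtau$ is a \emph{subset} of all integers, so every excursion visits at least as many obstacles in the denser environment, giving $\mK_{\out}^{\bbN}(a,\ell)\le \mK_{\out}(a,\ell)$ and hence, by coordinatewise monotonicity of the Perron--Frobenius eigenvalue, $\phi(\tau^-,\tone,\tau^+)\le\phi(\bbN,\tone,\bbN)$; and $\phi(\bbN,\tone,\bbN)<g(\tone)$ is immediate from the construction of the free energy.

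The lower bound, however, has a genuine gap. Your ``monotonicity principle'' rests on a pathwise coupling, which requires the two obstacle sets to be \emph{nested}: only then does a fixed trajectory visit at least as many obstacles in one environment as in the other. The environments $\mtau$ (outside $I_\loc$) and $\tone\bbZ$ are not nested --- a path can oscillate around a point of $\mtau$ that is not a multiple of $\tone$, or vice versa --- so ``all gaps are at most $\tone$'' does not yield $\mK_{\out}(a,\ell)\le \mK_{\out}^{\tone\bbZ}(a,\ell)$ for the \emph{pinned} excursion kernels, and the claimed inequality $\Lambda(f)\le\Lambda^{\tone\bbZ}(f)$ does not follow. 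The paper flags exactly this obstruction (``it does not seem so easy to compare any of the pinned versions'') and takes a different route: it fixes $f<\phi(\tone)$, uses \cite[Proposition~3.5]{PoiSim19} --- a comparison of the moment generating functions of the successive obstacle-hitting times $\gt_i$, valid for the \emph{free} (unpinned) quantities --- to show that $\sum_\ell \mZ_\ell(\ell\in\gt)e^{f\ell}<\infty$, then lower-bounds this sum by $\sum_k\Lambda(f)^k\cP^f(X_k=0)$ via the tilted Markov renewal process, and concludes $\Lambda(f)<1$ from the convergence of $\cP^f(X_k=0)$ to a positive limit. This also sidesteps the second unproven step in your proposal, namely the identification $\phi(\tone\bbZ,\tone,\tone\bbZ)=\phi(\tone)$ via the ``tridiagonal recursion,'' which you acknowledge as the delicate point but do not carry out; in the paper's argument no such identification is needed, since $\phi(\tone)$ enters only through the scalar equation $e^{-\gb}\hat q_{\tone}(f)<1$ for $f<\phi(\tone)$.
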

\begin{proof}[Proof of Lemma~\ref{lem:sandw:phit1t2}]
For the third inequality, there is nothing to prove as, by construction, $\phi(\tau^-,\tone,\tau^+)$ belongs to $(0,g(\tone))$ in particular when $\tau^-=\tau^+=\bbN$. Let us now deal with the second inequality. By a straightforward pathwise comparison we obtain, with obvious  notation,
\beq
\label{eq:pathwise-comp}
\mK_{\out}(a,\ell)  \ge \mK_{\out}^{\bbN}(a,\ell), \qquad a\in\{0,1\}.
\eeq
Since the Perron-Frobenius eigenvalue of a matrix is non-decreasing coordinatewise (as soon as it exists), we get the result. We now turn to the first inequality, which is slightly more involved. We fix $f< \phi(\tone)$ and prove that $\Lambda(f)<1$.
Let us first recall the distinction between
\beq
\gt = \{m\in \bbN_0\colon S_m \in \mtau\}
\eeq
and
\beq
\bar \gt = \{m\in \bbN_0\colon S_m \in \{\mtau_{\lopt-1}, \mtau_{\lopt-1}+\tone\}\}.
\eeq
By using~\cite[Proposition~3.5]{PoiSim19} we could compare the free partition functions (i.e. without any constraint on the final point) for the environments $(\tau^-,\tone, \tau^+)$ and $\tone\bbZ$ but it does not seem so easy to compare any of the pinned versions (that is with the constraint $\ell \in \gt$ or $S_\ell = 0$). However we may write
\beq
\label{eq:compareZf}
\sum_{\ell\ge 0} \mZ_\ell(\ell \in \gt) e^{f\ell}
\le \sum_{\ell\ge 0} Z^{\tone\bbZ}_\ell(\ell \in \gt) e^{f\ell},
\eeq
where the superscript on top of the partition function $\mZ_\ell$ indicates a change of environment.
Indeed, 
\beq
\mZ_\ell(\ell \in \gt) = \sum_{i\ge 0} \mZ_\ell(\gt_i = \ell),
\eeq
so
\beq
\sum_{\ell\ge 0} \mZ_\ell(\ell \in \gt) e^{f\ell} = \sum_{i\ge 0} e^{-\gb i} \bE^{\mtau}_{\tau_{\lopt-1}}[e^{f\gt_i}].
\eeq
By~\cite[Proposition~3.5]{PoiSim19},
\beq
\bE^{\mtau}_{\tau_{\lopt-1}}[e^{f\gt_i}] \le \bE^{\tone\bbZ}[e^{f\gt_i}]
\eeq
and we get the desired inequality. Moreover, note that
\beq
\sum_{\ell\ge 0} Z^{\tone\bbZ}_\ell(\ell \in \gt) e^{f\ell} = \frac{1}{1-e^{-\gb} \hat q_{\tone}(f)} < +\infty,
\eeq
so that 
\beq
\label{eq:sumZffini}
\sum_{\ell\ge 0} \mZ_\ell(\ell \in \gt) e^{f\ell}<+\8.
\eeq 
\par We denote by $h_f$ an eigenvector of $\hat \mK(\cdot,\cdot,f)$ associated to $\Lambda(f)$ and analogously to \eqref{def:cP}, define the law $\cP^f$ of a Markov renewal process such that for all $i\in \bbN$, $a\in\{0,1\}$, $y\in\{\ins,\out\}$,
\beq
\label{def:cPf}
\cP^f(\bar \gt_i -  \bar \gt_{i-1}= \ell, X_i = a, Y_i = y | \bar \gt_j, X_j, Y_j,\ j<i)
= \mK(X_{i-1}, a, y; \ell) \frac{h_f(a)}{h_f(X_{i-1})}\frac{e^{f \ell}}{\Lambda(f)}.
\eeq
Similarly to Proposition \ref{pr:ren-int}, we obtain for all $\ell\geq 0$, 
\beq
\mZ_\ell(\ell \in \gt)\ge \mZ_\ell^{00}(\ell \in \gt)=e^{-f \ell} \sum_{k\geq 1}\Lambda(f)^k \cP^f(\bar \gt_k=\ell, X_k=0),
\eeq
so that
\beq
\sum_{\ell\ge 0} \mZ_\ell(\ell \in \gt)e^{f \ell} \ge\sum_{k\geq 1}\Lambda(f)^k \cP^f(X_k=0).
\eeq
Together with \eqref{eq:compareZf} and \eqref{eq:sumZffini}, we deduce that 
\beq
\label{eq:laststep}
\sum_{k\geq 1}\Lambda(f)^k \cP^f(X_k=0)<+\8.
\eeq
The reader may check that the Markov chain $(X_k)_{k\ge 0}$ has non-degenerate transition probabilities. Moreover, it is irreducible and aperiodic, so that $(\cP^f(X_k=0))_{k\geq 0}$ converges when $k$ goes to infinity to some real number in $(0,1)$ that is the probability of $0$ under the invariant probability of $X$. Together with \eqref{eq:laststep} this implies that $\Lambda(f)<1$ and that concludes the proof.
\end{proof}
Lemma \ref{lem:sandw:phit1t2} provides the first order term in the expansion of the free energy as $\tone$ is large. We now estimate the second order term. To that purpose, let us define $\gep(\tau^-,\tone, \tau^+)$ by
\beq
\label{eq:assphi}
\phi(\tau^-,\tone, \tau^+) = \frac{\pi^2}{2\tone^2}\Big[1- \gep(\tau^-,\tone, \tau^+)\Big].
\eeq
Then, we have:
\begin{lemma}
\label{lem:est-scd-order-FE}
As $n\to\infty$ (enforcing on $\good_n$ that $\tone\to \infty$),
\beq
\frac{4}{e^\gb(1+\sqrt{1-e^{-2\gb}})-1} \le  \liminf \tone \gep(\tau^-,\tone, \tau^+) \le \limsup \tone \gep(\tau^-,\tone, \tau^+) \le \frac{4}{e^{\gb}-1}.
\eeq
\end{lemma}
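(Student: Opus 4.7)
The plan is to combine the sandwich $\phi(\tone) \le \phi(\tau^-, \tone, \tau^+) \le \phi(\bbN, \tone, \bbN)$ from Lemma~\ref{lem:sandw:phit1t2} with sharp second-order expansions of the two extremal quantities; via~\eqref{eq:assphi}, upper bounds on $\phi$ translate into lower bounds on $\tone \gep$ and vice versa.

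The upper bound $\limsup \tone \gep \le 4/(e^\gb - 1)$ is then immediate from $\phi(\tau^-, \tone, \tau^+) \ge \phi(\tone)$ together with the explicit expansion of $\phi(\tone) = \phi(\gb, \tone)$ in Proposition~\ref{pr:homo}, which already supplies the correct second-order coefficient.

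For the lower bound, since Proposition~\ref{pr:homo} does not apply directly to $\phi(\bbN, \tone, \bbN)$, I would analyze the Perron--Frobenius equation $\Lambda(f) = 1$ for this specific environment. The environment $(\bbN, \tone, \bbN)$ is symmetric under reflection of the central gap, so $\hat\mK_{\out}(0, \cdot) \equiv \hat\mK_{\out}(1, \cdot)$, and Lemma~\ref{lem:sandwPF} then reduces the equation to $\hat\mK_{\ins}(f) + \hat\mK_{\out}(0, f) = 1$. Both terms can be computed explicitly: $\hat\mK_{\ins}(f) = \tfrac{e^{-\gb}}{2}\hat q_{\tone}(f)$ follows from the definition of $\mK_\ins$ and the identity $q_{\tone} = q_{\tone}^0 + 2q_{\tone}^1$, while an outside excursion of length $\ell$ in this environment is a simple random walk excursion in $\bbZ^-$ carrying a killing weight $e^{-\gb\ell}$ (since the walk visits $\ell$ obstacles after time zero). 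Using the classical first-passage generating function $\sum_{m\ge 1} \bP_{-1}(H_0 = m) z^m = (1 - \sqrt{1 - z^2})/z$, this yields $\hat\mK_{\out}(0, f) = \tfrac{1}{2}\bigl(1 - \sqrt{1 - e^{2(f - \gb)}}\,\bigr)$, so that $\phi(\bbN, \tone, \bbN)$ is the unique solution in $(0, g(\tone))$ of the implicit equation
\[
\hat q_{\tone}(f) = e^\gb\bigl[1 + \sqrt{1 - e^{2(f - \gb)}}\,\bigr].
\]

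To circumvent this implicit dependence, I would introduce the auxiliary quantity $f_* := \phi(\log c_*, \tone)$ with $c_* := e^\gb[1 + \sqrt{1 - e^{-2\gb}}]$, which by Proposition~\ref{pr:homo} satisfies $\hat q_{\tone}(f_*) = c_*$ and admits the expansion $f_* = \tfrac{\pi^2}{2\tone^2}\bigl[1 - \tfrac{4}{(c_* - 1)\tone} + o(1/\tone)\bigr]$. Since the map $f \mapsto e^\gb[1 + \sqrt{1 - e^{2(f-\gb)}}]$ is strictly decreasing in $f$ and equals $c_*$ at $f = 0$, evaluating at $f = f_* > 0$ yields $\Lambda(f_*) > 1$, so the monotonicity of the Perron--Frobenius eigenvalue gives $\phi(\bbN, \tone, \bbN) < f_*$. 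Plugging this into the sandwich and using the expansion of $f_*$ produces the claimed $\liminf$. The main technical step is the explicit evaluation of $\hat\mK_{\out}(0, f)$, which requires correctly identifying the underlying SRW first-passage generating function and the total killing weight carried by a dense outside excursion; the remaining steps are short monotonicity comparisons and a direct appeal to Proposition~\ref{pr:homo}.
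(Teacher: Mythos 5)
Your proposal is correct and takes the same overall route as the paper: sandwich $\phi(\tone) \le \phi(\tau^-,\tone,\tau^+) \le \phi(\bbN,\tone,\bbN)$ via Lemma~\ref{lem:sandw:phit1t2}, read off the upper bound on $\limsup\tone\gep$ from the known expansion of $\phi(\tone)$, and for the lower bound compute $\hat\mK_{\out}^{\bbN}$ explicitly (your derivation of $\mK_{\out}^{\bbN}(\ell)=\tfrac12 e^{-\gb\ell}q_\infty^0(\ell)$, hence $\hat\mK_{\out}^{\bbN}(f)=\tfrac12(1-\sqrt{1-e^{2(f-\gb)}})$, is exactly the paper's \eqref{eq:comp:Zout:N}, and your accounting of the $\ell$ post-time-zero visits matches the model's killing convention). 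Where you diverge from the paper is in the final step of the lower bound: the paper asserts $\hat\mK_{\out}^{\bbN}(\phi)=\tfrac12(1-\sqrt{1-e^{-2\gb}})+o(1)$ as $\tone\to\infty$ and then ``concludes'' the expansion $\gep(\tone,1)\sim 4/(\tone[c_*-1])$ somewhat tersely, implicitly invoking Lemma~\ref{lem:asympt-q} to invert $\hat q_{\tone}(\phi)=c_*+o(1)$. You instead introduce the auxiliary level $f_*:=\phi(\log c_*,\tone)$ with $c_*:=e^\gb(1+\sqrt{1-e^{-2\gb}})$, verify $\gL(f_*)>1$ by a one-line computation exploiting the strict monotonicity of $f\mapsto\sqrt{1-e^{2(f-\gb)}}$, and then use monotonicity of the Perron--Frobenius eigenvalue to conclude $\phi(\bbN,\tone,\bbN)<f_*$, so the expansion of $f_*$ from Proposition~\ref{pr:homo} gives the $\liminf$ directly. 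This buys you a clean one-sided comparison with a quantity whose expansion is already proved, avoiding any asymptotic inversion of the implicit equation; it is a genuinely tidier way to close the lower-bound argument than the paper's abbreviated version, at the minor cost of introducing the auxiliary comparison point. One cosmetic remark: in the symmetric environment $(\bbN,\tone,\bbN)$ you do not actually need Lemma~\ref{lem:sandwPF} to identify $\gL(f)$; the transfer matrix is a symmetric $2\times 2$ matrix with equal diagonal entries, so $\gL(f)=\hat\mK_{\ins}(0,0,f)+\hat\mK_{\out}(0,f)+\hat\mK_{\ins}(0,1,f)=\hat\mK_{\ins}(f)+\hat\mK_{\out}(0,f)$ by inspection.
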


\begin{proof}[Proof of Lemma~\ref{lem:est-scd-order-FE}]
The idea is to use~Lemma~\ref{lem:sandw:phit1t2} and estimate $\phi(\tone)$ and $\phi(\bbN,\tone,\bbN)$. From~\cite{CP09b} we get
\beq
\label{eq:asympt:phit}
\phi(t) = \frac{\pi^2}{2t^2}[1 - \gep(t,t)]
\qquad \textrm{where} \quad \gep(t,t) \sim \frac{4}{(e^{\gb}-1)t},
\eeq
and this gives the rightmost inequality.
Let us now estimate $\phi(\bbN,\tone,\bbN)$, which corresponds to the free energy when there are interfaces everywhere outside of the optimal gap, that is $\tau^- = \tau^+ = \bbN$. 
In this case we get an explicit expression for $\hat \mK_{\out}^{\bbN}(\phi)$, where $\phi=\phi(\bbN,\tone,\bbN)$ now only depends on $\tone$. Indeed, we have
\beq
\mK_{\out}^{\bbN}(\ell) = \frac12 e^{-\gb  \ell} q_\infty^0(\ell),
\eeq
hence, by~\eqref{eq:mgf:qinfty},
\beq
\label{eq:comp:Zout:N}
\hat \mK_{\out}^{\bbN}(\phi) = \frac12 \hat q_\infty^0(\phi-\gb) = \frac12 (1 - \sqrt{1- e^{-2\gb}}) + o(1),
\eeq
in the limit of large $\tone$. We may conclude in this case that
\beq
\label{eq:eps:unfav.env}
\phi(\bbN,\tone,\bbN) = \frac{\pi^2}{2t^2}[1 - \gep(\tone,1)]
\qquad \textrm{where} \quad
\gep(\tone,1) \sim \frac{4}{\tone[e^\gb(1+\sqrt{1-e^{-2\gb}})-1]},
\eeq
in the limit of large $\tone$. By combining Lemma~\ref{lem:sandw:phit1t2}, \eqref{eq:asympt:phit} and~\eqref{eq:eps:unfav.env}, we finally get the result.
\end{proof}

\subsection{Technical estimates}
\label{subsec:te}
The first result of this section is an upper bound on $\mK_{\out}$ that turns out to be very useful to control the length of the excursions outside of the localization interval.
\begin{lemma} 
\label{lem:Zout}
There exists $C>0$ such that for all $J\ge 1$, $\tau \in \good_n(J)$ and $\ell \geq J^{\frac{\gamma+2}{\gamma}}$,
\beq
\mK_{\out}(a,\ell) \leq C\ell^3 e^{-\ell^{\frac{\gamma}{2(\gamma+2)}}},\qquad a\in\{0,1\}.
\eeq
\end{lemma}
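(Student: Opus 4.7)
My plan is to decompose the outside excursion by the index $m$ of the leftmost obstacle visited during the excursion and to treat short and long reaches with different tools. WLOG I focus on $a=0$ (the case $a=1$ is symmetric and uses the right-side gap control in $\kBfour_n(J)$). For $m\ge 0$, let $\mK_\out^{(m)}(0,\ell)$ be the restriction of $\mK_\out(0,\ell)$ to trajectories whose leftmost obstacle visited during the open interval $(0,\ell)$ is $\tau_{\lopt-1-m}$ (with $m=0$ meaning no left obstacle is visited); by SRW continuity such a trajectory visits all of $\tau_{\lopt-1},\tau_{\lopt-2},\ldots,\tau_{\lopt-1-m}$, so at least $m+1$ distinct obstacles. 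I set the threshold $K=\lceil\ell^{\gga/(\gga+2)}\rceil$. The hypothesis $\ell\ge J^{(\gga+2)/\gga}$ ensures $K\ge J$, and $\kBfour_n(J)$ then provides $\max_{1\le j\le m+1}T_{\lopt-j}\le (K+1)^{(4+\gga)/(4\gga)}$ uniformly in $m\le K$.

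For the short-reach regime $m\le K$, I apply Proposition~\ref{prop:roughUB} with $(k,r,\ell')=(\lopt-m-2,\lopt-1,\lopt)$. A direct application in $\mtau$ fails because the maximum $\max\{T_i\colon k<i\le\ell'\}$ would include the large optimal gap $\tone=T_\lopt$. To sidestep this I introduce the auxiliary environment $\mtau^*$ obtained from $\mtau$ by inserting a single obstacle at $\tau_{\lopt-1}+1$: a left excursion never visits this point, so $\mK_\out^{(m)}(0,\ell)$ is unchanged, while in $\mtau^*$ the gap $T_\lopt^{\mtau^*}$ equals $1$ and drops out of the maximum. Proposition~\ref{prop:roughUB} then yields
\[
\mK_\out^{(m)}(0,\ell)\le C\ell^2(m+2)\exp\big(-\phi(\gb,M_m)\,\ell\big),\quad M_m\le (K+1)^{(4+\gga)/(4\gga)},
\]
and combining $\phi(\gb,t)\ge 1/(C_1 t^2)$ from~\eqref{eq:encadr_gphi} with the choice of $K$, one checks that $\phi(\gb,M_m)\ell\ge c\,\ell^{\gga/(2(\gga+2))}$ uniformly in $m\le K$. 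Summing, $\sum_{m=0}^{K}\mK_\out^{(m)}(0,\ell)\le C\ell^{4}\exp(-c\,\ell^{\gga/(2(\gga+2))})$.

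For the long-reach regime $m>K$, I use the killing mechanism directly: since the trajectory visits at least $m+1$ distinct obstacles and each triggers an independent Bernoulli test with survival probability $e^{-\gb}$, I get the pointwise bound $\mK_\out^{(m)}(0,\ell)\le e^{-\gb(m+1)}$, and summing the geometric series gives $\sum_{m>K}\mK_\out^{(m)}(0,\ell)\le Ce^{-\gb K}\le C\exp(-\gb\,\ell^{\gga/(\gga+2)})$. This is negligible compared to $\exp(-\ell^{\gga/(2(\gga+2))})$ for $\ell$ large, since $\gga/(\gga+2)>\gga/(2(\gga+2))$. Adding the two regimes and absorbing the leftover polynomial factor into the exponential via $\log\ell=o(\ell^{\gga/(2(\gga+2))})$ gives the claim. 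I expect the main difficulty to be the short-reach estimate—pathwise justification that $\mK_\out^{(m)}$ is preserved when passing from $\mtau$ to $\mtau^*$, and the careful application of Proposition~\ref{prop:roughUB} with the starting obstacle $\tau_{\lopt-1}$ sitting strictly between $\tau_{\lopt-m-2}$ and $\tau_{\lopt-1}+1$ as the proposition requires.
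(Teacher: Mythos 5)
Your argument is correct and rests on the same two pillars the paper uses: a threshold at $j\approx\ell^{\gga/(\gga+2)}$, beyond which you charge the killing probability $e^{-\gb j}$, and below which you invoke the confinement estimate of Proposition~\ref{prop:roughUB} with the gaps controlled by $\kBfour_n(J)$, producing the exponent $\frac{\gga}{2(\gga+2)}$ in both regimes. The paper's proof is leaner in two respects. First, it does not decompose over the exact leftmost (or rightmost) obstacle $m$; it uses a single union bound on the two events $\{H_{\tau^{\pm}_j}<\bgs\}$ and $\{\bgs\wedge H_{\tau^{\pm}_j}>\ell-1\}$, which already separates the two regimes without the extra sum. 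Second, and more to the point, it never needs your auxiliary environment $\mtau^*$: by passing to the translated half-environment $\tau^{\pm}$ of~\eqref{eq:deftau+-}, the optimal gap $T_{\lopt}$ disappears from the picture and the boundary on the optimal-gap side is supplied for free by the built-in killing at $\bbZ^-$, so the maximum in Proposition~\ref{prop:roughUB} only ranges over the gaps $T_{\lopt\pm i}$, $1\le i\le j$, exactly as required. Your $\mtau^*$-insertion trick is a valid workaround for staying in the environment $\mtau$ (you correctly observe that the excursion is a.s.\ disjoint from the inserted obstacle, so the law of the excursion is unchanged, and that $k<r<\ell'$ holds in the re-indexed environment), but it re-derives by hand what the paper's $\tau^{\pm}$ notation already provides. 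One small caveat you should make explicit: for $m+1<J$ the event $\kBfour_n(J)$ gives no information directly, so you should bound $\max_{1\le j\le m+1}T_{\lopt-j}$ by $\max_{1\le j\le J}T_{\lopt-j}\le J^{\frac{4+\gga}{4\gga}}\le (K+1)^{\frac{4+\gga}{4\gga}}$, which uses $K\ge J$; and for $m$ near $\lopt$, the walk would have to cross $\tau_0=0\in\bbZ^-$, so the corresponding $\mK^{(m)}_\out$ vanishes, and no out-of-range index of $\mtau^*$ is ever needed.
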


\begin{proof}[Proof of Lemma~\ref{lem:Zout}] 
We only prove the case $a=1$ as the other one is similar. 
Let $\ell \ge J^{\frac{\gamma+2}{\gamma}}$. By Proposition~\ref{prop:roughUB} we have for $j\ge 1$,
\beq
\ba
\mK_{\out}(1,\ell) &\le \bP(\bgs(\tau^+)> H_{\tau^+_j})
+ \bP(\bgs(\tau^+) \wedge H_{\tau^+_j} >\ell-1)\\
&\le e^{-\gb j} + C \ell^3 \exp\Big[-\phi\Big(\max_{1\le i \le j} T_{\ell_0+i} \Big)\ell\Big].
\ea
\eeq
By picking $j = \ell^{\frac{\gga}{\gga+2}}$ (so that $j\geq J$) and using that $\tau \in \kBfour_n(J)$ together with~\eqref{eq:phi}, we get that
\beq
\phi\Big(\max_{1\le i \le j} T_{\ell_0+i} \Big)\geq {\rm (cst)} \ell^{-\frac{\gamma+4}{2(\gga+2)}}.
\eeq
This gives
\beq
\mK_{\out}(1,\ell) \le e^{-\gb \ell^{\frac{\gga}{\gga+2}}} + C\ell^3e^{- \ell^{\frac{\gamma}{2(\gamma+2)}}},
\eeq
from which the result follows.
\end{proof}
The second result of this section is a control on the ratio $h(a)/h(1-a)$ ($a\in \{0,1\}$) that appears in the definition of $\cP$, see~\eqref{def:cP}.
\begin{lemma}
\label{eq:control.ratio.h}
There exists $C>0$ (not depending on $\tau\in\good_n$) such that,
\beq
\frac1C \le \frac{h(1)}{h(0)} \le C.
\eeq
\end{lemma}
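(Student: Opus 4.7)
The starting point is the eigenvector relation \eqref{eq:decompo}. Writing $r := h(1)/h(0)$, the rows $a=0$ and $a=1$ read, respectively,
\[
r = \frac{1 - \hat\mK_{\ins}(0,0,\phi) - \hat\mK_{\out}(0,\phi)}{\hat\mK_{\ins}(0,1,\phi)}
\qquad\text{and}\qquad
r = \frac{\hat\mK_{\ins}(0,1,\phi)}{1 - \hat\mK_{\ins}(0,0,\phi) - \hat\mK_{\out}(1,\phi)}.
\]
Setting $A_a := \hat\mK_{\ins}(0,0,\phi) + \hat\mK_{\out}(a,\phi)$ and $B := \hat\mK_{\ins}(0,1,\phi)$, multiplying the two expressions gives the determinant-type identity $(1-A_0)(1-A_1) = B^2$, which is equivalent to $\gL(\phi)=1$. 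Therefore, if one establishes that $B$ is bounded from above and below by positive constants uniform over $\tau\in\good_n$, the conclusion follows: combined with the trivial bounds $1-A_a\le 1$, the identity above forces $1-A_a \ge B^2$, which in turn gives both the upper and lower bound on $r$ through the two displayed expressions.

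The task thus reduces to uniformly bounding $B = e^{-\gb}\hat q^1_{\tone}(\phi)$. For the upper bound, the plan is to leverage Lemma~\ref{lem:est-scd-order-FE} to obtain $g(\tone)-\phi \ge c/\tone^3$ for $\tone$ large, and then use the sharp pointwise bound of Lemma~\ref{lem:estim-qtn}, so that
\[
\hat q^1_{\tone}(\phi) \le \hat q_{\tone}(\phi) \le c_4 \sum_{n\ge 1} \frac{1}{\tone^3\wedge n^{3/2}}\, e^{-(g(\tone)-\phi)n}.
\]
Splitting the sum at $n = \tone^2$ gives two contributions that are both uniformly $O(1)$: the part $n\le \tone^2$ is dominated by $\sum_{n\ge 1} n^{-3/2}<\infty$, while the part $n>\tone^2$ is dominated by $\tone^{-3}(g(\tone)-\phi)^{-1}$, which is $O(1)$ precisely because $g(\tone)-\phi \asymp \tone^{-3}$.

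The delicate point, which I see as the main obstacle, is the uniform \emph{lower} bound on $B$. The natural first step is Lemma~\ref{lem:sandw:phit1t2}, which gives $\phi\ge \phi(\tone)$, reducing the task to showing $\hat q^1_{\tone}(\phi(\tone))\ge c>0$ uniformly as $\tone\to\infty$. The homogeneous identity in Proposition~\ref{pr:homo} reads $\hat q^0_{\tone}(\phi(\tone)) + 2\hat q^1_{\tone}(\phi(\tone)) = e^{\gb}$, so it suffices to check that the off-diagonal piece $\hat q^1_{\tone}(\phi(\tone))$ does not asymptotically vanish (equivalently, that $\hat q^0_{\tone}(\phi(\tone))$ does not saturate $e^\gb$). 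This should follow from the diffusive asymptotics of ruin generating functions evaluated at the spectral edge $\phi(\tone)\sim \pi^2/(2\tone^2)$: a standard eigenfunction decomposition for the killed walk in the strip $\{-\tone,\dots,\tone\}$ shows that both $\hat q^0_{\tone}(\phi(\tone))$ and $\hat q^1_{\tone}(\phi(\tone))$ converge as $\tone\to\infty$ to explicit positive limits, tied to the cosine eigenfunction that also drives~\eqref{eq:def:gt}. I would extract these limits from the explicit generating function identities gathered in Appendix~\ref{sec:mgf-qt} (see~\eqref{eq:mgf:qinfty} for the analogous one-barrier computation already used in~\eqref{eq:comp:Zout:N}); once these limits are identified and shown to be positive, the uniform lower bound on $B$ follows, and the lemma is proved.
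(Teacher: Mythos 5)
Your argument is correct and is essentially the paper's: both proofs reduce to the sandwich $B \le h(1)/h(0) \le 1/B$ with $B = e^{-\gb}\hat q^1_{\tone}(\phi)$ (the paper obtains it from the one-line observation that $\cP_a(X_1=1-a) = \hat\mK_{\ins}(a,1-a,\phi)\,h(1-a)/h(a) \le 1$ for $a\in\{0,1\}$, which is your two row equations in probabilistic clothing), and then to showing $\hat q^1_{\tone}(\phi)\asymp 1$ uniformly on $\good_n$. The only difference is in that last step: the paper gets the two-sided bound in one stroke from Lemma~\ref{lem:asympt-q} applied with $\gep(t)=\gep(\tau^-,\tone,\tau^+)$, which gives $\hat q^1_{\tone}(\phi)\sim 1/(\tone\,\gep(\tau^-,\tone,\tau^+))$ and is then bounded above and below by Lemma~\ref{lem:est-scd-order-FE}, whereas you split into an upper bound via Lemma~\ref{lem:estim-qtn} and a lower bound via monotonicity of $\hat q^1_{\tone}$, the inequality $\phi\ge\phi(\tone)$, and the homogeneous identity $\hat q_{\tone}(\phi(\tone))=e^{\gb}$ together with the explicit asymptotics of Appendix~\ref{sec:mgf-qt}; both routes go through.
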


As a consequence of Lemma~\ref{eq:control.ratio.h} and Proposition~\ref{pr:ren-int}, there exists a constant $C>0$ such that for all $m\in\bbN$ and $A\in \hat \cF_m$,
\beq
\label{eq:comp-Zpin}
\frac1C\ \cP(A, m\in\bar\gt) \le \mZ_m^\pin(A)e^{\phi m} \le C\ \cP(A, m\in\bar\gt).
\eeq
We repeatedly use the latter inequality in the following.

\begin{proof}[Proof of Lemma~\ref{eq:control.ratio.h}]
From \eqref{def:cP}, we have for $a\in\{0,1\}$,
\beq
\cP_a(X_1=1-a)= \hat \mK_{\ins}(a,1-a,\phi) \frac{h(1-a)}{h(a)}.
\eeq
Recall that $\phi \ge \phi(\tone)$, see Lemma~\ref{lem:sandw:phit1t2}. We have, using~\eqref{eq:lienZq} and Lemma~\ref{lem:asympt-q},
\beq
\label{eq:control.ratio.h.aux}
 \hat \mK_{\ins}(a,1-a,\phi) =e^{-\beta}\hat q_{\tone}^1(\phi) 
 \sim \frac{e^{-\gb}}{\tone \gep(\tau^-, \tone, \tau^+)},
 \eeq
and we conclude the proof thanks to Lemma~\ref{lem:est-scd-order-FE} as $\cP_a(X_1=1-a)\leq 1$.
\end{proof}

\subsection{Transition probabilities and excursions lengths}
\label{subsec:tpel}
We first observe that the transition probability of $\cP$ defined in \eqref{def:cP}, does not depend on $Y_{i-1}$.
Also, the laws of the excursion length conditional on the transitions of the modulating chain $(X,Y)$ do not depend on the function $h$. Indeed, we get for $a \in\{0,1\}$ and $\ell \in \bbN$:
\beq
\label{eq:explicitcP1}
\ba
&\cP(\bar \gt_1 = \ell | Y_1=\ins,X_1=a,X_0=a)= \frac{\mK_{\ins}(a,a,\ell)e^{\phi \ell}}{\hat \mK_{\ins}(a,a,\phi)},\\
&\cP(\bar \gt_1 = \ell | X_1=1-a,X_0=a)= \frac{\mK_{\ins}(a,1-a,\ell)e^{\phi \ell}}{\hat \mK_{\ins}(a,1-a,\phi)},\\
&\cP(\bar \gt_1 = \ell | Y_1=\out,X_0=a)= \frac{\mK_{\out}(a,\ell)e^{\phi \ell}}{\hat \mK_{\out}(a,\phi)}.
\ea
\eeq
Note that it is also the case for the probability to transit from one side of the optimal gap to the same side, as for $a\in\{0,1\}$,
\beq
\label{eq:explicitcP2}
\ba
&\cP( Y_1=\ins,X_1=a | X_0=a)= \hat \mK_{\ins}(a,a,\phi) = \hat \mK_{\ins}(1-a,1-a,\phi) ,\\
&\cP( Y_1=\out | X_0=a)= \hat \mK_{\out}(a,\phi),
\ea
\eeq
while the probability to change side (interpreted as the random walk crossing the optimal gap) is, for $a\in\{0,1\}$,
\beq
\label{eq:explicitcP3}
\cP( X_1=1-a | X_0=a)= \hat \mK_{\ins}(a,1-a,\phi) \frac{h(1-a)}{h(a)} = \hat \mK_{\ins}(1-a,a,\phi) \frac{h(1-a)}{h(a)}.
\eeq

\begin{lemma}
\label{lem:binf.proba.out}
There exists $\gd_0\in(0,1)$ (not depending on $\tau\in\good_n$) such that for $a,b\in~\{0,1\}$,
\beq
\ba
&\gd_0 \le \cP_a(Y_1 = \out) \le 1-\gd_0,\\
&\gd_0 \le \cP_a(Y_1 = \ins, X_1=a) \le 1-\gd_0,\\
&\gd_0 \le \cP_a(X_1=b) \le 1-\gd_0. 
\ea
\eeq
\end{lemma}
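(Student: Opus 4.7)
The plan is to exploit the eigen-identity \eqref{eq:decompo}, which asserts that the three quantities $\cP_a(Y_1=\out)$, $\cP_a(Y_1=\ins,X_1=a)$ and $\cP_a(X_1=1-a)$ sum to~$1$ for each $a\in\{0,1\}$; hence uniform positive lower bounds on each of them will automatically produce uniform upper bounds strictly less than~$1$. The third bullet then follows: for $b=1-a$ it is just the third probability above, while for $b=a$ one writes $\cP_a(X_1=a)=\cP_a(Y_1=\out)+\cP_a(Y_1=\ins,X_1=a)$, so that the lower bound is inherited and the upper bound is $1-\cP_a(X_1=1-a)$.

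First I would handle $\cP_a(Y_1=\ins,X_1=a)=\tfrac{e^{-\gb}}{2}\hat q_{\tone}^0(\phi)$ by keeping only the $n=2$ term in the series: for $\tone\ge 2$ (which holds on $\good_n$ since $\tone\ge \gep_0 N^{1/\gga}\to\8$), a walk stepping once inside the slab and returning contributes $q_{\tone}^0(2)=\tfrac12$, yielding $\cP_a(Y_1=\ins,X_1=a)\ge \tfrac14 e^{-\gb}$. Next, for $\cP_a(X_1=1-a)=e^{-\gb}\hat q_{\tone}^1(\phi)\,h(1-a)/h(a)$, I would invoke the asymptotic $e^{-\gb}\hat q_{\tone}^1(\phi)\sim e^{-\gb}/(\tone\gep(\tau^-,\tone,\tau^+))$ already derived in~\eqref{eq:control.ratio.h.aux}, combined with Lemma~\ref{lem:est-scd-order-FE} (which pins $\tone\gep(\tau^-,\tone,\tau^+)$ between two positive $\gb$-dependent constants) and with Lemma~\ref{eq:control.ratio.h} (which controls $h(1-a)/h(a)$ similarly).

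The main obstacle is the lower bound on $\cP_a(Y_1=\out)=\hat\mK_{\out}(a,\phi)$, since here the relation to known quantities is less direct. My plan is to exhibit an explicit short excursion of length $\ell=2$ out of $I_\loc$: starting from $\tau_{\lopt-1}+a\tone$, the walk steps once outside $I_\loc$ (to the left if $a=0$, to the right if $a=1$) and returns at time~$2$. The random walk weight of this path is $\tfrac14$, and the survival cost is at most $e^{-2\gb}$, accounting for the obligatory return to an obstacle at time~$2$ and for the worst case where the intermediate site is also an obstacle (which only occurs when $T_{\lopt-1}=1$ for $a=0$, or $\overline T_{\lopt+1}=1$ for $a=1$). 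The boundary condition $\tau_{\lopt-1}-1\ge 0$ needed when $a=0$ is automatic on $\good_n$ for large $n$ since $\lopt\ge \gep_0 N\to\8$, so $\tau_{\lopt-1}\ge \lopt-1\to\8$. Retaining only this $\ell=2$ term gives $\hat\mK_{\out}(a,\phi)\ge \tfrac14 e^{-2\gb}$, and one then takes $\gd_0$ to be the minimum of the three lower bounds produced above.
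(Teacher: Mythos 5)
Your proof is correct, and it follows the same overall structure as the paper's: observe that $\cP_a(Y_1=\out)$, $\cP_a(Y_1=\ins,X_1=a)$ and $\cP_a(X_1=1-a)$ sum to one, so uniform positive lower bounds yield the upper bounds for free, and then bound each of the three from below. Where you genuinely depart from the paper is in how you obtain two of those lower bounds. For $\cP_a(Y_1=\out)=\hat\mK_{\out}(a,\phi)$, the paper uses the pathwise comparison $\hat\mK_{\out}(a,\phi)\ge\hat\mK^{\bbN}_{\out}(\phi)$ (replacing $\tau^\pm$ by the worst-case all-obstacle environment $\bbN$), then passes to the explicit limit $\tfrac12(1-\sqrt{1-e^{-2\gb}})$ via \eqref{eq:comp:Zout:N}; you instead retain only the length-two outward excursion and get the direct bound $\tfrac14 e^{-2\gb}$, which is cleaner and avoids both Proposition~\ref{pr:homo} and the worst-environment device. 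Likewise, for $\cP_a(Y_1=\ins,X_1=a)=\tfrac{e^{-\gb}}{2}\hat q^0_{\tone}(\phi)$, the paper invokes the asymptotic $\hat q^0_{\tone}(\phi)=1+\tfrac{2+o(1)}{\tone\gep}$ together with Lemma~\ref{lem:est-scd-order-FE}; you simply keep the $n=2$ term $q^0_{\tone}(2)=\tfrac12$ (valid once $\tone\ge2$, which holds on $\good_n$ for $n$ large), again more elementary. The third bound, for $\cP_a(X_1=1-a)$, you treat exactly as the paper does, via \eqref{eq:control.ratio.h.aux}, Lemma~\ref{lem:est-scd-order-FE} and Lemma~\ref{eq:control.ratio.h}. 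Two tiny remarks: the phrase ``the survival cost is at most $e^{-2\gb}$'' should read that the survival \emph{probability} along the chosen two-step path is at least $e^{-2\gb}$ (which is what your final inequality uses); and, since $\bbZ^-=\bbZ\setminus\bbN$ includes $0$, the condition needed for $a=0$ is $\tau_{\lopt-1}-1\ge1$ rather than $\ge 0$, though this changes nothing since $\tau_{\lopt-1}\ge\lopt-1\to\infty$ on $\good_n$.
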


\begin{proof}[Proof of Lemma~\ref{lem:binf.proba.out}]
\par We only have to prove the lower bounds as for a fixed $a\in \{0,1\}$ the three probabilities sum up to one. We start with the first inequality. From~\eqref{eq:explicitcP2} and~\eqref{eq:pathwise-comp} we have
\beq
\label{eq:transout}
\cP_a(Y_1 = \out) = \hat \mK_{\out}(a, \phi) \ge \hat \mK_{\out}^{\bbN}(\phi).
\eeq
Recall from Lemma~\ref{lem:sandw:phit1t2} that $\phi \ge \phi(\tone)$. Arguing as in~\eqref{eq:comp:Zout:N} and using only that $\phi(\tone)$ converges to zero as $\tone$ tends to infinity, we get
 \beq
 \hat \mK_{\out}^{\bbN}(\phi) \ge  \hat \mK_{\out}^{\bbN}(\phi(\tone)) \to \frac12(1-\sqrt{1-e^{-2\gb}}),
 \eeq
 as $\tone\to \infty$. 
As a side remark, let us notice that the r.h.s goes to $0$ when $\beta$ goes to infinity and $1/2$ when $\beta$ goes to $0$.

\par We turn to the second inequality. Using~\eqref{eq:explicitcP2} and Lemma~\ref{lem:asympt-q},
\beq
\cP_a(Y_1 = \ins, X_1=a) = \hat \mK_{\ins}(0,0,\phi) = \frac{e^{-\gb}}{2} \hat q^0_{\tone}(\phi) = \frac{e^{-\gb}}{2}\Big[1+\frac{2+o(1)}{\tone\gep(\tau^-, \tone, \tau^+)}\Big],
\eeq
and one may conclude with the help of Lemma~\ref{lem:est-scd-order-FE}.

\par We turn to the last probability, that is the one to cross $(0,\tone)$. By~\eqref{eq:explicitcP3},  
\beq
\cP_0( X_1=1)= \hat \mK_{\ins}(0,1,\phi) \frac{h(1)}{h(0)},
\eeq
and one may conclude the proof thanks to Lemma~\ref{eq:control.ratio.h} (see also \eqref{eq:control.ratio.h.aux} in the proof of that lemma).
\end{proof}

The next lemma gives a lower bound on the distribution of the first return time to the endpoints of the localization interval.
\begin{lemma}
\label{lem:cP_tail}
There exists $C>0$ (not depending on $\tau\in\good_n$) such that for all $a\in\{0,1\}$, all integers $u,v\in 2\bbN$ such that $u<v$ and all $\tone$ large enough,
\beq
\cP_a(u \le \bar \gt_1 <v) \ge C \Big( e^{-[g(\tone)-\phi]u} - e^{-[g(\tone)-\phi]v} \Big).
\eeq
\end{lemma}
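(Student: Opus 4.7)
The plan is to lower-bound $\cP_a(u \le \bar\gt_1 < v)$ by restricting the first excursion to stay inside the localization interval (i.e.\ $Y_1 = \ins$, the simplest non-degenerate event one can isolate) and then to estimate the resulting sum by a geometric series in the ruin kernel $q_\tone$.

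First, from \eqref{def:cP} together with the explicit expressions \eqref{eq:lienZq},
\beq
\cP_a(\bar\gt_1 = \ell,\, Y_1 = \ins) = \tfrac{e^{-\gb}}{2}\, q_\tone^0(\ell)\, e^{\phi\ell} + e^{-\gb}\, q_\tone^1(\ell)\, e^{\phi\ell}\, \tfrac{h(1-a)}{h(a)}.
\eeq
Lemma~\ref{eq:control.ratio.h} bounds $h(1-a)/h(a)$ from below by a positive constant uniform in $\tau \in \good_n$, so after recombining with $q_\tone = q_\tone^0 + 2q_\tone^1$ one gets $\cP_a(u \le \bar\gt_1 < v) \ge c_1 \sum_{\ell = u}^{v-1} q_\tone(\ell)\, e^{\phi\ell}$ for some $c_1 > 0$ independent of $\tau$. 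The assumption $u, v \in 2\bbN$ now allows me to restrict the sum to even $\ell$ and invoke the lower bound of Lemma~\ref{lem:estim-qtn}: for every even $\ell \ge u$ (recall $\tone > \cT_0$ for $n$ large),
\beq
q_\tone(\ell) \ge \tfrac{c_3}{\tone^3 \wedge \ell^{3/2}}\, e^{-g(\tone)\ell} \ge \tfrac{c_3}{\tone^3}\, e^{-g(\tone)\ell}.
\eeq
Writing $r := g(\tone) - \phi > 0$ and summing the geometric progression over $\ell \in \{u, u+2, \ldots, v-2\}$ yields
\beq
\sum_{\ell=u}^{v-1} q_\tone(\ell)\, e^{\phi\ell} \ge \tfrac{c_3}{\tone^3} \cdot \tfrac{e^{-ru} - e^{-rv}}{1 - e^{-2r}} \ge \tfrac{c_3}{2\tone^3 r}\bigl(e^{-ru} - e^{-rv}\bigr),
\eeq
using $1 - e^{-2r} \le 2r$.

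The main (and really only) obstacle is to check that the prefactor $\tfrac{c_3}{2\tone^3 r}$ stays bounded below by a positive constant uniformly in $\tau \in \good_n$ as $\tone \to \infty$. This is exactly the point where the quantitative free-energy estimate of Lemma~\ref{lem:est-scd-order-FE} is needed: combining the expansion $g(\tone) = \tfrac{\pi^2}{2\tone^2}[1 + O(\tone^{-2})]$ from \eqref{eq:def:gt} with the representation $\phi = \tfrac{\pi^2}{2\tone^2}[1 - \gep(\tau^-,\tone,\tau^+)]$ from \eqref{eq:assphi}, the bound $\limsup \tone\, \gep(\tau^-,\tone,\tau^+) \le \tfrac{4}{e^\gb - 1}$ forces $r \le C_\gb/\tone^3$ for some constant $C_\gb$ depending only on $\gb$; hence $\tone^3 r$ is bounded above, and the three steps assemble into the claim with $C = c_1 c_3/(2C_\gb)$. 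Note that the cruder inequality $\phi < g(\tone)$ from Lemma~\ref{lem:sandw:phit1t2} alone would \emph{not} suffice: it is the second-order control on $\gep$ that makes $\tone^3 r$ stay bounded, and this is what matches the exponential scale $e^{-(g(\tone)-\phi)u}$ appearing in the statement.
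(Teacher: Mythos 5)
Your proof is correct and follows essentially the same route as the paper: restrict to inward excursions, express the kernel in terms of $q_\tone^0, q_\tone^1$, absorb $h(1-a)/h(a)$ into a constant via Lemma~\ref{eq:control.ratio.h}, recombine into $q_\tone$, then apply the lower bound of Lemma~\ref{lem:estim-qtn} and sum the geometric series. The only difference is that the paper delegates the final geometric-sum-plus-prefactor step to \cite[Lemma 2.2]{CP09b}, whereas you carry it out explicitly and correctly identify that it is precisely the second-order free-energy control of Lemma~\ref{lem:est-scd-order-FE} (and not merely the crude $\phi<g(\tone)$ from Lemma~\ref{lem:sandw:phit1t2}) which keeps $\tone^3(g(\tone)-\phi)$ bounded and makes the prefactor uniform.
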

\begin{remark}
Letting $v$ go to infinity in the above equation, and using Lemma \ref{lem:sandw:phit1t2} and \eqref{eq:def:gt} and \eqref{eq:phi}, one obtains that
\beq
\cP_a(\bar \gt_1 \ge u) \ge C  \exp\left\{-{\rm(cst.)}\frac{u}{\tone^3}\right\},
\eeq
which entails that the expectation of $\bar \gt_1$ under $\cP_a$ is bounded from below by a constant multiple of $\tone^3$. We will provide more precise estimates on the expectation of the excursions in Lemma \ref{lem:mom-exc}.
\end{remark}

\begin{proof}[Proof of Lemma~\ref{lem:cP_tail}] Let $a\in\{0,1\}$ and $u<v$. We first restrict to inward excursions:
\beq
\cP_a(u \le \bar\gt_1 <v) \ge \cP_a(u \le \bar\gt_1 <v, Y_1= \ins).
\eeq
Recall \eqref{eq:explicitcP1}. We obtain
\beq
\cP_a(u \le \bar\gt_1 <v) \ge \sum_{u\le \ell < v} \sum_{b\in\{0,1\}} \mK_{\ins}(a,b,\ell) e^{\phi\ell} \frac{h(b)}{h(a)},
\eeq
and by Lemma~\ref{eq:control.ratio.h},
\beq
\cP_a(u \le \bar\gt_1 <v) \ge C \sum_{u\le \ell < v} \mK_{\ins}(\ell) e^{\phi\ell}.
\eeq
Using~\eqref{eq:lienZq}, we get
\beq
\cP_a(u \le \bar\gt_1 <v) \ge C \sum_{u\le \ell < v} q_{\tone}(\ell) e^{\phi\ell}.
\eeq
From here, the proof follows as in~\cite[Lemma 2.2]{CP09b}: use Lemma~\ref{lem:estim-qtn} and Lemma~\ref{lem:est-scd-order-FE} (see also~\eqref{eq:assphi}).
\end{proof}

 We turn to the control of the first two moments of the interarrival times. Let $(\xi^{(\out)}_{0,i})_{i\ge 1}$ be the length of the excursions from $0$ to $0$ outside the optimal gap, $(\xi^{(\ins,0)}_{0,i})_{i\ge 1}$ be the length of the excursions from $0$ to $0$ inside the optimal gap and finally $(\xi^{(\ins,1)}_{0,i})_{i\ge 1}$ be the length of the excursions from $0$ to $1$.
The laws of $\xi^{(\out)}_{0,i}$, $\xi^{(\ins,0)}_{0,i}$ and $\xi^{(\ins,1)}_{0,i}$ are thus that of $\bar\gt_1$ conditioned on $\{X_0 = X_1=0,Y_1=\out\}$, $\{X_0 = X_1=0,Y_1=\ins\}$ and $\{X_0 = 0, X_1=1\}$ respectively. We analogously define the variables $(\xi^{(\out)}_{1,i})_{i\ge 1}$, $(\xi^{(\ins,1)}_{1,i})_{i\ge 1}$  and $(\xi^{(\ins,0)}_{1,i})_{i\ge 1}$. These are sequences of i.i.d. random variables. Moreover $(\xi^{(\ins,0)}_{0,i})_{i\ge 1}$ and $(\xi^{(\ins,1)}_{1,i})_{i\ge 1}$ have the same law, as do $(\xi^{(\ins,1)}_{0,i})_{i\ge 1}$ and $(\xi^{(\ins,0)}_{1,i})_{i\ge 1}$. We refer to the first variable of each sequence by omitting the time index that is, for example, $\xi^{(\out)}_{0}=\xi^{(\out)}_{0,1}$. The expectation with respect to $\cP$ is denoted by $\cE$.

\begin{lemma}[Control of the first two moments of the conditioned excursion lengths]
\label{lem:mom-exc}
Let $\rho\in(0,\frac12)$ and $J\ge 1$. There exist constants $0<C<\8$ and $C_1(\rho,J)$ such that for $\tau\in \good_n(\rho, J)$ and for all $a,b\in \{0,1\}$,
\beq
\label{lem:mom-exc1}
\tfrac 1 C \tone^3 \leq \cE[\xi^{(\ins,b)}_{a}] \le C \tone^3,
\qquad \cE[\xi^{(\out)}_{a}] \le C_1(\rho,J)
\eeq 
and
\beq
\label{lem:mom-exc2}
\cE[(\xi^{(\ins,b)}_{a})^2]  \le C \tone^6,
\qquad \cE[(\xi^{(\out)}_{a})^2] \le C_1(\rho,J).
\eeq 
\end{lemma}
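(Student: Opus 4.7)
The plan is to express each moment as a ratio via \eqref{eq:explicitcP1}--\eqref{eq:lienZq}:
$\cE[(\xi^{(\ins,b)}_a)^k] = \hat\mK_\ins(a,b,\phi)^{-1} \sum_\ell \ell^k \mK_\ins(a,b,\ell) e^{\phi\ell}$ and $\cE[(\xi^{(\out)}_a)^k] = \hat\mK_\out(a,\phi)^{-1} \sum_\ell \ell^k \mK_\out(a,\ell) e^{\phi\ell}$, for $k\in\{1,2\}$. I would first check that the denominators are $\Theta(1)$ uniformly on $\good_n$: for $\hat\mK_\out$ this is Lemma \ref{lem:binf.proba.out}, while for $\hat\mK_\ins(a,a,\phi) = \tfrac12 e^{-\gb}\hat q^0_{\tone}(\phi)$ and $\hat\mK_\ins(a,1{-}a,\phi) = e^{-\gb}\hat q^1_{\tone}(\phi)$ one recycles the asymptotics already computed in the proofs of Lemmas \ref{lem:binf.proba.out} and \ref{eq:control.ratio.h}, which via Lemma \ref{lem:est-scd-order-FE} keep $\tone\gep(\tau^-,\tone,\tau^+) = \Theta(1)$. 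Hence only the numerators remain to be estimated.

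For the inside excursions, the crucial input is $g(\tone)-\phi \asymp 1/\tone^3$, obtained by combining Lemma \ref{lem:est-scd-order-FE} with $g(\tone) = \pi^2/(2\tone^2) + O(1/\tone^4)$. I would insert Lemma \ref{lem:estim-qtn} into $\sum_\ell \ell^k q_{\tone}(\ell) e^{\phi\ell}$ and split at $\ell=\tone^2$: the range $\ell\le \tone^2$ contributes $O(\tone^{2k-1})$ using $q_{\tone}(\ell)\le c\ell^{-3/2}$ and $e^{(\phi-g(\tone))\ell}\le 1$, while the range $\ell>\tone^2$ gives the dominant contribution $\asymp \tone^{-3}(g(\tone)-\phi)^{-(k+1)} \asymp \tone^{3k}$. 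This immediately yields the upper bounds $\cE[\xi^{(\ins,b)}_a] \le C\tone^3$ and $\cE[(\xi^{(\ins,b)}_a)^2] \le C\tone^6$. For the matching lower bound on the first moment, I would use the lower half of Lemma \ref{lem:estim-qtn} restricted to even $\ell \ge c\tone^2$, supplemented by the refined single-component asymptotics for $q^0_{\tone}$ and $q^1_{\tone}$ separately (provided in Appendix \ref{app:ruin} through Lemma \ref{lem:asympt-q} and the spectral decomposition of the walk killed on $\{-\tone,0,\tone\}$), so that each of $q^0_{\tone}, q^1_{\tone}$ carries a positive fraction of $q_{\tone}$ at scale $\ell\asymp \tone^3$ with the correct parity. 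Equivalently, one may read the first moment as $\tfrac{d}{df}\log \hat q^i_{\tone}(f)\big|_{f=\phi}$ and use the polar behaviour of $\hat q^i_{\tone}$ at $g(\tone)$.

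For the outside excursions, I would invoke Lemma \ref{lem:Zout} and split the numerator at $\ell_J := J^{(\gga+2)/\gga}$. The short-$\ell$ piece is dominated by $\ell_J^k\, e^{\phi \ell_J}\sum_{\ell\le \ell_J}\mK_\out(a,\ell)\le C(J)$, using $\phi \le g(\tone)=O(\tone^{-2})\to 0$ and $\sum_\ell \mK_\out(a,\ell)\le 1$. On the long-$\ell$ range, Lemma \ref{lem:Zout} gives $\ell^k \mK_\out(a,\ell)e^{\phi\ell}\le C\ell^{k+3}\exp\bigl(-\ell^{\gga/(2(\gga+2))}+\phi\ell\bigr)$, and since $\gga/(2(\gga+2))<1$ the stretched exponential dominates the linear term $\phi\ell$ uniformly for $\tone$ large, so the tail sums to a universal constant. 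Dividing by $\hat\mK_\out(a,\phi)\ge \gd_0$ closes the $\out$ case. The main obstacle I anticipate is the lower bound on $\cE[\xi^{(\ins,b)}_a]$: Lemma \ref{lem:estim-qtn} lower-bounds only the total return distribution $q_{\tone}$, whereas the integrands here are $q^0_{\tone}$ or $q^1_{\tone}$ individually, which forces us to rely on the finer single-component estimates of Appendix \ref{app:ruin} rather than the cruder uniform bounds used for the upper bound.
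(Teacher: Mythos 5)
Your treatment of the inside excursions is essentially the paper's: the first and second moments are read off as $(\hat q^i_{\tone})'/\hat q^i_{\tone}$ and $(\hat q^i_{\tone})''/\hat q^i_{\tone}$ evaluated at $f=\phi$, and Lemma~\ref{lem:asympt-q} together with Lemma~\ref{lem:est-scd-order-FE} (which keeps $\tone\gep(\tau^-,\tone,\tau^+)$ of order one) gives both the upper and the matching lower bound; you correctly anticipate that the crude two-sided bound of Lemma~\ref{lem:estim-qtn} on the \emph{total} $q_{\tone}$ is not enough for the lower bound because one needs the single-component behaviour of $q^0_{\tone}$ and $q^1_{\tone}$.

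The gap is in the outward excursions, in your ``long-$\ell$'' range. You claim that
\[
\sum_{\ell > \ell_J} \ell^{k+3}\exp\bigl(-\ell^{\gga/(2(\gga+2))}+\phi\ell\bigr)
\]
sums to a universal constant because the stretched exponential ``dominates the linear term $\phi\ell$ uniformly for $\tone$ large.'' This is false: for any fixed $\tone$ one has $\phi>0$, and since $\gga/(2(\gga+2))<1$ the term $\phi\ell$ eventually beats $\ell^{\gga/(2(\gga+2))}$ (concretely, once $\ell \gg \tone^{4(\gga+2)/(\gga+4)}$, since $\phi \asymp \tone^{-2}$). So the summand \emph{diverges} and the upper bound from Lemma~\ref{lem:Zout} combined with $e^{\phi\ell}$ is vacuous beyond that scale. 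The paper avoids this by introducing a second threshold at $\ttwo^\eta$ with $2<\eta<\tfrac{4(\gga+2)}{\gga+4}$. On $\ell\le\ttwo^\eta$ one indeed has $\phi\ell\le c(\rho)\ell^{1-2/\eta}$ with $1-2/\eta<\gga/(2(\gga+2))$, so Lemma~\ref{lem:Zout} is usable; on $\ell\ge\ttwo^\eta$ one must switch to the genuinely exponential bound $\mK_\out(a,\ell)\le C\ell^3 e^{-\phi(\ttwo)\ell}$ from Proposition~\ref{prop:roughUB}, and then exploit $\ttwo^2(\phi(\ttwo)-\phi)\gtrsim\rho^3$ (which is where $\kBthree_n(\rho)$ enters) together with $\eta>2$ to get a uniformly bounded tail. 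Without this extra split your bound on the far tail simply fails; also note that the resulting constant is $C_1(\rho,J)$, not a universal one, precisely because the far-tail control is driven by the $\rho$-dependent gap $\phi(\ttwo)-\phi$.
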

\begin{remark}
The assumption that $\tau\in\good_n$ is actually not required for the moments of the excursion lengths inside the optimal gap.
\end{remark}

\begin{proof}[Proof of Lemma~\ref{lem:mom-exc}] 
(i) \textit{Proof of \eqref{lem:mom-exc1}.} Both cases being similar we only treat the case $a=0$. We first observe that 
\beq
\label{eq:deriv:observ}
\ba
(a)\quad&\cE[\xi^{(\ins,1)}_{0}]%
=\sum_{\ell\ge 1} \ell \frac{\mK_{\ins}(0,1,\ell)e^{\phi\ell}}{\hat \mK_{\ins}(0,1,\phi)}%
= \frac{(\hat q_{\tone}^1)'(\phi)}{(\hat q_{\tone}^1)(\phi)},\\
(b)\quad&\cE[\xi^{(\ins,0)}_{0}]%
=\sum_{\ell\ge 1} \ell \frac{\mK_{\ins}(0,0,\ell)e^{\phi\ell}}{\hat \mK_{\ins}(0,0,\phi)}%
= \frac{(\hat q_{\tone}^0)'(\phi)}{(\hat q_{\tone}^0)(\phi)},\\
(c)\quad&\cE[\xi^{(\out)}_{0}]%
=\sum_{\ell\ge 1} \ell \frac{\mK_{\out}(0,\ell)e^{\phi\ell}}{\hat \mK_{\out}(0,\phi)}.\\%
\ea
\eeq
\par To deal with (a), we use Lemma~\ref{lem:asympt-q} to obtain
\beq
\frac{(\hat q_{\tone}^1)'(\phi)}{(\hat q_{\tone}^1)(\phi)}\stackrel{\tone\to\infty}{\sim} \frac{2\tone^2}{\pi^2 \gep(\tau^-,\tone, \tau^+)}
\eeq 
and we conclude with Lemma \ref{lem:est-scd-order-FE}.

\par To deal with (b), we use the same equations to obtain
\beq
\frac{(\hat q_{\tone}^0)'(\phi)}{(\hat q_{\tone}^0)(\phi)}\stackrel{\tone\to\infty}{\sim}\frac{4\tone}{\pi^2 \gep(\tau^-,\tone, \tau^+)^2}\left(1 + \frac{2}{\tone\gep(\tau^-,\tone, \tau^+)}\right)^{-1},
\eeq
and conclude with Lemma \ref{lem:est-scd-order-FE} again. 

\par We finally deal with (c).
The denominator can be ignored as $\hat \mK_{\out}(1,\phi)\geq \gd_0$ by Lemma \ref{lem:binf.proba.out}. Then, we split the sum in the numerator according to the threshold $\ttwo^{\eta}$, where $\eta$ is any fixed real number such that $2< \eta< \frac{4(\gga+2)}{\gga+4}$, and first manage with the second part, which corresponds to $\ell\ge \ttwo^\eta$. We observe that for $\ell\geq 1$,
\beq
\mK_{\out}(1,\ell) \leq \bP(\bgs(\tau^+)  > \ell-1).
\eeq
By Proposition~\ref{prop:roughUB} we obtain
\beq
\label{eq:UB-Zout}
\mK_{\out}(1,\ell) \leq C \ell^3 \exp(-\phi(\ttwo) \ell),
\eeq
and
\beq
\label{eq:second bout}
\sum_{\ell \ge \ttwo^{\eta}} \ell \mK_{\out}(1,\ell)e^{\phi\ell} 
\le \sum_{\ell \ge \ttwo^{\eta}}  C \ell^4 \exp(-(\phi(\ttwo)-\phi) \ell).
\eeq
On $\kBthree_n(\rho)$, it holds that $\ttwo^2(\phi(\ttwo)-\phi)$ is bounded from below by ${\rm (Cst)\rho^3}$ and, as $\eta>2$, the last term in \eqref{eq:second bout} is bounded uniformly in $\ttwo$.

\par We turn to the first part of the sum in the numerator, which corresponds to $\ell \le \ttwo^\eta$. As $\tau \in \kBfour_n(J)$, one can use Lemma \ref{lem:Zout} so that
\beq
\sum_{J^{\frac{\gamma+2}{\gamma}} \leq \ell \le \ttwo^{\eta}} \ell \;\mK_{\out}(1,\ell)e^{\phi\ell}
\leq \sum_{J^{\frac{\gamma+2}{\gamma}} \leq \ell \le \ttwo^{\eta}} C\ell^4 e^{-\ell^{\frac{\gga}{2(\gga+2)}}}e^{\phi\ell}.
\eeq
On $\kBthree_n(\rho)$, we obtain from~\eqref{eq:def:gt} and Lemma~\ref{lem:sandw:phit1t2} that $\phi\leq c(\rho)/\ttwo^2$. This implies that for all $\ell \leq \ttwo^{\eta}$, $\phi \ell \leq c(\rho) \ell^{1-2/\eta}$ and, as $1-\frac 2 \eta< \frac{\gga}{2(\gga+2)}$,
\beq
\sum_{J^{\frac{\gamma+2}{\gamma}} \leq \ell \le \ttwo^{\eta}} C\ell^4 e^{-\ell^{\frac{\gga}{2(\gga+2)}}}e^{\phi\ell}
\leq \sum_{\ell \ge 1} C\ell^4 e^{-\ell^{\frac{\gga}{2(\gga+2)}}}e^{c(\rho) \ell^{1-2/\eta}} \leq C(\rho)<+\8.
\eeq
For the $J^{\frac{\gamma+2}{\gamma}}$ first terms of the sum, we only use the rough control :
\beq
\sum_{\ell \leq J^{\frac{\gamma+2}{\gamma}}}\cP(\xi_1^{(\out)}=\ell) \ell \leq \sum_{\ell \leq J^{\frac{\gamma+2}{\gamma}}} \ell \leq J^{2(\frac{\gamma+2}{\gamma})}. 
\eeq
We have thus proven that if $\tau\in \kBthree_n(\rho)\cap \kBfour_n(J)$, there exists a constant $C_1(\rho, J)$ so that 
\beq
\cE(\xi_1^{(\out)})\leq C_1(\rho,J).
\eeq 

(ii) \textit{Proof of \eqref{lem:mom-exc2}.} Let us now deal with the second moments. With the same observation as in~\eqref{eq:deriv:observ}, we get
\beq
\label{eq:deriv2:observ}
\ba
(d)\quad&\cE[(\xi^{(\ins,1)}_{0})^2]%
=\sum_{\ell\ge 1} \ell^2 \frac{\mK_{\ins}(0,1,\ell)e^{\phi\ell}}{\hat \mK_{\ins}(0,1,\phi)}%
= \frac{(\hat q_{\tone}^1)''(\phi)}{(\hat q_{\tone}^1)(\phi)},\\
(e)\quad&\cE[(\xi^{(\ins,0)}_{0})^2]%
=\sum_{\ell\ge 1} \ell^2 \frac{\mK_{\ins}(0,0,\ell)e^{\phi\ell}}{\hat \mK_{\ins}(0,0,\phi)}%
= \frac{(\hat q_{\tone}^0)''(\phi)}{(\hat q_{\tone}^0)(\phi)},\\
(f)\quad&\cE[(\xi^{(\out)}_{0})^2]%
=\sum_{\ell\ge 1} \ell^2 \frac{\mK_{\out}(0,\ell)e^{\phi\ell}}{\hat \mK_{\out}(0,\phi)}.%
\ea
\eeq
Items (d) and (e) are taken care of with Lemma~\ref{lem:asympt-q} together with Lemma \ref{lem:est-scd-order-FE}. Item (f) can be treated in exactly the same manner as (c).
\end{proof}

\subsection{Mass renewal function and survival probability}
\label{subsec:mrfsp}
Recall that the mass renewal function $(\cP(n\in \bar \gt))_{n\geq 1}$ is related to the pinned survival probability (Proposition~\ref{pr:ren-int}). In this section we provide bounds both for the pinned (Lemma \ref{lem:mass-ren-fct}) and unpinned (Lemma \ref{lem:sharp-Zn-renorm}) survival probabilites. Due to their technical nature and their similarities, the proofs of these two lemmas are deferred to Appendix~\ref{app:plms}.
\begin{lemma}\label{lem:mass-ren-fct}
Let $\rho\in(0,\frac12)$ and $J\ge 1$. There exists $C=C(\rho)$ such that for $\tau\in\good_n(\rho, J)$, $a\in\{0,1\}$, $\tone$ large enough, and $k\ge \tone^3$ (for the lower bound) or $k\ge 1$ (for the upper bound),
\beq
\frac{\ind_{\{k {\rm\ is\ even\ or\ }\tone {\rm\ is\ odd}\}}}{C \tone^3} \le \cP_a(k\in \bar\gt) \le \frac{C}{k^{3/2}\wedge \tone^3}.
\eeq
\end{lemma}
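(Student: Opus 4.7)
\emph{Setup.} Combining Lemma~\ref{lem:binf.proba.out} and Lemma~\ref{lem:mom-exc}, the mean inter-arrival time under $\cP_a$ satisfies $\mu_a := \cE_a[\bar\gt_1] \asymp \tone^3$ with $\cE_a[(\bar\gt_1)^2] \le C \tone^6$, dominated by the inside excursions. The modulating chain $(X_n)_{n\ge 0}$ on $\{0,1\}$ is irreducible with transition probabilities bounded away from $0$; it is aperiodic when $\tone$ is odd and $2$-periodic when $\tone$ is even (inside excursions then preserve parity). A useful reduction to a classical i.i.d.\ renewal is to consider the successive returns of $(X_n)$ to $a$: set $\nu = \min\{j\ge 1 : X_j = a\}$ and $R = \bar\gt_\nu$. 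Then the cycles $(R_i)_{i\ge 1}$ are i.i.d., $\cE_a[R] \asymp \tone^3$ and $\cE_a[R^2] \le C\tone^6$ by Wald's identity (as $\nu$ is geometric by Lemma~\ref{lem:binf.proba.out}), and $\cP_a(k\in\bar\gt) \ge u_a(k)$ where $u_a(k)$ is the renewal mass of $R_1 + R_2 + \cdots$.

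\emph{Upper bound.} I split at $k \asymp \tone^2$. For $k \le \tone^2$ I expand $\cP_a(k\in\bar\gt) = \sum_{m\ge 1}\cP_a(\bar\gt_m = k)$. Combining \eqref{eq:lienZq}, Lemma~\ref{lem:estim-qtn}, Lemma~\ref{lem:Zout} and Lemma~\ref{eq:control.ratio.h}, the one-step density satisfies
\[
\cP_a(\bar\gt_1 = \ell) \le \frac{C}{\ell^{3/2} \wedge \tone^3}\, e^{-(g(\tone)-\phi)\ell} + r(\ell),
\]
where $r(\ell)$ is the $\out$-contribution, which decays super-polynomially in $\ell$ by Lemma~\ref{lem:Zout}. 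Since $g(\tone) - \phi \asymp \tone^{-3}$ by Lemma~\ref{lem:est-scd-order-FE}, the exponential factor is negligible for $\ell \le \tone^2$. The $m$-fold convolution of such heavy-tailed densities still admits an $\ell^{-3/2}$ upper envelope on this scale, which I would establish by induction using the classical convolution inequality $(p \ast p)(k) \le 2 p(k) P(k/2) + 2 P(k/2) p(k)$; summing over $m\ge 1$ yields $\cP_a(k\in\bar\gt) \le C/k^{3/2}$. For $k\ge \tone^2$, Blackwell's renewal theorem applied to the classical renewal of the cycles $R_i$, together with a quantitative control of the approach to the limit based on the variance bound, yields $\cP_a(k\in\bar\gt) \le C/\mu_a \le C/\tone^3$.

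\emph{Lower bound and main obstacle.} For $k\ge \tone^3$ I prove $\cP_a(k\in\bar\gt) \ge c/\tone^3$ (when the parity condition is satisfied) via a local limit theorem for the cycle renewal. Since the rescaled law $R/\tone^3$ has uniformly bounded first two moments (Lemma~\ref{lem:mom-exc}) and is tight, a Stone-type local CLT provides a renewal density bounded below by $c/\tone^3$ uniformly for $k\ge \tone^3$ of the correct parity; the parity is automatic because $R$ is always even when $\tone$ is even, and charges both parities when $\tone$ is odd (via the single odd-length crossing step). The main difficulty is to quantify \emph{uniformly in $\tone$} the non-concentration of the characteristic function $\theta\mapsto \cE_a[e^{i\theta R}]$ outside a neighborhood of $0$ (and of $\pi$ in the $\tone$-even case), which is needed to apply Stone's theorem uniformly across the $\tone$-indexed family. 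This will come from tightness of $R/\tone^3$ and the non-degeneracy of its subsequential limits, itself traceable to the diffusive scaling of the ruin probabilities $q_t(n)$ captured by Lemma~\ref{lem:estim-qtn}.
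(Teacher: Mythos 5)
Your approach differs genuinely from the paper's: you reduce to an honest i.i.d.\ cycle renewal by cutting at returns of the modulating chain $(X_n)$ to $a$, and then invoke Blackwell's theorem and a Stone-type local CLT, whereas the paper works directly with the Markov renewal $\cP$, compares the events that the excursion straddling a given time is inward versus outward, and uses the explicit two-sided bounds of Lemma~\ref{lem:estim-qtn} plus a CP09b-style convolution induction. Your route is conceptually clean, but it has gaps beyond the one you flag.

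The most serious issue is the upper bound for $\tone^2 \le k \le \tone^3$. You invoke Blackwell's theorem to get $\cP_a(k\in\bar\gt)\le C/\tone^3$ for all $k\ge\tone^2$, but the cycle mean is $\mu_a \asymp \tone^3 \gg \tone^2$, so for $k$ in the range $[\tone^2,\tone^3]$ you are nowhere near the asymptotic regime; Blackwell's theorem says nothing there. Indeed, for a renewal with density heavy near $0$ (your $\ell^{-3/2}$ envelope), the renewal mass $u(k)$ need not be bounded by $C/\mu$ before equilibration, and a bound in this transitional range is precisely the content that requires the paper's careful decomposition (the CP09b induction up to $k\le\tone^3$, then the ``empty/non-empty window'' comparison \eqref{eq:equivalB13} for $k\ge\tone^3$). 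Moreover, even for $k\ge\tone^3$, both Blackwell and Stone's LCLT are asymptotic for a \emph{fixed} increment law, while here the law depends on $\tone$; you correctly flag this for the LCLT but the same uniformity problem afflicts the Blackwell step of the upper bound and is not addressed. Finally, the displayed convolution inequality $(p*p)(k)\le 2p(k)P(k/2)+2P(k/2)p(k)$ is not correct as written (the standard bound for nonincreasing $p$ is $(p*p)(k)\le 2\,p(\lfloor k/2\rfloor)P(\lfloor k/2\rfloor)$, not $p(k)P(k/2)$), and the $\out$-contribution $r(\ell)$ breaks monotonicity, so the inductive envelope needs the more careful treatment of CP09b rather than this one-line inequality. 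In short, the i.i.d.\ cycle reduction is a good idea, but the quantitative renewal estimates you would need — uniformity in $\tone$ of the LCLT, an upper bound valid in the pre-equilibration window $[\tone^2,\tone^3]$, and a correct convolution induction accounting for the outward tail — are exactly the technical work that the paper does by hand and that your outline defers.
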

Recall that $\phi = \phi(\tau^-,\tone, \tau^+)$ is the free energy defined in Section~\ref{sec:defcP}. The following lemma is the unpinned analog of Lemma~\ref{lem:mass-ren-fct}.
\begin{lemma}\label{lem:sharp-Zn-renorm} Let $\rho\in(0,\frac12)$ and $J\ge 1$. There exists $C(\rho)>0$ such that for $\tau\in\good_n(\rho,J)$, $\tone$ large enough and $k\ge 2\tone^3$ (for the lower bound) or $k\ge 2 \tone^2$ (for the upper bound),
\beq
\frac{1}{C(\rho)\tone} \le \mZ_k e^{\phi k} \le \frac{C(\rho)}{\tone}.
\eeq
\end{lemma}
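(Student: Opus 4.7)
The plan is to decompose the unpinned partition function $\mZ_m$ according to the last time $k \le m$ at which the walk touches the boundary $\{\tau_{\lopt-1},\tau_\lopt\}$ of the optimal gap:
\[
\mZ_m = \sum_{k=0}^m \sum_{a\in\{0,1\}} \mZ_k^{0a}\, R^a_{m-k}, \qquad R^a_\ell := \mZ^a_\ell(\bar\gt_1 > \ell).
\]
After multiplying by $e^{\phi m}$, Proposition~\ref{pr:ren-int} together with Lemma~\ref{eq:control.ratio.h} (in the form~\eqref{eq:comp-Zpin}) identifies the pinned factor $\mZ_k^{0a} e^{\phi k}$ with $\cP_0^a(k\in\bar\gt)$ up to constants depending only on $\gb$. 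The task thus reduces to combining the bounds of Lemma~\ref{lem:mass-ren-fct} on this mass renewal function with a sharp analysis of the tail factor $R^a_\ell e^{\phi\ell}$.

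I would split the tail factor as $R^a_\ell = R^{a,\ins}_\ell + R^{a,\out}_\ell$ according to whether the pending excursion lies inside or outside $I_\loc$. Since $I_\loc$ contains no obstacle, $R^{0,\ins}_\ell$ is precisely the slab-survival probability $\bP_0(S_1,\dots,S_\ell \in (0,\tone))$; by Lemma~\ref{lem:estim-qtn} this quantity is of order $\tfrac{1}{\tone\wedge\sqrt\ell} e^{-g(\tone)\ell}$, and together with the fact that $g(\tone)-\phi$ is of order $1/\tone^3$ (Lemma~\ref{lem:est-scd-order-FE} combined with~\eqref{eq:assphi}), one gets that $R^{a,\ins}_\ell e^{\phi\ell}$ is of order $\tfrac{1}{\tone\wedge\sqrt\ell}\, e^{-c\ell/\tone^3}$, so that $\sum_\ell R^{a,\ins}_\ell e^{\phi\ell}$ is of order $\tone$. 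For the outward part, on $\kBthree_n(\rho)$ the inequality $\phi < \phi(\ttwo)$ of Lemma~\ref{lem:sandw:phit1t2} comes with a gap of order $\tone^{-2}$, while Lemma~\ref{lem:Zout} together with $\kBfour_n(J)$ controls $\mK_{\out}(a,\ell)$ for moderate~$\ell$; repeating the argument used for item $(c)$ in the proof of Lemma~\ref{lem:mom-exc} then produces $\sum_\ell R^{a,\out}_\ell e^{\phi\ell} \le C(\rho,J)$. Overall, $\sum_\ell R^a_\ell e^{\phi\ell}$ is of order $\tone$.

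The lower bound of the lemma then follows by restricting the double sum to indices $k$ with $m-k\in[\tone^3, 2\tone^3]$: for $m\ge 2\tone^3$ this keeps $k\ge \tone^3$, where the lower half of Lemma~\ref{lem:mass-ren-fct} combined with the non-degeneracy of the modulating chain from Lemma~\ref{lem:binf.proba.out} gives $\cP_0^0(k\in\bar\gt)\ge c/\tone^3$ (the parity restriction costs only a factor $1/2$), while $R^{0,\ins}_{m-k}\, e^{\phi(m-k)}\ge c/\tone$; summation over the $\asymp\tone^3$ admissible values of $k$ yields the required $c/\tone$. For the upper bound with $m\ge 2\tone^2$ I would split the sum at $k=\tone^2$: for $k\ge\tone^2$, Lemma~\ref{lem:mass-ren-fct} delivers $\cP_0^a(k\in\bar\gt)\le C/\tone^3$ and, combined with $\sum_\ell R^a_\ell e^{\phi\ell}$ of order $\tone$, this yields a contribution of order $1/\tone^2$; for $k<\tone^2$ one has $m-k\ge\tone^2$, and the exponential decay of the inward tail together with the integrability of the outward one make the remaining contribution negligible. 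I expect the main technical obstacle to lie in the uniform control of $R^{a,\out}_\ell e^{\phi\ell}$, which crucially depends on the good-environment events $\kBthree_n(\rho)$ and $\kBfour_n(J)$ and closely parallels the outward-excursion estimates carried out in Lemma~\ref{lem:mom-exc}.
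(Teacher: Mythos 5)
Your decomposition (last visit of $\bar\gt$ before the final time, then comparison of the pinned factor with $\cP$ via~\eqref{eq:comp-Zpin} and Lemma~\ref{lem:mass-ren-fct}) is the same one the paper uses, so the strategy is sound. However, the arithmetic supporting both halves of the estimate is off in a way that makes the argument internally inconsistent.

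The key slip is the claim that $\sum_\ell R^{a,\ins}_\ell e^{\phi\ell}$ is of order $\tone$. Since $R^{a,\ins}_\ell e^{\phi\ell} \asymp \frac{1}{\tone\wedge\sqrt\ell}\, e^{-(g(\tone)-\phi)\ell}$ and $g(\tone)-\phi\asymp \tone^{-3}$, the dominant contribution to the sum comes from $\tone^2\lesssim\ell\lesssim\tone^3$, where each term is $\asymp\tone^{-1}$ and the exponential is still of order one; summing over those $\asymp\tone^3$ indices gives $\asymp\tone^2$, not $\tone$. (The regime $\ell\lesssim\tone^2$ only contributes $\asymp\tone$.) Plugging the incorrect $\asymp\tone$ into your upper-bound argument gives $\cP(k\in\bar\gt)\cdot\sum_\ell R_\ell e^{\phi\ell}\lesssim\tone^{-3}\cdot\tone = \tone^{-2}$, which is \emph{smaller} than the lower bound $c/\tone$ you correctly derive a few lines earlier --- an impossibility. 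With the corrected $\asymp\tone^2$ you recover $\tone^{-3}\cdot\tone^2=\tone^{-1}$, which is exactly what the paper obtains (see the middle case $\tone^2\le m\le k-\tone^2$ in its Step~2.1, where the geometric series $\sum_m e^{-(g(\tone)-\phi)m}\asymp\tone^3$ supplies the missing power). A related issue is your dismissal of the $k<\tone^2$ part as negligible: since $\sum_{k<\tone^2}\cP(k\in\bar\gt)\lesssim\sum_k k^{-3/2}$ is bounded and $R_{m-k}e^{\phi(m-k)}\lesssim\tone^{-1}$ there, that range also contributes $\asymp\tone^{-1}$, i.e.\ the same order as the main term, not a smaller one.

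There is also a small index slip in the lower bound: restricting to $m-k\in[\tone^3,2\tone^3]$ only guarantees $k=m-(m-k)\ge m-2\tone^3\ge 0$ when $m\ge 2\tone^3$, not $k\ge\tone^3$ as you claim, so the lower bound of Lemma~\ref{lem:mass-ren-fct} is not available throughout that window; restricting instead to $m-k\in[\tone^2,\tone^3]$ (the paper's choice) does keep $k\ge\tone^3$. None of these is a structural problem --- the ingredients, and the way they must be combined, are exactly the paper's --- but as written the upper and lower bounds do not match, and the proposal would need its order-counting redone before it proves the lemma.
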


\begin{remark}\label{rmk:pin-vs-free}
The pinned and free partition functions lead to the same value of the free energy but have different prefactors. Indeed, when $k\ge 2\tone^3$, $\mZ_k e^{\phi k}$ is of order $1/\tone$ by Lemma~\ref{lem:sharp-Zn-renorm} whereas $\mZ_k^\pin e^{\phi k}$ is of order $1/\tone^3$ by Proposition~\ref{pr:ren-int}, Lemma~\ref{eq:control.ratio.h} and Lemma~\ref{lem:mass-ren-fct}. As a consequence, we obtain that the right endpoint of the free polymer (started at the boundary of the optimal gap) is pinned at the boundary of the optimal gap with probability roughly $1/\tone^2$, which differs from the $1/\tone^3$ found for the Markov renewal process $\cP$ in Lemma~\ref{lem:mass-ren-fct}. This comes from the fact that, unlike the pinned measure, the free measure is not comparable to the Markov renewal process $\cP$ close to the right endpoint of the polymer (the latter point shall be made more precise during the proof of Proposition~\ref{pr:locsup1}, see in particular \eqref{eq:def-Theta}). Moreover, we notice that the pinning probability under the free measure is of the same order ($1/\tone^2$) as the free energy $\phi$. Remarkably, a very similar situation occurs for the homogeneous pinning model (with a single interface) in the localized regime, see~\cite[Theorem 2.2]{Gia07}. One may naturally wonder how general this observation can be made.
\end{remark}

\section{Localization starting from the optimal gap : case $\gga<1$}
\label{sec:locinf1}
In this section we assume that $\gga<1$. In this case, the localization interval is so large that a stronger form of localization occurs, as it is stated in the next proposition.
\begin{proposition} 
\label{pr:locinf1}
Let $\petit \in (0,1)$. For $C>0$ large enough, for all $\rho\in (0,1/2), \gep_0\in (0,1), \gep\in (0,\gb/2)$ and $J\geq 1$, for $\tau\in\good_n(\gep_0,\gep,\rho,J)$ and $n$ large enough,
\beq
\label{eq:locinf1}
\bP_{\tau_{\lopt-1}}\Big(\forall k\ge C,\  S_k \in I_\loc \Big| \bgs(\mtau) > n\Big) \ge 1-\petit.
\eeq
\end{proposition}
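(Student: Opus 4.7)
My plan is to leverage the Markov renewal interpretation of Section~\ref{sec:rmrp} together with the observation, crucial in this regime, that $\gamma<1$ combined with $\kBone_n$ forces $n\ll \tone^3$ (Remark~\ref{rk:comparegapn}). Introduce the last visit of the walk to the boundary of $I_\loc^{(n)}$ before time $n$, namely $L:=\max(\bar\gt\cap[0,n])$ (note $L\ge 0$ since $\bar\gt_0=0$). The complement of the event in~\eqref{eq:locinf1} is contained in the union of $\{L\ge C\}$ and $\{L< C\text{ and the walk is outside }I_\loc\text{ at some time in }(L,n]\}$. On the latter event the walk must survive in the obstacle field on one side of $I_\loc$ for a time at least $n-C$, and Proposition~\ref{prop:roughUB} gives a bound of the form $Cn^2 e^{-\phi(\ttwo)(n-C)}$; combined with the strict inequality $\phi(\ttwo)\ge g(\tone)+c_\rho/\tone^2$ (consequence of $\kBthree_n(\rho)$ together with Lemmas~\ref{lem:sandw:phit1t2} and~\ref{lem:est-scd-order-FE}) and the denominator lower bound on $\mZ_n$ below, this part contributes $o(1)$ to the conditional probability because $n\gg \tone^2$.

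To control $\mZ_n(L\ge C)$, I apply the strong Markov property at time $L$ to write $\mZ_n(L=k)=\sum_{a\in\{0,1\}}\mZ_k^{0a}\,f_a(n-k)$ with $f_a(m):=\mZ_m^a(\bar\gt_1>m)$. For the pinned factor, Proposition~\ref{pr:ren-int}, Lemma~\ref{eq:control.ratio.h} and Lemma~\ref{lem:mass-ren-fct} yield
\[
\mZ_k^{0a}\,e^{\phi k}\ \le\ C\,\cP_0^a(k\in\bar\gt)\ \le\ \frac{C}{k^{3/2}\wedge \tone^3}.
\]
For the no-return factor, splitting according to whether the walk's first step is inward or outward and discarding the outward contribution as in the first paragraph, Lemma~\ref{lem:estim-qtn} (applied to the confinement of the walk in the slab $(\tau_{\lopt-1},\tau_{\lopt})$) gives the two-regime bound
\[
f_a(m)\ \le\ \frac{C}{\sqrt m}\ \wedge\ \frac{C\,e^{-g(\tone)m}}{\tone},\qquad m\ge 1.
\]
For the denominator, Lemma~\ref{lem:sharp-Zn-renorm} is not available since $n\ll 2\tone^3$; instead, I restrict to trajectories stepping inward at time $1$ and never touching either boundary of $I_\loc$ until time $n$ (which incurs no killing beyond the initial obstacle visit) and invoke the classical sharp Dirichlet confinement estimate in the slab $(0,\tone)$:
\[
\mZ_n\ \ge\ \tfrac12\,\bP_1\!\left(\text{SRW stays in }(0,\tone)\text{ for time }n-1\right)\ \ge\ \frac{c}{\tone}\,e^{-g(\tone)n},
\]
valid in the regime $\tone^2\le n\le C'\tone^3$ that is ours on $\good_n$.

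Combining, and using $(g(\tone)-\phi)\,k=O(k/\tone^3)=O(n/\tone^3)=o(1)$ (by Lemma~\ref{lem:est-scd-order-FE} and Remark~\ref{rk:comparegapn}), the ratio $\mZ_n(L\ge C)/\mZ_n$ is controlled by splitting the sum over $k$ at $k=n-\tone^2$. In the range $C\le k\le n-\tone^2$, using the exponential form of $f_a$, the contribution is at most $C\sum_{k\ge C}\cP_0(k\in\bar\gt)\le 2C/\sqrt C+C\,n/\tone^3$ via Lemma~\ref{lem:mass-ren-fct}. In the range $n-\tone^2<k\le n$, using $f_a(m)\le C/\sqrt m$ together with $\cP_0(k\in\bar\gt)\le C/\tone^3$ (since here $k\ge \tone^2$) and $\sum_{m=1}^{\tone^2}m^{-1/2}\le C\tone$, the contribution is at most $C/\tone$. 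When $\gamma<1$ one has $n/\tone^3\to 0$ and $1/\tone\to 0$; choosing $C$ large enough makes $2C/\sqrt C<\petit/2$ and concludes the proof. The main obstacle is precisely the lack of a lower bound on $\mZ_n$ in the regime $n\ll\tone^3$, since Lemma~\ref{lem:sharp-Zn-renorm} covers only $n\ge 2\tone^3$; this forces the direct Dirichlet-type argument above. The rest is careful bookkeeping of the Markov renewal estimates already established in Section~\ref{sec:rmrp}.
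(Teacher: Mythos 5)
Your decomposition (split on the last visit time $L=\bar\gt_{\cN(n)}$ to the boundary, peel off the ``always outside after a short time'' event, control the denominator by a direct confinement estimate) is structurally the same as the paper's, and the bookkeeping for the sums over $k$ matches the paper's treatment of $E_2$ and $E_3$. However there is a genuine gap in the step that bounds the no-return factor $f_a(m)=\mZ_m^a(\bar\gt_1>m)$.

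You claim the two-regime bound $f_a(m)\le \frac{C}{\sqrt m}\wedge \frac{C}{\tone}e^{-g(\tone)m}$ by \emph{(i)} applying Lemma~\ref{lem:estim-qtn} to the inward part of the excursion, and \emph{(ii)} ``discarding the outward contribution as in the first paragraph.'' Step (ii) does not go through: the ``first paragraph'' argument (Proposition~\ref{prop:roughUB} applied to an outward stretch of length $\geq n-C$) only works because the outward stretch there is forced to be almost the whole trajectory. In the sum over $k\geq C$ the outward stretch has length $n-k$, which ranges over \emph{all} scales from $1$ up to $n$. For $m=n-k$ of order $\tone^2$ (or a bit larger), the naive bound from Proposition~\ref{prop:roughUB}, namely $f_a^{\out}(m)\le Cm^2(\lopt)\,e^{-\phi(\ttwo)m}$, carries a polynomial prefactor that, once multiplied by $e^{g(\tone)m}$ and summed over the roughly $\tone^8$-many relevant values of $m$, does not vanish and in fact blows up like a power of $\tone$; Lemma~\ref{lem:Zout}'s stretched-exponential bound also fails to cover $m$ comparable to $n$ because $g(\tone)m$ dominates $m^{\gga/(2(\gga+2))}$ there when $\gga<1$. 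In short, the outward part of $f_a(m)$ cannot be discarded by the $E_1$-type reasoning, and your write-up offers no other justification for it.

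The correct way to close this gap — and what the paper does — is to observe that $f_a(m)\leq \mZ_m^a$, and then use the \emph{upper bound} of Lemma~\ref{lem:sharp-Zn-renorm}, which is valid for all $m\geq 2\tone^2$ (the constraint $k\geq 2\tone^3$ applies only to the \emph{lower} bound, which you were right to avoid for the denominator). That upper bound is precisely what internalizes the renewal cancellations needed to control the outward excursions of all lengths, and it yields $f_a(m)\leq \frac{C(\rho)}{\tone}e^{-\phi m}$ for $m\geq 2\tone^2$, hence (using $\phi\geq g(\tone)-\mathrm{cst}/\tone^3$ and $m\leq n\leq\tone^3$) the bound $\frac{C}{\tone}e^{-g(\tone)m}$ you wanted. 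So your final inequality on $f_a$ is correct, but the route you propose to reach it is not, and the fix is exactly the lemma you mistakenly declared unavailable. Once you replace your justification of the $f_a$ bound by an appeal to the upper bound of Lemma~\ref{lem:sharp-Zn-renorm} on the range $n-k\geq 2\tone^2$ (and keep the basic $\bP_0(H_0>m)\leq C/\sqrt{m}$ estimate for $n-k<2\tone^2$), the rest of your computation goes through and coincides with the paper's $E_2/E_3$ split at $n-2\tone^2$.
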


\begin{proof}[Proof of Proposition~\ref{pr:locinf1}] 
\par The proof mainly follows ~\cite[Theorem 1.1 (3)]{CP09b} with adaptions due to the fact that our environment is not periodic. We are going to prove that for all $\petit\in (0,1)$, one can choose $C$ and $n$ large enough so that for all $\rho\in (0,1/2), \gep_0\in (0,1), \gep\in (0,\gb/2)$ and $J\geq 1$, on $\good_n(\gep_0,\gep,\rho,J)$,
\beq
\frac
{\mZ_{n}(S_{[C,n]}\cap I_\loc^c\neq\emptyset)}
{\mZ_{n}(S_{[C,n]}\cap I_\loc^c=\emptyset)}
\leq {\petit},
\eeq
which readily implies~\eqref{eq:locinf1}. We first control the denominator using \eqref{eq:estim-qtn2} in Lemma \ref{lem:estim-qtn}: 
\beq
\ba
D:=\mZ_{n}(S_{[C,n]}\cap I_\loc^c=\emptyset) &\geq \mZ_{n}(\bar \gt_1 > n, Y_1=in)\\
& \geq \frac{c_1}{2\tone}e^{-g(\tone)n}.
\ea
\eeq
We now turn to the numerator. We recall the definition of  $\cN(n)$ in \eqref{eq:defcN} and write
\beq
{\mZ_{n}(S_{[C,n]}\cap I_\loc^c\neq\emptyset)}\leq 
E_1+E_2+E_3,
\eeq
where
\beq
\ba
E_1&={\mZ_{n}(\bar \gt_{\cN(n)} \leq  C, S_{[C,n]}\subset I_\loc^c)},\\
E_2&={\mZ_{n}(C < \bar \gt_{\cN(n)} \leq n -2 \tone^2)},\\
E_3&={\mZ_{n}(\bar \gt_{\cN(n)} > n -2 \tone^2)}.
\ea
\eeq
We consider each of these three terms separately. From Proposition \ref{prop:roughUB},
\beq
E_1\leq (\textrm{cst.})\ n^3 e^{-\phi(\ttwo)(n-C)}
\eeq
so that on $\kBone_n(\gep_0)\cap \kBthree_n(\rho)$ and for $n$ large enough we obtain that $E_1/D < \petit$.
\par Decomposing over all possible values for $\bar \gt_{\cN(n)}$ in $E_2$ we obtain using~\eqref{eq:comp-Zpin},
\beq
\ba
E_2&\le {\rm (cst.)}\sum_{k=C+1}^{ n -2 \tone^2}\cP(k\in \bar \gt)e^{-\phi k}\mZ_{n-k}(\bar \gt_1>n-k)\\
&\leq {\rm (cst.)}\sum_{k=C+1}^{ n -2 \tone^2}\cP(k\in \bar \gt)e^{-\phi k}\ \frac{C(\rho)}{\tone}e^{-\phi (n-k)}\\
&\leq \frac{{\rm (cst.)}}{\tone}e^{-\phi(\tone) n} \left(\sum_{k=C+1}^{ n -2 \tone^2}\cP(k\in \bar \gt) \right),
\ea
\eeq
where we have used Lemma \ref{lem:sharp-Zn-renorm} to go from the first to the second line and Lemma \ref{lem:sandw:phit1t2} to go from the second one to the third one.
Using Lemma \ref{lem:mass-ren-fct}, we get 
\beq
\sum_{k=C+1}^{ n -2 \tone^2}\cP(k\in \bar \gt) \leq \sum_{k\geq C+1} \frac{\textrm{(\textrm{cst.})}}{k^{3/2}} +\textrm{(cst)}  \frac{n}{\tone^3}.
\eeq
so that, on $\kBone_n(\gep_0)$, using \eqref{eq:phi} and \eqref{eq:def:gt}, we obtain
\beq
\frac{E_2}{D}\leq {\rm (cst.)}\Big( \sum_{k\geq C+1} \frac{\textrm{1}}{k^{3/2}} +\frac{n}{\tone^3}\Big)e^{\frac{{\rm (cst.)}n}{\tone^3}}.
\eeq 
 One can then fix $C$ large enough so that for $n$ large enough $E_2/ D\leq \petit$. 
\par Using the same decomposition for $E_3$ we obtain
 \beq
E_3 \leq {\rm (cst.)}\sum_{k=n -2 \tone^2}^{n }\cP(k\in \bar \gt)e^{-\phi k}\mZ_{n-k}(\bar \gt_1>n-k).
\eeq
We first observe that
\beq
\mZ_{n-k}(\bar \gt_1>n-k)\leq \bP_0(H_0 >n-k) \leq \frac{\textrm{(cst.)}}{\sqrt{n-k+1}}.
\eeq
Then, using Lemma \ref{lem:sandw:phit1t2} and \eqref{eq:encadr_gphi}, one notices that $2 \phi \tone^2 \leq \textrm{(cst)}$ and from Lemma \ref{lem:mass-ren-fct} (note that on $\kBone_n$, $\tone^3\leq (n-2\tone^2)^{3/2}$ for $n$ large enough) we obtain
 \beq
E_3 \leq \textrm{(cst.)} e^{-\phi(\tone) n} \frac{1}{\tone ^3} \sum_{k=n -2 \tone^2}^{n} \frac{1}{\sqrt{n-k+1}} \leq \textrm{(cst.)}e^{-\phi(\tone) n} \frac{1}{\tone ^2}.
\eeq
From this estimate we easily prove that on $\kBone_n(\gep_0)$ and for $n$ large enough $E_3/ D\leq \petit$.
\end{proof}

\section{Localization starting from the optimal gap : case $\gga>1$}
\label{sec:locsup1}
In this section we turn to the case $\gga>1$ where the gaps are integrable and the localization interval is thus of a smaller order than in the previous section. There is however still localization in this single interval even if not in such a strong sense as in Proposition \ref{pr:locinf1}.
\begin{proposition}
\label{pr:locsup1}
For all $\petit, \gep_0, \in(0,1)$, $\rho \in (0,\frac12)$ and $J\ge 1$, there exists $C=C(\petit,\rho,J)>0$ such that for all $a,b\in\{0,1\}$, $\tau\in\good_n(\gep_0,\rho,J)$ and for $n$ large enough,
\beq
\label{eq:locsup1}
\bP_{\tau_{\lopt-1}}\Big(\frac{n}{CT_{\lopt}^3} \le \cN^{a,b}_{\ins}(n), \cN^a_{\out}(n), \cT_{\out}(n) \le C \frac{n}{T_{\lopt}^3}\Big| \bgs(\mtau) > n\Big) \ge 1-\petit.
\eeq
\end{proposition}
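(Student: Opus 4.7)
The strategy is to transfer all three bounds from the polymer measure to the auxiliary Markov renewal process $\cP$ of Section~\ref{sec:rmrp}, where the excursion structure is i.i.d.\ conditional on the modulating chain $(X_i,Y_i)$ and both have been sharply controlled in Lemmas~\ref{lem:binf.proba.out} and~\ref{lem:mom-exc}. The regime $\gga>1$ guarantees, via Remark~\ref{rk:comparegapn}, that $n\gg \tone^3$, so the typical number of excursions $n/\tone^3$ diverges and standard second-moment concentration arguments become effective. In particular, a single ``terminal'' excursion, which can exist under the free measure but not under $\cP$, contributes only $O(1)$ to all four counts and can be neglected.

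The transfer proceeds by decomposing the free partition function along the last renewal $\bar\gt_{\cN(n)}=k$ and the terminal side $a=X_{\cN(n)}$:
\beq
\mZ_n^0(E) = \sum_{k=0}^{n}\sum_{a\in\{0,1\}}\mZ_k^{0a}(E_k,\, k\in\bar\gt)\cdot\mZ_{n-k}^{a}(\bar\gt_1>n-k),
\eeq
where $E_k$ denotes the $\hat\cF_k$-measurable restriction of $E$ to $\{\bar\gt_{\cN(n)}=k\}$. Combining Proposition~\ref{pr:ren-int}, Lemma~\ref{eq:control.ratio.h}, and Lemma~\ref{lem:sharp-Zn-renorm} ($\mZ_n^0\asymp \tone^{-1}e^{-\phi n}$), the conditional probability reduces to a weighted sum of $\cP_0^a(E_k,\,k\in\bar\gt)$ with weights $w_a(j):=\tone\,e^{\phi j}\mZ_{j}^{a}(\bar\gt_1>j)$. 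On the $\cP$-side, Lemmas~\ref{lem:binf.proba.out} and~\ref{lem:mom-exc} yield $\cE_a[\bar\gt_1]\asymp\tone^3$ and $\var_a(\bar\gt_1)\le C\tone^6$; Chebyshev applied to $\bar\gt_k$ together with the identity $\{\cN(n)\le k\}=\{\bar\gt_k>n\}$ then shows that $\cN(n)$ lies in $[n/(C\tone^3),\,Cn/\tone^3]$ with $\cP$-probability at least $1-\petit$ for $C$ large. Since the transition probabilities of the chain $(X_i,Y_i)$ are uniformly bounded in $(\gd_0,1-\gd_0)$ (Lemma~\ref{lem:binf.proba.out}), a Hoeffding-type bound for uniformly ergodic Markov chains applied over the first $\cN(n)$ transitions shows that each of the types $(a,b;\ins)$ and $(a,a;\out)$ appears a $\Theta(1)$ fraction of the time, giving concentration of $\cN^{a,b}_\ins(n)$ and $\cN^a_\out(n)$ around $n/\tone^3$. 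For $\cT_\out(n)$, the upper bound comes from a second-moment estimate on $\sum_{i\le\cN(n)}\xi^{(\out)}_{X_{i-1},i}$ whose summands have mean and variance $O(1)$ (Lemma~\ref{lem:mom-exc}); the lower bound is immediate from $\cT_\out(n)\ge 2\sum_a\cN^a_\out(n)$.

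The main technical obstacle lies in the terminal contribution of the decomposition: for $k$ close to $n$, the pinned comparison~\eqref{eq:comp-Zpin} becomes ineffective because the terminal piece of length $n-k$ is a free rather than pinned excursion, so $w_a$ is no longer uniformly bounded. The remedy, following the pattern of Proposition~\ref{pr:locinf1}, is to split the sum at $k=n-2\tone^2$: on $[0,n-2\tone^2]$ the bound $w_a(n-k)\le C$ follows from Lemma~\ref{lem:sharp-Zn-renorm}, so the $\cP$-estimates above directly yield the desired inequalities after summation against $\cP_0^a(E_k,\,k\in\bar\gt)$; on the right-hand window $[n-2\tone^2,n]$ one combines the crude estimate $\mZ_j^a(\bar\gt_1>j)\le \bP_0(H_0>j)\le C/\sqrt{j+1}$ (so that $w_a(j)\le C\tone/\sqrt{j+1}$) with Lemma~\ref{lem:mass-ren-fct} ($\cP_0^a(k\in\bar\gt)\le C\tone^{-3}$) to show that this terminal contribution is $O(1)$ in the conditional probability, hence absorbable into the constants.
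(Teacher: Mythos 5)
Your overall strategy mirrors the paper's: establish the excursion statistics under the auxiliary Markov renewal process $\cP$ (the paper's Lemmas~\ref{lem:nbr-exc}--\ref{lem : tps exterieur}), then transfer to the polymer measure by decomposing at a renewal point near $n$ and controlling the Radon--Nikodym factor. The $\cP$-side estimates you sketch (Chebyshev on $\bar\gt_M$ using Lemma~\ref{lem:mom-exc}, Markov-chain concentration using the uniform ellipticity of Lemma~\ref{lem:binf.proba.out}) are essentially the paper's Lemmas~5.1--5.4, and your handling of the terminal window $k\in(n-2\tone^2,n]$ via $\mZ_j^a(\bar\gt_1>j)\le C/\sqrt{j+1}$ and Lemma~\ref{lem:mass-ren-fct} is sound.

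However, the transfer step for the bulk window $k\le n-2\tone^2$ has a genuine gap. You bound the weight $w_a(j)=\tone\,e^{\phi j}\mZ_j^a(\bar\gt_1>j)$ by a constant $C$ via Lemma~\ref{lem:sharp-Zn-renorm}, and then assert that "summation against $\cP_0^a(E_k,\,k\in\bar\gt)$" yields the bound. But $\sum_{k\le n-2\tone^2}\cP_0^a(E_k,\,k\in\bar\gt)$ is an \emph{expected count}, not a probability, and for the lower-bound events it is not small: taking $E=\{\cN(n)<n/(C\tone^3)\}$ with restriction $E_k=\{\nu(k)<n/(C\tone^3)\}$ (where $\nu(k)$ counts renewals up to $k$), one has
\beq
\sum_{k}\cP_0^a(\nu(k)<n/(C\tone^3),\,k\in\bar\gt)
= \sum_{m<n/(C\tone^3)}\cP_0^a(\bar\gt_m\le n-2\tone^2) \le \frac{n}{C\tone^3},
\eeq
which diverges with $n$, so multiplying by the constant $C$ from $w_a\le C$ does not give smallness. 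What is actually needed is the sharper comparison $w_a(j)\le C\,\cP_a(\bar\gt_1>j)$ (for $j\ge\chop$), which makes the sum reconstitute the $\cP$-probability,
\beq
\sum_{k}\cP_0^a(E_k,\,k\in\bar\gt)\,\cP_a(\bar\gt_1>n-k) \le \cP_0(\text{corresponding $\cP$-event}),
\eeq
to which Lemmas~\ref{lem:nbr-exc}--\ref{lem : tps exterieur} then apply. This is precisely the content of Lemma~\ref{lem:ctrl_RN-deriv} in the paper (the uniform bound $\Theta(n,k)\le C(\rho)$, which has the factor $\cP(\bar\gt_1>\cdot)$ built into its denominator), and it requires a dedicated argument in Appendix~\ref{sec:radon} combining Lemma~\ref{lem:cP_tail}, Lemma~\ref{lem:sharp-Zn-renorm} and Lemma~\ref{lem:Zout}. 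Your proposal neither invokes this lemma nor provides a substitute, so the bulk contribution is not controlled. A secondary issue, which the paper addresses in Step~(ii) of its proof, is that for the upper bounds on $\cN^{a,b}_\ins(n)$, $\cN^a_\out(n)$, $\cT_\out(n)$ the excursion counts on $[n-\chop,n]$ are not monotone in the cut-off and must be bounded separately (via the $e^{-\gb\,\cdot}$ cost of extra obstacle visits and $\mK_\out$ tail bounds), which is not quite the same as absorbing the terminal weight into a constant.
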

The proof of this result consists in two main steps. First, we consider the probability in~\eqref{eq:locsup1} with respect to $\cP$, see Section~\ref{sec:lumrn}. Secondly, in Section~\ref{sec:markovtorw}, we transfer this result to the probability $\bP_{\tau_{\lopt-1}}(\cdot | \bgs(\mtau) >n)$ by controlling the Radon-Nikodym derivative of the latter with respect to the former. 

\subsection{Localization under the Markov renewal process}
\label{sec:lumrn}
We first control the number of excursions under $\cP$.
\begin{lemma}
\label{lem:nbr-exc}
Let $\petit\in(0,1)$, $\rho \in(0,\frac12)$ and $J\ge 1$. There exists $C=C(\petit,\rho,J)>0$ such that for $\tone$ large enough, $\tau\in\good_n(\rho,J)$ and all $k\geq \tone^3$,
\beq
\label{eq:lem:nbr-exc}
\cP(\tfrac 1C \tfrac{k}{\tone^3} \le \cN(k) \le C\tfrac{k}{\tone^3}) \ge 1-\petit.
\eeq
\end{lemma}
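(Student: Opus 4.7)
The plan is to reduce the concentration of $\cN(k)$ to a concentration of partial sums of inter-arrivals. Writing $\Xi_i := \bar\gt_i - \bar\gt_{i-1}$ and using the identity $\{\cN(k)\ge N\} = \{\bar\gt_N\le k\}$, I would prove the upper bound on $\cN(k)$ by Chebyshev applied to $\bar\gt_{N_+}$ with $N_+ := \lceil C_1 k/\tone^3\rceil$, and the lower bound by Markov applied to $\bar\gt_{N_-}$ with $N_- := \lfloor k/(C_2 \tone^3)\rfloor$, choosing $C_1,C_2$ large depending on $\petit$.

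The main ingredient is a pair of moment estimates on $\bar\gt_N = \sum_{i=1}^N \Xi_i$ under $\cP_0$. Conditionally on $X_{i-1}=a$, the law of $\Xi_i$ is a mixture of the laws of $\xi^{(\ins,b)}_a$ ($b\in\{0,1\}$) and $\xi^{(\out)}_a$, with weights given by the one-step transition probabilities of $(X,Y)$ starting from $a$. By Lemma~\ref{lem:binf.proba.out} these weights are bounded away from $0$ and $1$ uniformly on $\good_n$, and by Lemma~\ref{lem:mom-exc} the inward excursion means, of order $\tone^3$, dominate the $O(1)$ outward ones. Hence $\mu(a) := \cE[\Xi_i\mid X_{i-1}=a]$ lies in $[c\tone^3, C\tone^3]$ for both $a\in\{0,1\}$, giving $\cE_0[\bar\gt_N]\asymp N\tone^3$, while the conditional second moments are uniformly bounded by $C\tone^6$.

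For the variance of $\bar\gt_N$ itself, I would use the orthogonal decomposition $\Xi_i = \mu(X_{i-1}) + (\Xi_i-\mu(X_{i-1}))$: the residual terms are uncorrelated because, conditionally on the whole modulating chain, they are independent with mean zero, so for $i\ne j$, $\cov(\Xi_i,\Xi_j) = \cov(\mu(X_{i-1}), \mu(X_{j-1}))$. Since $X$ is a two-state chain whose transition probabilities are uniformly bounded below by some $\gd_0>0$ (Lemma~\ref{lem:binf.proba.out}), it mixes exponentially fast at a rate depending only on $\gd_0$; therefore $|\cov(\mu(X_{i-1}),\mu(X_{j-1}))| \le C\tone^6 \gr^{|j-i|}$ for some $\gr\in(0,1)$, and summing over $i,j$ yields the linear bound $\var_0(\bar\gt_N) \le CN\tone^6$. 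Plugging these two estimates into Chebyshev gives
\[
\cP_0(\bar\gt_{N_+}\le k) \le \frac{4\var_0(\bar\gt_{N_+})}{\cE_0[\bar\gt_{N_+}]^2} \le \frac{C'' N_+\tone^6}{(cC_1 k)^2} \le \frac{C'''}{C_1},
\]
where the last step uses $k\ge\tone^3$ and $N_+\le 2C_1 k/\tone^3$; this is $\le\petit$ for $C_1$ large enough. The lower bound follows from $\cE_0[\bar\gt_{N_-}] \le CN_-\tone^3 \le Ck/C_2$ and Markov's inequality.

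The main technical obstacle is obtaining the $O(N\tone^6)$ variance bound for the non-i.i.d. sum $\bar\gt_N$; the naive $O(N^2\tone^6)$ bound is useless, and the whole point is to exploit the uniform spectral gap of the modulating chain furnished by Lemma~\ref{lem:binf.proba.out}. A minor additional point is that for $k$ slightly above $\tone^3$ the lower bound reads $\cN(k)\ge k/(C\tone^3)<1$, which (since $\cN(k)$ is integer-valued) is either trivially true or reduces to $\cP_0(\bar\gt_1\le k)\ge 1-\petit$; this can be absorbed by choosing $C$ large and using the exponential tail $\cP_0(\bar\gt_1>t)\le C\exp(-ct/\tone^3)$, which follows from Lemma~\ref{lem:estim-qtn} together with the estimate $g(\tone)-\phi\asymp 1/\tone^3$ of Lemma~\ref{lem:est-scd-order-FE}.
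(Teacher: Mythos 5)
Your approach is correct in spirit but genuinely different from the paper's, and it contains one fixable but real error. The paper avoids any variance computation for the non-i.i.d.\ sum $\bar\gt_M$. For the lower bound on $\cN(k)$ it stochastically dominates $\bar\gt_M$ by $\sum_{1\le i\le M}\big(\xi^{(\out)}_{0,i}+\xi^{(\ins,0)}_{0,i}+\xi^{(\ins,1)}_{0,i}+\xi^{(\out)}_{1,i}+\xi^{(\ins,1)}_{1,i}+\xi^{(\ins,0)}_{1,i}\big)$, a sum over $M$ independent copies of each of the six excursion types, and then applies a second-moment bound to each of the six i.i.d.\ sums. For the upper bound on $\cN(k)$ it shows via Lemma~\ref{lem:binf.proba.out} and binomial concentration that with high probability at least a constant fraction of the first $M$ excursions are of the fixed inward type $(\ins,0)$, and then applies Chebyshev to the i.i.d.\ sum of just those. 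You instead tackle the Markov renewal structure head-on by bounding $\var_0(\bar\gt_M)$ through exponential mixing of the modulating two-state chain. This yields a more symmetric pair of arguments---your Markov-inequality step for the lower bound on $\cN(k)$ is in fact simpler than the paper's second-moment argument---at the cost of having to handle conditional covariances.

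That covariance step is where the error lies. You write $\cov(\Xi_i,\Xi_j)=\cov(\mu(X_{i-1}),\mu(X_{j-1}))$ for $i\ne j$, claiming the residuals $\Xi_i-\mu(X_{i-1})$ are conditionally centred given the modulating chain. That is false: conditionally on $(X_l)_{l\ge 0}$, the excursion lengths are indeed independent, but $\cE[\Xi_i\mid X]=\nu(X_{i-1},X_i):=\cE[\Xi_1\mid X_0=X_{i-1},X_1=X_i]$, which averages only over $Y_i$, whereas $\mu(X_{i-1})=\cE[\nu(X_{i-1},X_i)\mid X_{i-1}]$ further averages over $X_i$. Since $\Xi_i$ and $X_i$ are correlated even conditionally on $X_{i-1}$, the residual $\Xi_i-\mu(X_{i-1})$ is not conditionally centred. (Concretely, under $\cP_0$ the state $X_0$ is deterministic, so $\cov(\mu(X_0),\mu(X_1))=0$, whereas $\cov(\Xi_1,\Xi_2)=\cov(\Xi_1,\mu(X_1))$ is generically nonzero.) The correct identity is $\cov(\Xi_i,\Xi_j)=\cov\big(\nu(X_{i-1},X_i),\nu(X_{j-1},X_j)\big)$ for $i\ne j$. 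Since $\nu$ is still bounded by $C\tone^3$ (Lemma~\ref{lem:mom-exc}) and the two-state $X$-chain mixes geometrically with a rate controlled by the $\gd_0$ of Lemma~\ref{lem:binf.proba.out}, you recover $|\cov(\Xi_i,\Xi_j)|\le C\tone^6(1-2\gd_0)^{|i-j|-1}$ and hence the $O(M\tone^6)$ variance bound; after this correction the rest of your argument goes through.
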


\begin{proof}[Proof of Lemma~\ref{lem:nbr-exc}]
Let us start with the upper bound. Note that for all $M\geq 0$
\beq
\cP(\cN(k) < M) = \cP(\bar \gt_M > k).
\eeq
The basic idea is to use a second moment bound. However, as the increments $(\gD\gt_i)$ are not independent (as we deal with a {\it Markov} renewal process), the evaluation of the second moment of $\bar \gt_M$ might be cumbersome. To simplify, we proceed as follows (recall the definition of the $\xi$'s before Lemma \ref{lem:mom-exc}):
\beq
\ba
&\cP(\bar \gt_M > k) \le \cP\Big(\sum_{1\le i \le M} (\xi^{(\out)}_{0,i}+\xi^{(\ins,0)}_{0,i}+\xi^{(\ins,1)}_{0,i}+\xi^{(\out)}_{1,i}+\xi^{(\ins,1)}_{1,i}+\xi^{(\ins,0)}_{1,i}) > k\Big)\\
&\le \sum_{a\in\{0,1\}} \cP\Big(\sum_{1\le i \le M}\xi^{(\ins,0)}_{a,i} > k/6\Big)+\cP\Big(\sum_{1\le i \le M}\xi^{(\ins,1)}_{a,i} > k/6\Big)+\cP\Big(\sum_{1\le i \le M} \xi^{(\out)}_{a,i} > k/6\Big).
\ea
\eeq
We may now use the Markov inequality for sums of i.i.d. random variables for each of the six terms in the previous sum. As the six ones can be treated with the same technique we only focus on the first one, in the case $a=0$: 

\beq
\cP\Big(\sum_{1\le i \le M} \xi_{0,i}^{(\ins,0)} > k/6\Big)
\le \frac{6 M\cE(\xi_{0}^{(\ins,0)})} {k}.
\eeq

We may now conclude by using Lemma~\ref{lem:mom-exc} and choosing $M = \lfloor \frac{k}{C_1 \tone^3} \rfloor$, with $C_1$ large enough.

\par We now turn to the lower bound in \eqref{eq:lem:nbr-exc}. Let $M\in\bbN$ and
\beq
\mathfrak{n}_{0,M}^{(\ins,0)} := \#\{1\le i \le M\colon X_{i-1}=X_i=0, Y_i = \ins\}
\eeq
be the number of inward excursions from the lower boundary of the optimal gap to itself, among the $M$ first ones. Note that for all $c_1>0$ and with the slight abuse of notation $c_1M = \lfloor c_1 M \rfloor$,
\beq
\label{eq:proof-lem:nbr-exc}
\cP(\cN(k) \ge M) = \cP(\bar \gt_M \le k)
\le \cP\Big(\sum_{i=1}^{ c_1M } \xi_{0,i}^{(\ins,0)} \le k\Big) + \cP(\mathfrak{n}_{0,M}^{(\ins,0)} \le c_1M).
\eeq
Using standard Markov chain theory and  Lemma~\ref{lem:binf.proba.out}, one can choose $c_1$ small enough, so that the second term goes to $0$ with $M$. We thus consider $M=C_1\frac{k}{\tone^3}$ with $C_1$ large enough so that 
the second term is smaller than $\petit$ uniformly in $k\geq \tone^3$. Let us now deal with the first term. By Lemma~\ref{lem:mom-exc}, $\cE(\xi_{0}^{(\ins,0)}) \ge C_0^{-1}\tone^3$ for some $C_0>0$. Provided that the constant $C_1$ chosen above is actually larger than $C_0/c_1$ so that $c_1M \times C_0^{-1} \tone^3> k$, we obtain, uniformly in $k\geq \tone^3$,
\beq
\ba
\cP\Big(\sum_{i=1}^{ c_1M } \xi_{0,i}^{(\ins,0)} \le k\Big)& \le
\cP\Big(\sum_{i=1}^{ c_1M } [\xi_{0,i}^{(\ins,0)} - \cE(\xi_{0}^{(\ins,0)})] \le k -  c_1M  C_0^{-1} \tone^3\Big)\\
&\le \frac{ c_1M }{(k -  c_1 M  C_0^{-1} \tone^3)^2} 
\var_\cP(\xi_{0}^{(\ins,0)}).
\ea
\eeq
Using Lemma~\ref{lem:mom-exc} one can prove that uniformly in $k\geq \tone^3$, this last term decays asymptotically (as $C_1$ is large) like $C_0^{2} / (c_1C)$ so that, choosing initially $C_1$ even larger in the definition of $M$, one can conclude that it is bounded by $\petit$.
\end{proof}

We now refine Lemma~\ref{lem:nbr-exc} by considering the starting points of the excursions. For all $a\in\{0,1\}$ and $k\in\bbN$, let 
\beq
\cN_{a}(k)= \cN_{\out}^{a}(k)+\cN_\ins^{a,0}(k)+\cN_\ins^{a,1}(k).
\eeq

\begin{lemma}
\label{lem:nbr-exc-a}
Let $\petit\in(0,1)$, $\rho \in(0,\frac12)$ and $J\ge 1$. There exists $C=C(\petit,\rho,J)>0$ such that for $\tone$ large enough, $a\in \{0,1\}$, $\tau\in\good_n(\rho,J)$ and all $k\geq \tone^3$,
\beq
\cP\Big(\frac{k}{C \tone^3}\le \cN_{a}(k) \le \frac{C k}{\tone^3}\Big) \ge 1-\petit
\eeq
\end{lemma}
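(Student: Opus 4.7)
My plan is to reduce Lemma~\ref{lem:nbr-exc-a} to the already established Lemma~\ref{lem:nbr-exc} by pairing it with a quantitative ergodic estimate for the $\{0,1\}$-valued modulating chain $(X_i)_{i\ge 0}$.

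The upper bound is essentially free: since $\cN_a(k) \le \cN(k)$ by construction, the right-hand inequality in Lemma~\ref{lem:nbr-exc-a} is inherited directly from the upper bound of Lemma~\ref{lem:nbr-exc} applied with tolerance $\petit/2$.

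For the lower bound I would introduce the auxiliary count
\beq
\mathfrak{n}_{a,N} := \#\{1\le i \le N \colon X_{i-1} = a\}, \qquad N \ge 0,
\eeq
so that $\cN_a(k) = \mathfrak{n}_{a, \cN(k)}$. Since $N\mapsto \mathfrak{n}_{a,N}$ is non-decreasing, for any threshold $M\ge 0$ and any $c>0$,
\beq
\cP\bigl(\cN_a(k) < cM\bigr) \le \cP\bigl(\cN(k) < M\bigr) + \cP\bigl(\mathfrak{n}_{a,M} < cM\bigr).
\eeq
I would then set $M := \lfloor k/(C_1 \tone^3)\rfloor$, where $C_1$ is the constant supplied by the lower bound of Lemma~\ref{lem:nbr-exc} at tolerance $\petit/2$, so that the first term is at most $\petit/2$.

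The second term is handled by a second-moment estimate for a Markov-chain ergodic sum, in direct analogy with the treatment of $\mathfrak{n}^{(\ins,0)}_{0,M}$ in the proof of Lemma~\ref{lem:nbr-exc}. The key input is Lemma~\ref{lem:binf.proba.out}, which ensures that the transition matrix of $(X_i)$ has all entries in $[\gd_0, 1-\gd_0]$ uniformly in $\tau \in \good_n$; hence $(X_i)$ is irreducible, aperiodic and uniformly geometrically ergodic, with a unique invariant probability $\pi$ obeying $\pi_a \ge \gd_0$. Consequently there exist $c = c(\gd_0) \in (0, \pi_a)$ and $M_0 = M_0(\petit, \gd_0)$, both independent of $\tau$, such that $\cP(\mathfrak{n}_{a,M} \le cM) \le \petit/2$ whenever $M \ge M_0$. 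Choosing $C := 2C_1/c$, and enlarging it if needed so that $C \ge M_0 C_1$, then delivers $\cN_a(k) \ge cM \ge k/(C\tone^3)$ with probability at least $1-\petit$ as soon as $k \ge M_0 C_1 \tone^3$; the residual range $\tone^3 \le k < M_0 C_1 \tone^3$ is absorbed by further enlargement of $C$, which makes the lower bound subcritical.

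Since the argument is essentially a cascade of inequalities starting from Lemma~\ref{lem:nbr-exc}, no genuinely new estimate on the obstacle environment is needed. The only mild technical point, which I view as the main obstacle to write out carefully, is the $\tau$-uniformity of the Markov-chain concentration for $\mathfrak{n}_{a,M}$; this however follows in a standard manner from the two-sided bounds of Lemma~\ref{lem:binf.proba.out}, and is already implicit in the proof of Lemma~\ref{lem:nbr-exc}.
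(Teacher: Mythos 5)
Your proposal is correct and follows essentially the same route as the paper: the upper bound is inherited from $\cN_a(k)\le\cN(k)$ and Lemma~\ref{lem:nbr-exc}, and the lower bound combines the lower bound of Lemma~\ref{lem:nbr-exc} with a concentration estimate for $\#\{i\le M\colon X_i=a\}$, uniformly over $\tau\in\good_n$ via Lemma~\ref{lem:binf.proba.out}. The only (harmless) difference is in that last step: the paper gets the concentration by stochastically dominating the occupation count from below by a $\bin(M,\gd_0)$ variable, since every one-step transition probability into state $a$ is at least $\gd_0$, which is slightly more elementary than your appeal to uniform geometric ergodicity and a second-moment bound.
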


\begin{proof}[Proof of Lemma \ref{lem:nbr-exc-a}]
We only focus on the lower bound in the event above as the upper bound is trivial using Lemma \ref{lem:nbr-exc} and the fact that $\cN_{a}(k)\leq \cN(k)$.
Recall the definition of $C_1=C_1(\petit,\rho, J)$ given by Lemma \ref{lem:nbr-exc} and the one of $\gd_0$ given in Lemma \ref{lem:binf.proba.out}. Pick $C_2>C_1/\gd_0$. For all $a\in\{0,1\}$, we have
\beq
\ba
\cP\left(\cN_{a}(k) \ge \frac{k}{C_2 \tone^3} \right) &\ge 
\cP\Big(\sum_{i \le \frac{k}{C_1 \tone^3}} \ind_{\{X_i = a\}} \ge \frac{k}{C_2 \tone^3} , \cN(k) \ge \frac{k}{C_1 \tone^3} \Big)\\
&\ge 
\cP\Big(\sum_{i \le   \frac{k}{C_1 \tone^3}} \ind_{\{X_i = a\}} \ge  \frac{k}{C_2 \tone^3} \Big) - \cP\Big(\cN(k) < \frac{k}{C_1\tone^3}\Big).
\ea
\eeq
Thanks to Lemma~\ref{lem:nbr-exc}, the second probability is smaller than $\petit$ (note that this is still the case if we choose $C_1$ even larger). We then notice that, thanks to Lemma~\ref{lem:binf.proba.out},
\beq
\cP\Big(\sum_{i \le \frac{k}{C_1 \tone^3}} \ind_{\{X_i = a\}} \ge \frac{k}{C_2 \tone^3}  \Big) \ge 
\cP\Big(\bin\Big( \frac{k}{C_1 \tone^3}, \gd_0\Big)\ge \frac{k}{C_2 \tone^3}\Big).
\eeq
The latter is greater than $1-\petit$, uniformly in $k\geq \tone^3$, if $C_2$ is large enough.
\end{proof}

We now distinguish between inward and outward excursions.
\begin{lemma}
\label{lem:nbr-exc-out}
For all $\petit\in(0,1)$, $\rho\in (0,\frac12)$ and $J\ge 1$, there exists $C = C(\petit,\rho,J)>0$ such that for $\tone$ large enough, $a,b\in \{0,1\}$, $\tau\in\good_n(\rho,J)$ and $k\geq \tone^3$,
\beq
\cP\Big(\frac{k}{C \tone^3}\le \cN^{a,b}_{\ins}(k), \cN^a_{\out}(k) \le \frac{C k}{\tone^3}\Big) \ge 1-\petit.
\eeq
\end{lemma}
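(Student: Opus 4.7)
The plan is to upgrade Lemma~\ref{lem:nbr-exc-a} by a thinning argument that separates $\cN_a(k)$ into its three constituent types $(\ins,a)$, $(\ins,1-a)$ and $(\out)$, using that each transition type out of state $a$ has probability at least $\gd_0$ by Lemma~\ref{lem:binf.proba.out}. The upper bound is immediate: both $\cN_\ins^{a,b}(k)$ and $\cN_\out^a(k)$ are pointwise dominated by $\cN_a(k)$, which Lemma~\ref{lem:nbr-exc-a} already bounds from above by $Ck/\tone^3$ with probability at least $1-\petit$, so no further work is needed on that side.

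For the lower bound, I would first invoke Lemma~\ref{lem:nbr-exc-a} to obtain a constant $C_2=C_2(\petit,\rho,J)$ such that, outside a set of probability at most $\petit/2$, at least $M:=\lfloor k/(C_2\tone^3)\rfloor$ of the excursions indexed by $1,\ldots,\cN(k)$ start from state $a$. Let $\sigma_1<\sigma_2<\dots$ denote the successive indices $i\ge 1$ with $X_{i-1}=a$; by the Markov renewal description~\eqref{def:cP}, in which the conditional law of $(\bar\gt_i-\bar\gt_{i-1},X_i,Y_i)$ given the past depends only on $X_{i-1}$, the strong Markov property at the stopping indices $\sigma_j$ yields that $(X_{\sigma_j},Y_{\sigma_j})_{j\ge 1}$ is i.i.d.\ with common law $\cP_a((X_1,Y_1)\in\cdot)$. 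By Lemma~\ref{lem:binf.proba.out}, each of the three possible types $(\ins,a)$, $(\ins,1-a)$ and $(\out)$ is realised with probability at least $\gd_0$ under this law. On the event $\{\cN_a(k)\ge M\}$ the count of any prescribed type among the first $M$ of these i.i.d.\ excursions therefore stochastically dominates a $\bin(M,\gd_0)$ random variable. A Chebyshev estimate, entirely analogous to the one at the end of the proof of Lemma~\ref{lem:nbr-exc-a}, shows that this count is at least $\gd_0 M/2\ge k/(C_3\tone^3)$ with probability at least $1-\petit/2$, provided $C_2$ (and hence $M$) is chosen large enough, uniformly in $k\ge\tone^3$. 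A union bound over the finitely many combinations of $a\in\{0,1\}$ and type then gives the joint lower bound in~\eqref{eq:lem:nbr-exc}, after replacing $\petit$ by a fixed fraction of itself at each step and enlarging $C$ accordingly.

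The only ingredient that deserves care in this plan is the i.i.d.\ structure of the thinned sequence of excursions starting from $a$, but this is a direct consequence of the Markov renewal structure encoded in~\eqref{def:cP}; no new environment-dependent estimate is required and all the probabilistic input is already contained in Lemma~\ref{lem:binf.proba.out} and Lemma~\ref{lem:nbr-exc-a}. The present lemma is thus essentially a second iteration of the binomial concentration scheme that takes Lemma~\ref{lem:nbr-exc} to Lemma~\ref{lem:nbr-exc-a}, and I do not expect any genuine obstacle beyond the bookkeeping of constants through the union bound.
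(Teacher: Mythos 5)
Your proposal is correct and takes essentially the same route as the paper: lower-bound via Lemma~\ref{lem:nbr-exc-a}, apply the strong Markov property at the return indices to state $a$ so that the types of those excursions are i.i.d.\ with law $\cP_a((X_1,Y_1)\in\cdot)$, invoke Lemma~\ref{lem:binf.proba.out} to get binomial domination, and conclude by concentration. The paper phrases the comparison through a conditional probability and spells out only the $\cN^a_\out$ case, whereas you use the intersection-of-events decomposition (as in Lemma~\ref{lem:nbr-exc-a}) and a union bound over types, but the two arguments are the same in substance.
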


\begin{proof}[Proof of Lemma~\ref{lem:nbr-exc-out}]
Since $\cN_{\out}^{a}(k)$ and the $\cN^{a,b}_{\ins}(k)$'s are smaller that $\cN(k)$ it is easy to manage with the upper bound part using Lemma \ref{lem:nbr-exc}.
We thus focus on the lower bound. Consider $C_2=C_2(\petit,\rho,J)$ as given by Lemma \ref{lem:nbr-exc-a}. Pick $C_3>C_2/\gd_0$ with $\gd_0$ as in Lemma \ref{lem:binf.proba.out}. For all $a\in\{0,1\}$, we have
\beq
\ba
\cP\Big(\cN_{\out}^{a}(k) \ge \frac{k}{C_3 \tone^3}\Big) \ge 
\cP\Big(\cN_{\out}^{a}(k) \ge \frac{k}{C_3 \tone^3} \Big| \cN_a(k) \ge \frac{k}{C_2 \tone^3}\Big)\cP\Big(\cN_a(k) \ge \frac{k}{C_2 \tone^3}\Big).
\ea
\eeq
Thanks to Lemma~\ref{lem:nbr-exc-a}, the second probability in the product is greater than $1-\petit$. By using the Markov property at the return times to $a$ and thanks to Lemma~\ref{lem:binf.proba.out}, we then notice that
\beq
\cP\Big(\cN_{\out}^{a}(k) \ge \frac{k}{C_3 \tone^3} \Big| \cN_a(k) \ge \frac{k}{C_2 \tone^3}\Big) \ge 
\cP\Big(\bin\Big(\tfrac{k}{C_2 \tone^3}, \gd_0\Big)\ge\frac{k}{C_3 \tone^3} \Big).
\eeq
The latter is greater than $1-\petit$ uniformly in $k\geq \tone^3$ if we choose $C_3$ large enough.
\end{proof}
We now focus on the cumulated length of the outward excursions. This quantity will later correspond to the time spent outside of the localization interval.
\begin{lemma}
\label{lem : tps exterieur}
For all $\petit\in(0,1)$, $\rho\in(0,\frac12)$ and $J\ge 1$, there exists $C = C(\petit,\rho,J)>0$ such that for $\tone$ large enough, $\tau\in\good_n(\rho,J)$ and $k\geq \tone^3$,

\beq
\cP\Big( \frac{k}{C\tone^3} \leq \cT_\out(k) \le C\frac{k}{\tone^3}\ \Big) \ge 1-\petit.
\eeq

\end{lemma}

\begin{proof}[Proof of Lemma~\ref{lem : tps exterieur}]
\par For the first line we simply observe that 
\beq
\cT_\out(k) \geq \cN_{\out}^{0}(k)+\cN_{\out}^{1}(k),
\eeq
and conclude using Lemma \ref{lem:nbr-exc-out}. For the second one we use that $\cE(\xi_a^{(\out)})\leq K$ for some constant $K$ by Lemma \ref{lem:mom-exc}. As both cases are similar we may suppose $a=1$. For all $C_4>0$
\beq
\cP\Big( \sum_{i\leq \cN_{\out}^{1}(k)}\xi^{(\out)}_{1,i}  \ge C_4\frac{k}{\tone^3} \Big)\leq 
\cP\Big( \cN_{\out}^{1}(k)\geq C_3(\petit,\rho,J) \frac{k}{\tone^3} \Big)+\cP\Big(  \sum_{i\leq C_3(\petit,\rho,J)\frac{k}{\tone^3}}\xi^{(\out)}_{1,i}  \ge C_4\frac{k}{\tone^3} \Big)
\eeq
where $C_3(\petit,\rho,J)$ is the constant provided by Lemma \ref{lem:nbr-exc-out}. The first term is directly controlled by this lemma. For the second one we simply use Markov inequality and then choose $C_4(\petit,\rho,J)$ large enough so that $C_3 K/C_4\leq \petit$. 
\end{proof}

\subsection{From the Markov renewal process to the random walk conditioned to survive}
\label{sec:markovtorw}
In this section we prove Proposition~\ref{pr:locsup1} by transfering the results from Lemma~\ref{lem:nbr-exc-out} and Lemma~\ref{lem : tps exterieur} to the polymer measure $\mZ_n(\cdot)/\mZ_n$, since \eqref{eq:locsup1} can be rewritten as
\beq
\mZ_n\Big(\frac{n}{C\tone^3} \le \cN^{a,b}_{\ins}(n), \cN^a_{\out}(n),\cT_{\out}(n) \le C\frac{n}{\tone^3}\Big) \ge (1-\petit) \mZ_n.
\eeq
We have already established the link between the measure $\cP$ and the {\it pinned} polymer measure $\mZ_n^\pin(\cdot)/\mZ_n^\pin$ in Proposition~\ref{pr:ren-int}, see also~\eqref{eq:comp-Zpin}. However, the pinned and unpinned polymer measures are not directly comparable close to the endpoint $n$. Therefore, we need to chop off a small amount of the random walk trajectory close to its right-boundary. This small amount shall be denoted by 
\beq
\chop:=2\tone^2.
\eeq
The idea is then to decompose the probability of the event under consideration (w.r.t. the polymer measure starting from the lower boudary of the optimal gap) according to the last point of $\bar \gt$ before $n-\chop$. Recall the definition of the free energy $\phi$ given in Section~\ref{sec:markov-ren-int} (see~\eqref{eq:defel}). For $\chop\le k \le n$, set:
\beq
\label{eq:def-Theta}
\Theta(n,k) :=
\frac{e^{-\phi(n-k)}\mZ_k(\bar \gt_1 > k-\chop)}%
{\cP(\bar \gt_1> k-\chop) \mZ_n},%
\eeq
which will (roughly speaking) play the role of the Radon-Nikodym derivative of the polymer measure w.r.t. the measure $\cP$. The key tool is the following lemma, the proof of which is deferred to Appendix \ref{sec:radon}.
\begin{lemma} Let $\gep_0\in(0,1)$, $\rho\in(0,\frac12)$ and $J\ge 1$. There exists $C(\rho)>0$ such that for all $\tau\in\good_n(\gep_0,\rho,J)$, and $\tone$ large enough,
\label{lem:ctrl_RN-deriv}
\beq
\sup_{\chop\le k \le n} \Theta(n,k) \le C(\rho).
\eeq
\end{lemma}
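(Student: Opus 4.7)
The plan is to bound $\Theta(n,k)$ in two stages. I would first invoke Lemma~\ref{lem:sharp-Zn-renorm} to replace $\mZ_n$ in the denominator by its lower bound $C(\rho)^{-1}e^{-\phi n}/\tone$, which is valid since on $\good_n$ one has $n\geq 2\tone^3$ for $n$ large enough when $\gga>1$. This reduces the task to showing
\begin{equation*}
\tone\cdot\frac{e^{\phi k}\mZ_k(\bar\gt_1>k-\chop)}{\cP(\bar\gt_1>k-\chop)}\leq C(\rho)
\end{equation*}
uniformly in $k\in[\chop,n]$. A preliminary lower bound on $\cP$ follows from its inside contribution: combining $\mK_\ins(\ell)\geq c\,e^{-\beta}q_\tone(\ell)$, the estimates of Lemma~\ref{lem:estim-qtn}, the bounded ratios $h(a)/h(0)$ from Lemma~\ref{eq:control.ratio.h}, and the second-order estimate $\alpha:=g(\tone)-\phi\sim c/\tone^3$ following from Lemmas~\ref{lem:sandw:phit1t2} and~\ref{lem:est-scd-order-FE}, one obtains $\cP(\bar\gt_1>m)\geq C^{-1}e^{-\alpha m}$.

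I would then split the argument according to the size of $k-\chop$. In the first regime $k-\chop\leq K\tone^3$, where $K$ is a universal constant to be fixed, $\cP(\bar\gt_1>k-\chop)$ is bounded below by a constant $C^{-1}(K)$, and it suffices to combine $\mZ_k(\bar\gt_1>k-\chop)\leq\mZ_k$ with the upper bound $\mZ_k\leq C(\rho)e^{-\phi k}/\tone$ of Lemma~\ref{lem:sharp-Zn-renorm}, which applies since $k\geq\chop=2\tone^2$. In the second regime $k-\chop>K\tone^3$, I would decompose
\begin{equation*}
\mZ_k(\bar\gt_1>k-\chop)=A(k)+B_\ins(k)+B_\out(k),
\end{equation*}
where $A(k):=\sum_{k-\chop<\ell\leq k}\sum_{a}\mK(0,a,\cdot;\ell)\mZ_{k-\ell}^{a}$ accounts for excursions returning to the boundary of $I_\loc$ in the last $\chop$ steps, while $B_\ins$ and $B_\out$ are the probabilities that the walk stays confined inside or outside $I_\loc$ throughout $[1,k]$.

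For $A(k)$: using $\mZ_{k-\ell}^{a}\leq 1$, $e^{\phi(k-\ell)}\leq e^{\phi\chop}=O(1)$ (since $\phi\chop\leq g(\tone)\cdot 2\tone^2=\pi^2+o(1)$), and the pointwise bound $\mK_\ins(\ell)\leq Ce^{-g(\tone)\ell}/\tone^3$ from Lemma~\ref{lem:estim-qtn} together with the negligible tail of $\mK_\out$ granted by Lemma~\ref{lem:Zout}, the fact that the sum runs over a window of length only $\chop=2\tone^2$ yields $e^{\phi k}A(k)\leq Ce^{-\alpha(k-\chop)}/\tone$; after dividing by $\cP(\bar\gt_1>k-\chop)\geq C^{-1}e^{-\alpha(k-\chop)}$ the exponentials cancel and the $1/\tone$ absorbs the outer factor $\tone$. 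For $B_\ins(k)$: Lemma~\ref{lem:estim-qtn} applied to $k\geq\tone^2$ gives $B_\ins(k)\leq Ce^{-g(\tone)k}/\tone$, and the same rate-matching cancellation yields a bounded contribution.

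The main difficulty is the outside term $B_\out(k)$. Proposition~\ref{prop:roughUB} combined with $\kBthree_n(\rho)$, which guarantees $\phi(\ttwo)-g(\tone)\geq c(\rho)/\tone^2$, only provides $B_\out(k)\leq Ck^2\lopt\,e^{-\phi(\ttwo)k}$, whose polynomial prefactor $k^2\lopt$ can be as large as a polynomial in $n$. When $k$ is just above $\chop=2\tone^2$ the exponential saving $e^{-c(\rho)k/\tone^2}$ is only a bounded constant, so this bound alone is insufficient. The two-regime split is precisely designed to bypass this issue: in the second regime, $k\geq K\tone^3$ gives $e^{-c(\rho)k/\tone^2}\leq e^{-c(\rho)K\tone}$, which on $\good_n$ decays faster than any polynomial in $n$ because $\tone$ grows at least like $n^{1/(\gga+2)}$, so $B_\out$ is easily absorbed after multiplication by $\tone$; the dangerous intermediate range $k\in[\chop,K\tone^3+\chop]$ is covered by the first regime where the crude upper bound on $\mZ_k$ suffices and $\cP$ is bounded below by a constant.
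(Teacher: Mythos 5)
Your overall strategy matches the paper's proof: lower-bound $\mZ_n$ via Lemma~\ref{lem:sharp-Zn-renorm} and $\cP(\bar\gt_1 > k-\chop)$ via Lemma~\ref{lem:cP_tail}, split at $k\asymp\tone^3$, close the small-$k$ regime with the upper bound of Lemma~\ref{lem:sharp-Zn-renorm}, and in the large-$k$ regime decompose $\mZ_k(\bar\gt_1>k-\chop)$ according to whether $\bar\gt_1>k$ or not and to the in/out type of the first excursion, which is exactly the paper's split into the terms $(a1)$, $(a2)$, $(b1)$, $(b2)$.

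There is, however, one concrete gap, in your treatment of $A(k)$. You invoke Lemma~\ref{lem:Zout} to control the $\mK_\out(0,\ell)$ contribution, but what you actually need to bound is $\mK_\out(0,\ell)e^{\phi\ell}$ for $\ell\in(k-\chop,k]$, hence $\ell\gtrsim\tone^3$. Lemma~\ref{lem:Zout} gives $\mK_\out(0,\ell)\le C\ell^3 e^{-\ell^{\gga/(2(\gga+2))}}$, and since $\phi\asymp\tone^{-2}$, the product $\mK_\out(0,\ell)e^{\phi\ell}$ stays bounded only when $\ell\lesssim\tone^{4(\gga+2)/(\gga+4)}$. For $\gga<4$ this threshold already lies below $\tone^3$, so the bound is useless throughout your window; for any $\gga$ it fails once $k$ exceeds that threshold. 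The short window length $\chop=2\tone^2$ does not rescue the step, because it is each individual summand, not the number of terms, that blows up. The correct tool here is Proposition~\ref{prop:roughUB} together with $\kBthree_n(\rho)$, which give $\mK_\out(0,\ell)e^{\phi\ell}\le C\ell^3 e^{-(\phi(\ttwo)-\phi)\ell}\le C\ell^3 e^{-c(\rho)\ell/\tone^2}$; this is exactly the mechanism you correctly deploy for $B_\out$, and it is what the paper uses for its term $(b2)$. With that substitution the rest of your argument goes through.
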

We may finally prove the main result of this section:
\begin{proof}[Proof of Proposition~\ref{pr:locsup1}] (i) Let us first prove that
\beq
\mZ_n\Big(\cN(n) \le \frac{n}{C\tone^3}\Big) \le \petit \mZ_n,
\eeq
which readily gives us the upper bound for the $\cN^{a,b}_\ins(n)$'s and $\cN^a_\out(n)$'s.
First, we decompose the partition function according to the last point in $\bar \gt$ before $n-\chop$. As $\cN(n)$ is non-decreasing in $n$, it follows that
\beq
\mZ_n\Big(\cN(n) \le \frac{n}{C\tone^3}\Big) \le
\sum_{\chop\le k\le n} \mZ_{n-k}\Big(\cN(n-k)\le \frac{n}{C\tone^3}, n-k\in\bar\gt\Big) \mZ_k(\bar\gt_1 >k-\chop).
\eeq
Recall~\eqref{eq:comp-Zpin} and~\eqref{eq:def-Theta}. We get
\beq
\mZ_n\Big(\cN(n) \le \frac{n}{C\tone^3}\Big) \le {\rm (cst.)} \mZ_n \sum_{\chop\le k\le n} \cP\Big(\cN(n-k)\le \frac{n}{C\tone^3}, n-k\in\bar\gt\Big) \cP(\bar\gt_1>k-\chop) \Theta(n,k).
\eeq
By Lemma~\ref{lem:ctrl_RN-deriv}, there exists $C(\rho)>0$ such that
\beq
\ba
\mZ_n\Big(\cN(n) \le \frac{n}{C\tone^3}\Big) &\le C(\rho) \mZ_n \sum_{\chop\le k\le n} \cP\Big(\cN(n-k)\le \frac{n}{C\tone^3}, n-k\in\bar\gt\Big) \cP(\bar\gt_1>k-\chop)\\
&\le C(\rho) \mZ_n \cP\Big( \cN(n-\chop) \le \frac{n}{C\tone^3}\Big).
\ea
\eeq
Lemma~\ref{lem:nbr-exc} allows us to conclude, since $\chop$ is negligible in front of $n$ on $\kBone_n$.
\par (ii) Let us now deal with the lower bound. Let $a,b\in\{0,1\}$. Let us write, with obvious notation,
\beq
\cN^{a,b}_\ins(n) = \cN^{a,b}_\ins(n-\chop) +\cN^{a,b}_\ins(n-\chop,n).
\eeq
Then,
\beq
\label{eq:from-cP-to-Zn-ii}
\mZ_n\Big(\cN^{a,b}_\ins(n) \ge C\frac{n}{\tone^3}\Big) \le \mZ_n\Big(\cN^{a,b}_\ins(n-\chop) \ge \frac{C(n-\chop)}{2\tone^3}\Big)
+ \mZ_n\Big(\cN^{a,b}_\ins(n-\chop,n) \ge \frac{Cn}{2\tone^3}\Big).
\eeq
The first term in the sum can be handled with the same arguments as in (i) using  Lemma~\ref{lem:ctrl_RN-deriv} again and Lemma~\ref{lem:nbr-exc-out} instead of Lemma~\ref{lem:nbr-exc}. From now on, we focus on the second term. By decomposing on the last and first renewal points of $\bar\gt$ before and after $n-\chop$, we get
\beq
\mZ_n\Big(\cN^{a,b}_\ins(n-\chop,n) \ge \frac{Cn}{2\tone^3}\Big)= \sum_{u,v\in\{0,1\}}
\sumtwo{\chop\le k\le n}{0\le \ell\le \chop} \mZ_{n-k}^{0u}(n-k\in\bar\gt)\mZ_{k-\ell}^{uv}(\bar\gt_1 = k-\ell) 
\mZ_\ell^v\Big(\cN^{a,b}_\ins(\ell)\ge \frac{Cn}{2\tone^3}\Big).
\eeq
Clearly,
\beq
\mZ_\ell^v\Big(\cN^{a,b}_\ins(\ell)\ge \frac{Cn}{2\tone^3}\Big) \le \exp\Big(-\gb \frac{Cn}{2\tone^3}\Big).
\eeq
Moreover, since $\ell \le \chop = 2\tone^2$, there exists $c_0>0$ such that (recall Lemma~\ref{lem:estim-qtn})
\beq
\label{eq:LBZellc0}
\mZ_{\ell}^v \ge \tfrac12 q_{\tone}(\ell) \ge \frac{c_0}{\tone^3}.
\eeq
As a consequence,
\beq
\ba
&\mZ_{n}\Big(\cN^{a,b}_\ins(n-\chop,n) \ge \frac{Cn}{2\tone^3}\Big)\\ 
&\qquad \le\sum_{u,v\in\{0,1\}}
\sumtwo{\chop\le k\le n}{0\le \ell\le \chop} \mZ_{n-k}^{0u}(n-k\in\bar\gt)\mZ_{k-\ell}^{uv}(\bar\gt_1 = k-\ell) 
\mZ_\ell^v\frac{\tone^3}{c_0} \exp\Big(-\gb \frac{Cn}{2\tone^3}\Big)\\
&\qquad \le  \frac{\tone^3}{c_0}\exp\Big(-\gb \frac{Cn}{2\tone^3}\Big) \mZ_n,
\ea
\eeq
which concludes this step, since $n\ge (\gep_0 \tone)^{\gga+2}$ on $\good_n(\gep_0)$ and $\gga>1$. The proof works exactly the same way for $\cN^a_\out(n)$.\\
\par (iii) Let us finally deal with $\cT_\out(n)$. The upper bound follows by writing 
\beq
\mZ_n\Big(\cT_\out(n) \le C\frac n {\tone^3}\Big) \le 
\mZ_n\Big(\cT_\out(n-\chop) \le C\frac n {\tone^3}\Big)
\eeq
and using the same proof strategy as in (i) using Lemma~\ref{lem:ctrl_RN-deriv} again and Lemma~\ref{lem : tps exterieur} instead of Lemma~\ref{lem:nbr-exc}. Let us now focus on the lower bound. Analogously to~\eqref{eq:from-cP-to-Zn-ii} and with obvious notation, we write
\beq
\mZ_n\Big(\cT_\out(n) \ge \frac{n}{C\tone^3}\Big) \le 
\mZ_n\Big(\cT_\out(n-\chop) \ge \frac{n-\chop}{2C\tone^3}\Big) + 
\mZ_n\Big(\cT_\out(n-\chop,n) \ge \frac{n}{2C\tone^3}\Big).
\eeq
We only need to focus on the last term, which we decompose into two parts:
\beq
{\rm (I)} = \sum_{u,v\in\{0,1\}}
\sumtwo{\chop\le k\le n}{0\le \ell\le \chop} \mZ_{n-k}^{0u}(n-k\in\bar\gt)\mZ_{k-\ell}^{uv}(\bar\gt_1 = k-\ell, Y_1=\ins)\mZ_{\ell}^v\Big(\cT_\out(\ell) \ge \frac{n}{2C\tone^3}\Big)
\eeq
and
\beq
\label{eq:UPZell(II)}
{\rm (II)} = \sum_{u,v\in\{0,1\}}
\sumtwo{\chop\le k\le n}{0\le \ell\le \chop} \mZ_{n-k}^{0u}(n-k\in\bar\gt)\mZ_{k-\ell}^{uv}(\bar\gt_1 = k-\ell, Y_1=\out)\mZ_{\ell}^v\Big(\cT_\out(\ell) \ge \frac{n}{2C\tone^3}-(\chop-\ell)\Big).
\eeq
Let us start with (I). By considering the events $\{\cN(\ell)>\ga\}$ and $\{\cN(\ell)\le\ga\}$ ($\ga$ is a positive integer to be determined later), we obtain
\beq
\mZ_{\ell}^v(\cT_\out(\ell) \ge \frac{n}{2C\tone^3}) \le e^{-\gb \ga}
+ \mZ_{\ell}^v\Big(\cT_\out(\ell) \ge \frac{n}{2C\tone^3}, \cN(\ell)\le \ga\Big).
\eeq
In turn, a rough union bound yields
\beq
\mZ_{\ell}^v\Big(\cT_\out(\ell) \ge \frac{n}{2C\tone^3}, \cN(\ell)\le \ga\Big) \le 
\sum_{1\le k \le \ga} \sum_{1\le i \le k} \mZ_\ell^v\Big(\bar\gt_i-\bar\gt_{i-1} \ge \frac{n}{2\ga C\tone^3}, Y_i = \out, \cN(\ell)=k\Big).
\eeq
We may now decompose the partition function inside the sum into a product, similarly to~\eqref{eq:decomposeZn}, and replace the long outward excursion (corresponding to index $i$) with an inward excursion, using~\eqref{eq:LBZellc0}. The reader may check that we get in this way:
\beq
\mZ_{\ell}^v\Big(\cT_\out(\ell) \ge \frac{n}{2C\tone^3}, \cN(\ell)\le \ga\Big) \le \frac{\ga^2 \tone^3}{c_0}  \left(\sum_{\frac{n}{2\ga C\tone^3} \le j\le \chop} \mK_{\out}(v,j)\right) \mZ_\ell^v.
\eeq
Since $\gga >1$, we may set $\ga = \tone^\gep$, with $0<\gep < \gga-1$.
Recall that $n\ge (\gep_0 \tone)^{\gga+2}$ on $\good_n(\gep_0)$, so that $\frac{n}{2\ga C\tone^3}\ge J^{\frac{\gga+2}{\gga}}$, for $\tone$ large enough. We can then use Lemma~\ref{lem:Zout} and get
\beq
\sum_{\frac{n}{2\ga C\tone^3} \le j\le \chop} \mK_{\out}(v,j) \le 
\tone^5 \exp\Big(- \Big[\frac{n}{2\ga C\tone^3}\Big]^{\frac{\gga}{2(\gga+2)}}\Big).
\eeq
As $\frac{n}{\ga \tone^3}$ is of order $\tone^{\gga-1-\gep}$, the bound on (I) readily follows. 
\par Let us now deal with (II). If $\chop-\ell \le \frac{n}{4C\tone^3}$ then we proceed as for (I), since then $\cT_\out(\ell) \ge \frac{n}{4C\tone^3}$ in the last term of~\eqref{eq:UPZell(II)}. Else, $k-\ell\ge \chop-\ell \ge \frac{n}{4C\tone^3}$ and we argue that for $n$ large enough,
\beq
\mZ_{k-\ell}^{uv}(\bar\gt_1 = k-\ell,Y_1=\out)\le \petit \mZ_{k-\ell}^{uv}(\bar\gt_1 = k-\ell, Y_1=\ins) 
\le \petit \mZ_{k-\ell}^{uv}(\bar\gt_1 = k-\ell).
\eeq
Indeed,
\beq
\mZ_{k-\ell}^{uv}(\bar\gt_1 = k-\ell,Y_1=\out) \le {\rm (cst)} (k-\ell)^3 \exp(-\phi(\ttwo)(k-\ell)),
\eeq
while, from Lemma~\ref{lem:estim-qtn},
\beq
\mZ_{k-\ell}^{uv}(\bar\gt_1 = k-\ell,Y_1=\ins) \ge {\rm (cst)} (k-\ell)^{-3/2} \exp(-g(\tone)(k-\ell)).
\eeq
Use~\eqref{eq:def:gt} and~\eqref{eq:phi} with the fact that $\tau\in \kBthree_n(\rho)$ (i.e. $\rho\tone < \ttwo < (1-\rho) \tone$) to conclude.
\end{proof}
\section{Proof of Theorems~\ref{thm:ggasup1} and~\ref{thm:ggainf1}}
\label{sec:proofmain}
We finally reassemble all our results to prove Theorems~\ref{thm:ggasup1} and~\ref{thm:ggainf1}. We mainly focus on the proof of the former and only specify what has to be adapted for the latter.

\begin{proof}[Proof of Theorem~\ref{thm:ggasup1}] 

\par \textit{Step 1.} We first prove that Propositions \ref{pr:weakloc} and \ref{pr:hitting} are still true if one replaces $\bgs$ (that is $\bgs(\tau)$) by $\bgs(\mtau)$ in the conclusion. We observe that by a straightforward pathwise comparison $\bP(\bgs>n)\geq \bP(\bgs(\mtau)>n)$ so that
\beq
\bP(H^*_{\kopt}\leq n <H^*_{\kopt+1} | \bgs(\mtau) >n) \geq \frac{\bP(H^*_{\kopt}\leq n <H^*_{\kopt+1},\bgs(\mtau)>n)}{\bP(\bgs>n)}.
\eeq
Note that on the event $\{n <H^*_{\kopt+1}\}$, one can switch $\tau$ and $\mtau$ so that
\beq
\bP(H^*_{\kopt}\leq n <H^*_{\kopt+1},\bgs(\mtau)>n)=\bP(H^*_{\kopt}\leq n <H^*_{\kopt+1},\bgs>n)
\eeq
and finally
\beq
\label{eq:otau}
\bP(H^*_{\kopt}\leq n <H^*_{\kopt+1} | \bgs(\mtau) >n)\geq \bP(H^*_{\kopt}\leq n <H^*_{\kopt+1} | \bgs >n).
\eeq

\par Let us now deal with Proposition \ref{pr:hitting} and first observe that from Proposition~\ref{pr:weakloc}
\beq
\label{eq:compareotau}
\bP(\bgs>n) \leq \frac{1}{1-\petit}\bP(H^*_{\kopt+1}\wedge \bgs>n)=\frac{1}{1-\petit}\bP(H^*_{\kopt+1}\wedge \bgs(\mtau)>n)\leq \frac{1}{1-\petit}\bP(\bgs(\mtau)>n)
\eeq
so that \eqref{eq:decompohit} rewrites 
\beq
\ba
&\bP(H_{\tau_{\lopt-1}} > \kappa n|\bgs(\mtau) > n)\\
&\quad\leq 
\frac{1}{1-\petit}\bP(\kappa n < H^*_{k_0}\leq n < H^*_{k_0+1}  |\bgs   > n)+\bP(n \notin [H^*_{\kopt},H^*_{\kopt+1}) |\bgs(\mtau)> n),
\ea
\eeq
and we conclude easily using \eqref{eq:otau} and the end of the proof of Proposition \ref{pr:hitting}.

\par  \textit{Step 2.} Let us now prove Theorem~\ref{thm:ggasup1} for $\bP(\cdot|\bgs(\mtau)>n)$ instead of $\bP(\cdot|\bgs>n)$.
We start by picking the parameters in the following order:
\begin{enumerate}
\item $\rho\in(0,\frac12)$ small enough and $J\ge 1$ large enough to satisfy the assumptions of Proposition~\ref{pr:goodenv};
\item $\gd\in(0,1)$ and $\eta\in(0,1)$ small enough to satisfy the assumptions of Propositions~\ref{pr:goodenv} and~\ref{pr:hitting};
\item $\gep_0\in(0,1)$ small enough (depending on $\gd$) to satisfy the assumptions of Propositions~\ref{pr:goodenv} and~\ref{pr:hitting};
\item $\gep\in(0,\frac\gb 2)$ small enough (depending on $\gep_0$ and $\eta$) to satisfy the assumptions of Proposition~\ref{pr:hitting};
\item $\mathsf{C}\ge 1$ large enough (depending on $\gep_0$ and $\gep$) to satisfy the assumptions of Proposition~\ref{pr:goodenv}.
\end{enumerate}
Let us now assume that $\tau \in \good_n(\gd, \gep_0, \gep, \eta, \rho, J, \mathsf{C})$, which holds with $\bbP$-probability larger than $1-\petit$, so that $\tau$ satisfies the assumptions of Propositions~\ref{pr:hitting} and~\ref{pr:locsup1}. Let us abbreviate
\beq
{\rm LOC}(n,C) = \Big\{\frac{n}{CT_{\lopt}^3} \le \cN^{a,b}_{\ins}(n), \cN^a_{\out}(n), \cT_{\out}(n) \le C \frac{n}{T_{\lopt}^3}\Big\}.
\eeq
Then for all $C,\kappa>0$  ,
\beq
\ba
&\bP(H_{\tau_{\lopt-1}}\le \gk n, {\rm LOC}(n,C)|\bgs(\mtau)>n)\\
&\qquad= \sum_{1\le k \le \gk n}
\frac{
\bP(H_{\tau_{\lopt-1}} = k < \bgs(\mtau))
\bP_{\tau_{\lopt-1}}({\rm LOC}(n-k,C), \bgs(\mtau) > n-k)
}
{\bP(\bgs(\mtau) >n)}.
\ea
\eeq
Since $n-k \ge (1-\gk)n$, Proposition~\ref{pr:locsup1} guarantees that there exists $C>0$ such that for $n$ large enough,
\beq
\ba
&\bP(H_{\tau_{\lopt-1}}\le \gk n, {\rm LOC}(n,C)|\bgs(\mtau)>n)\\  &\qquad\ge 
(1-\petit)\sum_{1\le k \le \gk n}
\frac{
\bP(H_{\tau_{\lopt-1}} = k < \bgs(\mtau))
\bP_{\tau_{\lopt-1}}(\bgs(\mtau) > n-k)
}
{\bP(\bgs(\mtau) >n)}\\
&\qquad = (1-\petit) \bP(H_{\tau_{\lopt-1}}\le \gk n | \bgs(\mtau)>n).
\ea
\eeq
By Proposition~\ref{pr:hitting}, we finally obtain that
\beq
\bP(H_{\tau_{\lopt-1}}\le \gk n, {\rm LOC}(n,C)|\bgs(\mtau)>n) \ge (1-\petit)^2.
\eeq

\par \textit{Step 3.} We finally come back to $\bP(\cdot | \bgs>n)$.
Let us abbreviate 
\beq
A:=\{H_{\tau_{\lopt-1}}\le \gk n\} \cap {\rm LOC}(n,C).
\eeq 
From~\eqref{eq:compareotau},
\beq
\ba
\bP(A|\bgs>n)\geq \frac{\bP(A,H^*_{\kopt+1}>n,\bgs>n)}{\bP(\bgs>n)}\geq (1-\petit) \frac{\bP(A,H^*_{\kopt+1}>n,\bgs(\mtau)>n)}{\bP(\bgs(\mtau)>n)}.
\ea
\eeq
Moreover
\beq
\bP(A,H^*_{\kopt+1}>n|\bgs(\mtau)>n)\geq \bP(A|\bgs(\mtau)>n)-\bP(H^*_{\kopt+1}\leq n|\bgs(\mtau)>n)
\eeq
and one concludes using Steps $1$ and $2$.
\end{proof}

\begin{proof}[Proof of Theorem~\ref{thm:ggainf1}] The proof strategy is the same as for Theorem~\ref{thm:ggasup1} except that we use Proposition~\ref{pr:locinf1} (that brings no additional constraint on the choice of parameters) instead of Proposition~\ref{pr:locsup1}.
\end{proof}


\begin{appendix}

\section{Results about ruin probabilities}
\label{app:ruin}
\subsection{Proof of Lemma~\ref{lem:estim-qtn}}
\begin{proof}[Proof of Lemma~\ref{lem:estim-qtn}]
When $n\in2\bbN$, the bounds in~\eqref{eq:estim-qtn1} and~\eqref{eq:estim-qtn2} are proven in~\cite[Lemma 2.1]{CP09b}. It is assumed therein that $t\in 2\bbN$. However, their proof relies on explicit formulas (see below) that are also valid for $t\in2\bbN-1$. We now focus on the case where $n\in2\bbN-1$. First, it suffices to combine \eqref{eq:estim-qtn1} with the trivial bounds
\beq
\sum_{i>n+1}  q_t(i)\le \sum_{i>n} q_t(i) \le \sum_{i>n-1} q_t(i)
\eeq
to obtain~\eqref{eq:estim-qtn2}. Let us now deal with~\eqref{eq:estim-qtn1} in the case where $n,t\in 2\bbN-1$. We remind the reader that
\beq
\ba
q^0_t(n) &=\Big( \frac 2 t \sum_{\nu=1}^{\lfloor(t-1)/2\rfloor} \cos^{n-2}\Big(\frac{\pi \nu}{t}\Big) \sin^{2}\Big(\frac{\pi \nu}{t}\Big) \Big) \ind_{\{n \in 2\bbN\}}, \\
q^1_t(n) &=\Big(\frac 1 t \sum_{\nu=1}^{\lfloor(t-1)/2\rfloor} (-1)^{\nu+1}\cos^{n-2}\Big(\frac{\pi \nu}{t}\Big) \sin^{2}\Big(\frac{\pi \nu}{t}\Big) \Big) \ind_{\{n-t \in 2\bbN_0\}},
\ea
\eeq
see equation (5.8) in~\cite[Chapter XIV]{Feller-vol1} or equation (B.1) in~\cite{CP09b}. To obtain the upper bound, we may replace the indicator $\ind_{\{n \in 2\bbN\}}$ by $1$ (even though $n$ is odd) and get
\beq
q_t(n) \le \frac4 t \sum_{\nu=1}^{\lfloor(t+2)/4\rfloor} \cos^{n-2}\Big(\frac{(2\nu-1)\pi}{t}\Big) \sin^{2}\Big(\frac{(2\nu-1)\pi}{t}\Big),
\eeq
from where the analysis is as in~\cite{CP09b}, see equation (B.2) therein. As for the lower bound, we start by fixing some $\gep\in(0,\frac12)$ and write:
\beq
q_t(n) = 2 q_t^1(n) \ge U_0(n) - U_1(n) - U_2(n),
\eeq
where
\beq
\ba
U_0(n) &= \frac{2}{t} \cos^{n-2}\Big(\frac{\pi}{t}\Big) \sin^{2}\Big(\frac{\pi}{t}\Big),\\
U_1(n) &=\frac 2 t \sum_{\nu=2}^{\lfloor \gep t\rfloor} \cos^{n-2}\Big(\frac{\pi \nu}{t}\Big) \sin^{2}\Big(\frac{\pi \nu}{t}\Big),\\
U_2(n) &=\frac 2 t \sum_{\nu=\lfloor \gep t\rfloor +1}^{\lfloor(t-1)/2\rfloor} \cos^{n-2}\Big(\frac{\pi \nu}{t}\Big) \sin^{2}\Big(\frac{\pi \nu}{t}\Big).
\ea
\eeq
In a similar way as (B.3)--(B.7) in~\cite{CP09b}, one may check that
\beq
U_0(n) = \frac{2\pi^2}{t^3}(1+o(1)) e^{-g(t)n},
\eeq
while
\beq
U_1(n) \le \frac{\rm (cst)}{n^{3/2}}e^{-g(t)n},
\qquad
U_2(n) \le {\rm (cst)} \cos^n(\pi\gep).
\eeq
The desired lower bound follows by choosing $\cT_0$ large enough so that $e^{-g(\cT_0)}> \cos(\pi \gep)$ and $n\ge c_5t^2$, with $c_5$ large enough.
\end{proof}

\subsection{Generating functions of ruin probabilities}
\label{sec:mgf-qt}
Let us now collect a few facts about the moment generating functions of the ruin probabilities defined in~\eqref{eq:defRuinproba2}.
For any $0 <f < g(t)$, set
\beq
\gD = \gD(f) = \arctan \sqrt{e^{2f} - 1}.
\eeq
Then~\cite[Appendix A]{CP09b},
\beq
\ba
\label{eq:mgf:qt}
\hat q_t(f) &= 1 + \tan(\gD) 	\frac{1- \cos(t\gD)}{\sin(t\gD)},\\
\hat q^0_t(f) &= 1 - \frac{\tan (\gD)}{\tan(t \gD)},\\
\hat q^1_t(f) &= \frac12 \frac{\tan(\gD)}{\sin(t\gD)}.
\ea
\eeq
In the limiting case $t=\infty$, the generating function is defined for all $f\leq 0$ and
\beq
\label{eq:mgf:qinfty}
\hat q^0_{\infty}(f) = 1 - \sqrt{1- e^{2f}}.
\eeq
Let us come back to the case $t<+\8$ and denote by $\tilde q_t$ the function such that $\hat q_t(f) = \tilde q_t(\gD(f))$. Then, see~\cite{CP09b},
\beq
\label{eq:mgf:qt.deriv}
\ba
(\tilde q_t)'(\gD) &= (\tilde q^0_t)'(\gD) + 2(\tilde q^1_t)'(\gD) = \frac{1-\cos(t\gD)}{\sin(t\gD)}
\Big[%
\frac{1}{\cos^2\gD} +%
t \frac{\tan \gD}{\sin(t\gD)}
\Big],\\%
\textrm{with}\quad (\tilde q^1_t)'(\gD) &= \frac{1}{2 \sin(t\gD)}
\Big[%
\frac{1}{\cos^2(\gD)} - \frac{t \tan \gD}{\tan(t\gD)}
\Big],\\
(\tilde q^0_t)'(\gD) &= \frac{1}{\sin(t\gD)}
\Big[%
-\frac{\cos(t\gD)}{\cos^2(\gD)} + \frac{t \tan \gD}{\sin(t\gD)}
\Big],\\
\textrm{and}\quad (\tilde q_t)''(\gD) &=\frac{1-\cos(t\gD)}{\sin(t\gD)}
\Big[%
\frac{2\sin\gD}{\cos^3(\gD)} +%
 \frac{2t}{\sin(t\gD) \cos^2\gD} +%
 t^2(1-\cos(t\gD)) \frac{\tan \gD}{\sin^2(t\gD)}
\Big],\\%
&= (\tilde q^0_t)''(\gD)+2(\tilde q^1_t)''(\gD),\\
\textrm{with}\quad (\tilde q^1_t)''(\gD) &= \frac{1}{2 \sin(t\gD)}
\Big[%
-\frac{2t \cos(t\gD)}{\sin(t\gD)\cos^2(\gD)} + \frac{2 \sin(\gD)}{\cos^3(\gD)}
+ \frac{t^2 (1+\cos^2(t\gD)) \tan(\gD)}{\sin^2(t\gD)}
\Big],\\
(\tilde q^0_t)''(\gD) &= \frac{1}{\sin(t\gD)}
\Big[%
\frac{2t}{\sin(t\gD)\cos^2(\gD)} - \frac{2\sin(\gD)}{\cos^3(\gD)}\cos(t\gD)
- \frac{2 t^2\cos(t\gD) \tan(\gD) }{\sin^2(t\gD)} 
\Big],
\ea
\eeq
and
\beq
\label{eq:delta:deriv}
\ba
\gD'(f) &= \frac{1}{\sqrt{e^{2f}- 1}} = \frac{1}{\tan \gD},\\
\gD''(f) &= - \frac{e^{2f}}{(e^{2f}- 1)^{3/2}} = -\frac{1+\tan^2 \gD}{\tan^3 \gD} =-\frac{\cos(\gD)}{\sin^3(\gD)}.
\ea
\eeq
We also provide the following large-$t$ asymptotic results:
\begin{lemma}
\label{lem:asympt-q}
For all {positive} functions $t\to \gep (t)$ converging to $0$ as $t\to +\8$ and such that $1/t^2 = o(\gep(t))$.

\beq
\label{eq:asymp:qhat2}
\ba
\hat q_t\Big(\frac{\pi^2}{2t^2}(1-\gep(t))\Big) - 1 \sim \frac{4}{t \gep(t)},\\
\hat q^0_t\Big(\frac{\pi^2}{2t^2}(1-\gep(t))\Big)-1 \sim \frac{2}{t \gep(t)},\\
\hat q^1_t\Big(\frac{\pi^2}{2t^2}(1-\gep(t))\Big) \sim \frac{1}{t \gep(t)},
\ea
\eeq
\beq
\label{eq:asymp:qhat.deriv}
\ba
(\hat q_t)'\Big(\frac{\pi^2}{2t^2}(1-\gep(t))\Big) &\sim \frac{8t}{\pi^2 \gep(t)^2}\\
(\hat q^0_t)'\Big(\frac{\pi^2}{2t^2}(1-\gep(t))\Big) &\sim \frac{4t}{\pi^2 \gep(t)^2}\\
(\hat q^1_t)'\Big(\frac{\pi^2}{2t^2}(1-\gep(t))\Big) &\sim \frac{2t}{\pi^2 \gep(t)^2},
\ea
\eeq
and
\beq
\label{eq:asymp:qhat.derivsec}
\ba
(\hat q_t)''\Big(\frac{\pi^2}{2t^2}(1-\gep(t))\Big) &\sim \frac{32t^3}{\pi^4 \gep(t)^3},\\
(\hat q^0_t)''\Big(\frac{\pi^2}{2t^2}(1-\gep(t))\Big) &\sim \frac{16t^3}{\pi^4 \gep(t)^3},\\
(\hat q^1_t)''\Big(\frac{\pi^2}{2t^2}(1-\gep(t))\Big) &\sim \frac{8t^3}{\pi^4 \gep(t)^3}.
\ea
\eeq
\end{lemma}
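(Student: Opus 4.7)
The plan is to reduce everything to a Taylor expansion of the inner variable $\gD(f)$ at the evaluation point $f = \frac{\pi^2}{2t^2}(1-\gep(t))$, then substitute into the closed-form expressions in \eqref{eq:mgf:qt} and \eqref{eq:mgf:qt.deriv}. The non-trivial observation is that $t\gD$ is close to $\pi$ at this value of $f$, so the factors $1/\sin(t\gD)$ will produce the singular behaviour that dominates the asymptotics. The condition $1/t^2 = o(\gep(t))$ is exactly what is needed to ensure that $\pi - t\gD$ is genuinely of order $\gep(t)$ and not drowned in error terms.

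More precisely, I would first expand
\[
e^{2f} - 1 = \frac{\pi^2}{t^2}(1-\gep(t)) + O(1/t^4), \qquad \sqrt{e^{2f}-1} = \frac{\pi}{t}\sqrt{1-\gep(t)}\,(1 + O(1/t^2)),
\]
then $\gD(f) = \arctan\sqrt{e^{2f}-1} = \frac{\pi}{t}(1 - \tfrac{\gep(t)}{2}) + O(\tfrac{\gep(t)^2}{t} \vee \tfrac{1}{t^3})$. Setting $\psi := \pi - t\gD$, this gives $\psi \sim \pi\gep(t)/2$ under the hypothesis on $\gep(t)$. From there I read off the trigonometric building blocks: $\sin(t\gD) = \sin\psi \sim \pi\gep(t)/2$, $1-\cos(t\gD) = 1+\cos\psi \sim 2$, $\tan\gD \sim \gD \sim \pi/t$, $\cos\gD \sim 1$. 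Plugging into \eqref{eq:mgf:qt} yields the three zeroth-order statements \eqref{eq:asymp:qhat2} in one line each.

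For the derivatives I would use the chain rule together with \eqref{eq:delta:deriv}, which gives $\gD'(f) \sim t/\pi$ and $\gD''(f) \sim -t^3/\pi^3$. For $(\hat q_t^\bullet)'(f) = (\tilde q_t^\bullet)'(\gD)\,\gD'(f)$, I would identify the dominant term inside each bracket in \eqref{eq:mgf:qt.deriv}: in every case it is the one carrying $t/\sin(t\gD)$ or $t/\tan(t\gD)$, which contributes a factor $t/\psi \sim 2t/(\pi\gep(t))$; multiplied by the overall $1/\sin(t\gD) \sim 2/(\pi\gep(t))$ prefactor and by $\gD'(f) \sim t/\pi$ this produces the $t/\gep(t)^2$ scaling advertised in \eqref{eq:asymp:qhat.deriv}. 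The term involving $1/\cos^2\gD$ is bounded and therefore negligible against $t/\gep(t)$, justifying the asymptotic equivalence.

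For the second derivatives I would write $(\hat q_t^\bullet)''(f) = (\tilde q_t^\bullet)''(\gD)\,\gD'(f)^2 + (\tilde q_t^\bullet)'(\gD)\,\gD''(f)$. Inside $(\tilde q_t^\bullet)''(\gD)$ in \eqref{eq:mgf:qt.deriv} the dominant term is the one with $t^2 \tan\gD / \sin^2(t\gD) \sim (8t/\pi)\,\gep(t)^{-2}$; combined with the outer factor $1/\sin(t\gD) \sim 2/(\pi\gep(t))$ and with $\gD'(f)^2 \sim t^2/\pi^2$ this yields the claimed $t^3/\gep(t)^3$ order. The second piece $(\tilde q_t^\bullet)'(\gD)\,\gD''(f)$ is of order $t\cdot \gep(t)^{-2} \cdot t^3 = t^3/\gep(t)^2$, which is indeed $o(t^3/\gep(t)^3)$ as $\gep(t) \to 0$, so it does not contribute to leading order. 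The main obstacle is purely bookkeeping: making sure at each step that the competing subdominant terms (of relative size $\gep(t)$, $1/t$, or $1/(t^2 \gep(t))$) are all negligible under the hypothesis $1/t^2 = o(\gep(t))$ and $\gep(t) = o(1)$; this hypothesis is tight precisely because it makes $\psi \sim \pi\gep(t)/2$ the true leading behaviour of $\pi - t\gD$.
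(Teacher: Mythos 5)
Your approach is essentially identical to the paper's: expand $\gD(f)$ at $f=\tfrac{\pi^2}{2t^2}(1-\gep(t))$, observe that $t\gD = \pi[1-\tfrac12\gep(t)(1+o(1))]$ under $1/t^2=o(\gep(t))$, substitute the trigonometric building blocks into the explicit formulas \eqref{eq:mgf:qt} and \eqref{eq:mgf:qt.deriv}, and finish by chain rule using the $\gD'$, $\gD''$ asymptotics in \eqref{eq:delta:deriv}. One bookkeeping slip in the first-derivative paragraph: the dominant bracket term $t\tan\gD/\sin(t\gD)$ is $\sim 2/\gep(t)$ (the factor $\tan\gD\sim\pi/t$ cancels the leading $t$), not $\sim 2t/(\pi\gep(t))$ as written, and the full prefactor for $(\tilde q_t)'$ is $\tfrac{1-\cos(t\gD)}{\sin(t\gD)}\sim 4/(\pi\gep(t))$; with these corrections the chain of factors reproduces the advertised $8t/(\pi^2\gep(t)^2)$ rather than the inconsistent $t^2/\gep(t)^2$ that your written factors would literally give.
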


\begin{proof}[Proof of Lemma~\ref{lem:asympt-q}]
Let us pick $f(t)= \frac{\pi^2}{2t^2}(1-\gep(t))$ and abbreviate $\gD = \gD(f(t))$. Then,
\beq
\tan \gD = \frac{\pi}{t}\sqrt{1-\gep(t)}[1+O(1/t^2)],
\eeq
as $t\to\infty$, and, with the assumption that $1/t^2 = o(\gep(t))$, we get
\beq
t\gD = \pi [1 - \tfrac12 \gep(t)(1+o(1))].
\eeq
The asymptotics in~\eqref{eq:asymp:qhat2} are obtained from~\eqref{eq:mgf:qt}. Then, a straightforward computation gives 
\beq
\ba
&(\tilde q_t^1)'(\gD) \sim \frac{2}{\pi \gep(t)^2}, \qquad 
(\tilde q_t^0)'(\gD) \sim \frac{4}{\pi \gep(t)^2},\\
&(\tilde q_t^1)''(\gD) \sim \frac{8t}{\pi^2 \gep(t)^3}, \qquad 
(\tilde q_t^0)''(\gD) \sim \frac{16t}{\pi^2 \gep(t)^3},\\
&\gD'(f(t)) \sim \frac{t}{\pi}, \qquad
\gD''(f(t)) \sim -\frac{t^3}{\pi^3},
\ea
\eeq
and one readily obtains~\eqref{eq:asymp:qhat.deriv} and~\eqref{eq:asymp:qhat.derivsec} using 
\beq
\ba
&(\hat q_t^a)'(f) = \gD'(f) (\tilde q_t^a)'(\gD(f))\\
& (\hat q_t^a)''(f) = \gD''(f) (\tilde q_t^a)'(\gD(f)) + [\gD'(f)]^2 (\tilde q_t^a)''(\gD(f)),
\ea
\eeq
for $a\in\{0,1\}$.
\end{proof}

\section{Proof of Proposition~\ref{pr:goodenv}}
\label{sec:proof-good-env}

\begin{proof}
[Proof of Proposition~\ref{pr:goodenv}] The proposition follows from Lemma \ref{lem:goodB0} to \ref{lem:goodB78} below, in combination with \cite[Proposition 6.1]{PoiSim19} (an inspection of the proof therein reveals that $\gep_0$ may actually be chosen independently from $\gd$).
\end{proof}

\begin{lemma}
\label{lem:goodB0}
For all $\petit \in(0,1)$, there exists $\gep_0\in(0,1)$ small enough such that
\beq
\label{eq:cclB0}
\liminf_{n\to\infty} \bbP(\kBone_n(\gep_0)) \ge 1-\petit.
\eeq
\end{lemma}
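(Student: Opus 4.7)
The plan is to reduce the statement to a variational problem about a limiting Poisson point process, exploiting a convergence of the rescaled gap point measure. With the rescaling $X_\ell = (\ell-1)/N$ and $Y_\ell = T_\ell/N^{1/\gga}$, the function in \eqref{eq:tildeG} reads $\tilde G_n^\gb(\ell) = F(X_\ell, Y_\ell)$ where $F(x,y) := \gl(\gb) x + \pi^2/(2y^2)$. Standard extreme-value theory for i.i.d. gaps with tail \eqref{eq:deftau} ensures that the point measure $\sum_{\ell \ge 1} \delta_{(X_\ell, Y_\ell)}$ converges in distribution, in the vague topology on $[0,\infty) \times (0,\infty)$, to a Poisson point process $\Pi$ with intensity proportional to $dx \otimes \gga y^{-(1+\gga)} dy$; this is essentially the extreme value statistics underlying \cite[Theorem 2.2]{PoiSim19}.

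First I would establish two properties of the limit: (i) $\bbP$-a.s., $F$ attains its minimum over $\Pi$ at a unique point $(x^\star, y^\star) \in (0,\infty)^2$; (ii) given $\petit$, there exists $\gep_0 > 0$ and $\eta > 0$ such that with probability at least $1 - \petit/2$ one has $(x^\star, y^\star) \in [\gep_0, \gep_0^{-1}]^2$ together with the ``isolation'' property that the second-smallest value of $F$ over $\Pi$ exceeds $F(x^\star, y^\star)$ by at least $\eta$. Uniqueness is a ``no-tie'' argument: level sets of $F$ have zero Lebesgue measure in $\mathbb{R}_+^2$, so two independent Poisson points almost surely do not share a value of $F$. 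Boundedness of $y^\star$ from above follows because for large $y_0$ the first point of $\Pi$ with $Y \ge y_0$ has an $X$-coordinate of order $y_0^\gga$, making $F$ blow up; boundedness away from zero comes from the almost sure finiteness of $\min F$, itself a consequence of an elementary moment computation on $\Pi$ restricted to $\{y \ge \eta\}$ for small $\eta$. Boundedness of $x^\star$ is similar.

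Next I would transfer these properties to $\tilde G_n^\gb$ via the convergence of the point measure. Restricting to the compact window $W_{\gep_0} := [0, \gep_0^{-1}] \times [\gep_0, \gep_0^{-1}]$, which carries only finitely many points of $\Pi$, a Skorokhod-type coupling of the finite-dimensional projections gives that the minimizer $\tilde \lopt$ of $\tilde G_n^\gb$ restricted to $\{\ell \colon (X_\ell, Y_\ell) \in W_{\gep_0}\}$ is unique and satisfies $\gep_0 N \le \tilde \lopt \le \gep_0^{-1} N$, $\gep_0 N^{1/\gga} \le T_{\tilde \lopt} \le \gep_0^{-1} N^{1/\gga}$, with probability $\ge 1 - 3\petit/4$ for $n$ large. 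To rule out indices outside the window, I would use crude lower bounds: for $\ell \ge \gep_0^{-1} N$ the first term of $\tilde G_n^\gb$ alone exceeds $F(x^\star, y^\star)$; for $\ell \le \gep_0 N$ with $T_\ell \le \gep_0 N^{1/\gga}$ the second term does; remaining cases are handled via $A_n^{(11)}$ and $A_n^{(2)}$ of \eqref{eq:ge}, which already bound the extremes of $\{T_\ell\}$.

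Finally, to identify $\lopt$ with $\tilde \lopt$, I would estimate $D_n(\ell) := G_n^\gb(\ell) - \tilde G_n^\gb(\ell) = (\gl(\gb, \ell-1) - \gl(\gb))(\ell-1)/N + (g(T_\ell) - \pi^2/(2 T_\ell^2))\, n/N$. On $W_{\gep_0}$, the first summand is $o(1)$ by \eqref{eq:defgl} (using that $\ell \ge \gep_0 N \to \infty$), while the expansion \eqref{eq:def:gt} yields $|g(T_\ell) - \pi^2/(2T_\ell^2)| = O(1/T_\ell^4)$, so the second summand is $O((n/N)/T_\ell^4) = O(\gep_0^{-4}/T_\ell^2) = o(1)$. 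Hence $|D_n(\ell)| < \eta/2$ uniformly on the window for $n$ large, which combined with the isolation property forces $\lopt = \tilde \lopt$ and yields the claim with probability at least $1 - \petit$. The main obstacle is making the uniqueness/isolation property of the Poisson minimizer quantitative enough, and ensuring the compactification argument is uniform in $n$; the rest is routine comparison of $G$ with $\tilde G$.
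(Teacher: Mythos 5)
Your proposal follows essentially the same route as the paper: convergence of the rescaled gap point measure to a Poisson point process, a.s.\ uniqueness and compact localization of the minimizer of the limiting functional together with an isolation gap $\eta$ (which is exactly what the paper extracts in its preparatory Lemma~\ref{lem:phikphi} and reuses for $\kBtwo_n(\eta)$), transfer to $\tilde G_n^\gb$ on a compact window, and finally the comparison $|G_n^\gb-\tilde G_n^\gb|<\eta/2$ on that window to force $\lopt=\tilde\lopt$. The one point you leave implicit is that the \emph{global} minimizer of $G_n^\gb$ (not only of $\tilde G_n^\gb$) must be shown to lie in the window $[\gep_0 N,\gep_0^{-1}N]$ before the on-window comparison can identify $\lopt$ with $\tilde\lopt$; the paper devotes its Step~2 to this, using the deterministic bounds $\gl(\gb,\ell-1)\ge\gb$ and \eqref{eq:encadr_gphi}, and your crude off-window lower bounds carry over verbatim to $G_n^\gb$ for the same reason, so this is a one-line fix rather than a real obstruction.
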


\begin{lemma}
\label{lem:goodB6}
For all $\petit \in(0,1)$, there exists $\eta>0$ small enough such that
\beq
\label{eq:cclB6}
\liminf_{n\to\infty} \bbP(\kBtwo_n(\eta)) \ge 1-\petit.
\eeq
\end{lemma}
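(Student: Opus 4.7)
The plan is to compare the discrete functional $G_n^\gb$ with the continuous variational expression $\tilde G(x,y) := \gl(\gb) x + \pi^2/(2y^2)$ evaluated along the limiting Poisson point process $\Pi$ introduced in~\cite[Theorem 2.2]{PoiSim19}, and then exploit the fact that $\tilde G$ takes $\Pi$-a.s.\ distinct values on the atoms of $\Pi$ (which are locally finite).

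First I would argue that for any $\petit>0$, up to choosing $\gep_0$ small enough, with $\bbP$-probability at least $1-\petit/3$ the two smallest values of $G_n^\gb$ are both attained for indices $\ell\in[\gep_0 N,\gep_0^{-1} N]$ with $T_\ell\in [\gep_0 N^{1/\gga},\gep_0^{-1} N^{1/\gga}]$. The control for $\ell$ or $T_\ell$ outside these windows follows from the same kind of extreme-value arguments used to prove Lemma~\ref{lem:goodB0} and in the definition of $A_n^{(1)}$--$A_n^{(5)}$: for $\ell<\gep_0 N$ the gaps cannot be too large by $A^{(2)}_n(\gep_0)$; for $\ell>\gep_0^{-1}N$ the term $\gl(\gb,\ell-1)(\ell-1)/N$ dominates; and the event $A^{(11)}_n(\gep_0)$ rules out atypically large values of $T_\ell$ for small $\ell$. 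These bounds are quantitative, and show that $G_n^\gb(\ell)$ exceeds its minimum by an amount bounded below by a constant depending only on $\gep_0$ in those regions. On the restricted window, $A_n^{(10)}(\gep_0,\gep,\eta)$ together with the expansion~\eqref{eq:def:gt} of $g$ yields, uniformly in $\ell$,
\beq
G_n^\gb(\ell) = \tilde G\Big(\tfrac{\ell-1}{N},\tfrac{T_\ell}{N^{1/\gga}}\Big) + o(1), \qquad n\to\infty.
\eeq

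Next, by the weak convergence $\Pi_N \Rightarrow \Pi$ exploited in~\cite[Theorem 2.2]{PoiSim19}, restricted to the compact rectangle $K_{\gep_0}:=[0,\gep_0^{-1}]\times[\gep_0,\gep_0^{-1}]$ on which $\Pi$ has a.s.\ finitely many atoms, the pair made of the smallest and second smallest values of $\tilde G$ on $\Pi_N\cap K_{\gep_0}$ converges jointly in distribution to the corresponding pair $(M_1,M_2)$ for $\Pi\cap K_{\gep_0}$. The map $\tilde G\colon(0,\8)^2\to(0,\8)$ is smooth with non-vanishing gradient, so by the mapping theorem for Poisson processes the image point process $\sum_{(x,y)\in\Pi}\gd_{\tilde G(x,y)}$ is a Poisson point process on $(0,\8)$ whose intensity measure is absolutely continuous with respect to Lebesgue measure. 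In particular $M_2-M_1>0$ almost surely, and one may choose $\eta>0$ so small that $\bbP(M_2-M_1<3\eta)<\petit/3$. Combining this with the first paragraph and the Portmanteau theorem yields $\liminf_n \bbP(\kBtwo_n(\eta))\ge 1-\petit$.

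The main obstacle I foresee is the truncation step, namely ruling out that the second smallest value of $G_n^\gb$ is attained at an $\ell$ lying outside the compact window $[\gep_0 N,\gep_0^{-1} N]\times [\gep_0 N^{1/\gga},\gep_0^{-1} N^{1/\gga}]$ because of an atypically large but rare gap. Fortunately, the events $A^{(2)}_n$, $A^{(3)}_n$ and (a slight strengthening of) $A^{(11)}_n$ already provide quantitative comparisons showing that such configurations contribute $G_n^\gb(\ell)$ exceeding the minimum by a $\gep_0$-dependent constant, which is much larger than $2\eta$ once $\eta$ is chosen sufficiently small. All remaining steps reduce to standard continuous-mapping/continuity arguments for functionals of Poisson point processes, so no additional work beyond that already performed for Lemma~\ref{lem:goodB0} is needed.
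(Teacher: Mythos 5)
Your proposal is essentially the same argument as the paper's, which proves Lemmas~\ref{lem:goodB0} and~\ref{lem:goodB6} in a single combined proof. Both arguments (i) confine the two smallest values of the (tilde) functional to a compact rectangle of $E$ with high probability, (ii) use the weak convergence $\Pi_N \Rightarrow \Pi$ restricted to that rectangle, and (iii) establish that the gap between the smallest and second-smallest values of $\psi^\gl$ over the atoms of $\Pi$ has a continuous law (hence no atom at $0$), so that one may choose $\eta$ small. The only cosmetic difference is in step (iii): you invoke the mapping theorem for Poisson processes (the image of $\Pi$ under the smooth map $\tilde G$ is again Poisson with absolutely continuous intensity), whereas the paper argues more elementarily by conditioning on $\Pi(K)=m$ and using that the atoms are then i.i.d.\ with a density, from which non-atomicity of the functional $\Phi_K$ follows. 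Both routes are correct; yours is marginally slicker. The paper also invests more effort in making the ``truncation to a compact rectangle'' step rigorous (via the preparatory Lemma~\ref{lem:phikphi} and the continuity arguments from Resnick), whereas you treat it more informally; that is the one place where your sketch would need to be fleshed out, but it is not a gap in the underlying idea.
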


\begin{lemma}
\label{lem:goodB1}
For all $\petit \in(0,1)$, there exists $\rho\in(0,\frac12)$ small enough such that
\beq
\label{eq:cclB1}
\liminf_{n\to\infty} \bbP(\kBthree_n(\rho)) \ge 1-\petit.
\eeq
\end{lemma}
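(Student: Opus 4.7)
The plan is to derive Lemma \ref{lem:goodB1} from the Poisson point process (PPP) scaling limit used to establish \eqref{eq:varform} in \cite{PoiSim19}: the rescaled obstacle configuration $\Pi_N$ converges in distribution to a PPP $\Pi$ on $[0,\infty)\times(0,\infty)$ with intensity proportional to $dx\otimes y^{-(1+\gga)}dy$. On $\kBone_n(\gep_0)$, which has probability at least $1-\petit$ by Lemma \ref{lem:goodB0}, the rescaled pair $(\lopt/N, T_{\lopt}/N^{1/\gga})$ lives in the compact box $[\gep_0,\gep_0^{-1}]^2$. First I would introduce the corresponding limiting objects on $\Pi$: the almost surely unique minimizer $(x^*,y^*)$ of $F(x,y)=\gl x+\pi^2/(2y^2)$, the next ``record'' abscissa $x^{\text{next}}=\inf\{x>x^* : (x,y)\in\Pi,\ y>y^*\}$, and the runner-up $y^{**}=\max\{y<y^* : (x,y)\in\Pi,\ x<x^{\text{next}}\}$. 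Then I would upgrade the PPP convergence to the joint weak convergence
\[
\bigl(\lopt/N,\ T_{\lopt}/N^{1/\gga},\ i(\kopt+1)/N,\ \ttwo/N^{1/\gga}\bigr)\ \Longrightarrow\ (x^*,y^*,x^{\text{next}},y^{**}).
\]

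Given this joint convergence, I would reduce the lemma to showing that $\bbP(y^{**}/y^*\in(\rho,1-\rho))\to 1$ as $\rho\to 0$. The \emph{upper bound} $y^{**}<(1-\rho)y^*$ follows by noting that, conditional on $(x^*,y^*,x^{\text{next}})$, the restriction of $\Pi$ to $[0,x^{\text{next}})\times(0,y^*)$ is dominated by a PPP of the same intensity; the mean number of its points in the strip $y\in((1-\rho)y^*,y^*)$ is a constant times $((1-\rho)^{-\gga}-1)(y^*)^{-\gga}x^{\text{next}}$, which vanishes as $\rho\to 0$. The \emph{lower bound} $y^{**}>\rho y^*$ goes the other way: the event $y^{**}\le\rho y^*$ forces the PPP to have no points in $[0,x^{\text{next}})\times[\rho y^*,y^*)$, whose expected cardinality blows up like $\rho^{-\gga}$, so by the Poisson property its probability is at most $\exp(-C\rho^{-\gga})\to 0$. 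Combining these two estimates with the joint convergence above and the Portmanteau theorem yields the lemma for $\rho$ small enough.

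The main obstacle will be justifying the displayed joint convergence, i.e. proving that the functional $\Pi\mapsto(x^*,y^*,x^{\text{next}},y^{**})$ is almost surely continuous at the limit PPP in the vague topology. The key observation is that, on $\kBone_n(\gep_0)$ and on the event that the limiting minimizer $(x^*,y^*)$ lies in a fixed compact region, this functional only depends on the restriction of $\Pi$ to a bounded window $[0,M]\times[\eta,\infty)$ for some $M,\eta^{-1}$ that can be taken arbitrarily large at the cost of a small error (since configurations where $x^{\text{next}}>M$ or $y^{**}<\eta$ are controlled by explicit PPP tail estimates). This reduces the question to a finite-dimensional continuity statement which boils down to the a.s. uniqueness of the argmin of $F$; this uniqueness follows from the absolute continuity of the joint law of any two points of $\Pi$ together with the smoothness of $F$. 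Since this is structurally the same continuity argument already carried out in \cite[Section~6]{PoiSim19} to derive the variational formula \eqref{eq:varform}, the technical implementation should be essentially routine.
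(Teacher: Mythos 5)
Your proposal rests on the same PPP scaling machinery as the paper (Lemma~\ref{lem:phikphi} and the convergence $\Pi_n\Rightarrow\Pi$), and your treatment of the \emph{upper} bound $\ttwo<(1-\rho)\tone$ is essentially the paper's argument in a slightly different guise: the paper shows that the ratio functional $\Phi_K(\Pi)$ has no atom at $1$ and applies the Portmanteau theorem, while you bound the mean number of PPP points in the thin strip $[0,x^{\text{next}})\times((1-\rho)y^*,y^*)$; these are two ways of expressing the same fact, and your stochastic-domination remark (fewer points after conditioning) is in the right direction for this part.

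Where the two approaches genuinely diverge is the \emph{lower} bound $\ttwo>\rho\tone$. The paper's route is deliberately minimal: on the event $\cS(n)$ of Lemma~\ref{lem:phikphi}, both $\ub Y_n=\ttwo/N^{1/\gga}$ and $Y_n^*=\tone/N^{1/\gga}$ lie in a fixed compact interval $[\ub c_\gep,\bar d_\gep]\subset(0,\infty)$, so the ratio is automatically $\ge\ub c_\gep/\bar d_\gep>0$, and any $\rho<\ub c_\gep/\bar d_\gep$ works. No Poisson computation is needed. You instead invoke a void-probability estimate, arguing that $\{y^{**}\le\rho y^*\}$ forces $\Pi$ to have no points in $[0,x^{\text{next}})\times[\rho y^*,y^*)$, a region of $p$-measure of order $\rho^{-\gga}$. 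This is workable, but as written it glosses over a real conditioning issue. The event that $(x^*,y^*)$ is the $F$-minimizer already forces $\Pi$ to avoid the sublevel set $D=\{(x,y):F(x,y)<F(x^*,y^*)\}$, and $D$ does intersect $[0,x^{\text{next}})\times(0,y^*)$; the stochastic domination you noted goes the \emph{wrong} way here, since removing points can only make the void event \emph{more} likely. The argument can be repaired: $p(D)<\infty$ (because $D\subset[0,F(z^*)/\gl]\times[\pi/\sqrt{2F(z^*)},\infty)$ and the intensity is integrable in $y$ there), hence
\[
p\bigl(([0,x^{\text{next}})\times[\rho y^*,y^*))\setminus D\bigr)\ \ge\ \mathrm{cst}\cdot x^{\text{next}}\,(y^*)^{-\gga}\,\rho^{-\gga}\ -\ p(D)\ \longrightarrow\ \infty
\]
as $\rho\to0$, uniformly over $(x^*,y^*,x^{\text{next}})$ in the compact box supplied by Lemma~\ref{lem:phikphi}. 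But this amount of bookkeeping is exactly what the paper's compactness argument avoids. In short, your route is correct once the conditioning is handled carefully, but it is strictly more expensive than what the paper does, and you should note that the preparatory lemma already gives tight compact containment of all four rescaled quantities, which settles the lower-bound part without any PPP tail estimate.
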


 \begin{lemma}
\label{lem:goodB2}
For all $\petit\in(0,1)$, there exists $J\ge 1$ large enough such that
\beq
\liminf_{n\to\infty} \bbP(\kBfour_n(J)) \ge 1-\petit.
\eeq
\end{lemma}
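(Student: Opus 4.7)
\textbf{Proof strategy for Lemma~\ref{lem:goodB2}.} The plan is to use $\kBone_n(\gep_0)$ to localize the minimizer $\lopt$, and then, for each candidate value $k$ of $\lopt$, to exploit the fact that the single gap $T_k$ is independent of all other $T_j$'s. First I would pick $\gep_0\in(0,\tfrac12)$ small enough that Lemma~\ref{lem:goodB0} yields $\liminf_n\bbP(\kBone_n(\gep_0))\ge 1-\petit/2$; it then suffices to control $\bbP(\kBone_n(\gep_0)\cap\kBfour_n(J)^c)$.

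I would rewrite the failure of the right-hand condition in $\kBfour_n(J)$ as $\{\exists\, j>\lopt:\ T_j>\max(j-\lopt,J)^{(4+\gga)/(4\gga)}\}$ and, for each $k\ge 1$, define the analogous event $E_k(J):=\{\exists\, j>k:\ T_j>\max(j-k,J)^{(4+\gga)/(4\gga)}\}$. On $\kBone_n(\gep_0)$ we have $\lopt\in[\gep_0 N,\gep_0^{-1}N]$ and $T_\lopt\ge \gep_0 N^{1/\gga}$, so the intersection is contained in $\bigcup_{\lceil\gep_0 N\rceil\le k\le\lfloor\gep_0^{-1}N\rfloor}(\{T_k\ge\gep_0 N^{1/\gga}\}\cap E_k(J))$. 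The crucial observation is that $E_k(J)$ depends only on $(T_j)_{j>k}$ and is therefore independent of $T_k$. A union bound combined with independence yields
\beq
\bbP(\kBone_n\cap\{\textrm{right part of }\kBfour_n(J)\textrm{ fails}\})\le \sum_{k}\bbP(T_k\ge\gep_0 N^{1/\gga})\,\bbP(E_k(J)).
\eeq
The tail estimate $\bbP(T_k\ge\gep_0 N^{1/\gga})\le C\gep_0^{-\gga}N^{-1}$ from \eqref{eq:deftau} exactly absorbs the cardinality $\lesssim N$ of the summation range. By stationarity, $\bbP(E_k(J))=\bbP(E_0(J))$, and rewriting $E_0(J)=\{\exists\, j\ge 1:\ T_j>\max(j,J)^{(4+\gga)/(4\gga)}\}$ followed by a further union bound split at $j=J$ gives $\bbP(E_0(J))\le C'J^{-\gga/4}$ (the sum over $j>J$ converges since $(4+\gga)/4>1$). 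Altogether this produces a bound of order $C(\gep_0,\gga)J^{-\gga/4}$, smaller than $\petit/4$ for $J$ large enough.

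The left-hand condition is handled symmetrically: its failure rewrites as $\{\exists\, j\in[1,\lopt-1]:\ T_j>\max(\lopt-j,J)^{(4+\gga)/(4\gga)}\}$, which, once $\lopt=k$ is extracted by a union bound, involves only $(T_j)_{j<k}$ and is therefore still independent of $T_k$; by the i.i.d. property of the gaps its probability matches $\bbP(E_0(J))$. Summing the two contributions with the bound on $\bbP(\kBone_n(\gep_0)^c)$ closes the argument. The only non-routine point is the \emph{decoupling}: rather than attempting to condition on $\{\lopt=k\}$ (a complicated event involving all of $\tau$), one relaxes it to the necessary one-coordinate condition $\{T_k\ge\gep_0 N^{1/\gga}\}$, which isolates the contribution of $T_k$ and unlocks the required independence.
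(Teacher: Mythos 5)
Your proof is correct, but it takes a genuinely different route from the paper's. The paper decomposes over the exact value of $\tilde\lopt$ and of $T_{\tilde\lopt}$, rewrites the event $\{\tilde\lopt=k,\,T_k=t\}$ through the explicit minimality constraints $T_{k+\ell}\le U(\ell,t)$, and then invokes the FKG inequality to decouple the (coordinatewise increasing) bad event $\{\exists \ell\ge J,\ T_{k+\ell}\ge \ell^{(4+\gga)/(4\gga)}\}$ from the (coordinatewise decreasing) conditioning event; this reduces matters to the unconditioned i.i.d. bound $\bbP(\exists \ell\ge J,\ T_\ell\ge \ell^{(4+\gga)/(4\gga)})$, which the paper then controls via a record-based computation. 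You instead relax $\{\lopt=k\}$ to the one-coordinate necessary condition $\{T_k\ge\gep_0 N^{1/\gga}\}$ furnished by $\kBone_n(\gep_0)$, so that independence from $(T_j)_{j\neq k}$ comes for free and no correlation inequality is needed; the price is a union bound over the $\asymp \gep_0^{-1}N$ candidate locations of $\lopt$, which is exactly absorbed by the tail $\bbP(T_k\ge\gep_0 N^{1/\gga})\lesssim \gep_0^{-\gga}N^{-1}$, and your direct union bound on $E_0(J)$ gives the explicit rate $J^{-\gga/4}$ more simply than the paper's record argument. The one structural difference worth noting is that your $J$ depends on the auxiliary $\gep_0=\gep_0(\petit)$, whereas the paper's FKG route yields a $J$ depending on $\petit$ alone; since $\kBfour_n(J)$ is increasing in $J$ and the lemma only asserts existence of a suitable $J$, this causes no problem for the statement nor for the parameter ordering in the proof of Theorem~\ref{thm:ggasup1}. (The minor indexing issue at $j=\lopt$, where $T_0$ is undefined, is already present in the paper's definition of $\kBfour_n$ and is immaterial.)
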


\begin{lemma}
\label{lem:goodB78}
For all $\petit \in(0,1)$, $\gep_0 \in (0,1)$ and all $\gep\in(0,\gb/2)$, there exists $\mathsf C = \mathsf C(\gep_0,\gep)>0$ so that 
\beq
\label{eq:cclB78}
\liminf_{n\to\infty} \bbP(\kBfive_n(\gep_0,\gep,\mathsf{C}))\ge 1-\petit.
\eeq
\end{lemma}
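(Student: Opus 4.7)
The plan is to compute the expected cardinality of the random set appearing in $\kBfive_n(\gep_0,\gep,\mathsf C)$ and then to apply Markov's inequality. Denote
\[
X_n \;:=\; \card\Big\{k\in [\gep_0 N, \gep_0^{-1}N] \colon T_k \ge \tfrac{\alpha(\gep)\gep_0}{4}\, N^{\frac 1 \gga}\Big\}.
\]
Since the gap sequence $(T_k)_{k\ge 1}$ is i.i.d., $X_n$ is a $\bin(n_*, p_n)$ random variable with
\[
n_* \;=\; \lfloor\gep_0^{-1}N\rfloor - \lceil\gep_0 N\rceil + 1 \;\sim\; (\gep_0^{-1}-\gep_0)\,N,
\qquad
p_n \;=\; \bbP\Big(T_1 \ge \tfrac{\alpha(\gep)\gep_0}{4}\, N^{\frac 1 \gga}\Big).
\]

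The hypothesis \eqref{eq:deftau} implies (by summation of the asymptotic tail) that $\bbP(T_1 \ge m) \sim \frac{c_\tau}{\gga} m^{-\gga}$ as $m\to\infty$. Applied with $m=\tfrac{\alpha(\gep)\gep_0}{4} N^{1/\gga}$, and noting that $N\to\infty$ and $\alpha(\gep)>0$ by \cite[Lemma 3.7]{PoiSim19}, this yields
\[
p_n \;\sim\; \frac{c_\tau}{\gga}\Big(\frac{\alpha(\gep)\gep_0}{4}\Big)^{-\gga} N^{-1},
\qquad n\to\infty.
\]
Combining the two asymptotics, the expectation converges to a finite constant that depends only on $\gep_0$ and $\gep$:
\[
\bbE[X_n] \;=\; n_* p_n \;\xrightarrow[n\to\infty]{}\; M(\gep_0,\gep) \;:=\; \frac{c_\tau}{\gga}(\gep_0^{-1}-\gep_0)\Big(\frac{\alpha(\gep)\gep_0}{4}\Big)^{-\gga}.
\]

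It then suffices to choose $\mathsf C = \mathsf C(\gep_0,\gep) := \lceil 2 M(\gep_0,\gep)/\petit\rceil$ and to invoke Markov's inequality: for $n$ large enough, $\bbE[X_n]\le 2 M(\gep_0,\gep)$, so
\[
\bbP\big(\kBfive_n(\gep_0,\gep,\mathsf C)^c\big) \;=\; \bbP(X_n > \mathsf C) \;\le\; \frac{\bbE[X_n]}{\mathsf C} \;\le\; \petit,
\]
from which \eqref{eq:cclB78} follows. There is no genuine obstacle here: the argument is a direct consequence of the power-law tail \eqref{eq:deftau}, the i.i.d.\ structure of the gaps, and the fact that the scale $\tfrac{\alpha(\gep)\gep_0}{4}N^{1/\gga}$ is precisely of the order at which $O(1)$ many of the $O(N)$ gaps in the window are expected to exceed the threshold. (In fact, by the standard Poisson limit for a Binomial with fixed mean, one also gets a Poisson-type concentration, but this is more than what is needed.)
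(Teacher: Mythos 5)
Your proposal is correct and rests on exactly the same observation as the paper's proof: the cardinality in $\kBfive_n$ is a Binomial with roughly $(\gep_0^{-1}-\gep_0)N$ trials and success probability of order $N^{-1}$, hence bounded mean. The paper concludes via the Poisson limit (which you note as an aside), while you conclude via Markov's inequality; both are immediate once the binomial structure is identified, so the arguments are essentially identical.
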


\subsection{Preparatory lemma} 
\label{sec:preplemma}
Recall the definition of $(\Pi_n)$ in \eqref{eq:defpin} that is a sequence of point processes on the quadrant $E:= [0, +\infty) \times (0, +\infty)$. In the following we abbreviate for all $n\ge 1$,
\beq
(X^n_i,Y^n_i) := \Big(\frac{i-1}{n}, \frac{T_i}{n^{1/\gamma}}\Big) \qquad \textrm{for all } i\geq 1.
\eeq
Each element of the sequence $(\Pi_n)$ belongs to $M_p(E)$, that is the space of Radon point measures on $E$. We recall that this sequence converges weakly (with the topology of vague convergence) to a Poisson point process $\Pi$ with intensity measure $p = \dd x \otimes \frac{c_\tau \gamma}{y^{\gamma + 1}} \dd y$, where $c_\tau$ is the constant appearing in~\eqref{eq:deftau} (see \cite[Proposition 2.4]{PoiSim19} for a reference). In \cite{PoiSim19} we defined for any $\lambda>0$,
\beq
\label{eq:psi0}
\begin{array}{ccccc}
 \psi^\lambda & : & E & \to & \R^+ \\
 & &  (x,y) & \mapsto & \lambda x + \frac{\pi^2}{2 y^2}, \\
\end{array}
\eeq
and for any $\mu$ in $M_p(E)$,
\begin{equation}
\label{eq:ystar}
\Psi^\lambda (\mu):=\inf_{(x,y)\in \mu} \psi^\lambda(x,y),
\end{equation}
with the convention $\inf \emptyset =+\8$ and $(x,y)\in \mu$ means that $(x,y)$ is in the support of $\mu$. We might omit the superscript $\gl$ when it does not bring confusion. When the infimum of $\Psi^\gl$ is achieved by a unique point we call it $z^*=(x^*,y^*)$. When the following quantities are uniquely defined, we call $\bar z=(\bar x, \bar y)$ the point in $\mu$ such that
\beq
\label{eq:ybar}
\bar y = \inf_{\substack{(x,y)\in \mu\ \colon \\x> x^* \text{ and } y>y^*}} y,
\eeq
and $\underline{z}=(\ub x, \ub y)$ the point in $\mu\setminus\{z^*\}$ such that
\beq
\label{eq:yub}
\ub y = \sup_{\substack{(x,y)\in \mu\setminus\{z^*\}\ \colon \\x<\bar x}} y.
\eeq
Finally, we call $z^{**}$ the minimizer of $\psi^\lambda$ over $\mu\setminus \{z^*\}$ when it is well-defined and unique, so that
\begin{equation}
\label{eq:ystarstar}
\psi^\lambda(z^{**}) = \inf_{(x,y)\in \mu\setminus \{z^*\}} \psi^\lambda(x,y).
\end{equation}
Provided these points are well-defined when one replaces $\mu$ by $\Pi_n$, we obtain four (random) points denoted by $Z_n^*$, $\bar Z_n$, $\ub Z_n$ and $Z_n^{**}$, respectively. 

\begin{lemma}
\label{lem:phikphi} For all $\petit \in (0,1)$, there exists a compact rectangle $K\subset E$ so that 
\beq
 \liminf_{n} \bbP(\cS(n))\geq 1-\petit,
 \eeq
where
\beq
\cS(n):=\{Z_n^*,\bar Z_n, \ub Z_n \text{ and } Z_n^{**} \text{ are well defined and lie in }K\}.
\eeq

 \end{lemma}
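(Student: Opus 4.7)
The plan is to reduce the question to almost-sure properties of the limit Poisson point process (PPP) $\Pi$ via the vague convergence $\Pi_n \Rightarrow \Pi$ recalled just before the lemma. First, I would analyze the analogous quadruple $(Z^{*},\bar Z,\ub Z,Z^{**})$ associated to $\Pi$, showing that each of the four points is a.s.\ uniquely defined and that they all lie in a common compact rectangle $K=[0,A]\times[a,b]$ with probability at least $1-\petit/2$. Second, I would transfer this to $\Pi_n$ via a Skorokhod-type coupling together with a.s.\ continuity of the relevant $\argmin$ / $\arginf$ functionals along the pre-limit sequence.

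For the analysis on $\Pi$ the key inputs are as follows. Since $\psi^\gl(x,y)\to \infty$ whenever $x\to \infty$ or $y\to 0$, every sublevel set $\{\psi^\gl\le c\}$ is relatively compact and contains finitely many atoms of $\Pi$ a.s. A fixed box such as $[0,1]\times[1,2]$ contains at least one atom of $\Pi$ with probability $1-\exp(-c_\tau(1-2^{-\gga}))$, forcing $\Psi^\gl(\Pi)\le \gl+\pi^2/2$; this already gives an a.s.\ upper bound on $x^{*}$ and a positive lower bound on $y^{*}$. Conversely the event $\{y^{*}>M\}$ confines $Z^{*}$ to a region of the form $[0,A_1(\gl)]\times[M,\infty)$, whose expected atom count $A_1(\gl)\,c_\tau M^{-\gga}$ vanishes as $M\to\infty$, so $y^{*}$ is tight. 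Uniqueness of $Z^{*}$, and then of $Z^{**}$, follows from the standard fact that no two atoms of a PPP with absolutely continuous intensity a.s.\ share a value of $\psi^\gl$, since its level sets are smooth curves of zero intensity. Analogous estimates control $\bar Z$ and $\ub Z$ in a compact region, using that the witness realising the relevant infimum/supremum lies in a bounded window of $E$ with large probability.

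For the transfer to $\Pi_n$, I would enlarge $K$ slightly so that with probability at least $1-\petit$ all four limit points lie in the interior of $K$ and $\Pi$ has no atom near its boundary. On a Skorokhod coupling, the atoms of $\Pi_n$ in $K$ then match one-to-one with those of $\Pi$ in $K$ for large $n$, and by uniqueness together with interior-point arguments the pre-limit quadruple $(Z_n^{*},\bar Z_n,\ub Z_n,Z_n^{**})$ converges to its limit and eventually lies in $K$. The main obstacle is precisely this continuity step: the $\argmin$/$\arginf$ functionals defining the four points are not continuous on the full space $M_p(E)$, and one must simultaneously ensure that the competing atoms near the minimizer are strictly separated by $\psi^\gl$ and that the infimum-defining witness for $\bar Z$ actually lies in the bounded window $K$ rather than being realised only in the far field of $\{x>x^{*},\,y>y^{*}\}$. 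Both conditions are delivered by the almost-sure properties of the Poisson process combined with the tightness estimates described above.
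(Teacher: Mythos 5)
Your strategy rests on the same two pillars as the paper's proof --- the vague convergence $\Pi_n\Rightarrow\Pi$ and the a.s.\ regularity of the optimizers of the limiting Poisson process --- but organized in the opposite order. The paper localizes the optimizers \emph{directly at the level of $\Pi_n$}, translating ``all minimizers lie in $[a_\gep,b_\gep]\times[c_\gep,d_\gep]$'' into explicit events on the renewal increments (of the type $\{\max_{i\le cN}T_i \gtrless \gep N^{1/\gga}\}$), and invokes the limit $\Pi$ only for uniqueness, via the portmanteau theorem applied to the continuity set ``at least two minimizers in $K$''; you prove everything for $\Pi$ first and then transfer by coupling. Both routes are viable, but two points in yours need to be made honest. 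First, the occupation probability $1-e^{-c_\tau(1-2^{-\gga})}$ of the fixed box $[0,1]\times[1,2]$ is a constant, not $1-\petit/2$, and it yields a high-probability (not an a.s.) bound on $\Psi^\gl$; the box must be chosen depending on $\petit$. Second, and more importantly, knowing that the optimizers of $\Pi$ lie in $K$ and that the atoms of $\Pi_n$ and $\Pi$ match inside $K$ does not yet prevent an atom of $\Pi_n$ \emph{outside} $K$ from beating them: you need tightness of $\Pi_n$ on $\{\psi^\gl\le c\}\setminus K$ \emph{uniformly in $n$}. This is recoverable (that set is relatively compact with small intensity, so its $\Pi_n$-occupation probability converges to a small limit), but it cannot be absorbed into the a.s.\ analysis of $\Pi$ alone --- it is precisely what the paper's explicit pre-limit events encode.

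One genuine trap concerns $\bar Z$. Read literally, \eqref{eq:ybar} minimizes the $y$-coordinate over the half-infinite region $\{x>x^*,\ y>y^*\}$. For the limiting Poisson process this infimum equals $y^*$ and is \emph{not attained} (for every $\delta>0$ the strip $\{x>x^*\}\times(y^*,y^*+\delta]$ has infinite intensity, so minimizing sequences escape to $x=+\infty$); hence the first step of your scheme, ``$\bar Z$ is a.s.\ well defined for $\Pi$'', fails for that coordinate as written. The object actually used in the paper's Step~2 (where $\bar X_n$ is bounded via $\argmax_{i\le\bar b_\gep n}T_i$) is the \emph{next record}, i.e.\ the atom minimizing the $x$-coordinate over that region; for this object your tightness argument does go through, since the region $(x^*,x^*+M]\times(y^*,\infty)$ has positive finite intensity and the first record after $x^*$ therefore occurs at a finite abscissa a.s. You correctly identify the ``far field'' danger for $\bar Z$, but it is resolved by working with the intended (record) definition, not by the a.s.\ properties of $\Pi$ under the literal one.
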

\begin{proof}[Proof of Lemma \ref{lem:phikphi}]
We divide the proof in four steps.
\par \textit{Step $1.$} We first prove that for all $\petit>0$ there exists a compact set $K_\petit \subset E$ such that
\beq
\liminf_{n\to\infty} \bbP(\cS_1(n))> 1-\petit,
\eeq
where
\beq
\cS_1(n) = \Big\{\text{the infimum of $\psi$ over $\Pi_n$ is achieved by a unique minimizer that lies in }K_\petit \Big\}.
\eeq
We first choose $\gep>0$ small enough so that $\liminf_{n} \bbP(\max_{i\leq n} T_i\geq \gep n^{1/\gamma}) \geq 1-\petit$. On this event, considering any point $(X_i^n, Y_i^n)$ such that $X_i^n\le 1$ and $Y_i^n \ge \gep$, we easily obtain that $\inf_{\Pi_n} \psi\leq \lambda +\pi^2/(2\gep^2)$. Since the set $\{z\in E \colon \psi(z) \le \gl +\pi^2/(2\gep^2)\}$ is contained in $[0, 1+\frac{\pi^2}{2\lambda \gep^2}] \times (0,+\infty)$, it contains almost surely finitely many points of $\Pi_n$. This implies that the infimum over $\Pi_n$ is almost surely achieved by at least one point in $\Pi_n$ and that all minimizers $(x,y)\in \Pi_n$ satisfy $x \leq b_\gep := 1+\frac{\pi^2}{2\lambda \gep^2}$ and $y\geq c_\gep:=\pi/\sqrt{2\gl b_\gep}$. We now fix $d_\gep>c_\gep$ large enough so that $\liminf_{n} \bbP(\max_{i\leq b_\gep n} T_i\leq d_\gep n^{1/\gamma})\geq 1-\petit$ and, in order to anticipate Step 3, we also fix $0<a_\gep< b_\gep$ small enough so that $\liminf_n \bbP(\max_{i\leq a_\gep n} T_i< c_\gep n^{1/\gamma})\ge 1-\petit$. We have thus proven that on the set 
\beq
\{\max_{i\leq n} T_i\geq \gep n^{1/\gamma}\} \cap \{\max_{i\leq b_\gep n} T_i\leq d_\gep n^{1/\gamma}\} \cap \{\max_{i\leq a_\gep n} T_i<  c_\gep n^{1/\gamma}\},
\eeq
the infimum of $\psi$ over $\Pi_n$ is achieved and all minimizers lie in the compact set $K_\petit:=[a_\gep,b_\gep]\times [c_\gep, d_\gep]$. We now prove that there is a unique minimizer with high probability. We consider the event 
\beq
A=\Big\{\mu \in M_p(E),\ \inf_{z\in \mu\cap K_\petit}\psi(z) \text{ has at least two minimizers}\Big\}.
\eeq
Using \cite[Proposition 3.13]{Res}, one can check that all $\mu\in \partial A$ belong to $A$ or  satisfy $\mu(\partial K_\petit)\geq 1$. Using \cite[Proposition 3.14]{Res}, we obtain that $\Pi(\partial A)=0$ and thus, \beq
\lim_{n\to +\8}\Pi_n(A)=\Pi(A)=0.
\eeq
This concludes the proof of the first step.
\par \textit{Step $2.$} Let us first note that almost surely on $\cS_1(n)$, $Z^*_n$ and $\bar Z_n$ are well defined. We now prove that for all $\petit>0$ one can enlarge $K_\petit$ to another compact set $\bar K_\petit \subset E$ such that
\beq
\liminf_{n} \bbP(\cS_2(n))\geq 1-\petit,
\eeq
where
\beq
\cS_2(n) := \cS_1(n) \cap \Big\{\bar Z_n\in \bar K_\petit \Big\}.
\eeq
We observe that for all $\bar b_\gep > b_\gep$,
\beq
\bbP(\cS_1(n), \bar X_n > \bar b_\gep  )\leq \bbP(\argmax_{i\leq \bar b_\gep n}T_i\leq b_\gep n).
\eeq
We choose $\bar b_\gep > b_\gep$ large enough so that $\liminf_n \bbP(\argmax_{i\leq \bar b_\gep n}T_i\leq b_\gep n)\leq \petit$ and $\bar d_\gep > d_\gep$ large enough so that $\liminf_{n} \bbP(\max_{i\leq \bar b_\gep n} T_i\leq \bar d_\gep n^{1/\gamma})\geq 1-\petit$. Setting $\bar K_\petit:=[a_\gep,\bar b_\gep]\times [c_\gep, \bar d_\gep]$ concludes the proof of the second step.
\par \textit{Step $3.$} 
We finally prove that for all $\petit>0$ one can enlarge $\bar K_\petit$ to a compact  set $\ub K_\petit \subset E$ such that
\beq
\liminf_{n\to\infty} \bbP(\cS_2(n), \ub Z_n \text{ is well defined, unique, and lies in }\ub K_\petit)\geq 1-\petit.
\eeq
The supremum in \eqref{eq:yub} is achieved (with $\Pi_n$ instead of $\mu$) and any maximizer $(\ub X_n,\ub Y_n)$ satisfies $\ub X_n \leq \bar X_n$ and $\ub Y_n\leq Y^*_n$, by definition. We now choose $\ub c_\gep < c_\gep$ small enough so that $\limsup_n \bbP(\max_{i\le a_\gep n} T_i < \ub c_\gep n^{1/\gga}) \le \petit$ and we set $\ub K_\petit = [0, \bar b_\gep]\times [\ub c_\gep, \bar d_\gep]$ so that, on $\{\max_{i\le a_\gep n} T_i \ge \ub c_\gep n^{1/\gga}\}\cap \cS_2(n)$, any maximizer $\ub Z_n$ belongs to $\ub K_\petit$. We finally prove that for $n$ large enough, $\ub Z_n$ is uniquely defined with probability larger that $1-\petit$ using the same argument that was used at the end of Step 1.\\
\par \textit{Step $4.$}
Using the same arguments as above one can prove that the infimum in \eqref{eq:ystarstar} is well defined and achieved (again with $\Pi_n$ instead of $\mu$).  On $\cS(n)$, as $\bar Z_n\in K$, one may deduce that any minimizer $Z^{**}_n$ in \eqref{eq:ystarstar} satisfies $X^{**}_n\leq b^{**}_\gep:=\bar b_\gep+\frac{\pi^2}{2\gl\ub c^2_\gep}$ and $Y^{**}_n\geq c^{**}_\gep := \frac{\pi}{\sqrt{2\gl b^{**}_\gep}}$. One may finally choose $d^{**}_\gep \geq \bar d_\gep$ large enough so that
 \beq
 \limsup_{n\to\infty} \bbP(\max_{i\leq  b^{**}_\gep n}T_i> d^{**}_\gep n^{1/\gamma} )\leq \petit.
 \eeq
 On $\cS(n)\cap \{\max_{i\leq  b^{**}_\gep n}T_i\leq d^{**}_\gep n^{1/\gamma}\}$, one obtain that $Z^{**}_n$ lies in the compact set $K^{**}=[0,b^{**}_\gep]\times[c^{**}_\gep, d^{**}_\gep]$. Once we know that all minimizers lie in $K^{**}$ we prove that there is actually only one by using the same argument as in Step 1.\\
\end{proof}

\subsection{Technical proofs for Section~\ref{sec:goodenv}}

\begin{proof}[Proof of Lemma~\ref{lem:goodB0} and Lemma~\ref{lem:goodB6}]
{\it Step 1.} Using the compact rectangle from Step 1 of Lemma \ref{lem:phikphi} (that does not intersect the $y$-axis) and noting that
\beq
Z^*_N = \Big( \frac{\tilde\lopt(n)-1}{N}, \frac{T_{\tilde\lopt(n)}}{N^{1/\gga}} \Big),
\eeq 
we already know that for all $\petit\in(0,1)$, there exists $\gep_0\in(0,1)$ such that $\tilde \lopt$ is uniquely defined, with $\gep_0 N \le \tilde\lopt \le \gep_0^{-1} N$ and $\gep_0 N^{\frac 1 \gga} \le T_{\tilde\lopt} \le \gep_0^{-1} N^{\frac 1 \gga}$. 
\par {\it Step 2.} Let us now prove that all minimizers of  $G_n$ lie in the interval $[\gep_0N, \gep_0^{-1}N]$ with probability larger than $1-\petit$, provided $\gep_0$ is chosen small enough. We replicate the argument used in the proof of Lemma~\ref{lem:phikphi} (Step 1). First, we choose $\gep>0$ small enough so that $\liminf_{n} \bbP(\max_{i\leq N} T_i\geq \gep N^{1/\gamma}) \geq 1-\petit$. On this event, considering any point $i\leq N$ such that $T_i\geq \gep N^{1/\gamma}$, we easily obtain using \eqref{eq:encadr_gphi},
\beq
\inf_{\ell}G_n(\ell)\leq \gb+\bbE[\log T_1] +\frac{C_1}{\gep^2}.
\eeq
This implies that the infimum is almost surely achieved and that all minimizers satisfy $\ell\leq b_\gep N$ where $b_\gep:=1+\gb^{-1}({\bbE[\log T_1]} +\frac{C_1}{\gep^2})$ and $T_\ell \geq c_\gep N^{1/\gamma}$ with $c_\gep :=C_1^{-1}(\gb+\bbE[\log T_1] +\frac{C_1}{\gep^2})^{-1}$. We fix $0<a_\gep< b_\gep$ small enough so that $\liminf_n \bbP(\max_{i\leq a_\gep N} T_i< c_\gep N^{1/\gamma})\ge 1-\petit$. On the set 
\beq
\label{eq:B26}
\Big\{\max_{i\leq N} T_i\geq \gep N^{1/\gamma}\Big\} \cap \Big\{\max_{i\leq N} T_i\leq \gep N^{1/\gamma}\Big\}  \cap \Big\{\max_{i\leq a_\gep N} T_i<  c_\gep N^{1/\gamma}\Big\},
\eeq
any minimizer $\ell$ of $G_n$ indeed lies in $[a_\gep N, b_\gep N]$ and satisfies $T_{\ell} \ge c_\gep N^{1/\gga}$.
\par {\it Step 3.} Let us now prove that $\lopt$ is uniquely defined and actually coincides with $\tilde \lopt$ for $n$ large enough, with probability larger than $1-\petit$. To this end, let us first notice that for every $\eta\in(0,1)$, every $\ell$ that minimizes $G_n$, on the intersection of~\eqref{eq:B26} and $A^{(10)}_n(\gep_0,\eta)$, and for $n$ large enough (so that, in particular, $A^{(10)}_n(\gep_0,\eta)$ has probability at least $1-\petit$),
\beq
\label{eq:diffGGtilde}
|G_n(\ell)-\tilde G_n(\ell)| \leq \frac{3\eta}{4}.
\eeq
The next step is proving that with probability larger than $1-\petit$,
\beq
\label{eq:secondmin-tildeG}
\min_{\ell \neq \tilde \lopt} \tilde G_n(\ell) - \min \tilde G_n(\ell) \ge 2\eta.
\eeq
Using the notation of Section~\ref{sec:preplemma}, we set for any $\mu\in M_p(E)$,
\beq
\Phi(\mu)=\psi^\gl(z^*)-\psi^\gl(z^{**}),
\eeq
if $z^*$ and $z^{**}$ are both uniquely defined (otherwise set the function to zero) and $\Phi_K(\mu)=\Phi(\mu(\cdot \cap K))$, where $K$ stands for a compact subset of $[0,+\infty)\times (0,+\infty)$. Adapting the proof of Lemma \ref{lem:phikphi}, one can show that $\Pi$ is almost surely a continuity point of $\Phi_K$ and that $\Phi_K(\Pi)$ has no atom in $\R$. This implies that for all $\eta > 0$
\beq
\label{eq:convdeux}
\lim_{n\to +\8}\bbP(\Phi_K(\Pi_n)<2\eta)=\bbP(\Phi_K(\Pi)<2\eta).
\eeq
 On $\cS^{**}_n$, $\min_{\ell\neq \lopt}  G_n^{\beta}(\ell)- \min_{\ell}  G_n^{\beta}(\ell)=\Phi_K(\Pi_n)$ so that for all $\eta>0$,
\beq
\label{eq:proofB2}
\bbP(\min_{\ell\neq \lopt}  G_n^{\beta}(\ell)- \min_{\ell}  G_n^{\beta}(\ell)< 2\eta)\leq \bbP(\Phi_K(\Pi_n)<2\eta)+\bbP\left((\cS^{**}_n)^c\right).
\eeq
We thus choose $\eta>0$ small enough so that $\bbP(\Phi_K(\Pi)<2\eta)\leq \petit$ and conclude the proof using \eqref{eq:convdeux} and Lemma \ref{lem:phikphi}.
Combining~\eqref{eq:diffGGtilde} and~\eqref{eq:secondmin-tildeG}, we get the desired result. As a byproduct, we use the inequality in~\eqref{eq:proofB2} to prove Lemma~\ref{lem:goodB6}.
\end{proof}

\begin{proof}[Proof of Lemma~\ref{lem:goodB1}] By Lemma~\ref{lem:goodB0}, we may safely assume that $\lopt = \tilde \lopt$ in what follows. The idea is to write the ratio between $\max_{i\neq \lopt, i< i(\kopt+1)} T_i$ and $T_{\lopt+1}$ as a function of the point process $\Pi_n$ and then use the convergence of $\Pi_n$ to $\Pi$. 
In the case where $Z_n^*,\bar Z_n$ and $\ub Z_n$ are well defined we set
\beq
\Phi(\Pi_n)=\frac{\ub Y_n}{Y_n^*},
\eeq
and $\Phi(\Pi_n)=0$ otherwise. 
We consider a compact set $K\subset E$ such that $\bbP(\Pi(\partial K)=0)=1$ and define the function
$\Phi_K$ for all $\mu\in M_p(E)$ by
\beq
\Phi_K(\mu)=\Phi(\mu(\cdot \cap K)).
\eeq
Let us check that $\Pi$ is almost surely a continuity point of $\Phi_K$ (that is however not the case for $\Phi$). Almost surely, $\Pi(\cdot \cap K)$ has only a finite number of atoms and none of them belongs to $\partial K$. Moreover, arguing as in~\cite[Proposition~2.5]{PoiSim19}, for $\mu=\Pi(\cdot \cap K)$, the optimisers in \eqref{eq:ystar} \eqref{eq:ybar} \eqref{eq:yub} are almost surely unique, provided they are well defined (that is if the set of point satisfying the constraints is not empty). Using~\cite[Proposition~$3.13$]{Res} one can check that $\Pi$ is thus almost surely a continuity point of $\Phi_K$ (even in the case where one of the optimisers is not well defined and $\Phi_K=0$). As a consequence, using also~\cite[Proposition $2.4$]{PoiSim19}, $(\Phi_K(\Pi_n))$ converges in law to $\Phi_K(\Pi)$.
\par Recall that for $m\in\N$ and conditional on $\{\Pi(K)=m\}$, the restriction of $\Pi$ to $K$ has the same law as $\sum_{i=1}^m \delta_{Z_i}$, where the $(Z_i)_{1\le i\leq m}$'s are i.i.d. with continuous law $p_K$ that is the intensity measure of $\Pi$ restricted to $K$ and renormalized to a probability measure. From this we deduce that $\Phi_K(\Pi)$ has no atom in $\bbR$. This implies that for all $\rho \in (0,1)$,
\beq
\label{eq:convcompact}
\lim_{n\to +\8}\bbP(\Phi_K (\Pi_n) \geq 1-\rho) = \bbP(\Phi_K (\Pi) \geq 1-\rho).
\eeq
For $\petit>0$, we consider the compact set $K=[0, \bar b_\gep]\times [\ub c_\gep, \bar d_\gep]\subset E$ given by Lemma \ref{lem:phikphi} and write for all $\rho\in (0,1)$
\beq
\bbP\Big( \frac{ \max_{i\neq \lopt, i< i(\kopt+1)} T_i }{T_{\lopt}} \geq 1-\rho \Big) \leq \bbP\Big(\Phi_K (\Pi_n) \geq 1-\rho\Big)+\bbP\Big(\cS(n)^c\Big).
\eeq
We choose $\rho$ small enough such that $\bbP(\Phi_K (\Pi) \geq 1-\rho)\leq \petit$ so that using \eqref{eq:convcompact} and Lemma \ref{lem:phikphi}, one can easily deduce
\beq
\limsup_{n\to +\8} \bbP\Big( \frac{ \max_{i\neq \lopt, i< i(\kopt+1)} T_i }{T_{\lopt}} \geq 1-\rho \Big)\leq \petit.
\eeq
If moreover $\rho<\ub c_\gep /\bar d_\gep$ then
\beq
\bbP\Big( \frac{ \max_{i\neq \lopt, i< i(\kopt+1)} T_i }{T_{\lopt}} \leq \rho \Big) \leq \bbP\Big(\cS(n)^c\Big).
\eeq
This concludes the proof of Lemma~\ref{lem:goodB1}.
\end{proof}

\begin{proof}[Proof of Lemma~\ref{lem:goodB2}]
By Lemma~\ref{lem:goodB0}, we may safely assume that $\lopt = \tilde \lopt$ in what follows.
The event $\kBfour_n(J)^c$ splits in two parts and we only control the first one as both are similar.  We observe that 
\beq
\ba
\label{eq:decompo2termes}
&\bbP(\exists \ell\geq J,\ \max\{T_{\ell_0+i} \colon i\le \ell\} \geq \ell^{\frac{4+\gamma}{4\gamma}})\\
&\qquad \leq \bbP(\max\{T_{\ell_0+i} \colon i\le J\} \geq J^{\frac{4+\gamma}{4\gamma}})+\bbP(\exists \ell\geq J,\ T_{\ell_0+\ell} \geq \ell^{\frac{4+\gamma}{4\gamma}}).
\ea
\eeq
{\it Step 1.} Let us first manage with the second term. 
Note that the family $(T_{\tilde \ell_0+\ell})$ is not i.i.d. anymore as $\tilde \ell_0$ is random. Our first task is thus to bring us back to this i.i.d. case, see \eqref{eq:back2iid} below. To this end, we decompose according to the different possible values of $\tilde \ell_0$ and $T_{\tilde \ell_0}$:
\beq
\label{eq:decompoB}
\ba
\bbP(\exists \ell\geq J,\ T_{\tilde\ell_0+\ell} \geq \ell^{\frac{4+\gamma}{4\gamma}})
&=\sum_{k,t} \bbP(\exists \ell\ge J, T_{k+\ell} \ge \ell^{\frac{4+\gga}{4\gga}}, \tilde \ell_0 = k, T_k = t)\\
\leq 
&\sum_{k,t}
\bbP(\exists \ell\geq J,\ T_{k+\ell} \geq \ell^{\frac{4+\gamma}{4\gamma}}, \forall \ell> -k,\ \tilde G_n^\gb(k+\ell)\geq \tilde G_n^\gb(k),T_k=t).
\ea
\eeq
For all $\ell> -k$, using~\eqref{eq:tildeG} one can rewrite the event $\{\tilde G_n^\gb(k+\ell)\geq \tilde G_n^\gb(k),T_k=t\}$ as $\{T_{k+\ell}\leq U(\ell,t),T_k=t\}$, where 
\beq
U(\ell,t)= \Big[\Big(\frac{1}{t^2} - \frac{2\gl \ell}{\pi^2 n} \Big) \vee 0\Big]^{-1/2}.
\eeq
As the r.v. $T_\ell$'s are i.i.d. this leads to 
\beq
\ba
{\rm r.h.s.}\ \eqref{eq:decompoB}&\leq \sum_{k,t}
\bbP(\exists \ell\geq J,\ T_{k+\ell} \geq \ell^{\frac{4+\gamma}{4\gamma}}, \forall \ell> -k,\ T_{k+\ell}\leq U(\ell,t),T_k=t)\\
&\leq \sum_{k,t}
\bbP(\forall \ell\in(-k,0),\ T_{k+\ell}\leq U(\ell,t),T_k=t)\\
&\qquad \qquad \times \bbP(\exists \ell\geq J,\ T_{k+\ell} \geq \ell^{\frac{4+\gamma}{4\gamma}}, \forall \ell\geq 1,\ T_{k+\ell}\leq U(\ell,t)).
\ea
\eeq
As the event $\{\exists \ell\geq J,\ T_{k+\ell} \geq \ell^{\frac{4+\gamma}{4\gamma}}\}$ is non-decreasing (coordinatewise) with $(T_{k+\ell})_{\ell\geq 1}$ while $\{\forall \ell\geq 1,\ T_{k+\ell}\leq U(\ell,t)\}$ is non-increasing (coordinatewise) with respect to the same random sequence, the FKG inequality gives for all $k$ and $t$,
\beq
\ba
&\bbP(\exists \ell\geq J,\ T_{k+\ell} \geq \ell^{\frac{4+\gamma}{4\gamma}}, \forall \ell\geq 1,\ T_{k+\ell}\leq U(\ell,t))\\
&\qquad \leq \bbP(\exists \ell\geq J,\ T_{k+\ell} \geq \ell^{\frac{4+\gamma}{4\gamma}})
\bbP( \forall \ell\geq 1,\ T_{k+\ell}\leq U(\ell,t)).
\ea
\eeq
As the $T_{\ell}$'s are i.i.d., we have for all $k
\geq 0$
\beq
\bbP(\exists \ell\geq J,\ T_{k+\ell} \geq \ell^{\frac{4+\gamma}{4\gamma}})=\bbP(\exists \ell\geq J,\ T_{\ell} \geq \ell^{\frac{4+\gamma}{4\gamma}}).
\eeq
Finally, we obtain
\beq
\label{eq:back2iid}
\ba
\bbP(\exists \ell\geq J,\ T_{\tilde\ell_0+\ell} \geq \ell^{\frac{4+\gamma}{4\gamma}})
&\leq 
\bbP(\exists \ell\geq J,\ T_{\ell} \geq \ell^{\frac{4+\gamma}{4\gamma}})\sum_{k,t}
\bbP(\tilde\ell_0=k,T_k=t)\\
&\leq 
\bbP(\exists \ell\geq J,\ T_{\ell} \geq \ell^{\frac{4+\gamma}{4\gamma}}).
\ea
\eeq
Let us now recall that $i_k$ is the index of the $k-$th record of the sequence $(T_\ell)_{\ell\geq 0}$. We thus obtain that for all $K\geq 1$,
\beq
\bbP(\exists \ell\geq J,\ T_{\ell} \geq \ell^{\frac{4+\gamma}{4\gamma}})\leq \bbP(\exists k\geq K,\  T_{i_k}> i_k^{\frac{4+\gamma}{4\gamma}})+\bbP(i_{K} >  J).
\eeq
We first observe that
\beq
\ba
\sum_{k\geq 1}\bbP(T_{i_k} > i_k^{\frac{4+\gamma}{4\gamma}})&=\sum_{j\geq 1}\bbP(T_{j} > j^{\frac{4+\gamma}{4\gamma}}, T_i < T_j \ \text{for all }i<j)\\
				&\leq \sum_{j\geq 1}\frac{1}{j}\bbP\Big(\max_{i\leq j}T_{i} > j^{\frac{4+\gamma}{4\gamma}}\Big)\\
				&\leq \sum_{j\geq 1}\frac{1}{j}\Big[1- \Big\{1-\bbP(T_1\geq j^{\frac{4+\gamma}{4\gamma}})\Big\}^j\Big].\\
\ea
\eeq
Note that we lose the equality at the second line as two or more variables could achieve simultaneously the maximum. The term in the last sum is equivalent to $j^{-(1+\frac\gga 4)}$ and, as $\gamma>0$, the sum converges. We can thus fix $K$ large enough so that 
\beq
\bbP\Big(\exists k\geq K,\  T_{i_k}> i_k^{\frac{4+\gamma}{4\gamma}}\Big)\leq \petit.
\eeq
Finally, as $i_{K}<+\8$ almost surely, we choose $J$ so that $\bbP(i_{K} >  J)\leq \petit$.\\

{\it Step 2.} We use the same ideas to manage with the first term in \eqref{eq:decompo2termes} and obtain
\beq
\bbP(\max\{T_{\tilde \ell_0 +i} \colon i\le J\} \geq J^{\frac{4+\gamma}{4\gamma}})
\le \bbP(\max\{T_{i} \colon i\le J\} \geq J^{\frac{4+\gamma}{4\gamma}}).
\eeq
We conclude easily as this does not depend on $n$ and goes to $0$ with $J$ going to $+\8$.\\
\end{proof}

\begin{proof}[Proof of Lemma~\ref{lem:goodB78}]
The random variable $\card\{k\in [\gep_0 N, \gep_0^{-1}N] \colon T_k \ge \frac{\alpha(\gep)\gep_0}{4}  N^{\frac 1 \gga}\}$ is distributed as a binomial random variable with parameters $(\gep_0^{-1}- \gep_0)N$ and $\bbP(T_1 \ge \frac{\alpha(\gep)\gep_0}{4}  N^{\frac 1 \gga}) \sim (\frac{\alpha(\gep)\gep_0}{4})^{-\gga}N^{-1}$. As such, it converges in law to a Poisson random variable as $n\to\infty$, which proves our result.
\end{proof}

\section{Proof of Lemma~\ref{lem:mass-ren-fct} and Lemma~\ref{lem:sharp-Zn-renorm}}
\label{app:plms}
\begin{proof}[Proof of Lemma~\ref{lem:mass-ren-fct}]
(i) Let us start with the lower bound. Let 
\beq
\ba
\Inw(k) &= \{\text{the last excursion before $k$ is inward}\},\\
\Out(k) &= \{\text{the last excursion before $k$ is outward}\},
\ea
\eeq
that is $\Inw(k) = \{Y_\iota = \ins\}$ and $\Out(k) = \{Y_\iota = \out\}$, where $\iota = \sup\{i\ge 0 \colon \bar\gt_i < k\}$.
We write (the constant $c_5$ below being the same as in Lemma~\ref{lem:estim-qtn})
\beq
\cP_0(k\in \bar\gt) \ge \cP_0(k\in\bar\gt,\Inw(k-c_5\tone^2), \bar\gt \cap [k-\tone^3,k-c_5\tone^2)\neq \emptyset).
\eeq
By decomposing on the value of
\beq
\gga_{k-\tone^3} = \inf\{i\ge k-\tone^3\colon i\in\bar\gt\},
\eeq
we get
\beq
\cP_0(k\in \bar\gt) \ge \sumtwo{0\le i < \tone^3-c_5\tone^2}{a\in\{0,1\}}
\cP_0^a(\gga_{k-\tone^3} = k-\tone^3+ i) \cP_a(\tone^3 - i \in \bar\gt, \Inw(\tone^3-c_5\tone^2 - i)).
\eeq
Note that
\beq
\ba
&\cP_a(\tone^3 - i \in \bar\gt, \Inw(\tone^3-c_5\tone^2 - i))\\
&\qquad \ge \cP_a(\Inw(\tone^3-c_5\tone^2 - i)) \cP_a(\bar\gt_1 = \tone^3 - i | \Inw(\tone^3-c_5\tone^2 - i)).
\ea
\eeq
By Lemma~\ref{lem:estim-qtn}, since $\tone^3 -i \ge c_5\tone^2$,
\beq
\cP_a(\bar\gt_1 = \tone^3 - i | \Inw(\tone^3-c_5\tone^2 - i)) \ge \frac{\rm{(cst)}}{\tone^3} \exp[(\phi-g(\tone))(\tone^3 - i)] \ge \frac{\rm{(cst)}}{\tone^3}.
\eeq
We have used above the fact that, by Lemma~\ref{lem:sandw:phit1t2},
\beq
\phi-g(\tone) \ge \phi(\tone) - g(\tone) \ge - \frac{\rm{(cst)}}{\tone^3}.
\eeq
In the case where $\tone$ and $k$ are both even, we note that $k- \gamma_{k-\tone^3}$ is necessarily even. Therefore, we obtain
\beq
\cP_0(k\in \bar\gt) \ge \frac{\rm{(cst)}}{\tone^3} \cP_0(\Inw(k-c_5\tone^2), \bar\gt \cap [k-\tone^3,k-c_5\tone^2)\neq \emptyset).
\eeq
We will now prove that
\beq
\label{eq:LB-inw0}
\cP_0(\Inw(k-c_5\tone^2), \bar\gt \cap [k-\tone^3,k-c_5\tone^2)\neq \emptyset) \ge {\rm (cst)} 
\cP_0(\Inw(k-c_5\tone^2)),
\eeq
by showing that
\beq
\ba
&\cP_0(\Inw(k-c_5\tone^2), \bar\gt \cap [k-\tone^3,k-c_5\tone^2)\neq \emptyset)\\
&\qquad \ge {\rm (cst)}
\cP_0(\Inw(k-c_5\tone^2), \bar\gt \cap [k-\tone^3,k-c_5\tone^2)= \emptyset).
\ea
\eeq
To this end, we decompose the left-hand side according to the rightmost point in $\bar\gt$ before $k-\tone^3$ and the leftmost point strictly after $k-\tone^3$:
\beq
\ba
&\cP_0(\Inw(k-c_5\tone^2), \bar\gt \cap [k-\tone^3,k-c_5\tone^2)\neq \emptyset)\\
&\ge 
\sumtwo{0\le \ell < k-\tone^3 \le m < k -c_5\tone^2}{a,b\in\{0,1\}}  \cP_0^a(\ell \in \bar\gt) 
\cP_a^b(\bar \gt_1 = m-\ell, Y_1 = \ins) \cP_b(\Inw(k-m-c_5\tone^2)).
\ea
\eeq
By \eqref{eq:LB-Pinw} (see below), we obtain:
\beq
\ba
&\cP_0(\Inw(k-c_5\tone^2), \bar\gt \cap [k-\tone^3,k-c_5\tone^2)\neq \emptyset)\\
&\qquad \ge  {\rm (cst)}
\sumtwo{0\le \ell < k-\tone^3}{a\in\{0,1\}} \cP_0^a(\ell \in \bar\gt) 
\cP_a(k-\tone^3 - \ell \le \bar\gt_1 < k -c_5\tone^2- \ell, Y_1 = \ins).
\ea
\eeq
By Lemma~\ref{lem:cP_tail}, for all $u,v\in 2\bbN$ such that $u<v$,
\beq
\label{eq:LB-cP-in}
\cP_a(u \le \bar\gt_1 < v, Y_1 = \ins) \ge {\rm (cst)} 
(e^{-(g(\tone)-\phi)u} - e^{-(g(\tone)-\phi)v}),
\eeq
and we obtain 
\beq
\ba
\label{eq:LB-inw}
&\cP_0(\Inw(k-c_5\tone^2), \bar\gt \cap [k-\tone^3,k-c_5\tone^2)\neq \emptyset)\\
&\qquad \ge
{\rm (cst)} \sum_{0\le \ell < k-\tone^3} \cP_0(\ell \in \bar\gt)(e^{-(g(\tone)-\phi)(k-\tone^3-\ell)} - e^{-(g(\tone)-\phi)(k-c_5\tone^2-\ell)}).
\ea
\eeq
(Actually, $k-\tone^3-\ell$ should be replaced by the smallest even integer larger than $k-\tone^3-\ell$ and $k-c_5\tone^2-\ell$ by the largest even integer smaller than $k-c_5\tone^2-\ell$.)
On the other hand, we have, with the same decomposition,
\beq
\ba
&\cP_0(\Inw(k-c_5\tone^2), \bar\gt \cap [k-\tone^3,k-c_5\tone^2)= \emptyset)\\
&\qquad = \sumtwo{0\le \ell < k-\tone^3}{a\in\{0,1\}}\cP_0^a(\ell \in \bar\gt) \cP_a(\bar\gt_1 \ge k-c_5\tone^2-\ell, Y_1 = \ins).
\ea
\eeq
Using the same upper bound as (2.17) in~\cite[Lemma 2.2]{CP09b}, we get
\beq
\ba
\label{eq:UB-inw}
&\cP_0(\Inw(k-c_5\tone^2), \bar\gt \cap [k-\tone^3,k-c_5\tone^2)= \emptyset)\\
&\qquad \le {\rm (cst)} \sum_{0\le \ell < k-\tone^3} \cP_0(\ell \in \bar\gt) e^{-(g(\tone)-\phi)(k-c_5\tone^2-\ell)}.
\ea
\eeq
(Actually, $k-c_5\tone^2-\ell$ should be replaced by the smallest even integer larger than $k-c_5\tone^2-\ell$.) Recall that from Lemma~\ref{lem:sandw:phit1t2},
\beq
\label{eq:sdwch-g-minus-phi}
\frac{1}{{\rm (cst)}\tone^3} \le g(\tone)-\phi(\bbN,\tone,\bbN) \le g(\tone)-\phi \le g(\tone)-\phi(\tone) \le \frac{\rm (cst)}{\tone^3}.
\eeq
Therefore, combining~\eqref{eq:sdwch-g-minus-phi}, \eqref{eq:LB-inw} and~\eqref{eq:UB-inw}, we get the desired inequality in~\eqref{eq:LB-inw0}. To conclude the lower bound part, it remains to prove that uniformly in $k$ and $b\in\{0,1\}$,
\beq
\label{eq:LB-Pinw}
\cP_b(\Inw(k)) \ge {\rm (cst)} >0.
\eeq
Without any loss in generality, we may assume that $b=0$. Again, it is enough to show that
\beq
\cP_0(\Inw(k)) \ge {\rm (cst)} \cP_0(\Out(k)).
\eeq
By decomposing on the last contact point before $k$, we get
\beq
\cP_0(\Inw(k)) = \sumtwo{0\le i < k}{a\in\{0,1\}} \cP_0^a(i\in\bar\gt)
\cP_a(Y_1 = \ins, \bar \gt_1 \ge k-i).
\eeq
Therefore, it is enough to prove that for all $\ell \in \bbN$,
\beq
\cP_a(Y_1 = \ins, \bar\gt_1 \ge \ell) \ge {\rm (cst)} \cP_a(Y_1 = \out, \bar\gt_1 \ge \ell).
\eeq
We distinguish between $\ell \le 5\tone^3$ and $\ell > 5\tone^3$. If $\ell \le 5\tone^3$ then (recall~\eqref{eq:LB-cP-in})
\beq
\label{eq:LBPain}
\ba
\cP_a(Y_1 = \ins, \bar \gt_1 \ge \ell) &\ge {\rm (cst)} e^{-(g(\tone)-\phi)\ell}\\
&\ge {\rm (cst)}\\
&\ge {\rm (cst)} \cP_a(Y_1 = \out, \bar \gt_1 \ge \ell).
\ea
\eeq
Now suppose that $\ell > 5\tone^3$. From Proposition~\ref{prop:roughUB}, we get
\beq
\cP_a(Y_1 = \out, \bar\gt_1 \ge \ell) \le {\rm (cst)}\sum_{k\ge\ell} k^3 e^{(\phi-\phi(\ttwo))k}.
\eeq
Since $\ttwo \le (1-\rho) \tone$, we get $\phi - \phi(\ttwo) \le -\rho g(\tone)$, thus
\beq
\cP_a(Y_1 = \out, \bar\gt_1 \ge \ell) \le \sum_{k\ge\ell}k^3 e^{-\frac12\rho g(\tone)k} \times e^{-\frac12\rho g(\tone)k}.
\eeq
Since $i \mapsto i^3 e^{-\frac12\rho g(\tone)i}$ is decreasing when $i \ge 5\tone^3 \ge 5 \tone^2 / \rho$, we get
\beq
\ba
\cP_a(Y_1 = \out, \bar\gt_1 \ge \ell) &\le \tone^9 e^{-\frac12\rho \tone} \sum_{i\ge\ell} e^{-\frac12\rho g(\tone)i}\\
&\le C(\rho) \exp(- \tfrac12\rho g(\tone) \ell).
\ea
\eeq
Therefore, using~\eqref{eq:sdwch-g-minus-phi} and the first line of~\eqref{eq:LBPain}, we obtain
\beq
\frac{\cP_a(Y_1 = \ins, \bar\gt_1 \ge \ell)}{\cP_a(Y_1 = \out, \bar\gt_1 \ge \ell)}
\ge \frac{1}{C(\rho)}\exp\Big[\Big(\tfrac12\rho g(\tone) - \frac{\rm cst}{\tone^3}\Big)\ell\Big] \ge \frac{1}{C(\rho)}.
\eeq
which concludes this part of the proof.

\par (ii) We continue with the upper bound. First, we note that there exists $\ga < 1$ such that for all $\tone$ large enough and $a\in \{0,1\}$,
\beq
\sum_{u\le \tone^3}\cP_a(\bar\gt_1= u) \le \ga,
\eeq
following the proof strategy of ~\cite[Proposition 2.3]{CP09b}. The inequality above can be shown by using Lemma~\ref{lem:cP_tail}. Then, we show that there exists a constant $C\in(0,\infty)$ such that
\beq
\cP_a(\bar\gt_1=k) \le \frac{C}{k^{3/2}\wedge \tone^3},
\qquad (k\le \tone^3).
\eeq
This is a consequence of
\beq
\label{eq:UB-small-n}
\cP_a(\bar\gt_1=k, Y_1=\ins) \le \frac{C}{k^{3/2}\wedge \tone^3},
\qquad (k\le \tone^3),
\eeq
which comes from Lemma \ref{lem:estim-qtn}, and 
\beq
\cP_a(\bar\gt_1=k, Y_1=\out) \le
\begin{cases}
C k^{-3/2} & (k<\tone^2)\\
Ck^3 \exp\Big(- k^{\frac{\gga}{2(\gga+2)}}\Big) & (\tone^2 \le k<\tone^3)\\
\end{cases}
\eeq
where the second line comes from Lemma~\ref{lem:Zout} while the first one is just a basic control on the return time at $0$ of a simple random walk. The rest of the proof of the upper bound for $k\le \tone^3$ proceed by induction as in~\cite[Proposition 2.3]{CP09b}. Let us now prove it for $k\ge \tone^3$. The proof starts in the same way as in~\cite{CP09b}. It is enough to prove the equivalent of (B.13) therein, that is~\eqref{eq:equivalB13} below. To this purpose we may write
\beq
\ba
&\cP_0(\bar\gt \cap [k-\tone^3, k- \tone^2] \neq \emptyset, k\in\bar\gt)\\
&=\sumtwo{0\le m <k-\tone^3}{a\in\{0,1\}} \cP_0^a(m\in\bar\gt) \sumtwo{k-\tone^3 \le \ell \le k-\tone^2}{b\in\{0,1\}}
\cP_a^b(\bar\gt_1 = \ell -m) \cP_b(k-\ell \in \bar\gt).
\ea
\eeq
From the lower bound part, we have $\cP_b(k-\ell \in \bar\gt) \ge 1/(C\tone^3)$ for $b\in\{0,1\}$. Moreover, we have by~\eqref{eq:LB-cP-in} that for $a\in\{0,1\}$
\beq
\sum_{k-\tone^3 \le \ell \le k-\tone^2} \cP_a(\bar\gt_1 = \ell - m) \ge {\rm (cst)} e^{-(\phi - g(\tone))(k-m)}.
\eeq
Hence,
\beq
\label{eq:cP-LB-nonemptyband}
\cP_0(\bar\gt \cap [k-\tone^3, k-\tone^2] \neq \emptyset, k\in \bar\gt) \ge
\frac{{\rm (cst)}}{\tone^3}
\sum_{0\le m< k-\tone^3} \cP_0(m\in\bar\gt) \exp(-(\phi - g(\tone))(k-m)).
\eeq
Next we deal with
\beq
\label{eq:cP-emptyband}
\ba
&\cP_0(\bar\gt \cap [k-\tone^3, k- \tone^2] = \emptyset, k\in\bar\gt)\\
&=\sumtwo{m < k-\tone^3}{a\in\{0,1\}} \cP_0^a(m\in\bar\gt) \sumtwo{k-\tone^2\le \ell \le k}{b\in\{0,1\}}
\cP_a^b(\bar\gt_1 = \ell -m) \cP_b(k-\ell \in \bar\gt).
\ea
\eeq
We shall now prove that 
\beq
\label{eq:UB-cP-theta}
\cP_a(\bar\gt_1 = i) \le \frac{\rm (cst)}{\tone^3} \exp\Big([\phi - g(\tone)]i\Big),
\qquad i\ge \tone^3-\tone^2,
\qquad a\in\{0,1\}.
\eeq
The upper bound holds for $\cP_a(\bar\gt_1 = i, Y_1=\ins)$, by~\eqref{def:cP}, Lemma~\ref{lem:estim-qtn} and Lemma~\ref{eq:control.ratio.h}. As for the excursions outside of the main gap, we have, thanks to the inequality in Proposition~\ref{prop:roughUB},
\beq
\ba
\frac{\cP_a(\bar\gt_1 = i, Y_1=\out)}{\tone^{-3}\exp\Big([\phi - g(\tone)]i\Big)} &\le C(i\tone)^{3} \exp(-[\phi(\ttwo)- g(\tone)]i)\\
&\le C(i\tone)^{3} \exp\Big(-C\frac{\rho i}{\tone^2}\Big)\\
&\le C \tone^{9} \exp(-C\rho \tone),
\ea
\eeq
The second inequality is a consequence of~\eqref{eq:def:gt},~\eqref{eq:phi} and the fact that $\ttwo\le (1-\rho)\tone$.  The third inequality holds since $i\ge \tone^3 - \tone^2$.
By substituting~\eqref{eq:UB-cP-theta} into~\eqref{eq:cP-emptyband} and using that $0\le k-\ell \le \tone^2$, we obtain
\beq
\ba
&\cP(\bar\gt \cap [k-\tone^3, k- \tone^2] = \emptyset, k\in\bar\gt)\\ 
&\le 
\frac{C}{\tone^3}\sum_{0\le m < k-\tone^3} \cP(m\in\bar\gt) \exp\Big([\phi - g(\tone)](k-m)\Big)
\sum_{k-\tone^2 \le \ell \le k} \cP(k-\ell \in \bar\gt).
\ea
\eeq
By~\eqref{eq:UB-small-n} and since $k-\ell \le \tone^2$, we have $\cP(k-\ell\in\bar\gt) \le C(k-\ell)^{-3/2}$. Therefore,
\beq
\cP(\bar\gt \cap [k-\tone^3, k- \tone^2] = \emptyset, k\in\bar\gt) \le \frac{C}{\tone^3}\sum_{0\le m < k-\tone^3} \cP(m\in\bar\gt) \exp\Big([\phi - g(\tone)](k-m)\Big).
\eeq
Recalling~\eqref{eq:cP-LB-nonemptyband}, we get
\beq
\label{eq:equivalB13}
\cP(\bar\gt \cap [k-\tone^3, k- \tone^2] = \emptyset, k\in\bar\gt) \le {\rm (cst)} 
\cP(\bar\gt \cap [k-\tone^3, k- \tone^2] \neq \emptyset, k\in\bar\gt),
\eeq
which is what we needed.
\end{proof}

\begin{proof}[Proof of Lemma~\ref{lem:sharp-Zn-renorm}] We recall that the random walk is started from the lower boundary of the optimal gap in the definition of $\mZ_k$.
\par {\it Step 1.} Let us start with the lower bound. Suppose $k \geq 2 \tone^3$. We first decompose the partition function according to the last renewal point before $k$:
\beq
\label{eq:decompolast}
\mZ_k=\sum_{a\in\{0,1\}}\sum_{m=0}^k \mZ^{0a}_{k-m}(k-m\in \bar\gt)\mZ^a_m(\bar\gt_1> m).
\eeq
We readily obtain from~\eqref{eq:comp-Zpin}:
\beq
\ba
\mZ_k e^{\phi k}&\ge \frac1C \sum_{m=0}^k \cP(k-m\in \bar\gt)e^{\phi m}\min_{a\in\{0,1\}} \mZ^a_m(\bar\gt_1> m)\\
&\geq \frac1C \sum_{m=\tone^2}^{\tone^3} \cP(k-m\in \bar\gt)e^{\phi m}\min_{a\in\{0,1\}}\mZ^a_m(\bar\gt_1> m,Y_1=\ins).
\ea
\eeq
As $k-m\geq \tone^3$, $\cP(k-m\in \bar\gt)\geq 1/(C\tone^3)$, by~Lemma~\ref{lem:mass-ren-fct}. Using Lemma \ref{lem:estim-qtn}, as $m\geq \tone^2$ and $a\in\{0,1\}$,
\beq
\mZ^a_m(\bar\gt_1> m,Y_1=\ins)\geq \sum_{i>m}q_{\tone}(i)\geq  \sum_{i>m}\frac{c_1}{\tone^3 } e^{-g(\tone)i}\geq \frac{C}{g(\tone)\tone^3} e^{-g(\tone)m}\geq \frac{C}{\tone} e^{-g(\tone)m}.
\eeq
This leads to
\beq
\ba
\mZ_ke^{\phi k}&\geq \sum_{m=\tone^2}^{\tone^3} \frac{1}{C\tone^3} e^{\phi m}\times  \frac{C}{\tone} e^{-g(\tone)m}.
\ea
\eeq
Since $m\leq \tone^3$ and as a consequence of~\eqref{eq:sdwch-g-minus-phi}, $e^{(\phi-g(\tone)) m}\geq C$ and we obtain
\beq
\mZ_ke^{\phi k}\geq \frac{C}{\tone}.
\eeq
\par {\it Step 2.} Suppose $k\ge 2\tone^2$. For the upper bound, we come back to \eqref{eq:decompolast} and observe that
\beq
\mZ_ke^{\phi k} \le C\sum_{m=0}^k \cP(k-m\in \bar\gt)e^{\phi m} \max_{a\in\{0,1\}}\mZ^a_m(\bar\gt_1> m).
\eeq
Let $a\in\{0,1\}$. We decompose the last term as follows:
\beq
\mZ^a_m(\bar\gt_1> m)=\mZ^a_m(\bar\gt_1> m,Y_1=\ins)+\mZ^a_m(\bar\gt_1> m,Y_1=\out).
\eeq
\par {\it Step 2.1.} We first consider the part of the sum relative to $\{Y_1=\ins\}$. (i)
Let us first control the sum for $m\leq \tone^2$. In this case $k-m\geq \tone^2$, so that using Lemma~\ref{lem:mass-ren-fct}, $\cP(k-m\in \bar\gt)\leq C/\tone^3$ while $\mZ_m(\bar\gt_1> m,Y_1=\ins)\leq 1$ and $e^{\phi m}\leq C$ so that 
\beq
\label{eq:in1}
\sum_{m=0}^{\tone^2} \cP(k-m\in \bar\gt)e^{\phi m}\mZ_m(\bar\gt_1> m,Y_1=\ins) \leq C \sum_{m=0}^{\tone^2} \frac{c}{\tone^3}\leq \frac{C}{\tone}.
\eeq
\par (ii) We turn to the case when $\tone^2\leq m\leq k-\tone^2$. Here again, with the same argument, $\cP(k-m\in \bar\gt)\leq c/\tone^3$ while using Lemma \ref{lem:estim-qtn} and as $m\geq \tone^2$,
\beq
\label{eq:controleZm}
\mZ_m(\bar\gt_1> m,Y_1=\ins)\leq \sum_{i>m} q_{\tone}(i)\leq  \sum_{i>m} \frac{c_2}{\tone^3 } e^{-g(\tone)i}\leq \frac{C}{g(\tone)\tone^3} e^{-g(\tone)m}\leq \frac{C}{\tone} e^{-g(\tone)m}.
\eeq
Finally,
\beq
\sum_{m=\tone^2}^{k-\tone^2} \cP(k-m\in \bar\gt)e^{\phi m}\mZ_m(\bar\gt_1> m,Y_1=\ins) \leq C \sum_{m=\tone^2}^{k-\tone^2} \frac{1}{\tone^4} e^{-g(\tone)m}e^{\phi m}.
\eeq
Since $\phi-g(\tone)\leq - \frac{c}{\tone^3}$, we have 
\beq
\sum_{m=\tone^2}^{k-\tone^2} e^{(\phi-g(\tone))m}\leq \sum_{m\ge 0} e^{(\phi-g(\tone))m}\leq c \tone^3,
\eeq
which completes the control of the second part.
\par (iii) We turn to $k-\tone^2\leq m\leq k$: We use the same bounds as the ones in the previous part except that this time we use that $\cP(k-m\in \bar\gt)\leq \frac{C}{1+(k-m)^{3/2}}$ (by Lemma \ref{lem:mass-ren-fct}). This leads to
\beq
\sum_{m=k-\tone^2}^{k} \cP(k-m\in \bar\gt)e^{\phi m}\mZ_m(\bar\gt_1> m,Y_1=\ins) \leq C\sum_{m=k-\tone^2}^{k}\frac{1}{\tone} \frac{e^{(\phi-g(\tone))m}}{1+(k-m)^{3/2}}  \leq \frac{C}{\tone},
\eeq
where we used the fact that $\phi-g(\tone)\leq 0$ for the last inequality.
 \par {\it Step 2.2.} We now consider the part of the sum relative to $\{Y_1=\out\}$ and again split the sum in three parts.
(i) We manage with the part corresponding to $m\leq \tone^2$ exactly in the same way as \eqref{eq:in1}. (ii) We then consider the case $\tone^2\leq m\leq \frac{1}{2C_1} \tone^{\frac{4(\gga+2)}{\gga+4}}$, with $C_1$ as in~\eqref{eq:encadr_gphi}. By Lemma~\ref{lem:Zout}, we obtain
\beq
\mZ_m(\bar\gt_1> m,Y_1=\out)\leq C m^3 \exp(-m^{\frac{\gga}{2(\gga+2)}}).
\eeq
This leads to 
\beq
\ba
&\sum_{\tone^2\leq m\leq \frac{1}{2C_1} \tone^{\frac{4(\gga+2)}{\gga+4}}} \cP(k-m\in \bar\gt)e^{\phi m}\mZ_m(\bar\gt_1> m,Y_1=\out)\\
&\qquad \qquad \leq C\sum_{\tone^2\leq m\leq \frac{1}{2C_1} \tone^{\frac{4(\gga+2)}{\gga+4}}}  m^3 \exp(-m^{\frac{\gga}{2(\gga+2)}}+\phi m)\\
&\qquad \qquad \leq C\sum_{\tone^2\leq m\leq \frac{1}{2C_1} \tone^{\frac{4(\gga+2)}{\gga+4}}}  m^3 \exp(-\tfrac12 m^{\frac{\gga}{2(\gga+2)}}) \le {\rm (cst)}.
\ea
\eeq
(iii) We end the proof with the case $\frac{1}{2C_1} \tone^{\frac{4(\gga+2)}{\gga+4}}\le m \le k$. By Proposition~\ref{prop:roughUB}, we obtain
\beq
\mZ_m(\bar\gt_1> m,Y_1=\out)\leq C m^3 \exp(-\phi(\ttwo)m).
\eeq
This leads to 
\beq
\ba
&\sum_{\frac{1}{2C_1} \tone^{\frac{4(\gga+2)}{\gga+4}}\le m \le k} \cP(k-m\in \bar\gt)e^{\phi m}\mZ_m(\bar\gt_1> m,Y_1=\out)\\
&\qquad \qquad \leq C\sum_{\frac{1}{2C_1} \tone^{\frac{4(\gga+2)}{\gga+4}}\le m \le k}  m^3 \exp[(g(\tone)-\phi(\ttwo))m]\\
&\qquad \qquad \leq C\sum_{\frac{1}{2C_1} \tone^{\frac{4(\gga+2)}{\gga+4}}\le m \le k}  m^3 \exp\Big(-\frac{\rho m}{\tone^2}\Big)\\
&\qquad \qquad \leq C(\rho)\ \tone^{8}\exp(-\tfrac12\rho \tone^{\frac{2\gga}{\gga+4}}).
\ea
\eeq
\end{proof}
\section{Proof of Lemma~\ref{lem:ctrl_RN-deriv}}
\label{sec:radon}
\begin{proof}[Proof of Lemma~\ref{lem:ctrl_RN-deriv}]
By Remark~\ref{rk:comparegapn}, note that $n\ge 2\tone^3$ for $n$ large enough, which allows us to use Lemma~\ref{lem:sharp-Zn-renorm}.
Using Lemma~\ref{lem:cP_tail} as well (note that $\chop$ is of order $\tone^2$ while $g(\tone)-\phi(\tone)$ is of order $1/\tone^3$) we get for $\chop\le k \le n$:
\beq
\label{eq:ctrl_RN-deriv1}
\Theta(n,k) \le C \tone \mZ_k(\bar \gt_1 > k-\chop) e^{g(\tone)k}.
\eeq 
Let us split the rest of the proof in two cases: either $\chop\le k \le \tone^3$ (first case) or $\tone^3 \le k\le n$ (second case).

\par {\it (First case)} Let us start with the case $\chop\le k \le \tone^3$. In this case there exists some constant $C$ such that $e^{g(\tone)k}\leq C e^{\phi(\tone)k}$, by\eqref{eq:def:gt} and~\eqref{eq:phi}. Therefore,
\beq
\Theta(n,k) \leq C \tone \mZ_ke^{\phi(\tone) k},
\eeq
and we directly conclude using Lemma \ref{lem:sharp-Zn-renorm} and the fact that $k\geq 2\tone^2$.
\par {\it (Second case)} We now consider the case $\tone^3 \le k\le n$. Let us split the right-hand side in~\eqref{eq:ctrl_RN-deriv1} as the sum of 
\beq
(a) = C \tone \mZ_k(\bar \gt_1 > k) e^{g(\tone)k}
\qquad
{\rm and}
\qquad
(b) = C \tone \mZ_k(k\ge \bar \gt_1 > k-\chop) e^{g(\tone)k}.
\eeq
We further decompose these two terms as follows:
\beq
\ba
(a1) &= C \tone \mZ_k(\bar \gt_1 > k, Y_1 = \ins) e^{g(\tone)k},\\
(a2) &= C \tone \mZ_k(\bar \gt_1 > k, Y_1 = \out) e^{g(\tone)k}
\ea
\eeq
and
\beq
\ba
(b1) &= C \tone \mZ_k(k\ge \bar \gt_1 > k-\chop, Y_1=\ins) e^{g(\tone)k},\\
(b2) &= C \tone \mZ_k(k\ge \bar \gt_1 > k-\chop, Y_1=\out) e^{g(\tone)k}.
\ea
\eeq
\par (i) Let us start with $(a1)$. From \eqref{eq:controleZm} and using that $k\geq \tone^2$,
\beq
\mZ_k(\bar \gt_1 > k, Y_1 = \ins)\leq \frac{C}{\tone} e^{-g(\tone)k}
\eeq
so that $(a1)\leq C$.

\par (ii) We turn to $(a2)$. We first consider the case $\tone^3 \leq k \leq \frac{1}{2C_1} \tone^{\frac{4(\gga+2)}{\gga+4}}$ (which does not exist if $\gga<4$). Using Lemma~\ref{lem:Zout}, we get
\beq
\mZ_k(\bar \gt_1 > k, Y_1 = \out)\leq {\rm (cst)}k^3e^{-k^{\frac{\gamma}{2(\gamma+2)}}}.
\eeq
so that 
\beq
(a2)\leq C \tone k^3e^{-k^{\frac{\gamma}{2(\gamma+2)}}} e^{g(\tone)k}\leq C \tone k^3 e^{-\frac12 k^{\frac{\gamma}{2(\gamma+2)}}}\le C\tone^{13}e^{-\frac12 \tone^{\frac{\gamma}{\gamma+2}}},
\eeq
Now, consider the case $k > \frac{1}{2C_1} \tone^{\frac{4(\gga+2)}{\gga+4}}$ and use this time
\beq
\mZ_k(\bar \gt_1 > k, Y_1 = \out)\leq {\rm (cst)}k^3e^{-\phi(\ttwo)k}.
\eeq
As $g(\tone)-\phi(\ttwo)\leq -\frac{\rho}{\ttwo^2}$ we obtain for $k \geq \frac{1}{2C_1} \tone^{\frac{4(\gga+2)}{\gga+4}}$ that
\beq
(a2)\leq C \tone k^3 \exp\Big(-\frac{\rho k}{\ttwo^2}\Big) \leq C\tone^{7}\exp\Big({-\rho \tone^{\frac{2\gamma}{\gamma+4}}}\Big).
\eeq

\par (iii) Let us now manage with the case $(b1)$ using Lemma \ref{lem:estim-qtn}. By decomposing on the value of $\bar \gt_1$, we get
\beq
(b1)\leq C \tone \sum_{i=0}^\chop q_{\tone}(k-i)\mZ_i e^{g(\tone)k}\leq C \tone \sum_{i=0}^\chop \frac{1}{\tone^3}e^{g(\tone)i} \mZ_i.
\eeq
We finally use the rough estimate $\mZ_i\leq 1$ and note that for $i\leq \chop$, $e^{g(\tone)i}$ is bounded, in order to get
\beq
(b1)\leq C \tone \times \frac{\chop}{\tone^3}\leq C . 
\eeq

\par (iv) We conclude with the control of $(b2)$, still in the case $k\geq \tone^3$. We first observe
\beq
(b2) = C \tone  \sum_{i=0}^\chop \mK_{\out}(k-i)\mZ_i e^{g(\tone)k}\leq C \tone e^{g(\tone)k} \sum_{i=0}^\chop (k-i)^3 e^{-\phi(\ttwo)(k-i)}.
\eeq
Here again we use that $g(\tone)-\phi(\ttwo)\leq -\frac{c(\rho)}{\tone^2}$ to obtain
\beq
(b2) \leq C \tone k^3 e^{-c(\rho) \frac{k}{\tone^2}} \sum_{i=0}^\chop  e^{\phi(\ttwo)i}.
\eeq
We bound $\sum_{i=0}^\chop  e^{\phi(\ttwo)i}\leq \chop e^{\phi(\ttwo)\chop}\leq C\chop$ as $\chop\leq 2\tone^2$ and $\ttwo\geq \rho \tone$, and finally
\beq
(b2) \leq C \tone^3 k^3 e^{-c(\rho) \frac{k}{\tone^2}} \leq  C \tone^9  e^{-c(\rho) \tone}. 
\eeq
\end{proof}

\end{appendix}

\bibliographystyle{abbrv}
\bibliography{biblio_JulFra}

\end{document}